\newtheorem{theorem}{Theorem}
\newtheorem{definition}[theorem]{Definition}
\newtheorem{lemma}[theorem]{Lemma}
\newtheorem{proposition}[theorem]{Proposition}
\newtheorem{remark}[theorem]{Remark}
\newenvironment{proof}[1][Proof]{\noindent\textbf{#1.} }{\ \rule{0.5em}{0.5em}}
\begin{document}

\title{Decay and Continuity of Boltzmann Equation in Bounded Domains}
\author{Yan Guo \\
Division of Applied Mathematics, Brown University}
\maketitle

\begin{abstract}
Boundaries occur naturally in kinetic equations and boundary effects are
crucial for dynamics of dilute gases governed by the Boltzmann equation. We
develop a mathematical theory to study the time decay and continuity of
Boltzmann solutions for four basic types of boundary conditions: inflow,
bounce-back reflection, specular reflection, and diffuse reflection. We
establish exponential decay in $L^{\infty }$ norm for hard potentials for
general classes of smooth domains near an absolute Maxwellian. Moreover, in
convex domains, we also establish continuity for these Boltzmann solutions
away from the grazing set of the velocity at the boundary. Our contribution
is based on a new $L^{2}$ decay theory and its interplay with delicate $%
L^{\infty }$ decay analysis for the linearized Boltzmann equation, in the
presence of many repeated interactions with the boundary.
\end{abstract}

\section{Introduction}

Boundary effect plays a crucial role in the dynamics of gases governed by
the Boltzmann equation: 
\begin{equation}
\partial _{t}F+v\cdot \nabla _{x}F=Q(F,F)_{\text{ }}  \label{boltzmann}
\end{equation}%
where $F(t,x,v)$ is the distribution function for the gas particles at time $%
t\geq 0,$ position $x\in \Omega ,~~$and $v\in \mathbf{R}^{3}.$ Throughout
this paper, the collision operator takes the form%
\begin{eqnarray}
Q(F_{1},F_{2}) &=&\int_{\mathbf{R}^{3}}\int_{\mathbf{S}^{2}}|v-u|^{\gamma
}F_{1}(u^{\prime })F_{2}(v^{\prime })q_{0}(\theta )d\omega du  \notag \\
&&-\int_{\mathbf{R}^{3}}\int_{\mathbf{S}^{2}}|v-u|^{\gamma
}F_{1}(u)F_{2}(v)q_{0}(\theta )d\omega du  \notag \\
&\equiv &Q_{\text{gain}}(F_{1},F_{2})-Q_{\text{loss}}(F_{1},F_{2}),
\label{qgl}
\end{eqnarray}%
where $u^{\prime }=u+(v-u)\cdot \omega ,$ $v^{\prime }=v-(v-u)\cdot \omega ,$
$\cos \theta =(u-v)\cdot \omega /|u-v|,$ $0\leq \gamma \leq 1$ (hard
potential) and $0\leq q_{0}(\theta )\leq C|\cos \theta |$ (angular cutoff)$.$
The mathematical study of the particle-boundary interaction in a bounded
domain and its effect on the global dynamics is one of the fundamental
problems in the Boltzmann theory. There are four basic types of boundary
conditions for $F(t,x,v)$ at the boundary $\partial \Omega :$ (1) In-flow
injection: in which the incoming particles are prescribed; (2) bounce-back
reflection: in which the particles bounce back at the reverse the velocity;
(3) specular reflection: in which the particle bounce back specularly; (4)
diffuse reflection (stochastic): in which the incoming particles are a
probability average of the outgoing particles. Due to its importance, there
have been many contributions in the mathematical study of different aspects
of the Boltzmann boundary value problems [A], [AC], [AEMN], [AEP], [AH],
[C2], [C3], [CC], [De], [Gui], [H], [LY], [MS], [M], [US], [YZ], among
others, see also references in the books [C1], [CIP] and [U1].

According to Grad (p243, [Gr1]), one of the basic problems in the Boltzmann
study is to prove existence and uniqueness of its solutions, as well as
their time-decay toward an absolute Maxwellian, in the presence\ of
compatible physical boundary conditions in a general domain. In spite of
those contributions to the study of Boltzmann boundary problems, there are
few mathematical results of uniqueness, regularity, and time decay-rate for
Boltzmann solutions toward an Maxwellian. In [SA], it was announced that
Boltzmann solutions near a Maxwellian would decay exponentially to it in a
smooth bounded convex domain with specular reflection boundary conditions.
Unfortunately, we are not aware of any complete proof for such a result over
the last thirty years [U2]. Recently, important progress has been made in
[DeV] and [V] to establish almost exponential decay rate for Boltzmann
solutions with large amplitude for general collision kernels and general
boundary conditions, provided certain a-priori strong Sobolev estimates can
be verified. Even though these estimates had been established for spatially
periodic domains [G3-4] near Maxwellians, their validity is completely open
for the Boltzmann solutions, even local in time, in a bounded domain. As a
matter of fact, such kind of strong Sobolev estimates may not be expected
for a general non-convex domain [G4]. This is because even for simplest
kinetic equations with the differential operator $v\cdot \nabla _{x},$ the
phase boundary $\partial \Omega \times \mathbf{R}^{3}$ is always
characteristic but not uniformly characteristic at the grazing set $\gamma
_{0}=\{(x,v):x\in \partial \Omega ,~$and $v\cdot n(x)=0\}$ where $n(x)$
being the outward normal at $x$. Hence it is very challenging and delicate
to obtain regularity from general \ theory of hyperbolic PDE. Moreover, in
comparison with the half-space problems studied, for instance in [LY], [YZ],
the complication of the geometry makes it difficult to employ spatial
Fourier transforms in $x.$ There are many cycles (bouncing characteristics)
interacting with the boundary repeatedly, and analysis of such cycles is one
of the key mathematical difficulties.

The purpose of this article is to develop an unified $L^{2}-L^{\infty }$
theory in the near Maxwellian regime, to establish exponential decay toward
a normalized Maxwellian $\mu =e^{-\frac{|v|^{2}}{2}},$ for all four basic
types of the boundary conditions and rather general domains. Consequently,
uniqueness among these solutions can be obtained. For convex domains, these
solutions are shown to be continuous away from the singular grazing set $%
\gamma _{0}$.

\subsection{\protect\bigskip Domain and Characteristics}

Throughout this paper, we define $\Omega =\{x:\xi (x)<0\}\ $is connected and
bounded with $\xi (x)$ being a smooth function. We assume $\nabla \xi
(x)\neq 0$ at the boundary $\xi (x)=0$. The outward normal vector at $%
\partial \Omega $ is given by 
\begin{equation}
n(x)=\frac{\nabla \xi (x)}{|\nabla \xi (x)|},  \label{outwardnormal}
\end{equation}%
and it can be extended smoothly near $\partial \Omega =\{x:\xi (x)=0\}.$ We
say $\Omega $ is real analytic if $\xi $ is real analytic in $x.$ We define $%
\Omega $ is strictly convex if there exists $c_{\xi }>0$ such that 
\begin{equation}
\partial _{ij}\xi (x)\zeta ^{i}\zeta ^{j}\geq c_{\xi }|\zeta |^{2}
\label{convexity}
\end{equation}%
for all $x$ such that $\xi (x)\leq 0,$ and all $\zeta \in \mathbf{R}^{3}.$
We say that $\Omega $ has a rotational symmetry, if there are vectors $x_{0}$
and $\varpi ,$ such that for all $x\in \partial \Omega $ 
\begin{equation}
\{(x-x_{0})\times \varpi \}\cdot n(x)\equiv 0.  \label{axis}
\end{equation}

We denote the phase boundary in the space $\Omega \times \mathbf{R}^{3}$ as $%
\gamma =\partial \Omega \times \mathbf{R}^{3},$ and split it into outgoing
boundary $\gamma _{+},$ the incoming boundary $\gamma _{-},$ and the
singular boundary $\gamma _{0}$ for grazing velocities$:$ 
\begin{eqnarray*}
\gamma _{+} &=&\{(x,v)\in \partial \Omega \times \mathbf{R}^{3}:\text{ \ }%
n(x)\cdot v>0\}, \\
\gamma _{-} &=&\{(x,v)\in \partial \Omega \times \mathbf{R}^{3}:\text{ \ }%
n(x)\cdot v<0\}, \\
\gamma _{0} &=&\{(x,v)\in \partial \Omega \times \mathbf{R}^{3}:\text{ \ }%
n(x)\cdot v=0\}.
\end{eqnarray*}

Given $(t,x,v),$ let $[X(s),V(s)]=[X(s;t,x,v),V(s;t,x,v)]=[x+(s-t)v,v]$ be
the trajectory (or the characteristics) for the Boltzmann equation (\ref%
{boltzmann}): 
\begin{equation}
\frac{dX(s)}{ds}=V(s),\text{ \ \ \ \ \ \ \ \ \ \ \ \ \ \ \ \ \ \ }\frac{dV(s)%
}{ds}=0.  \label{ode}
\end{equation}%
with the initial condition: $[X(t;t,x,v),V(t;t,x,v)]=[x,v].$ For any $(x,v)$
such that $x\in \bar{\Omega},v\neq 0,$ we define its \textbf{backward exit
time} $t_{\mathbf{b}}(x,v)\geq 0$ to be the the last moment at which the
back-time straight line $[X(s;0,x,v),V(s;0,x,v)]$ remains in $\bar{\Omega}:$ 
\begin{equation}
t_{\mathbf{b}}(x,v)=\sup \{\tau \geq 0:x-\tau v\in \bar{\Omega}\}.
\label{exit}
\end{equation}%
We therefore have $x-t_{\mathbf{b}}v\in \partial \Omega $ and $\xi (x-t_{%
\mathbf{b}}v)=0.$ We also define 
\begin{equation}
x_{\mathbf{b}}(x,v)=x(t_{\mathbf{b}})=x-t_{\mathbf{b}}v\in \partial \Omega .
\label{xb}
\end{equation}%
We always have $v\cdot n(x_{\mathbf{b}})\leq 0.$

\subsection{Boundary Condition and Conservation Laws}

In terms of the standard perturbation $f$ such that $F=\mu +\sqrt{\mu }f,$
the Boltzmann equation can be rewritten as 
\begin{equation*}
\left\{ \partial _{t}+v\cdot \nabla +L\right\} f=\Gamma (f,f),\text{ \ \ \ \ 
}f(0,x,v)=f_{0}(x,v),
\end{equation*}%
where the standard linear Boltzmann operator see [G] is given by 
\begin{equation}
Lf\equiv \nu f-Kf=-\frac{1}{\sqrt{\mu }}\{Q(\mu ,\sqrt{\mu }f)+Q(\sqrt{\mu }%
f,\mu )\}  \label{mu-k}
\end{equation}%
with the collision frequency $\nu (v)\equiv \int |v-u|^{\gamma }\mu
(u)q_{0}(\theta )dud\theta \backsim \{1+|v|\}^{\gamma }$ for $0\leq \gamma
\leq 1;$ and 
\begin{equation}
\Gamma (f_{1},f_{2})=\frac{1}{\sqrt{\mu }}Q(\sqrt{\mu }f_{1},\sqrt{\mu }%
f_{2})\equiv \Gamma _{\text{gain}}(f_{1},f_{2})-\Gamma _{\text{loss}%
}(f_{1},f_{2}).  \label{gamma}
\end{equation}%
In terms of $f,$ we formulate the boundary conditions as

(1) The in-flow boundary condition: for $(x,v)\in \gamma _{-}$ 
\begin{equation}
f|_{\gamma _{-}}=g(t,x,v)  \label{inflow}
\end{equation}

(2) The bounce-back boundary condition: for $x\in \partial \Omega ,$

\begin{equation}
f(t,x,v)|_{\gamma _{-}}=f(t,x,-v)  \label{bounceback}
\end{equation}

(3) Specular reflection: for $x\in \partial \Omega ,$ let 
\begin{equation}
R(x)v=v-2(n(x)\cdot v)n(x),  \label{r}
\end{equation}%
and 
\begin{equation}
f(t,x,v)|_{\gamma _{-}}=f(x,v,v-2(n(x)\cdot v)n(x))=f(x,v,R(x)v)
\label{specular}
\end{equation}

(4) Diffusive reflection: assume the natural normalization, 
\begin{equation}
c_{\mu }\int_{v\cdot n(x)>0}\mu (v)|n(x)\cdot v|dv=1,  \label{diffusenormal}
\end{equation}%
then for $(x,v)\in \gamma _{-},$ 
\begin{equation}
f(t,x,v)|_{\gamma _{-}}=c_{\mu }\sqrt{\mu (v)}\int_{v^{\prime }\cdot
n(x)>0}f(t,x,v^{\prime })\sqrt{\mu (v^{\prime })}\{n_{x}\cdot v^{\prime
}\}dv^{\prime }.  \label{diffuse}
\end{equation}

For both the bounce-back and specular reflection conditions (\ref{bounceback}%
) and (\ref{specular}), it is well-known that both mass and energy are
conserved for (\ref{boltzmann}). Without loss of generality, we may always
assume that the mass-energy conservation laws hold for $t\geq 0,$ in terms
of the perturbation $f$: 
\begin{eqnarray}
\int_{\Omega \times \mathbf{R}^{3}}f(t,x,v)\sqrt{\mu }dxdv &=&0,
\label{mass} \\
\int_{\Omega \times \mathbf{R}^{3}}|v|^{2}f(t,x,v)\sqrt{\mu }dxdv &=&0.
\label{energy}
\end{eqnarray}%
Moreover, if the domain $\Omega $ has \textit{any} axis of rotation symmetry
(\ref{axis}), then we further assume the corresponding conservation of
angular momentum is valid for all $t\geq 0:$%
\begin{equation}
\int_{\Omega \times \mathbf{R}^{3}}\{(x-x_{0})\times \varpi \}\cdot vf(t,x,v)%
\sqrt{\mu }dxdv=0.  \label{axiscon}
\end{equation}

For the diffuse reflection (\ref{diffuse}), the mass conservation (\ref{mass}%
) is assumed to be valid.

\subsection{Main Results}

We introduce the weight function 
\begin{equation}
w(v)=(1+\rho |v|^{2})^{\beta }e^{\theta |v|^{2}}.  \label{weight}
\end{equation}%
where $0\leq \theta <\frac{1}{4},\rho >0$ and $\beta \in \mathbf{R}^{1}.$

\begin{theorem}
\label{inflownl}Assume $w^{-2}\{1+|v|\}^{3}\in L^{1}$ in (\ref{weight})$.$
There exists $\delta >0$ such that if $F_{0}=\mu +\sqrt{\mu }f_{0}\geq 0,$
and 
\begin{equation*}
||wf_{0}||_{\infty }+\sup_{0\leq t\leq \infty }e^{\lambda
_{0}t}||wg(t)||_{\infty }\leq \delta ,
\end{equation*}%
with $\lambda _{0}>0,$ then there there exists a unique solution $%
F(t,x,v)=\mu +\sqrt{\mu }f\geq 0$ to the inflow boundary value problem (\ref%
{inflow}) for the Boltzmann equation (\ref{boltzmann}). There exists $%
0<\lambda <\lambda _{0}$ such that 
\begin{equation*}
\sup_{0\leq t\leq \infty }e^{\lambda t}||wf(t)||_{\infty }\leq
C\{||wf_{0}||_{\infty }+\sup_{0\leq t\leq \infty }e^{\lambda
_{0}t}||wg(s)||_{\infty }\}.
\end{equation*}%
Moreover, if $\Omega $ is strictly convex (\ref{convexity}), and if $%
f_{0}(x,v)$ is continuous except on $\gamma _{0},$ and $g(t,x,v)$ is
continuous in $[0,\infty )\times \{\partial \Omega \times \mathbf{R}%
^{3}\setminus \gamma _{0}\}$ satisfying 
\begin{equation*}
f_{0}(x,v)=g(x,v)\text{ }\ \text{on }\gamma _{-},
\end{equation*}%
then $f(t,x,v)$ is continuous in $[0,\infty )\times \{\bar{\Omega}\times 
\mathbf{R}^{3}\setminus \gamma _{0}\}.$
\end{theorem}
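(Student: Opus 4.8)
The plan is to treat the inflow problem as the most tractable of the four boundary conditions, since the trace on $\gamma_-$ is prescribed rather than coupled to the outgoing data, so no analysis of multiply-reflected cycles is needed: every backward characteristic either exits through $\gamma_-$ in finite time $t_{\mathbf b}(x,v)$ or survives to time $0$. The strategy is the standard $L^2$--$L^\infty$ bootstrap. First I would set up the integral (mild/Duhamel) formulation along characteristics: writing $h=wf$, the equation $\{\partial_t+v\cdot\nabla+\nu\}f = Kf+\Gamma(f,f)$ integrated along $[X(s),V(s)]=[x-(t-s)v,v]$ gives, for $t\le t_{\mathbf b}$, a representation of $h(t,x,v)$ in terms of $h_0$ at the exit point on $\Omega$, and for $t>t_{\mathbf b}$, in terms of the inflow datum $wg$ at $x_{\mathbf b}$, plus a time integral of $w\nu^{-1}K(w^{-1}h)$ and the weighted nonlinear term $w\nu^{-1}\Gamma$, all carrying the exponential damping factor $e^{-\int_s^t\nu\,d\tau}$. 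The weight is chosen (via the hypothesis $w^{-2}\{1+|v|\}^3\in L^1$ and $\theta<1/4$) so that the weighted kernel $k_w(v,u)=w(v)k(v,u)w(u)^{-1}$ of $K$ has a small-tail, integrable bound of Grad type; the nonlinear term is handled by the usual estimate $|w\Gamma(f,f)|\lesssim \nu\,\|wf\|_\infty^2$ up to velocity weights absorbed into $w$.

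The core of the decay argument is the linear one: show $\|wf(t)\|_\infty \le C e^{-\lambda t}(\|wf_0\|_\infty + \sup_s e^{\lambda_0 s}\|wg(s)\|_\infty)$ for the linearized inflow problem. Here I would invoke the new $L^2$ decay theory advertised in the abstract (which must be established earlier in the paper for the linearized operator with inflow boundary data — presumably giving exponential $L^2$ decay of $f$, using the coercivity of $L$ on $\ker L^\perp$, the conservation laws playing no role for inflow, and a positivity/trace control at the boundary). Then I would iterate the $L^\infty$ Duhamel formula \emph{twice}: one substitution of the representation into itself produces a double integral in which the crucial gain comes from the fact that the kernel composition $k_w(v,u)k_w(u,u')$, after restricting $|v|\le N$ and truncating $|v-u|\ge 1/N$, $|u-u'|\ge 1/N$, has a bounded $L^2_{u,u'}$ norm on a compact $v$-region, so Cauchy--Schwarz converts that piece into the $L^2$ norm of $f$ on a time-slab, which decays by the $L^2$ theory; the leftover pieces (large velocities, near-grazing $v\approx u$) are absorbed by a small constant times $\sup\|wf\|_\infty$ thanks to the $\nu$-damping and the smallness of the kernel tails. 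Choosing $N$ large and the time-slab length appropriately closes a Gronwall-type inequality and yields the exponential rate $\lambda<\min(\lambda_0,\text{spectral gap})$. Local existence and uniqueness for the nonlinear problem then follow from the same $L^\infty$ estimate by a contraction mapping argument on a short time interval, and the global solution with decay is obtained by a continuity/bootstrap argument using the smallness of $\delta$.

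For the continuity statement I would first establish continuity of $t_{\mathbf b}(x,v)$ and $x_{\mathbf b}(x,v)$ away from $\gamma_0$: strict convexity (\ref{convexity}) is exactly what guarantees that the backward trajectory hits $\partial\Omega$ transversally whenever $v\cdot n(x_{\mathbf b})\ne 0$, so the implicit function theorem applied to $\xi(x-tv)=0$ gives $t_{\mathbf b}\in C^0$ (indeed $C^1$) on $\{\bar\Omega\times\mathbf R^3\}\setminus\gamma_0$, and the only place continuity can fail is where the characteristic grazes, i.e. on $\gamma_0$ or on the back-traced image of $\gamma_0$. With that in hand, the Duhamel representation exhibits $f(t,x,v)$ as a composition of continuous maps — the datum $f_0$ or $g$ evaluated at a continuously-varying base point, the exponential factors, and the iterated $K$ and $\Gamma$ integrals — provided the compatibility condition $f_0=g$ on $\gamma_-$ removes the jump across the surface $\{t=t_{\mathbf b}\}$. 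The integral terms $\int K$ and $\int\Gamma$ are continuous by dominated convergence once one knows the integrand is bounded and the kernels depend continuously on parameters; one has to check that the $v$-integration smooths out the potential discontinuity coming from the set where $u$ produces a grazing characteristic, which has measure zero.

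The main obstacle, as always in this program, is the double-iteration $L^2\to L^\infty$ estimate: making the kernel-composition $L^2$ bound uniform while simultaneously tracking the boundary terms (the inflow datum contributes at the shifted time $t-t_{\mathbf b}$, and one must verify $e^{\lambda_0(t-t_{\mathbf b})}\|wg(t-t_{\mathbf b})\|_\infty$ is controlled since $t_{\mathbf b}$ can be small) and ensuring no characteristic spends an unbounded amount of time grazing. For the inflow case this is genuinely simpler than bounce-back/specular/diffuse because there is no bouncing, so I expect the real difficulty to be purely the kernel/$L^2$ bookkeeping rather than the geometry of cycles; the geometric subtlety resurfaces only in the continuity claim, where controlling the back-traced image of $\gamma_0$ inside a convex domain is the delicate point.
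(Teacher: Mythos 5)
Your overall architecture coincides with the paper's: the linear inflow decay is obtained exactly as you describe (double Duhamel iteration, truncation and approximation of the kernel, the change of variable $y=x-(t-s_1)v-(s_1-s)v'$, Cauchy--Schwarz to pass to the $L^2$ norm of $f$, which decays by the $L^2$ theory of Section 3), and the nonlinear problem is then solved by iterating in the weighted $L^\infty$ decay norm with the bilinear estimate of Lemma \ref{collision}, continuity being propagated through the iterates using strict convexity and the compatibility condition $f_0=g$ on $\gamma_-$. Two technical wrinkles you wave at ("velocity weights absorbed into $w$", "controlling the back-traced image of $\gamma_0$") are real and are handled in the paper by specific devices: the loss of a factor $\nu(v)$ in $w\Gamma(\frac{h}{w},\frac{h}{w})$ is harmless in the $G_0$ term but in the term $\int G_0K_wU_0\,w\Gamma$ it is removed by passing to the modified semigroup $\tilde U$ with weight $w/\sqrt{1+\rho|v|^2}$ as in (\ref{utilde})--(\ref{equalutilde}); and the continuity argument needs the Velocity Lemma \ref{velocity}, not just the implicit function theorem, because the IFT requires $v\cdot n(x_{\mathbf b})\neq0$ at the \emph{exit} point, and it is strict convexity via Lemma \ref{velocity} that guarantees this for every $(x,v)\notin\gamma_0$; as you state it, the transversality hypothesis is assumed rather than derived.

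The genuine gap is the positivity claim $F=\mu+\sqrt{\mu}f\geq0$, which is part of the theorem and is nowhere addressed in your proposal. The contraction iteration on the perturbation $h=wf$ (or a local-in-time fixed point plus bootstrap, as you suggest) gives existence, uniqueness and decay, but it does not preserve the sign of $F$, since the linearized operator $-\nu+K_w$ and the full bilinear term $\Gamma$ have no sign structure. The paper proves positivity separately, by a different approximation scheme (\ref{positive}): $\{\partial_t+v\cdot\nabla_x\}F^{m+1}+\nu(F^m)F^{m+1}=Q_{\text{gain}}(F^m,F^m)$ with the nonnegative inflow datum, for which $Q_{\text{gain}}(F^m,F^m)\geq0$ when $F^m\geq0$, and the integrating factor $I(t,s)$ along characteristics shows $F^{m+1}\geq0$ inductively; this scheme converges locally in time in the same weighted $L^\infty$ class, so by the uniqueness you have already established its limit coincides with the solution constructed by your iteration, and positivity propagates to all times by repeating the argument on $[kT_0,(k+1)T_0]$ using the uniform bound on $\sup_t\|h(t)\|_\infty$. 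Without some such sign-preserving approximation (or a maximum-principle argument for $F$ itself), the statement $F\geq0$ does not follow from your construction.
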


\begin{theorem}
\label{bouncebacknl}Assume $w^{-2}\{1+|v|\}^{3}\in L^{1}$ in (\ref{weight}).
Assume the conservation of mass (\ref{mass}) and energy (\ref{energy}) are
valid for $f_{0}$. Then there exists $\delta >0$ such that if $%
F_{0}(x,v)=\mu +\sqrt{\mu }f_{0}(x,v)\geq 0$ and $||wf_{0}||_{\infty }\leq
\delta ,$ there exists a unique solution $F(t,x,v)=\mu +\sqrt{\mu }%
f(t,x,v)\geq 0$ to the bounce-back boundary value problem (\ref{bounceback})
for the Boltzmann equation (\ref{boltzmann}) such that 
\begin{equation*}
\sup_{0\leq t\leq \infty }e^{\lambda t}||wf(t)||_{\infty }\leq
C||wf_{0}||_{\infty }.
\end{equation*}%
Moreover, if $\Omega $ is strictly convex (\ref{convexity}), and if
initially $f_{0}(x,v)$ is continuous except on $\gamma _{0}$ and 
\begin{equation*}
f_{0}(x,v)=f_{0}(x,-v)\text{ on }\partial \Omega \times \mathbf{R}%
^{3}\setminus \gamma _{0},
\end{equation*}%
then $f(t,x,v)$ is continuous in $[0,\infty )\times \{\bar{\Omega}\times 
\mathbf{R}^{3}\setminus \gamma _{0}\}.$
\end{theorem}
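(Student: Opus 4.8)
\noindent The plan is to establish this via an $L^{2}$--$L^{\infty }$ bootstrap adapted to the bounce-back characteristics, carried out first for the linearized equation $\{\partial _{t}+v\cdot \nabla +L\}f=0$ and then for the full nonlinear problem. For the linear problem with the bounce-back condition (\ref{bounceback}), the basic energy identity reads $\tfrac{1}{2}\tfrac{d}{dt}\Vert f(t)\Vert _{2}^{2}+(Lf,f)=-\tfrac{1}{2}\int_{\partial \Omega }\int_{\mathbf{R}^{3}}(v\cdot n(x))|f(t,x,v)|^{2}dvdS_{x}$, and the boundary integral \emph{vanishes}: $v\cdot n(x)$ is odd in $v$ while $|f(t,x,v)|^{2}=|f(t,x,-v)|^{2}$ on $\partial \Omega $ by (\ref{bounceback}), so the contributions of $\{v\cdot n>0\}$ and $\{v\cdot n<0\}$ cancel after $v\mapsto -v$. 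Since $(Lf,f)\geq \delta _{0}\Vert (\mathbf{I}-\mathbf{P})f\Vert _{\nu }^{2}$, the microscopic part is controlled; for the hydrodynamic part $\mathbf{P}f=\{a+b\cdot v+c|v|^{2}\}\sqrt{\mu }$ I would use the macroscopic equations obtained by testing against $\sqrt{\mu },\,v_{i}\sqrt{\mu },\,|v|^{2}\sqrt{\mu }$, which close for $(a,b,c)$ modulo $(\mathbf{I}-\mathbf{P})f$. The decisive structural point is that (\ref{bounceback}) forces $b(t,x)\cdot v=-b(t,x)\cdot v$ on $\partial \Omega $, hence $b|_{\partial \Omega }=0$; together with conservation of mass (\ref{mass}) and energy (\ref{energy}), which kill the $a$ and $c$ zero modes, this is exactly what is needed to run a finite-time estimate $\int_{0}^{T}\Vert \mathbf{P}f\Vert ^{2}\leq C_{T}\int_{0}^{T}\Vert (\mathbf{I}-\mathbf{P})f\Vert _{\nu }^{2}$ by a compactness--contradiction argument. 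Combined with the dissipation this yields $\Vert f(t)\Vert _{2}\leq Ce^{-\lambda _{1}t}\Vert f_{0}\Vert _{2}$, and the same bound persists for the nonlinear solution while $\Vert wf\Vert _{\infty }$ stays small.

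\noindent \textbf{From $L^{2}$ to $L^{\infty }$.} Set $h=wf$. For bounce-back the backward trajectory is explicit: the particle oscillates back and forth on the \emph{fixed chord} of $\Omega $ through $x$ in direction $v$, with reflection times read off from $t_{\mathbf{b}}$; since the total elapsed time is $t$, the accumulated free-transport damping along the chord is exactly $e^{-\nu (v)t}$, independent of the number of bounces. Writing the Duhamel representation for $\{\partial _{t}+v\cdot \nabla +\nu \}f=Kf+\Gamma (f,f)$ along this cycle and substituting it once more into the $Kf$ term produces a double-kernel contribution $\iint k_{w}(v,v^{\prime })k_{w}(v^{\prime },v^{\prime \prime })h\,$. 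Splitting off the region $|v^{\prime }|,|v^{\prime \prime }|\leq N$ from its complement (the complement absorbed by the weight, using the hypothesis $w^{-2}\{1+|v|\}^{3}\in L^{1}$ and $\theta <\tfrac{1}{4}$) and using that $k_{w}$ has an integrable exponential tail, the bounded part is estimated, after a change of variables along the trajectory, by Cauchy--Schwarz in the phase variables by $\sup_{0\leq s\leq t}\Vert f(s)\Vert _{2}$, which decays by the previous step; the quadratic term $\Gamma (f,f)$ contributes $O(\delta )\sup_{s}\Vert wf(s)\Vert _{\infty }$ and is absorbed. This gives $\sup_{t\geq 0}e^{\lambda t}\Vert wf(t)\Vert _{\infty }\leq C\Vert wf_{0}\Vert _{\infty }$ for some $0<\lambda <\lambda _{1}$.

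\noindent \textbf{Existence, uniqueness, positivity, continuity.} The a priori bound closes the iteration $\{\partial _{t}+v\cdot \nabla +\nu \}f^{n+1}=Kf^{n}+\Gamma (f^{n},f^{n})$ with bounce-back data, producing for $\delta $ small a unique global solution with the stated decay; $F^{n+1}\geq 0$ propagates since the gain term is nonnegative and the remainder is an exponential multiplier, so $F=\lim F^{n}\geq 0$. For the continuity claim, when $\Omega $ is strictly convex a chord through an interior point meets $\partial \Omega $ transversally at both endpoints, and $(t_{\mathbf{b}},x_{\mathbf{b}})$ is continuous on $\{\bar{\Omega}\times \mathbf{R}^{3}\}\setminus \gamma _{0}$ by the implicit function theorem applied to $\xi (x-\tau v)=0$, since $\nabla \xi \cdot v\neq 0$ at the exit point off $\gamma _{0}$ by (\ref{convexity}). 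One then shows inductively that each $f^{n}$ is continuous on $[0,\infty )\times \{\bar{\Omega}\times \mathbf{R}^{3}\setminus \gamma _{0}\}$: the transport terms inherit continuity from $t_{\mathbf{b}},x_{\mathbf{b}}$; the reflection creates no jump because the compatibility $f_{0}(x,v)=f_{0}(x,-v)$ on $\partial \Omega \setminus \gamma _{0}$ matches incoming and outgoing traces and the relation is propagated in $t$ by (\ref{bounceback}); and $Kf^{n}$, $\Gamma (f^{n},f^{n})$ are continuous in $(x,v)$ by dominated convergence using the kernel/weight bounds. Uniform convergence of $f^{n}$ transfers continuity to $f$.

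\noindent \textbf{Main obstacle.} I expect the crux to be twofold. First, inside the $L^{2}$ step, extracting genuine decay of the hydrodynamic part $\mathbf{P}f$ from the conservation laws together with the boundary relation $b|_{\partial \Omega }=0$: the compactness--contradiction argument must rule out every nontrivial purely macroscopic solution compatible with bounce-back. Second, inside the $L^{\infty }$ step, converting a phase-space $L^{\infty }$ bound on $h=wf$ into a quantity genuinely controlled by the $L^{2}$ decay --- precisely the place where the repeated boundary interactions of the bounce-back cycles, in particular near-grazing chords where $t_{\mathbf{b}}$ is tiny and bounces accumulate (even though the accumulated damping is unaffected), must be handled with care.
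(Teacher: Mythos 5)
Your outline follows the paper's general $L^{2}$--$L^{\infty }$ strategy, but it leaves unproved exactly the step on which the bounce-back case actually turns: the validity of the change of variables $v^{\prime }\mapsto y=X_{\mathbf{cl}}(s;s_{1},X_{\mathbf{cl}}(s_{1};t,x,v),v^{\prime })$ in the double-Duhamel term. For in-flow this map is affine with Jacobian $(s_{1}-s)^{3}$, but once the backward trajectory from $X_{\mathbf{cl}}(s_{1})$ reflects, the bounce times $t_{\mathbf{b}}(X_{\mathbf{cl}}(s_{1}),\pm v^{\prime })$ depend on $v^{\prime }$, and $\det (\partial y/\partial v^{\prime })$ becomes a cubic polynomial in $s$ with $v^{\prime }$-dependent coefficients that genuinely vanishes on a nontrivial set; ``after a change of variables along the trajectory, by Cauchy--Schwarz'' is precisely the assertion that needs proof. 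The paper supplies the missing mechanism: (i) the set $S_{x}$ of directions grazing at the exit point has measure zero (Lemma \ref{s=0}, via Sard's theorem), (ii) a finite covering makes the non-grazing lower bound $|v^{\prime }\cdot n(x_{\mathbf{b}})|\geq \delta _{\varepsilon ,N}$ uniform in $x$ (Lemma \ref{bouncebacklower}), (iii) on the remaining set the number of bounces is bounded, (\ref{bouncebackclaim}), after discarding the small-measure regions $|v^{\prime }\cdot n(X_{\mathbf{cl}}(s_{1}))|\leq \varepsilon $ and $X_{\mathbf{cl}}(s_{1})$ too close to $\partial \Omega $, and (iv) one excises small $s$-intervals around the at most three roots $\eta _{j}(v^{\prime })$ of the cubic determinant before inverting the map locally. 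You correctly name this region as the ``main obstacle,'' but identifying the obstacle is not the same as overcoming it; without (i)--(iv) the $L^{\infty }$-to-$L^{2}$ conversion does not close.

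There is a second, smaller gap in your $L^{2}$ step. You treat the boundary term in the energy identity and the relation $b|_{\partial \Omega }=0$ as routine, but for an $L^{2}$ solution the trace on $\gamma $ is not a priori defined (the paper must assume $f_{\gamma }\in L_{\text{loc}}^{2}(L^{2}(\gamma ))$ and later produces it through the $L^{\infty }$ theory), and the trace of $b$ is exactly what the paper says cannot be used directly at the $L^{2}$ level. More importantly, in the compactness--contradiction argument the hard point is not ruling out nontrivial macroscopic limits (once $Z$ has the polynomial form of Lemma \ref{limit}, that is an algebraic computation using (\ref{mass}), (\ref{energy}) and the bounce-back relation), but proving that $Z_{k}\rightarrow Z$ strongly \emph{up to the boundary}: one must exclude concentration near $\partial \Omega $, which the paper does by the near-boundary estimate with the non-grazing indicators $\tilde{\chi}_{\pm }$, Ukai's trace theorem, and the observation that $\mathbf{P}Z_{k}$ cannot concentrate on the almost-grazing set (Lemmas \ref{nomove}--\ref{strong}). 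Your proposal is silent on this, so as written the contradiction argument does not yield the finite-time estimate you invoke. The rest (iteration, positivity via the gain-term scheme, continuity from Lemma \ref{bouncebackcon}-type arguments plus uniform convergence, with the compatibility condition handling trajectories ending exactly on $\partial \Omega $) is consistent with the paper's treatment.
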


\begin{theorem}
\label{specularnl}Assume $w^{-2}\{1+|v|\}^{3}\in L^{1}$ in (\ref{weight}).
Assume that $\xi $ is both strictly convex (\ref{convexity}) and analytic,
and the mass (\ref{mass}) and energy (\ref{energy}) are conserved for $f_{0}$%
. In the case of $\Omega $ has any rotational symmetry (\ref{axis}), we
require that the corresponding angular momentum (\ref{axiscon}) is conserved
for $f_{0}$. Then there exists $\delta >0$ such that if $F_{0}(x,v)=\mu +%
\sqrt{\mu }f_{0}(x,v)\geq 0$ and $||wf_{0}||_{\infty }\leq \delta ,$ there
exists a unique solution $F(t,x,v)=\mu +\sqrt{\mu }f(t,x,v)\geq 0$ to the
specular boundary value problem (\ref{specular}) for the Boltzmann equation (%
\ref{boltzmann}) such that 
\begin{equation*}
\sup_{0\leq t\leq \infty }e^{\lambda t}||wf(t)||_{\infty }\leq
C||wf_{0}||_{\infty }.
\end{equation*}%
Moreover, if $f_{0}(x,v)$ is continuous except on $\gamma _{0}$ and 
\begin{equation*}
f_{0}(x,v)=f_{0}(x,R(x)v)\text{ on }\partial \Omega
\end{equation*}%
then $f(t,x,v)$ is continuous in $[0,\infty )\times \{\bar{\Omega}\times 
\mathbf{R}^{3}\setminus \gamma _{0}\}.$
\end{theorem}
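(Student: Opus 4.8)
The plan is to run the $L^{2}$--$L^{\infty }$ scheme, now adapted to the \emph{generalized specular characteristics} (the trajectories \eqref{ode} reflected by $v\mapsto R(x)v$ at each boundary hit), in four stages: a linear $L^{2}$ decay estimate, a geometric analysis of the specular cycles, a linear $L^{\infty }$ decay estimate by a double Duhamel iteration, and finally the nonlinear existence/uniqueness together with the continuity statement.

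First I would prove exponential decay in $L^{2}$ for the linearized problem $\{\partial _{t}+v\cdot \nabla +L\}f=0$ with the specular boundary condition \eqref{specular}. Because $\langle Lf,f\rangle \gtrsim \|(\mathbf{I}-\mathbf{P})f\|_{\nu }^{2}$, with $\mathbf{P}$ the projection onto $\mathrm{span}\{\sqrt{\mu },v_{i}\sqrt{\mu },|v|^{2}\sqrt{\mu }\}$, and the specular condition makes the boundary flux $\int_{\partial \Omega }\int (v\cdot n)|f|^{2}$ vanish, the one missing ingredient is a \emph{macroscopic} coercivity estimate of the form $\int_{0}^{T}\|\mathbf{P}f\|^{2}\lesssim \int_{0}^{T}\|(\mathbf{I}-\mathbf{P})f\|^{2}$ up to lower order terms. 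I would obtain it by contradiction: a normalized sequence violating it converges weakly to a nonzero $f_{\infty }=\{a+b\cdot v+c|v|^{2}\}\sqrt{\mu }$ solving the free transport equation with specular reflection and satisfying the conservation laws \eqref{mass}, \eqref{energy} (and \eqref{axiscon} when $\Omega $ is rotationally symmetric). The point — and where strict convexity \eqref{convexity} and analyticity are used — is the rigidity: the only steady specular transport solutions are $\{a+c|v|^{2}\}\sqrt{\mu }$ for general $\Omega $, plus the angular momentum mode $\{(x-x_{0})\times \varpi \}\cdot v\,\sqrt{\mu }$ exactly when \eqref{axis} holds, so the assumed conservation laws force $f_{\infty }\equiv 0$, a contradiction; the macroscopic estimate then gives the $L^{2}$ decay by a standard energy argument.

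The geometric heart of the specular case is a non-degeneracy lemma for the backward cycles. Given $(x,v)$ I would build $X_{\mathrm{cl}}(s),V_{\mathrm{cl}}(s)$ by reflecting at each hit of $\partial \Omega $, recording the bounce times $t_{1}>t_{2}>\cdots $, and prove that in a strictly convex analytic domain the cycle is non-degenerate: outside a set of $(x,v)$ of small measure (the nearly grazing ones, close to $\gamma _{0}$), only finitely many bounces occur on any interval $[0,T]$, the maps $(x,v)\mapsto (t_{\mathbf{b}},x_{\mathbf{b}},R(x_{\mathbf{b}})v)$ through each bounce are continuous (Lipschitz on compacta away from $\gamma _{0}$), and $|v\cdot n(x_{\mathbf{b}})|$ is bounded below for non-grazing data. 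Strict convexity gives transversality of each exit; analyticity is precisely what excludes an infinite accumulation of bounces in finite time. I expect this to be the main obstacle: controlling the repeated boundary interactions, and quantitatively showing that the measure of trajectories experiencing many bounces or approaching $\gamma _{0}$ within a fixed time is small — that smallness is the resource spent in the $L^{\infty }$ estimate.

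With these in hand I would close the linear $L^{\infty }$ bound as in the in-flow case \ref{inflownl}, but integrated along the specular cycles. Writing $h=wf$, the representation along the backward cycle has a boundary/initial term (small, since the data is small and, off the bad set, the cycle reaches time $0$ after controlled bounces) plus $\int k_{w}(v,v')\,h(s,X_{\mathrm{cl}},v')$. Iterating twice produces a double kernel $\int\!\int k_{w}(v,v')k_{w}(v',v'')$; splitting $|v|,|v'|$ into a large range (absorbed by $\nu $) and a bounded one, and changing variables along the cycles via the geometric lemma, bounds this by $\varepsilon \sup|h|+C_{\varepsilon }\iint_{\mathrm{bdd}}|f|^{2}$, and the last term decays by the $L^{2}$ theory; a bootstrap/Gronwall argument then yields $\sup_{t}e^{\lambda t}\|wf(t)\|_{\infty }\lesssim \|wf_{0}\|_{\infty }$. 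The nonlinear theorem follows by treating $\Gamma (f,f)$ as a forcing term in this linear estimate and running a contraction/continuation argument for small $\delta $, with $F=\mu +\sqrt{\mu }f\geq 0$ preserved through an approximating scheme; uniqueness is a direct energy estimate on the difference. Finally, continuity on $[0,\infty )\times \{\bar{\Omega}\times \mathbf{R}^{3}\setminus \gamma _{0}\}$ is obtained by propagating continuity along the cycles, using the continuity of $(t_{\mathbf{b}},x_{\mathbf{b}})$ and of the reflection $v\mapsto R(x)v$ off $\gamma _{0}$ from the geometric lemma, together with the fact that $Kf$ is continuous whenever $f\in L^{\infty }$, inducting on the number of bounces.
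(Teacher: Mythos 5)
Your Stage 1 ($L^{2}$ decay by contradiction and rigidity of the limiting hydrodynamic mode) and Stage 4 (nonlinear iteration, positivity, continuity) follow the paper's route. The genuine gap is in your geometric Stage 2--3, and it is exactly the point the specular case turns on: the validity of the change of variables $v^{\prime }\mapsto y=X_{\mathbf{cl}}(s;s_{1},X_{\mathbf{cl}}(s_{1}),v^{\prime })$ in the double Duhamel term, i.e.\ the non-vanishing of $\det \{\partial X_{\mathbf{cl}}^{\prime }/\partial v^{\prime }\}$ as in (\ref{detnotzero}). Your geometric lemma only records transversality of each bounce, a lower bound on $|v\cdot n(x_{\mathbf{b}})|$ off the grazing set, finiteness of bounces, and smallness of the nearly grazing set; none of that implies the Jacobian is nonzero. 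Indeed it is false that non-grazing cycles are automatically non-degenerate: already after one reflection, $\det \{\partial v_{2}/\partial v_{1}\}\backsim 3$ for nearly tangential $v_{1}$ but $\backsim -1$ for $v_{1}$ parallel to $n(x)$, so by continuity the determinant vanishes at some perfectly transversal configurations. The paper's resolution is Proposition \ref{jacobian} (an asymptotic, inductive computation of $\det (\partial v_{k}/\partial v_{1})=\zeta (k)+1+O(\varepsilon _{0})\neq 0$ along special cycles almost tangential to the boundary, undergoing many small bounces), which shows the analytic function $J_{k,k^{\prime }}(t,x,v,s_{1},s,v^{\prime })$ is not identically zero; then analyticity of $\xi $ plus Lemma \ref{analytic} (zero sets of nontrivial analytic functions have measure zero) give Lemma \ref{specularlower}, so the change of variable is valid outside an arbitrarily small set. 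Your proposal contains no substitute for this step, and the $L^{\infty }$ bound cannot be closed without it.

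Relatedly, you misassign the role of the hypotheses: you claim analyticity is what excludes accumulation of bounces in finite time, but in the paper that is a consequence of strict convexity alone, via the Velocity Lemma \ref{velocity} (near-invariance of $\alpha $ gives $\{v_{k}\cdot n(x_{k})\}^{2}\geq C\alpha (t)>0$, hence a uniform lower bound (\ref{tlower}) on the time between bounces), together with the quantitative count (\ref{kk'bound}) on the set $A_{\alpha }$. Analyticity enters only through Lemma \ref{analytic} to control the zero set of the Jacobian. Until your outline contains an argument of the type of Proposition \ref{jacobian} (or some other proof that the degeneracy set of the cycle map is null), the central estimate of Theorem \ref{specularrate}, and hence Theorem \ref{specularnl}, is not established.
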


\begin{theorem}
\label{diffusenl}Assume (\ref{diffusenormal}). There is $\theta _{0}(\nu
_{0})>0$ such that 
\begin{equation}
\text{ \ \ \ \ }\theta _{0}(\nu _{0})<\theta <\frac{1}{4}\text{ },\ \text{\
\ \ and \ \ }\rho \text{ }\ \ \text{is}\ \text{sufficiently small}
\label{wdiffuse}
\end{equation}%
for weight function $w$ in (\ref{weight}). Assume the mass conservation (\ref%
{mass}) is valid for $f_{0}$. If $F_{0}(x,v)=\mu +\sqrt{\mu }f_{0}(x,v)\geq
0 $ and $||wf_{0}||_{\infty }\leq \delta $ sufficiently small$,$ then there
exists a unique solution $F(t,x,v)=\mu +\sqrt{\mu }f(t,x,v)\geq 0$ to the
diffuse boundary value problem (\ref{diffuse}) for the Boltzmann equation (%
\ref{boltzmann}) such that 
\begin{equation*}
\sup_{0\leq t\leq \infty }e^{\lambda t}||wf(t)||_{\infty }\leq
C||wf_{0}||_{\infty }.
\end{equation*}%
Moreover, if $\xi $ is strictly convex, and if $f_{0}(x,v)$ is continuous
except on $\gamma _{0}$ with 
\begin{equation*}
f_{0}(x,v)|_{\gamma _{-}}=c_{\mu }\sqrt{\mu }\int_{\{n_{x}\cdot v^{\prime
}>0\}}f_{0}(x,v^{\prime })\sqrt{\mu (v^{\prime })}\{n(x)\cdot v^{\prime
}\}dv^{\prime }\text{ }
\end{equation*}%
then $f(t,x,v)$ is continuous in $[0,\infty )\times \{\bar{\Omega}\times 
\mathbf{R}^{3}\setminus \gamma _{0}\}.$
\end{theorem}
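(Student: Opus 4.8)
The plan is to prove all four theorems by a single linear $L^{2}$--$L^{\infty }$ scheme and to regard Theorem \ref{diffusenl} as the representative case, since diffuse reflection couples all velocities at every boundary hit and is the most delicate. I would first isolate the linearized problem $\{\partial _{t}+v\cdot \nabla +L\}f=0$ with the diffuse boundary condition (\ref{diffuse}) and prove exponential decay of $\Vert f(t)\Vert _{2}$; then upgrade this, via the mild formulation along characteristics, to exponential decay of $\Vert wf(t)\Vert _{\infty }$ for the inhomogeneous linear equation; then close the nonlinear equation by a contraction for small $\Vert wf_{0}\Vert _{\infty }$, propagating $F=\mu +\sqrt{\mu }f\geq 0$; and finally read continuity off $\gamma _{0}$ from the Duhamel representation in the strictly convex case.

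\emph{Step 1: $L^{2}$ decay for the linear equation.} Start from the energy identity $\tfrac{1}{2}\tfrac{d}{dt}\Vert f(t)\Vert _{2}^{2}+(Lf,f)+\tfrac{1}{2}\int_{\gamma }(n\cdot v)|f|^{2}=0$. Under (\ref{diffuse}), a Cauchy--Schwarz/Jensen estimate against the probability measure $c_{\mu }\mu (v')|n(x)\cdot v'|dv'$ shows the boundary term is $\geq 0$, so, with the standard coercivity $(Lf,f)\geq \delta _{0}\Vert (\mathbf{I}-\mathbf{P})f\Vert _{\nu }^{2}$, the microscopic part is dissipated. The genuinely new ingredient is a macroscopic estimate $\int_{0}^{T}\Vert \mathbf{P}f(s)\Vert _{2}^{2}ds\lesssim \Vert f_{0}\Vert _{2}^{2}+\int_{0}^{T}\Vert (\mathbf{I}-\mathbf{P})f(s)\Vert _{\nu }^{2}ds$, where $\mathbf{P}$ is the projection onto $\mathrm{span}\{\sqrt{\mu },v_{i}\sqrt{\mu },|v|^{2}\sqrt{\mu }\}$ (for bounce-back/specular a few extra terms appear, removed by the conservation laws (\ref{mass})--(\ref{axiscon})). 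I would prove it by contradiction: a normalized sequence violating it would, after passing to a weak limit, yield a nonzero solution of the transport equation $\{\partial _{t}+v\cdot \nabla \}(\mathbf{P}f)=0$ whose hydrodynamic fields $(\rho ,u,\theta )(t,x)$ satisfy an over-determined first-order system; the diffuse boundary condition forces $u\equiv 0$ and then the rest to vanish, a contradiction. Combining the two estimates gives $\Vert f(t)\Vert _{2}^{2}+\int_{0}^{t}\Vert f(s)\Vert _{\nu }^{2}ds\lesssim \Vert f_{0}\Vert _{2}^{2}$, hence $\int_{0}^{\infty }\Vert f\Vert _{\nu }^{2}<\infty $, and a time-averaging/interpolation argument upgrades this to $\Vert f(t)\Vert _{2}\leq Ce^{-\lambda t}\Vert f_{0}\Vert _{2}$.

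\emph{Step 2: $L^{\infty }$ decay via stochastic cycles.} Put $h=wf$, so $\{\partial _{t}+v\cdot \nabla +\nu \}h=K_{w}h$ with $K_{w}h=wK(h/w)$, whose kernel $k_{w}(v,v')$ is integrable with small tail in $|v'|$. Integrating along backward characteristics and using (\ref{diffuse}), each boundary crossing spawns an integral over outgoing velocities, so iteration produces a branching "stochastic cycle" $(t_{1},x_{1},v_{1}),(t_{2},x_{2},v_{2}),\dots $. Two mechanisms tame it: (i) the reflection coefficient $\sup_{x}c_{\mu }w(v)\sqrt{\mu (v)}\int_{n(x)\cdot v'>0}\tfrac{1}{w(v')}\sqrt{\mu (v')}|n(x)\cdot v'|dv'$ is made $<1$ precisely by the weight hypothesis (\ref{wdiffuse}) ($\theta >\theta _{0}$, $\rho $ small), so each bounce contracts; and (ii) in a strictly convex domain a measure lemma bounds the set of cycles that undergo $k$ bounces within a fixed time by $(1/2)^{k}$, so many-bounce trajectories are negligible for large $k$. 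On the finitely-many-bounce remainder I would iterate Duhamel twice so that the double kernel $\int k_{w}(v,v')k_{w}(v',v'')(\cdots )$ appears; splitting $|v'|\geq N$ (small tail) from $|v'|\leq N$ (bounded kernel) turns the inner integral into an $L^{2}_{x,v}$ norm of $f$, giving $\Vert wf(t)\Vert _{\infty }\lesssim \varepsilon \sup_{s\leq t}\Vert wf(s)\Vert _{\infty }+C_{N}\int_{0}^{t}\Vert f(s)\Vert _{2}ds+\dots $. Feeding in the exponential $L^{2}$ decay of Step 1 and absorbing the $\varepsilon $-term yields $\sup_{t}e^{\lambda t}\Vert wf(t)\Vert _{\infty }\lesssim \Vert wf_{0}\Vert _{\infty }$.

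\emph{Step 3: nonlinear problem, continuity, and the main obstacle.} With the decaying linear solution operator and the bilinear bound $\Vert \nu ^{-1}w\Gamma (f_{1},f_{2})\Vert _{\infty }\lesssim \Vert wf_{1}\Vert _{\infty }\Vert wf_{2}\Vert _{\infty }$, I would iterate $\{\partial _{t}+v\cdot \nabla +L\}f^{n+1}=\Gamma (f^{n},f^{n})$ with (\ref{diffuse}); for $\Vert wf_{0}\Vert _{\infty }\leq \delta $ small this is a contraction in the norm $\sup_{t}e^{\lambda t}\Vert wf(t)\Vert _{\infty }$, giving a unique global solution with the stated decay, and $F^{n}\geq 0$ is preserved since $Q_{\text{gain}}\geq 0$ and diffuse reflection maps nonnegative data to nonnegative data. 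For continuity when $\Omega $ is strictly convex, the key facts are that $t_{\mathbf{b}}(x,v)$ and $x_{\mathbf{b}}(x,v)$ are continuous on $\{\bar{\Omega}\times \mathbf{R}^{3}\}\setminus \gamma _{0}$ there (characteristics meet $\partial \Omega $ transversally by (\ref{convexity})), and the diffuse boundary operator is an average against $\sqrt{\mu (v')}|n(x)\cdot v'|dv'$, hence smoothing in the incoming velocity and blind to $\gamma _{0}$ up to a null set; thus each Duhamel iterate is continuous on $[0,\infty )\times (\{\bar{\Omega}\times \mathbf{R}^{3}\}\setminus \gamma _{0})$, and the velocity weight makes the convergence uniform, so $f$ is too. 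I expect the main obstacles to be the macroscopic $L^{2}$ estimate of Step 1 in the bounded geometry with diffuse data (no Fourier transform, no direct Poincar\'e inequality) and, in tandem, the many-bounce measure estimate of Step 2, where strict convexity — and, for the specular case, analyticity of $\xi $ — is what prevents characteristics from accumulating on $\gamma _{0}$.
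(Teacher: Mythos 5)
Your architecture mirrors the paper's ($L^{2}$ decay by a contradiction argument, $L^{\infty }$ decay via Duhamel iteration along diffuse cycles with a finite-bounce truncation, then a nonlinear contraction), but two steps as you state them contain genuine gaps. First, in Step 1 the entire difficulty of the $L^{2}$ theory is hidden in the phrase ``after passing to a weak limit, yield a nonzero solution of the transport equation.'' A weak limit of the normalized sequence could perfectly well be zero: the normalization $\int_{0}^{1}\Vert \mathbf{P}Z_{k}\Vert _{\nu }^{2}=1$ only survives the limit if one rules out concentration in time, at large velocities, and above all at the boundary $\partial \Omega $. The paper spends the bulk of Section 3 on exactly this: velocity averaging gives interior compactness; near the boundary the non-grazing part of $Z_{k}$ is controlled by transporting it to an inner shell $\{\xi =-\varepsilon ^{4}\}$ and invoking a trace estimate, while on the almost-grazing set the fact that $Z_{k}\thicksim \mathbf{P}Z_{k}$ (a fixed finite-dimensional velocity profile $\{a+b\cdot v+c|v|^{2}\}\sqrt{\mu }$) forbids concentration on $\{|n\cdot v|\leq \varepsilon \}$. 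You flag this as ``the main obstacle'' but supply no mechanism to overcome it, so your contradiction argument does not close. (Your over-determined system for the limit and the rigidity forced by the diffuse boundary condition do match Lemma \ref{limit} and Section 3.6 once a nonzero limit is in hand.)

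Second, your key mechanism (i) in Step 2 is wrong: for any weight with $\theta <\frac{1}{4}$ the diffuse boundary operator acting on $h=wf$ has $L^{\infty }$ norm strictly \emph{greater} than one, since $c_{\mu }w(v)\sqrt{\mu (v)}\int_{n\cdot v^{\prime }>0}\frac{\sqrt{\mu (v^{\prime })}}{w(v^{\prime })}|n\cdot v^{\prime }|dv^{\prime }$ involves the Gaussian $e^{-(\frac{1}{4}+\theta )|v^{\prime }|^{2}}$, which is strictly wider than $\mu $; the norm equals one only for the forbidden weight $\mu ^{-1/2}$, for which $K_{w}$ is no longer controlled by Lemma \ref{kernel}. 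So there is no per-bounce contraction and no maximum principle — this is precisely the ``serious new difficulty'' the paper emphasizes. The actual role of hypothesis (\ref{wdiffuse}) is the opposite of what you claim: one first fixes $k_{0}(\varepsilon )$ by Lemma \ref{small} (which, incidentally, needs no convexity, only (\ref{tlower})) so that cycles not reaching $t=0$ after $k_{0}$ bounces have measure $\leq \varepsilon $, and then chooses $\theta _{0}(\nu _{0})$ so close to $\frac{1}{4}$ that the accumulated growth $\prod_{l\leq k_{0}}\int \tilde{w}^{2}d\sigma \thicksim (16\theta ^{2})^{-k_{0}}$ over those finitely many bounces is beaten by the collision-frequency decay $e^{\nu _{0}/2}$ over a unit time interval; exponential decay of $G(t)$ is then assembled by iterating the unit-time estimate. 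Without this quantitative interplay your Step 2 does not control even the finite-bounce part of $G(t)$. A smaller point in Step 3: the linearized iteration $\{\partial _{t}+v\cdot \nabla +L\}f^{n+1}=\Gamma (f^{n},f^{n})$ does not preserve $F^{n}\geq 0$ (the operator $K$ has no sign); the paper obtains positivity from a separate iteration of the form (\ref{positive}) built on $Q_{\text{gain}}\geq 0$ and then identifies the two solutions by uniqueness, and your continuity argument likewise needs the finite-bounce representation with the Lemma \ref{small} remainder, since no exact formula reaching the initial plane exists for diffuse reflection.
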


\subsection{Velocity Lemma and Analyticity}

In section 2, we first establish some important analytical tools. The first
Velocity Lemma \ref{velocity} plays the most important role in the study of
continuity and the cycles (bouncing generalized trajectories) in the
specular case. It implies that in a strictly convex domain (\ref{convexity}%
), the singular set $\gamma _{0}$ can not be reached via the trajectory $%
\frac{dx}{dt}=v,$ $\frac{dv}{dt}=0$ from interior points inside $\Omega ,$
and hence $\gamma _{0}$ does not really participate or interfere with the
interior dynamics. By Lemma \ref{huang}, no singularity would be created
from $\gamma _{0}$ and it is possible to perform calculus for the back-time
exit time $t_{\mathbf{b}}(x,v).$ This is the foundation for future
regularity study. Moreover, the Velocity Lemma \ref{velocity} also provides
the lower bound away from the singular set $\gamma _{0},$ which leads to the
estimates for repeating bounces in the specular reflection cases. Such a
Velocity Lemma \ref{velocity} was first discovered in [G3-4], in the study
of regularity of Vlasov-Poisson (Maxwell) system with flat geometry. It then
was generalized in [HH] for Vlasov-Poisson system in a ball, and it is the
starting point for the recent final resolution to the Vlasov-Poisson in a
general convex domain [HV] with specular boundary condition.

Lemma \ref{kernel} gives refined estimates for the operator $K_{w}$ with $w$ 
$\backsim $ $\mu ^{-1/2}.\,$Similar estimates were established in [SG].
Lemma \ref{analytic} states that the zero set of a analytic function is of
measure zero unless such a analytic function is identically zero. This
provides a very convenient tool to verify certain geometric conditions of
general domains for particularly specular reflections.

\subsection{$L^{2}$ Decay Theory}

Since no spatial Fourier transform is available, we first establish linear $%
L^{2}$ exponential decay estimates in Section 3 via a functional analytical
approach. It turns out that it suffices to establish the following
finite-time estimate (Proposition \ref{lower})%
\begin{equation}
\int_{0}^{1}||\mathbf{P}f(s)||_{\nu }^{2}ds\leq M\left\{ \int_{0}^{1}||\{%
\mathbf{I}-\mathbf{P\}}f(s)||_{\nu }^{2}+\text{boundary contributions}%
\right\}  \label{pi-p}
\end{equation}%
for any solution $f$ $\ $to the linear Boltzmann equation 
\begin{equation}
\partial _{t}f+v\cdot \nabla _{x}f+Lf=0,\text{ \ \ \ \ \ }f(0,x,v)=f_{0}(x,v)
\label{lboltzmann}
\end{equation}%
with all four boundary conditions (\ref{inflow}), (\ref{bounceback}), (\ref%
{specular}) and (\ref{diffuse}). Here for any fixed $(t,x),$ the standard
projection $\mathbf{P}$ onto the hydrodynamic part is given by 
\begin{eqnarray}
\mathbf{P}f &=&\{a(t,x)+b(t,x)\cdot v+c(t,x)|v|^{2}\}\sqrt{\mu (v)},
\label{hydro} \\
\mathbf{P}_{a}f &=&a(t,x)\sqrt{\mu (v)},\text{ \ \ }\mathbf{P}_{\mathbf{b}%
}f=b(t,x)v\sqrt{\mu (v)},\text{ \ \ }\mathbf{P}_{c}f=c(t,x)|v|^{2}\sqrt{\mu
(v)},  \notag
\end{eqnarray}%
and $||\cdot ||_{\nu }$ is the weighted $L^{2}$ norm with the collision
frequency $\nu (v).$

Similar types of estimates like (\ref{pi-p}), but with strong Sobolev norms,
have been established in recent years [G1] via so-called the macroscopic
equations for the coefficients $a,b$ and $c.~$\ The key of the analysis was
based on the ellipticity for $b$ which satisfies the Poisson's equation $%
\Delta b=\partial ^{2}\{\mathbf{I}-\mathbf{P\}}f,$ where $\partial ^{2}$ is
some second order differential operators. In the presence of the boundary
condition $b\cdot n(x)=0$ (bounce-back and specular) or $b\equiv 0$ (inflow
and diffuse) at $\partial \Omega $, such an ellipticity is very difficult to
employ for the weak $L^{2}$, instead of $H^{1},$ estimate for $b$ in (\ref%
{pi-p}). This is due to lack of regularity of $b$ in (\ref{pi-p}), even the
trace of $b$ is hard to define. Instead, we employ the hyperbolic
(transport) feature rather than elliptic feature of the problem to prove (%
\ref{pi-p}). $\ $By a method of contradiction, we can find $f_{k}$ such that
if (\ref{pi-p}) is not valid, then the normalized $Z_{k}(t,x,v)\equiv \frac{%
f_{k}(t,x,v)}{\sqrt{\int_{0}^{1}||\mathbf{P}f_{k}(s)||_{\nu }^{2}ds}}$
satisfies $\int_{0}^{1}||\mathbf{P}Z_{k}(s)||_{\nu }^{2}ds\equiv 1,$ and 
\begin{equation}
\int_{0}^{1}||(\mathbf{I}-\mathbf{P)}Z_{k}(s)||_{\nu }^{2}ds\leq \frac{1}{k}.
\label{i-p1/k}
\end{equation}%
Denote a weak limit of $Z_{k}$ to be $Z,$ we expect that $Z=\mathbf{P}Z=0,$
by each of the four boundary conditions. The key is to prove that $%
Z_{k}\rightarrow Z$ strongly to reach a contradiction. By the averaging
Lemma [DL2], we know that $Z_{k}(s)\rightarrow Z$ strongly in the interior
of $\Omega .$ As expected, the most delicate part is to exclude possible
concentration near the boundary $\partial \Omega .$ Since $Z_{k}$ is a
solution to the transport equation, it then follows (Proposition \ref{strong}%
) that near $\partial \Omega ,$ on set of the non-grazing velocity $v\cdot
n(x)\neq 0$ can be reached via a trajectory from the interior of $\Omega ,$
which implies that $Z_{k}$ can be controlled on such non-grazing set with no
concentration$.$ On the other hand, over the remaining almost grazing set $%
v\cdot n(x)\thicksim 0,$ thanks to the fact (\ref{i-p1/k})$,$ we know that 
\begin{equation*}
Z_{k}\thicksim \mathbf{P}Z_{k}=\{a_{k}(t,x)+b_{k}(t,x)\cdot
v+c_{k}(t,x)|v|^{2}\}\sqrt{\mu (v)}.
\end{equation*}%
We observe that such special form of velocity distribution $\mathbf{P}Z_{k}$
can \textit{not} have concentration on the almost grazing set $v\cdot
n(x)\thicksim 0$ (Lemma \ref{nomove}), and we therefore conclude (\ref{pi-p}%
). Clearly, the hyperbolic or the transport property is crucial to control
boundary behaviors via the interior compactness of $Z_{k}$.

\subsection{$L^{\infty }$ Decay Theory}

Section 4 is devoted to the study of linear $L^{\infty }$ decay for all four
different types of boundary conditions: in-flow, bounceback, specular and
diffuse (stochastic) reflection. \ In order to control the nonlinear term $%
\Gamma (f,f),$ we need to estimate the weighted $L^{\infty }$ of $wf.$ We
recall that $L=\nu -K,$ and study the $L^{\infty }$ (pointwise) decay of the
linear Boltzmann equation (\ref{lboltzmann}). We denote a weight function 
\begin{equation}
h(t,x,v)=w(v)f(t,x,v),  \label{h}
\end{equation}%
and study the equivalent linear Boltzmann equation: 
\begin{equation}
\{\partial _{t}+v\cdot \nabla _{x}+\nu -K_{w}\}h=0,\text{ \ \ \ \ \ }%
h(0,x,v)=h_{0}(x,v)\equiv wf_{0},  \label{lboltzmannh}
\end{equation}%
where 
\begin{equation}
K_{w}h=wK(\frac{h}{w}),  \label{kw}
\end{equation}%
together with various boundary conditions (\ref{inflow}), (\ref{bounceback}%
), (\ref{specular}), or (\ref{diffuse}). In bounce-back, specular, diffuse
reflection as well as the inflow case with $g\equiv 0$, we denote the
semigroup $U(t)h_{0}$ to be the solution to (\ref{lboltzmannh}), and the
semigroup $G(t)h_{0}$ to be the solution to the simpler transport equation
without collision $K_{w}:$%
\begin{equation}
\{\partial _{t}+v\cdot \nabla _{x}+\nu \}h=0,\text{ \ \ \ }%
h(0,x,v)=h_{0}(x,v)=wf_{0}\text{.\ \ }  \label{transport}
\end{equation}%
Notice that neither $G(t)$ nor $U(t)$ is a strongly continuous semigroup in $%
L^{\infty }$ [U1]$.$

We first obtain explicit representation of $G(t)$ in the presence of various
boundary conditions. We then can obtain the explicit exponential decay
estimate for $G(t).$ Moreover, we also establish the continuity for $G(t)$
with a forcing term $q$ if $\Omega $ is strictly convex (\ref{convexity})
based on the Velocity Lemma \ref{velocity}. To study the $L^{\infty }$ decay
for $U(t),$ we make use of the Duhamal Principle : 
\begin{equation}
U(t)=G(t)+\int_{0}^{t}G(t-s_{1})K_{w}U(s_{1})ds_{1}.  \label{duhamel}
\end{equation}%
Following the pioneering work of Vidav [Vi], we iterate (\ref{duhamel}) back
to get: 
\begin{equation}
U(t)=G(t)+\int_{0}^{t}G(t-s_{1})K_{w}G(s_{1})ds_{1}+\int_{0}^{t}%
\int_{0}^{s_{1}}G(t-s_{1})K_{w}G(s_{1}-s)K_{w}U(s)dsds_{1}.  \label{duhamel2}
\end{equation}

The idea is to estimate the last double integral in terms of the $L^{2}$
norm of $f=\frac{h}{w}$, which decays exponential by $L^{2}$ decay theory in
Section 2. The key difficulty lies in the presence of different boundary
conditions which could lead to complicated bouncing trajectories. Each of
the boundary condition presents different difficulties.

Section 4.1 is devoted to the study of inflow boundary condition (\ref%
{inflow}), in which the back-time trajectory is either from the initial
plane or from the boundary. Even though when $g\neq 0,$ the solution
operators for (\ref{transport}) and (\ref{lboltzmannh}) are not semigroups,
for any $(t,x,v),$ a similar representation as $%
G(t-s_{1})K_{w}G(s_{1}-s)K_{w}U(s)$ $\ $is still possible. With the compact
property of $K_{w}$ (Lemma \ref{kernel})$,$ we are led to the main
contribution in (\ref{duhamel2}) roughly of the form 
\begin{equation}
\int_{0}^{t}\int_{0}^{s_{1}}\int_{v^{\prime },v^{\prime \prime }\text{bounded%
}}|h(s,X(s;s_{1},X(s_{1};t,x,v),v^{\prime }),v^{\prime \prime })|dv^{\prime
}dv^{\prime \prime }dsds_{1}.  \label{k-k}
\end{equation}%
The $v^{\prime }$ integral is estimated by a change of variable introduced
in [Vi], (see also in [LY]) 
\begin{equation}
y\equiv X(s;s_{1},X(s_{1};t,x,v),v^{\prime
})=x-(t-s_{1})v-(s_{1}-s)v^{\prime }.  \label{ychange}
\end{equation}%
Since $\det (\frac{dy}{dv^{\prime }})\neq 0$ almost always true$,$ the $%
v^{\prime }$ and $v^{\prime \prime }-$integration in (\ref{k-k}) can be
bounded by ($h=wf$) 
\begin{equation*}
\int_{\Omega ,v^{\prime \prime }\text{ bounded}}|h(s,y,v^{\prime \prime
})|dydv^{\prime \prime }\leq C\left( \int_{\Omega ,v^{\prime \prime }\text{
bounded}}|f(s,y,v^{\prime \prime })|^{2}dydv^{\prime \prime }\right) ^{1/2}.
\end{equation*}

For bounce-back, specular or diffuse reflections, the characteristic
trajectories repeatedly interact with the boundary. Instead of $X(s;t,x,v),$
we should use the generalized characteristics, defined as cycles, $X_{%
\mathbf{cl}}(s;t,x,v)$ in (\ref{k-k}) as in Definitions \ref%
{bouncebackcycles}, \ref{specularcycles} and \ref{diffusecycles}. The key
question is wether or not the change of variable 
\begin{equation}
y\equiv X_{\mathbf{cl}}(s;s_{1},X_{\mathbf{cl}}(s_{1};t,x,v),v^{\prime })
\label{ycycle}
\end{equation}%
is valid, i.e., to determine if it is almost always true 
\begin{equation}
\det \left\{ \frac{dX_{\mathbf{cl}}(s;s_{1},X_{\mathbf{cl}%
}(s_{1};t,x,v),v^{\prime })}{dv^{\prime }}\right\} \neq 0.
\label{detnotzero}
\end{equation}

Section 4.2 is devoted to the study of the bounce-back reflection. The
bounce-back cycles $X_{\mathbf{cl}}(s;t,x,v)$ from a given point $(t,x,v)$
is relatively simple, just going back and forth between two fixed points $x_{%
\mathbf{b}}(x,v)$ and $x_{\mathbf{b}}(x_{\mathbf{b}}(x,v),-v).$ Now the
change of variable (\ref{ycycle}) and (\ref{detnotzero}) can be established
by the study of set $S_{x}(v)$ in Section 4.2.2.

Section 4.3 is devoted to the study of specular reflection. The specular
cycles $X_{\mathbf{cl}}(s;t,x,v)$ reflect repeatedly with the boundary in
general, and $\frac{dX_{\mathbf{cl}}(s;s_{1},X_{\mathbf{cl}%
}(s_{1};t,x,v),v^{\prime })}{dv^{\prime }}$ is very complicated to compute
and (\ref{detnotzero}) is extremely difficult to verify, even in a convex
domain. This is in part due to the fact that there is no apparent way to
analyze $\frac{dX_{\mathbf{cl}}(s;s_{1}X_{\mathbf{cl}}(s_{1};t,x,v),v^{%
\prime })}{dv^{\prime }}$ inductively with finite bounces. To overcome such
a difficulty, in Section 4.3.2, $\det \frac{dv_{k}}{dv_{1}}$ can be computed
asymptotically in a delicate iterative fashion for special cycles almost
tangential to the boundary, which undergo many small bounces near the
boundary. It then follows that $\det \{\frac{dX_{\mathbf{cl}}(s;s_{1},X_{%
\mathbf{cl}}(s_{1};t,x,v),v^{\prime })}{dv^{\prime }}\}\neq 0$ for these
special cycles. This crucial observation is then combined with analyticity
of $\xi $ and Lemma \ref{analytic} to conclude that the set of $\det \{\frac{%
dX_{\mathbf{cl}}(s;X_{\mathbf{cl}}(s_{1},x,v),v^{\prime })}{dv^{\prime }}%
\}=0 $ is arbitrarily small (Lemma \ref{specularlower}), and the change of
variable (\ref{ycycle}) is almost always valid. The analyticity plays an
important role in our proof.

Section 4.4 is devoted to the study of diffuse reflection. The diffuse
cycles $X_{\mathbf{cl}}(s;t,x,v)$ contain more and more independent
variables and (\ref{k-k}) involves their integrations. Similar change of
variable (\ref{ychange}) is expected with respect to one of such independent
variables. However, the main difficulty in this case is the $L^{\infty }$
control of $G(t)$ which satisfies (\ref{transport}). The most natural $%
L^{\infty }$ estimate for $G(t)$ is for the weight $w=\mu ^{-\frac{1}{2}},$
in which the diffuse boundary condition takes the form 
\begin{equation*}
h(t,x,v)=c_{\mu }\int_{v^{\prime }\cdot n(x)>0}h(t,x,v^{\prime })\mu
(v^{\prime })\{v^{\prime }\cdot n(x)\}dv^{\prime }
\end{equation*}%
with $c_{\mu }\int_{v^{\prime }\cdot n(x)>0}\mu (v^{\prime })\{v^{\prime
}\cdot n(x)\}dv^{\prime }=1.$ But such a natural (strong) weight $\mu ^{-%
\frac{1}{2}}$ exactly makes the whole linear Boltzmann theory break down
(Lemma \ref{kernel}), so we have to use a weaker weight, which is closer to $%
\mu ^{-\frac{1}{2}}.$ This new weight will introduce serious new difficulty
since no natural maximum principle is available now from the new boundary
condition (\ref{hdiffuse}). Moreover, for any $(t,x,v),$ since there are
always particles moving almost tangential to the boundary in the bounce-back
reflection, it is impossible to reach down the initial plane no matter how
many cycles the particles take. In other words, there is no explicit
expression for $G(t)$ in terms of initial data completely. To establish the $%
L^{\infty }$ estimate, we make crucial observation in Lemma \ref{small} that
the measure of those particles can not reach initial plane after $k-$bounces
is small when $k$ is large. We therefore can obtain an approximate
representation formula for $G(t)$ by the initial datum, with only finite
number of bounces.

Section 5 is devoted to the proofs of the main nonlinear decay and
continuity results of this paper.

Our contribution opens new lines of research about such interesting
questions as specular reflections in non-convex domains, decay for the soft
potentials, higher regularity in a convex domain, characterization of
propagation of singularity in a non-convex domain, as well as charged
particles\ interacting with fields. Moreover, our new $L^{2}-L^{\infty }$
theory will shed new lights in other investigations of Boltzmann equation,
in which regularity of the solutions is difficult to employ [EGM] [GS].

\section{Preliminary}

\begin{lemma}
\label{velocity}Let $\Omega $ be strictly convex defined in (\ref{convexity}%
). Define the functional along the trajectories $\frac{dX(s)}{ds}=V(s),\frac{%
dV(s)}{ds}=0$ in (\ref{ode}) as: 
\begin{equation}
\alpha (s)\equiv \xi ^{2}(X(s))+[V(s)\cdot \nabla \xi
(X(s))]^{2}-2\{V(s)\cdot \nabla ^{2}\xi (X(s))\cdot V(s)\}\xi (X(s)).
\label{alpha}
\end{equation}%
Let $X(s)\in \bar{\Omega}$ for $t_{1}\leq s\leq t_{2}$. Then there exists
constant $C_{\xi }>0$ such that 
\begin{eqnarray}
e^{C_{\xi }(|V(t_{1})|+1)t_{1}}\alpha (t_{1}) &\leq &e^{C_{\xi
}(|V(t_{1})|+1)t_{2}}\alpha (t_{2});  \label{velocitybound} \\
e^{-C_{\xi }(|V(t_{1})|+1)t_{1}}\alpha (t_{1}) &\geq &e^{-C_{\xi
}(|V(t_{1})|+1)t_{2}}\alpha (t_{2}).  \notag
\end{eqnarray}
\end{lemma}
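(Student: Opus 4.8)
The plan is to differentiate $\alpha(s)$ along the straight‑line trajectory $X(s)=x+(s-t)v$, $V(s)\equiv v$, exploit an algebraic cancellation in $\alpha'(s)$, and reduce the statement to the pointwise differential inequality
\begin{equation*}
|\alpha'(s)|\le C_{\xi}\,(|V(t_{1})|+1)\,\alpha(s),\qquad t_{1}\le s\le t_{2},
\end{equation*}
with $C_{\xi}$ depending only on $\xi$. Once this holds, multiplying by the integrating factor $e^{\mp C_{\xi}(|V(t_{1})|+1)s}$ shows that $s\mapsto e^{\mp C_{\xi}(|V(t_{1})|+1)s}\alpha(s)$ is monotone on $[t_{1},t_{2}]$, which is exactly (\ref{velocitybound}).

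For the cancellation, set $p(s)=\xi(X(s))$, $q(s)=V\cdot\nabla\xi(X(s))$, and $r(s)=V\cdot\nabla^{2}\xi(X(s))\cdot V$. Since $\dot X=V$ and $\dot V=0$, one has $\dot p=q$, $\dot q=r$, and $\dot r=m(s):=\sum_{i,j,k}V_{i}V_{j}V_{k}\,\partial_{ijk}\xi(X(s))$. Hence, writing $\alpha=p^{2}+q^{2}-2rp$,
\begin{equation*}
\alpha'=2pq+2qr-2mp-2rq=2p(q-m)=2\,\xi(X(s))\,\big[\,V\cdot\nabla\xi(X(s))-m(s)\,\big],
\end{equation*}
the terms $2qr$ and $-2rq$ cancelling; this is the reason for the particular combination defining $\alpha$.

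The estimate of $\alpha'$ uses strict convexity. For $t_{1}\le s\le t_{2}$ we have $X(s)\in\bar\Omega$, so $p=\xi(X(s))\le0$, and by (\ref{convexity}) $r\ge c_{\xi}|V|^{2}\ge0$; thus all three summands of $\alpha$ are nonnegative, and in particular $\alpha(s)\ge p^{2}+q^{2}$ as well as $\alpha(s)\ge 2r|p|\ge 2c_{\xi}|V|^{2}|p|$. Now split $|\alpha'|\le 2|p||q|+2|p||m|$. The first term obeys $2|p||q|\le p^{2}+q^{2}\le\alpha(s)$. For the second, $|m|\le A_{\xi}|V|^{3}$ with $A_{\xi}:=\sup_{\bar\Omega}|\nabla^{3}\xi|<\infty$, so $2|p||m|\le A_{\xi}|V|\cdot 2|V|^{2}|p|\le (A_{\xi}/c_{\xi})\,|V|\,\alpha(s)$ (the case $V=0$ is trivial, since then $\alpha$ is constant). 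Combining, $|\alpha'(s)|\le\big(1+(A_{\xi}/c_{\xi})|V|\big)\alpha(s)$, and since $|V(s)|\equiv|V(t_{1})|$ along the trajectory, this is the desired inequality with $C_{\xi}=\max\{1,\,A_{\xi}/c_{\xi}\}$; a Gr\"onwall argument then completes the proof.

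The only genuinely delicate point I anticipate is the balance in the last step between the cubic growth $|m|\lesssim|V|^{3}$ of the third‑order term and the quadratic convexity gain $\alpha\gtrsim|V|^{2}|\xi|$: it is exactly this that forces the weight $|V(t_{1})|+1$ in the exponent, and it requires retaining — rather than discarding — the nonnegative term $-2r\xi$ in the definition of $\alpha$. Checking that all constants depend only on $c_{\xi}$ and on $\sup_{\bar\Omega}(|\nabla\xi|+|\nabla^{3}\xi|)$, uniformly over the trajectory and over $t_{1},t_{2}$, is then routine bookkeeping.
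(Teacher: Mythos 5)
Your proposal is correct and follows essentially the same route as the paper: differentiate $\alpha$ along the free trajectory, use the exact cancellation of the cross terms $2qr-2rq$, bound $2|p||q|\le p^{2}+q^{2}\le\alpha$ and the third-derivative term by $C_{\xi}|V|\,|{-}2r\xi|\le C_{\xi}|V|\alpha$ via strict convexity, then conclude by Gr\"onwall. The only differences are cosmetic (explicit constants and the trivial $V=0$ case), so no further comments are needed.
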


\begin{proof}
Under the convexity assumption (\ref{convexity}), we notice that $\alpha
(s)\geq 0$ for $t_{1}\leq s\leq t_{2}.$ Since $\frac{dV(s)}{ds}\equiv 0$ by (%
\ref{ode}), we compute the derivative of $\alpha (s)$ in (\ref{alpha}) as 
\begin{eqnarray*}
\frac{d\alpha (s)}{ds} &=&2\xi (X(s))[\nabla \xi (X(s))\cdot
V(s)]+2[V(s)\nabla ^{2}\xi (X(s))V(s))][V(s)\cdot \nabla \xi (X(s))] \\
&&-2\{V(s)\cdot \nabla ^{2}\xi (X(s))\cdot V(s)\}[\nabla \xi (X(s))\cdot
V(s)] \\
&&-2[V(s)\cdot \nabla ^{3}\xi (X(s))V(s)\cdot V(s)]\xi (X(s)).
\end{eqnarray*}%
Note that the second and the third terms on the right hand side cancel
exactly$.$ By the convexity (\ref{convexity}), there exists $C_{\xi }>0$ so
that we can further bound the last term by $\alpha (s)$ as: 
\begin{eqnarray*}
&&|2[V(s)\cdot \nabla ^{3}\xi (X(s))V(s)\cdot V(s)]\xi (X(s))| \\
&\leq &C_{\xi }|V(t_{1})|\times |\{-2V(s)\cdot \nabla ^{2}\xi (X(s))\cdot
V(s)\}\xi (X(s))| \\
&\leq &C_{\xi }|V(t_{1})|\alpha (s).
\end{eqnarray*}%
We therefore have from the definition (\ref{alpha}): 
\begin{eqnarray*}
|\frac{d\alpha (s)}{ds}| &\leq &\xi ^{2}(X(s))+[\nabla \xi (X(s)\cdot
V(s)]^{2}+C_{\xi }|V(t_{1})|\alpha (s) \\
&\leq &\{C_{\xi }|V(t_{1})|+1\}\alpha (s).
\end{eqnarray*}%
Our lemma thus follows from the standard Gronwall inequality.
\end{proof}

\begin{lemma}
\label{huang} Let $(t,x,v)$ be connected with $(t-t_{\mathbf{b}},x_{\mathbf{b%
}},v)$ backward in time through a trajectory of (\ref{ode}).

(1) The backward exit time $t_{\mathbf{b}}(x,v)$ is lower semicontinuous.

(2) If 
\begin{equation}
v\cdot n(x_{\mathbf{b}})=v\cdot \frac{\nabla \xi (x_{\mathbf{b}})}{|\nabla
\xi (x_{\mathbf{b}})|}<0,  \label{negative}
\end{equation}%
then $(t_{\mathbf{b}}(x,v),x_{\mathbf{b}}(x,v))$ are smooth functions of $%
(x,v)$ so that%
\begin{eqnarray}
\text{ \ \ \ }\nabla _{x}t_{\mathbf{b}} &=&\frac{n(x_{\mathbf{b}})}{v\cdot
n(x_{\mathbf{b}})},\text{ \ \ \ }\nabla _{v}t_{\mathbf{b}}=\frac{t_{\mathbf{b%
}}n(x_{\mathbf{b}})}{v\cdot n(x_{\mathbf{b}})},  \notag \\
\text{ \ \ \ }\nabla _{x}x_{\mathbf{b}} &=&I+\nabla _{x}t_{\mathbf{b}%
}\otimes v,\text{ \ \ }\nabla _{v}x_{\mathbf{b}}=t_{\mathbf{b}}I+\nabla
_{v}t_{\mathbf{b}}\otimes v.  \label{tbderivative}
\end{eqnarray}%
Furthermore, if $\xi $ is real analytic, then $(t_{\mathbf{b}}(x,v),x_{%
\mathbf{b}}(x,v))$ are also real analytic$.$

(3) Let $x_{i}\in \partial \Omega ,$ for $i=1,2,$ and let $(t_{1},x_{1},v)$
and $(t_{2},x_{2},v)$ be connected with the trajectory $\frac{dX(s)}{ds}%
=V(s),\frac{dV(s)}{ds}=0$ which lies inside $\bar{\Omega}$. Then there
exists a constant $C_{\xi }>0$ such that 
\begin{equation}
|t_{1}-t_{2}|\geq \frac{|n(x_{1})\cdot v|}{C_{\xi }|v|^{2}}.  \label{tlower}
\end{equation}

(4) Define the boundary mapping 
\begin{equation}
\Phi _{\mathbf{b}}:(t,x,v)\rightarrow (t-t_{\mathbf{b}},x_{\mathbf{b}%
}(x,v),v)\in \mathbf{R\times }\{\gamma _{0}\cup \gamma _{-}\}.
\label{boundarymap}
\end{equation}%
Then $\Phi _{\mathbf{b}}$ and $\Phi _{\mathbf{b}}^{-1}$ maps zero measure
sets to zero-measure sets between either $\{t\}\times \Omega \mathbf{\times R%
}^{3}$ and $\mathbf{R\times }\{\gamma _{0}\cup \gamma _{+}\}$ or $\mathbf{%
R\times }\{\gamma _{0}\cup \gamma _{+}\}\rightarrow \mathbf{R\times }%
\{\gamma _{0}\cup \gamma _{-}\}$.
\end{lemma}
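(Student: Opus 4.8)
The plan is to dispatch the four parts in order; the substance is in part (4), and the rest is either the implicit function theorem or a one‑variable estimate along the trajectory. For (1) and (3) I would study the scalar function $g(s)\equiv\xi (X(s))=\xi (x-(t-s)v)$ along the straight line $\tfrac{dX}{ds}=v$. For (1): if $(x_{n},v_{n})\to (x,v)$ with $\liminf_{n}t_{\mathbf{b}}(x_{n},v_{n})=L<t_{\mathbf{b}}(x,v)$, then from $t_{\mathbf{b}}(x,v)>L$ one produces some $\tau >L$ with $x-\tau v$ in the \emph{open} set $\Omega $ (the only obstruction, namely the backward trajectory being tangent to or sliding along $\partial \Omega $, is excluded by inspecting the sign of $g$ near such a time, where $\nabla \xi \neq 0$ is used); since $\Omega $ is open, $x_{n}-\tau v_{n}\in \Omega $ for large $n$, hence $t_{\mathbf{b}}(x_{n},v_{n})\geq \tau >L$, a contradiction, so $t_{\mathbf{b}}$ is lower semicontinuous. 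For (3), take $t_{1}<t_{2}$ without loss of generality; then $g\leq 0$ on $[t_{1},t_{2}]$ with $g(t_{1})=g(t_{2})=0$, so $g'(t_{1})\leq 0\leq g'(t_{2})$, while $g''(s)=v\cdot \nabla ^{2}\xi (X(s))\cdot v$ is bounded by $C_{\xi }|v|^{2}$ on $\bar{\Omega}$; therefore $|\nabla \xi (x_{1})|\,|n(x_{1})\cdot v|=|g'(t_{1})|\leq g'(t_{2})-g'(t_{1})=\int_{t_{1}}^{t_{2}}g''(s)\,ds\leq C_{\xi }|v|^{2}(t_{2}-t_{1})$, and absorbing the bounded, bounded‑below factor $|\nabla \xi (x_{1})|$ into the constant gives (3). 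No convexity is needed here.

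For (2) I would apply the implicit function theorem to $H(\tau ,x,v)\equiv \xi (x-\tau v)$ at $\tau =t_{\mathbf{b}}(x,v)$: there $H=0$ and $\partial _{\tau }H=-v\cdot \nabla \xi (x_{\mathbf{b}})=-|\nabla \xi (x_{\mathbf{b}})|\,(v\cdot n(x_{\mathbf{b}}))\neq 0$ by hypothesis (\ref{negative}), so $t_{\mathbf{b}}$ is locally $C^{\infty }$ in $(x,v)$, and real analytic when $\xi $ is (analytic implicit function theorem); then $x_{\mathbf{b}}=x-t_{\mathbf{b}}v$ inherits the same regularity. Differentiating the identity $\xi (x-t_{\mathbf{b}}v)\equiv 0$ in $x$ and in $v$ and solving the resulting scalar relations for $\nabla _{x}t_{\mathbf{b}}$ and $\nabla _{v}t_{\mathbf{b}}$ (again using $v\cdot n(x_{\mathbf{b}})\neq 0$) yields the stated derivative formulas, and substituting these into the expressions coming from $x_{\mathbf{b}}=x-t_{\mathbf{b}}v$ gives $\nabla _{x}x_{\mathbf{b}}$ and $\nabla _{v}x_{\mathbf{b}}$.

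Part (4) is the crux. I would first peel off the map $(a,y,v)\mapsto (t-a,y,v)$, which is a measure‑preserving affine bijection of $\mathbf{R}\times \partial \Omega \times \mathbf{R}^{3}$, so that the claim reduces to the change of variables between $(x,v)\in \Omega \times \mathbf{R}^{3}$ and $(t_{\mathbf{b}},x_{\mathbf{b}},v)$, i.e.\ to the parametrization $x=x_{\mathbf{b}}+t_{\mathbf{b}}v$ with $(t_{\mathbf{b}},x_{\mathbf{b}},v)\in (0,\infty )\times \partial \Omega \times \mathbf{R}^{3}$. On the open set $\{v\cdot n(x_{\mathbf{b}})<0\}$ part (2) makes this map $C^{1}$ with $C^{1}$ inverse, and a direct computation of its Jacobian in local coordinates on $\partial \Omega $ produces a nonvanishing multiple of $v\cdot n(x_{\mathbf{b}})$, so it is a genuine local diffeomorphism there and hence carries null sets to null sets in both directions. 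It then remains to dispose of the grazing locus. For $\Phi _{\mathbf{b}}(\text{null})\subset \text{null}$ it suffices that $\Phi _{\mathbf{b}}$ sends $\{v\cdot n(x_{\mathbf{b}})=0\}$ into $\mathbf{R}\times \gamma _{0}$, and $\gamma _{0}$ is null in $\partial \Omega \times \mathbf{R}^{3}$ because for each fixed $x\in \partial \Omega $ the slice $\{v:v\cdot n(x)=0\}$ is a plane. For $\Phi _{\mathbf{b}}^{-1}$ one needs in addition that the grazing preimage $N=\{(x,v)\in \Omega \times \mathbf{R}^{3}:v\cdot n(x_{\mathbf{b}}(x,v))=0\}$ is itself null, and this is exactly where the Velocity Lemma \ref{velocity} enters: when $\Omega $ is strictly convex, a trajectory meeting $\gamma _{0}$ forces $\alpha \equiv 0$ in (\ref{alpha}) along that trajectory, whereas at an interior point $x$ (with $\xi (x)<0$, $v\neq 0$) one has $\alpha \geq \xi ^{2}(x)+2c_{\xi }|v|^{2}|\xi (x)|>0$, so $N=\emptyset $; in a general domain one instead shows $N$ is null by a Sard/transversality argument for the fibration $x\mapsto x_{\mathbf{b}}(x,v)$ together with the fact that $\Sigma _{v}=\{y\in \partial \Omega :v\cdot n(y)=0\}$ is null in $\partial \Omega $ for almost every $v$. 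Adjoining the null set $\mathbf{R}\times \gamma _{0}$ to source or target is then harmless, which accounts for the way (\ref{boundarymap}) is phrased.

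The step I expect to be the genuine obstacle is precisely this last one: making $\Phi _{\mathbf{b}}$ behave like a change of variables despite $t_{\mathbf{b}}$ being only lower semicontinuous globally, and in particular keeping the grazing set $\gamma _{0}$ and its backward preimage $N$ negligible. Strict convexity, through the Velocity Lemma \ref{velocity}, is what makes this transparent, since it outright forbids any backward trajectory issuing from the interior (or from an outgoing boundary point) from ever touching $\gamma _{0}$; without it, the transversality analysis of $N$ is the delicate point.
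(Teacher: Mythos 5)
Parts (1)--(3) of your plan coincide in substance with the paper's proof: (1) is the same openness-plus-continuity argument; (2) is exactly the paper's implicit function theorem applied to $\xi (x-t_{\mathbf{b}}v)=0$ with $\partial _{t_{\mathbf{b}}}\xi =-v\cdot \nabla \xi (x_{\mathbf{b}})\neq 0$, plus the analytic IFT; and your proof of (3) by integrating $g''(s)=v\cdot \nabla ^{2}\xi (X(s))\cdot v$ between the two boundary times is a harmless repackaging of the paper's estimate $|\{x_{1}-y\}\cdot n(x_{1})|\leq C_{\xi }|x_{1}-y|^{2}$ (you do need $|\nabla \xi |$ bounded below on the compact boundary, which holds). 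Likewise, on the non-grazing set your part (4) is the paper's argument: the Jacobian of the map in boundary coordinates is $\pm \{v\cdot n(x_{\mathbf{b}})\}\sqrt{1+|\nabla \eta |^{2}}$, so away from $\gamma _{0}$ one has local diffeomorphisms and a countable covering (the paper's sets $A_{k}$) finishes that half.

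The genuine gap is your treatment of the grazing preimage $N=\{(x,v):v\cdot n(x_{\mathbf{b}}(x,v))=0\}$. Lemma \ref{huang}(4) is invoked for \emph{general} smooth domains (it underlies the representation formulas for the in-flow, bounce-back and diffuse cases, none of which assume convexity), so you cannot dispose of $N$ via the Velocity Lemma \ref{velocity}; and your fallback for general domains, ``a Sard/transversality argument for the fibration $x\mapsto x_{\mathbf{b}}(x,v)$,'' does not work as stated, because $x_{\mathbf{b}}(\cdot ,v)$ fails to be smooth -- indeed fails to be continuous -- precisely on the set $N$ you are trying to control, so Sard's theorem has nothing to bite on; at best you would have to replace it by an explicit containment such as $N_{v}\subset \Sigma _{v}+\mathbf{R}_{+}v$ together with a Fubini argument showing $\Sigma _{v}=\{y\in \partial \Omega :v\cdot n(y)=0\}$ is null for a.e.\ $v$, none of which you carry out. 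The paper needs no such machinery: it observes that any $(t,x,v)$ whose backward exit is grazing lies in $\Psi _{t}(\mathbf{R\times }\gamma _{0})$, where $\Psi _{t}(t',x',v')=(t,x'+v'(t-t'),v')$; since $\mathbf{R\times }\gamma _{0}$ is five-dimensional (hence null in the six-dimensional boundary phase space) and $\Psi _{t}$ is smooth, its image is null -- valid for every smooth $\Omega $, with no convexity and no Sard. Finally, your reduction only addresses the slice-to-boundary case; the second case in the statement, $\mathbf{R\times }\{\gamma _{0}\cup \gamma _{+}\}\rightarrow \mathbf{R\times }\{\gamma _{0}\cup \gamma _{-}\}$, is handled in the paper by factoring $\Phi _{\mathbf{b}}=\Phi _{\mathbf{b}}(s)\circ \Psi _{s}$ through an intermediate time slice, a step your outline omits (it is routine once the first case is done, but it should be said).
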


\begin{proof}
\textbf{\ (1):} We need to show that the set $\{(x,v):x\in \Omega \,$\ and $%
t_{\mathbf{b}}(x,v)>c\}$ is open for all $c\in \mathbf{R}.$ Let $t_{\mathbf{b%
}}(x_{0},v_{0})>c+\varepsilon ,$ for some $\varepsilon >0$ small. From the
definition of $t_{\mathbf{b}}(x,v)$ in (\ref{exit}), $x_{0}-sv_{0}\in \Omega 
$ for all $0\leq s\leq c+\varepsilon <t_{\mathbf{b}}(x_{0},v_{0}).$ Since $%
\Omega $ is open, we deduce that for $(x,v)$ close to $(x_{0},v_{0}),$ by
continuity, $x-vs\in \Omega $ for all $c\leq s\leq c+\varepsilon .$ This
implies that $t_{\mathbf{b}}(x,v)>c$. Hence $t_{\mathbf{b}}(x,v)$ is lower
semicontinuous.

\textbf{Proof of (2):} Since $x_{\mathbf{b}}\in \partial \Omega ,$ $\xi (x_{%
\mathbf{b}}(x,v))=\xi (x-t_{\mathbf{b}}v)=0.$ But from (\ref{negative}) and
the fact $|\nabla \xi (x_{\mathbf{b}})|\neq 0,$ we have 
\begin{equation*}
\partial _{t_{\mathbf{b}}}\xi (x-t_{\mathbf{b}}v)|_{t_{\mathbf{b}}}=-v\cdot
\nabla \xi (x_{\mathbf{b}})>0.
\end{equation*}%
Therefore, by the Implicit Function Theorem, we can solve for smooth $t_{%
\mathbf{b}}(x,v)$ and deduce (\ref{tbderivative}). Furthermore, when $\xi $
is analytic, so are $t_{\mathbf{b}}$ and $x_{\mathbf{b}}$ by Theorem 15.3 in
[D].

\textbf{Proof of (3):} Notice that for $x_{1}\in \partial \Omega ,$%
\begin{equation*}
\lim_{\substack{ y\rightarrow x_{1}  \\ y\in \partial \Omega }}\frac{%
|\{x_{1}-y\}\cdot n(x_{1})|}{|x_{1}-y|}=0.
\end{equation*}%
Hence, we have $|\{x_{1}-y\}\cdot n(x_{1})|\leq C_{\xi }|x_{1}-y|^{2}$ for
all $y\in \partial \Omega .$ Taking inner product of $%
x_{1}-x_{2}=v(t_{1}-t_{2})$ with $n(x_{1}),$ we get 
\begin{equation*}
C_{\xi }|v|^{2}|t_{1}-t_{2}|^{2}\geq C_{\xi }|x_{1}-x_{2}|^{2}\geq
|\{x_{1}-x_{2}\}\cdot n(x_{1})|=|v(t_{1}-t_{2})\cdot n(x_{1})|.
\end{equation*}%
We thus deduce (\ref{tlower}) by dividing $|t_{1}-t_{2}|$.

\textbf{Proof of (4):} We define a map from $\mathbf{R\times }\partial
\Omega \mathbf{\times R}^{3}$ to $\{t\}\times \mathbf{R}^{3}\times \mathbf{R}%
^{3}$ as%
\begin{equation}
\Psi _{t}:(t^{\prime },x^{\prime },v^{\prime })=(t,x^{\prime }+v^{\prime
}(t-t^{\prime }),\text{ }v^{\prime }).  \label{psi}
\end{equation}

Recall the boundary map $\Phi _{\mathbf{b}}(t,x,v)$ in (\ref{boundarymap}).
From the definition of $t_{\mathbf{b}}$ in (\ref{exit}), $\Phi _{\mathbf{b}}$
is one to one from either from $\{t\}\times \Omega \times \mathbf{R}^{3}$ or
from $\mathbf{R\times }\{\gamma _{0}\cup \gamma _{+}\}$ to $\mathbf{R\times }%
\{\gamma _{0}\cup \gamma _{-}\}$. Denote its inverse by $\Phi _{\mathbf{b}%
}^{-1}.$

In the case that $v\cdot n(x_{\mathbf{b}})=0,$ i.e., $\Phi _{\mathbf{b}%
}(t,x,v)\in \gamma _{0},$ the grazing set$,$ then $(t,x,v)\in $ $\Psi
_{t}(\gamma _{0})$. In (\ref{psi}), $\gamma _{0}$ is characterized by the
five-dimensional space: $x^{\prime }\in \partial \Omega ,$ $v^{\prime }\cdot
n(x^{\prime })=0,$ $t^{\prime }\in \mathbf{R.}$ Since $\Psi _{t}$ is a
smooth map, $\Psi _{t}(\gamma _{0})$ is also a five-dimensional space
locally at $(x^{\prime },v^{\prime },t^{\prime })\in \gamma _{0}.$ This
implies that $\Phi _{\mathbf{b}}^{-1}(\gamma ^{0})\subset \Psi _{t}(\gamma
_{0})$ is a zero-measure set in $\{t\}\times \Omega \times \mathbf{R}^{3}$.

In the case that $v\cdot n(x_{\mathbf{b}})\neq 0,$ we consider the map $\Psi
_{t}$ where $\xi (x^{\prime })=0.$ We may assume that $\partial _{x_{1}}\xi
(x^{\prime })\neq 0,$ and $x_{1}^{\prime }=\eta (x_{2}^{\prime
},x_{3}^{\prime })$ with some smooth function $\eta .$ Now $\Psi
_{t}=(t,\eta (x_{2}^{\prime },x_{3}^{\prime })+v_{1}^{\prime }(t-t^{\prime
}),x_{2}^{\prime }+v_{2}^{\prime }(t-t^{\prime }),x_{3}^{\prime
}+v_{3}^{\prime }(t-t^{\prime }),v^{\prime })$ and we compute the Jacobian
matrix of $\Psi _{t\text{ }}$for $(t^{\prime },x_{2}^{\prime },x_{3}^{\prime
},v^{\prime })$ to get 
\begin{equation*}
\left( 
\begin{array}{cccccc}
-v_{1}^{\prime } & \partial _{x_{2}^{\prime }}\eta & \partial
_{x_{3}^{\prime }}\eta & t-t^{\prime } & 0 & 0 \\ 
-v_{2}^{\prime } & 1 & 0 & 0 & t-t^{\prime } & 0 \\ 
-v_{3}^{\prime } & 0 & 1 & 0 & 0 & t-t^{\prime } \\ 
0 & 0 & 0 & 1 & 0 & 0 \\ 
0 & 0 & 0 & 0 & 1 & 0 \\ 
0 & 0 & 0 & 0 & 0 & 1%
\end{array}%
\right)
\end{equation*}%
and its determinant is exactly $\pm \{v^{\prime }\cdot n(x^{\prime })\}\sqrt{%
1+|\nabla \eta |^{2}}\neq 0$ if $(x^{\prime },v^{\prime })\notin \gamma
_{0}. $ Hence, locally, $\Psi _{t}$ is a smooth homeomorphism preserving
zero-measure sets away from $\gamma _{0}$. Notice that from the uniqueness
in part (2) of Lemma \ref{huang}, $\Phi _{\mathbf{b}}^{-1}=\Psi _{t}$ and $%
\Phi _{\mathbf{b}}^{{}}=\Psi _{t}^{-1}$ locally if $v\cdot n(x_{\mathbf{b}%
})\neq 0.$ Hence, for any $k\geq 1,$ by a finite covering for a compact set, 
$\Phi _{\mathbf{b}}^{-1}$preserves the zero-measure sets on 
\begin{equation*}
A_{k}\equiv \{(t^{\prime },x^{\prime },v^{\prime })|\text{ }x^{\prime }\in
\partial \Omega ,|v^{\prime }\cdot n(x^{\prime })|\geq \frac{1}{k}%
,|v^{\prime }|\leq k,|t^{\prime }|\leq k\}\cap \Phi _{\mathbf{b}%
}(\{t\}\times \Omega \times \mathbf{R}^{3}).
\end{equation*}

To prove $\Phi _{\mathbf{b}}^{{}}$ preserves the zero-measure sets between $%
\{t\}\times \Omega \times \mathbf{R}^{3}$ and $\mathbf{R\times }\{\gamma
_{0}\cup \gamma _{+}\},$ we take any set $S\in \mathbf{R\times }\{\gamma
_{0}\cup \gamma _{+}\}$ with $|S|=0.$ Clearly, $S=\{\mathbf{R}\times \gamma
_{0}\cap S\}\cup _{k=1}^{\infty }\{A_{k}\cap S\}.$ Therefore, $\Phi _{%
\mathbf{b}}^{-1}(S)\subset \Psi _{t}(\gamma _{0})\cup _{k=1}^{\infty }\Phi _{%
\mathbf{b}}^{-1}(A_{k}\cap S),$ and $|\Phi _{\mathbf{b}}^{-1}(A_{k}\cap
S)|=0 $ and $|\Psi _{t}(\gamma _{0})|=0.$ On the other hand, if $S\in $ $%
\{t\}\times \Omega \times \mathbf{R}^{3},$ we have $\Phi _{\mathbf{b}}(S)=\{%
\mathbf{R}\times \gamma _{0}\cap \Phi _{\mathbf{b}}(S)\}\cup _{k=1}^{\infty
}\{A_{k}\cap \Phi _{\mathbf{b}}(S)\}.$ If $|S|=0,$ then $|A_{k}\cap \Phi _{%
\mathbf{b}}(S)|=0,$ because $\Phi _{\mathbf{b}}^{-1}\{A_{k}\cap \Phi _{%
\mathbf{b}}(S)\}\subset S$ has measure zero and $\Phi _{\mathbf{b}}=\Psi
_{t}^{-1}$ maps zero-measure sets to zero-measure sets.

To prove $\Phi _{\mathbf{b}}^{{}}$ preserves the zero-measure sets from $%
\mathbf{R\times }\{\gamma _{0}\cup \gamma _{+}\}\rightarrow \mathbf{R\times }%
\{\gamma _{0}\cup \gamma _{-}\},$ we take $S\in \mathbf{R\times \{}\gamma
_{0}\cup \gamma _{+}\}$ with $|S|=0.$ Consider the set $\Phi _{\mathbf{b}%
}^{-1}\{\Phi _{\mathbf{b}}(S)\setminus \gamma _{0}\}$. For any point $%
(t,x,v)\in \Phi _{\mathbf{b}}^{-1}\{\Phi _{\mathbf{b}}(S)\setminus \gamma
_{0}\},$ we know that $v\cdot n(x_{\mathbf{b}})\neq 0.$ This implies from (%
\ref{tlower}) that $t_{\mathbf{b}}>0$ and $t-t_{\mathbf{b}}<t.$ We can
choose a fixed $s$ between $t$ and $t-t_{\mathbf{b}}$. Locally around $%
(t,x,v),$ 
\begin{equation}
\Phi _{\mathbf{b}}=\Phi _{\mathbf{b}}(s)\circ \Psi _{s}.  \label{composition}
\end{equation}%
where $\Psi _{s}$ defined in (\ref{psi}) which maps $\mathbf{R\times }%
\{\gamma _{0}\cup \gamma _{+}\}$ to the plane $\{s\}\times \Omega \times 
\mathbf{R}^{3},$ and $\Phi _{\mathbf{b}}(s)$ maps $\{s\}\times \Omega \times 
\mathbf{R}^{3}$ to $\mathbf{R\times }\{\gamma _{0}\cup \gamma _{-}\}.$ Since 
$\Phi _{\mathbf{b}}$ is one to one, we have $\Phi _{\mathbf{b}}^{-1}\{\Phi _{%
\mathbf{b}}(S)\setminus \gamma _{0}\}\subset S,$ so that $|\Phi _{\mathbf{b}%
}^{-1}\{\Phi _{\mathbf{b}}(S)\setminus \gamma _{0}\}|=0.$ Therefore, from
previous arguments, $\Psi _{s}$ preserves zero-measure sets from $\mathbf{%
R\times }\{\gamma _{0}\cup \gamma _{+}\}~$to $\{s\}\times \Omega \times 
\mathbf{R}^{3},$ while $\Phi _{\mathbf{b}}(s)$ preserves zero-measure sets
from $\{s\}\times \Omega \times \mathbf{R}^{3}$ set to $\mathbf{R\times }%
\{\gamma _{0}\cup \gamma _{-}\}.$ Hence, by (\ref{composition}), $|\Phi _{%
\mathbf{b}}(S)\setminus \gamma _{0}|=|\Phi _{\mathbf{b}}(s)\circ \Psi
_{s}[\Phi _{\mathbf{b}}^{-1}\{\Phi _{\mathbf{b}}(S)\setminus \gamma
_{0}\}]|=0.$ We therefore deduce $|\Phi _{\mathbf{b}}(S)|=0,$ for $S$ inside
a neighborhood of $(t,x,v).$

On the other hand, we take $S\in \mathbf{R\times }\{\gamma _{0}\cup \gamma
_{-}\}$ with $|S|=0$ and we need to show $|\Phi _{\mathbf{b}}^{-1}(S)|=0$ $.$
For any point $(t,x,v)\in \Phi _{\mathbf{b}}^{-1}(S)\setminus \gamma _{0},$ $%
v\cdot n(x)\neq 0,$ so that if $(t^{\prime },x^{\prime },v)=\Phi _{\mathbf{b}%
}(t,x,v),$ then $t^{\prime }<t.$ Hence, we again can find $t^{\prime }<s<t$
such that, near $(t,x,v),$ $\Phi _{\mathbf{b}}^{-1}=\Psi _{s}^{-1}\circ \Phi
_{\mathbf{b}}^{-1}(s)$ from (\ref{composition})$.$ Since $|\Phi _{\mathbf{b}%
}\{\Phi _{\mathbf{b}}^{-1}(S)\setminus \gamma _{0}\}|\leq |S|=0,$ for $\Phi
_{\mathbf{b}}^{-1}(S)$ near $(t,x,v),$ $\{\Phi _{\mathbf{b}%
}^{-1}(S)\setminus \gamma _{0}\}=\Psi _{s}^{-1}\circ \Phi _{\mathbf{b}%
}^{-1}(s)[\Phi _{\mathbf{b}}\{\Phi _{\mathbf{b}}^{-1}(S)\setminus \gamma
_{0}\}].$ It follows (\ref{composition}) again that $|\Phi _{\mathbf{b}%
}^{-1}(S)\setminus \gamma _{0}|=0$, $|\Phi _{\mathbf{b}}^{-1}(S)|=0$ for $S$
inside a neighborhood of $(t-t_{\mathbf{b}},x_{\mathbf{b}},v).$ The general
case follows from a countable covering for $\mathbf{R\times }\{\gamma
_{0}\cup \gamma _{+}\}$ and $\{s\}\times \Omega \times \mathbf{R}^{3}$.
\end{proof}

\begin{lemma}
\label{kernel}Recall (\ref{mu-k}) and the Grad estimate [Gr2] for hard
potentials:%
\begin{equation}
|K(v,v^{\prime })|\leq C\{|v-v^{\prime }|+|v-v^{\prime }|^{-1}\}e^{-\frac{1}{%
8}|v-v^{\prime }|^{2}-\frac{1}{8}\frac{||v|^{2}-|v^{\prime }|^{2}|^{2}}{%
|v-v^{\prime }|^{2}}}.  \label{grad}
\end{equation}%
Recall $w$ in (\ref{weight})$.$ Then there exists $0\leq \varepsilon (\theta
)<1$ and $C_{\theta }>0$ such that for $0\leq \varepsilon <\varepsilon
(\theta ),$ 
\begin{equation}
\int \{|v-v^{\prime }|+|v-v^{\prime }|^{-1}\}e^{-\frac{1-\varepsilon }{8}%
|v-v^{\prime }|^{2}-\frac{1-\varepsilon }{8}\frac{||v|^{2}-|v^{\prime
}|^{2}|^{2}}{|v-v^{\prime }|^{2}}}\frac{w(v)}{w(v^{\prime })}dv^{\prime
}\leq \frac{C}{1+|v|}.  \label{wk}
\end{equation}
\end{lemma}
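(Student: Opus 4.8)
The plan is to prove the weighted kernel estimate \eqref{wk} by carefully tracking how the Gaussian-type decay in $K(v,v')$ absorbs the weight ratio $w(v)/w(v')$, and then reducing to a scalar one-dimensional integral estimate of the familiar type $\int \{|v-v'| + |v-v'|^{-1}\} e^{-c|v-v'|^2} \, dv' \leq C/(1+|v|)$. The key algebraic observation is that if $w(v) = (1+\rho|v|^2)^\beta e^{\theta|v|^2}$, then
\begin{equation*}
\frac{w(v)}{w(v')} = \left(\frac{1+\rho|v|^2}{1+\rho|v'|^2}\right)^\beta e^{\theta(|v|^2 - |v'|^2)},
\end{equation*}
so one must show the exponent $\theta(|v|^2-|v'|^2)$ is dominated by a small fraction of $\frac{1}{8}|v-v'|^2 + \frac{1}{8}\frac{||v|^2-|v'|^2|^2}{|v-v'|^2}$, leaving a residual Gaussian of the same shape with a slightly degraded constant $\frac{1-\varepsilon}{8}$.

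First I would reduce the polynomial factor $\left(\frac{1+\rho|v|^2}{1+\rho|v'|^2}\right)^\beta$: using $1+\rho|v|^2 \leq (1+\rho|v'|^2)(1+\rho|v-v'|^2)\cdot C$ (and the symmetric bound when $\beta<0$), this factor is bounded by $C(1+|v-v'|^2)^{|\beta|}$, which is harmless since it is a polynomial in $|v-v'|$ against a Gaussian in $|v-v'|$ — it only changes the constant. Second, and this is the crux, I would handle the exponential weight ratio $e^{\theta(|v|^2-|v'|^2)}$. Write $|v|^2 - |v'|^2 = (|v|^2-|v'|^2)$ and split the domain: on the region $|v-v'| \geq |v|/2$ (say), we have $|v|^2 - |v'|^2 \leq |v|^2 + |v'|^2 \lesssim |v-v'|^2 + \text{(lower order)}$, so $\theta(|v|^2-|v'|^2)$ is swallowed by $\frac{\varepsilon}{8}|v-v'|^2$ once $\varepsilon$ is chosen with $\varepsilon/8 > \theta$ times a fixed constant — this is where the condition $\theta < 1/4$ and the freedom in choosing $\varepsilon(\theta)<1$ enter. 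On the complementary region $|v-v'| \leq |v|/2$, one uses the elementary inequality $||v|^2-|v'|^2| \leq |v-v'|(|v|+|v'|) \leq 3|v-v'||v|$, hence $|v|^2-|v'|^2 \leq \frac{1}{2\eta}\frac{||v|^2-|v'|^2|^2}{|v-v'|^2} + \frac{\eta}{2}|v-v'|^2 \cdot 9$ by Young's inequality (or more directly, $|v|^2 - |v'|^2 \le \frac{\lambda}{2} \frac{||v|^2-|v'|^2|^2}{|v-v'|^2} + \frac{1}{2\lambda}|v-v'|^2$ — choosing $\lambda$ small makes the first term a small multiple of the second Gaussian exponent), so again $\theta(|v|^2-|v'|^2)$ is absorbed into a small fraction $\frac{\varepsilon}{8}$ of the combined exponent. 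Either way, choosing $\varepsilon(\theta)$ small enough (and this forces the quantitative dependence $\theta_0(\nu_0)$ appearing in Theorem \ref{diffusenl}), we are left with
\begin{equation*}
\int \{|v-v'| + |v-v'|^{-1}\} (1+|v-v'|^2)^{|\beta|} e^{-\frac{1-\varepsilon}{8}\left(|v-v'|^2 + \frac{||v|^2-|v'|^2|^2}{|v-v'|^2}\right)} dv' \leq \frac{C}{1+|v|}.
\end{equation*}

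Finally I would prove this residual estimate, which is standard (it is essentially the classical bound for the Grad kernel). Decompose $v' = v'_\parallel + v'_\perp$ relative to $v$, or better, introduce the change of variables to polar-type coordinates centered so that the singular factor $|v-v'|^{-1}$ is integrable in $3$D (since $\int_{|v-v'|\le 1} |v-v'|^{-1} dv' < \infty$). The decay in $|v|$ comes from the factor $e^{-c\frac{||v|^2-|v'|^2|^2}{|v-v'|^2}}$: for $v'$ in the "slab" where $|v|^2 - |v'|^2$ is small, i.e. $|v'| \approx |v|$, the geometry confines $v'$ and the Gaussian in $|v-v'|$ plus the slab constraint yields a factor $\sim 1/(1+|v|)$; one splits into $|v-v'| \ge \frac{1}{2}|v|$ (where $e^{-c|v-v'|^2} \le e^{-c|v|^2/4}$ gives even faster decay) and $|v-v'| < \frac{1}{2}|v|$ (handled by the slab argument). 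The main obstacle I anticipate is bookkeeping the precise constant $\varepsilon(\theta)$ and verifying that the Young-inequality splittings genuinely leave a strictly positive residual exponent $\frac{1-\varepsilon}{8} > 0$ uniformly — i.e., checking that $\theta < 1/4$ is exactly the threshold that makes this work, since $\theta = 1/4$ would correspond to $e^{\theta|v|^2}\sqrt{\mu(v)}$ having no decay at all and the integral diverging; that borderline is precisely why Lemma \ref{kernel} says "the whole linear Boltzmann theory break[s] down" at $w = \mu^{-1/2}$.
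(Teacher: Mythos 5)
Your overall architecture (bound the polynomial part of $w(v)/w(v')$ by $C(1+|v-v'|^2)^{|\beta|}$, absorb $e^{\theta(|v|^2-|v'|^2)}$ into the two Gaussian exponents, then extract the $\frac{1}{1+|v|}$ decay from the factor involving $\frac{||v|^2-|v'|^2|^2}{|v-v'|^2}$) matches the paper's, but the absorption step as you wrote it has a genuine gap. First, you misread the role of $\varepsilon$: in (\ref{wk}) the exponent available is the full (degraded) $\frac{1-\varepsilon}{8}$, and $\varepsilon$ measures how much weakening of the Grad exponent the estimate can tolerate; it is \emph{not} a reserve of size $\frac{\varepsilon}{8}|v-v'|^2$ that you may spend on the weight. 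Second, even with the correct budget, your treatment of the region $|v-v'|\geq |v|/2$ fails: there you bound $\theta(|v|^2-|v'|^2)\leq \theta|v|^2\leq 4\theta|v-v'|^2$, and absorbing this into $\frac{1-\varepsilon}{8}|v-v'|^2$ would force $\theta<\frac{1}{32}$ (and your stated condition ``$\varepsilon/8>\theta\times$ const'' is impossible for $\theta\geq 1/8$ since $\varepsilon<1$). But the lemma must hold for all $\theta<\frac14$, and indeed $\theta$ close to $\frac14$ is exactly the regime needed for the diffuse case (\ref{wdiffuse}). In that region you cannot discard the second exponential; the absorption must use both Gaussian factors simultaneously. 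Relatedly, in your ``more direct'' Young inequality $|v|^2-|v'|^2\leq \frac{\lambda}{2}\frac{||v|^2-|v'|^2|^2}{|v-v'|^2}+\frac{1}{2\lambda}|v-v'|^2$, choosing $\lambda$ \emph{small} is wrong: small $\lambda$ makes $\frac{\theta}{2\lambda}|v-v'|^2$ exceed the available $\frac{1-\varepsilon}{8}|v-v'|^2$. One needs $\lambda$ in the window $4\theta<\lambda<\frac{1}{4\theta}$ (up to $O(\varepsilon)$), which is nonempty precisely because $\theta<\frac14$; applying this \emph{globally}, with no region splitting, closes the gap.

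This balanced-Young argument is exactly what the paper does in different language: setting $\eta=v-v'$, it writes the total exponent as a quadratic form in $\bigl(|\eta|,\frac{v\cdot\eta}{|\eta|}\bigr)$, namely $(-\theta-\frac14)|\eta|^2+(\frac12+2\theta)v\cdot\eta-\frac12\frac{(v\cdot\eta)^2}{|\eta|^2}$, and checks the discriminant $4\theta^2-\frac14<0$; strict negative definiteness is what allows the small perturbation by $\varepsilon$, which is where $\varepsilon(\theta)$ comes from. After that, the paper keeps a residual $-C_\theta\{\frac{|\eta|^2}{2}+|v\cdot\eta|\}$ and obtains the $\frac{1}{|v|}$ decay by integrating $e^{-C_\theta|v|\,|\eta_{\shortparallel}|}$ in the component of $\eta$ along $v$ (the singular factor $|v-v'|^{-1}$ being integrable in the perpendicular variables). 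Your ``slab'' argument for the decay is a legitimate geometric restatement of this last step and would be fine, provided the corrected absorption leaves a strictly positive multiple of both $|v-v'|^2$ and $\frac{||v|^2-|v'|^2|^2}{|v-v'|^2}$ in the residual exponent.
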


\begin{proof}
By (\ref{weight}), we first notice that for some $C_{\rho ,\beta }>0,$ 
\begin{equation*}
\left\vert \frac{w(v)}{w(v^{\prime })}\right\vert \leq C[1+|v-v^{\prime
}|^{2}]^{|\beta |}e^{-\theta \{|v^{\prime }|^{2}-|v|^{2}\}}.
\end{equation*}%
Let $v-v^{\prime }=\eta $ and $v^{\prime }=v-\eta $ in the integral of (\ref%
{wk})$.$ By (\ref{grad}), we now compute the total exponent in $%
K(v,v^{\prime })\frac{w(v)}{w(v^{\prime })}$ as: 
\begin{eqnarray*}
&&-\frac{1}{8}|\eta |^{2}-\frac{1}{8}\frac{||\eta |^{2}-2v\cdot \eta |^{2}}{%
|\eta |^{2}}-\theta \{|v-\eta |^{2}-|v|^{2}\} \\
&=&-\frac{1}{4}|\eta |^{2}+\frac{1}{2}v\cdot \eta -\frac{1}{2}\frac{|v\cdot
\eta |^{2}}{|\eta |^{2}}-\theta \{|\eta |^{2}-2v\cdot \eta \} \\
&=&(-\theta -\frac{1}{4})|\eta |^{2}+(\frac{1}{2}+2\theta )v\cdot \eta -%
\frac{1}{2}\frac{\{v\cdot \eta \}^{2}}{|\eta |^{2}}.
\end{eqnarray*}%
Since $\theta <\frac{1}{4},$ the discriminant of the above quadratic form of 
$|\eta |$ and $\frac{v\cdot \eta }{|\eta |}$ is 
\begin{equation*}
\Delta =(\frac{1}{2}+2\theta )^{2}+2(-\theta -\frac{1}{4})=4\theta ^{2}-%
\frac{1}{4}<0.
\end{equation*}%
Hence, the quadratic form is negative definite. We thus have, for $%
\varepsilon >0$ sufficiently small, there is $C_{\theta }>0$ such that the
following perturbed quadratic form is still negative definite 
\begin{eqnarray}
&&-\frac{1-\varepsilon }{8}|\eta |^{2}-\frac{1-\varepsilon }{8}\frac{||\eta
|^{2}-2v\cdot \eta |^{2}}{|\eta |^{2}}-\theta \{|\eta |^{2}-2v\cdot \eta \} 
\notag \\
&\leq &-C_{\theta }\{|\eta |^{2}+\frac{|v\cdot \eta |^{2}}{|\eta |^{2}}%
\}=-C_{\theta }\{\frac{|\eta |^{2}}{2}+(\frac{|\eta |^{2}}{2}+\frac{|v\cdot
\eta |^{2}}{|\eta |^{2}})\}  \notag \\
&\leq &-C_{\theta }\{\frac{|\eta |^{2}}{2}+|v\cdot \eta |\}.
\label{exponent}
\end{eqnarray}%
Therefore, for given $|v|\geq 1,$ we make another change of variable $\eta
_{\shortparallel }=\{\eta \cdot \frac{v}{|v|}\}\frac{v}{|v|},$ and $\eta
_{\perp }=\eta -\eta _{||}$ so that $|v\cdot \eta |=|v||\eta
_{\shortparallel }|$ and $|v-v^{\prime }|\geq |\eta _{\perp }|.$ We can
absorb $\{1+|\eta |^{2}\}^{|\beta |}$, $|\eta |\{1+|\eta |^{2}\}^{|\beta |}$
by $e^{\frac{C_{\theta }}{4}|\eta |^{2}}$, and bound the integral in (\ref%
{wk}) by (\ref{exponent}): 
\begin{eqnarray*}
&&C_{\beta }\int_{\mathbf{R}^{2}}(\frac{1}{|\eta _{_{\perp }}|}+1)e^{-\frac{%
C_{\theta }}{4}|\eta |^{2}}\left\{ \int_{-\infty }^{\infty }e^{-C_{\theta
}|v|\times |\eta _{||}|}d|\eta _{||}|\right\} d\eta _{\perp } \\
&\leq &\frac{C_{\beta }}{|v|}\int_{\mathbf{R}^{2}}(\frac{1}{|\eta _{_{\perp
}}|}+1)e^{-\frac{C_{\theta }}{4}|\eta _{\perp }|^{2}}\left\{ \int_{-\infty
}^{\infty }e^{-C_{\theta }|y|}dy\right\} d\eta _{\perp }\text{ \ \ }%
(y=|v|\times |\eta _{||}|).
\end{eqnarray*}%
We thus deduce our lemma since both integrals are finite.
\end{proof}

\begin{lemma}
\label{analytic}Let $\kappa (y)$ be a real analytic function of $y\in 
\mathbf{R}^{n}$ in a region such that $\kappa (y)$ is not identically zero.
Then the set $\{y:\kappa (y)=0\}$ has zero $n-$dimensional Lebesque measure.
\end{lemma}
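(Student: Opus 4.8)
The plan is to argue by induction on the dimension $n$, after first reducing to a purely local statement. The region on which $\kappa$ is defined is an open subset of $\mathbf{R}^n$, hence a countable union of open balls, and by the identity theorem for real analytic functions $\kappa$ is not identically zero on any of these balls (once $\kappa \not\equiv 0$ on a connected component it vanishes on no nonempty open subset of that component). It therefore suffices to prove that every point $y^0$ of the region has a neighborhood $U$ with $|\{y \in U : \kappa(y) = 0\}| = 0$, and then to cover the zero set by countably many such neighborhoods together with open sets on which $\kappa$ does not vanish.

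For $n = 1$: a real analytic function that is not identically zero near a point has a zero of finite order there, for otherwise all of its derivatives would vanish at that point, forcing $\kappa$ to be identically zero in a neighborhood and hence, by analytic continuation along the interval, identically zero. Thus the zeros of $\kappa$ are isolated, the zero set is countable, and it has one-dimensional Lebesgue measure zero. This is the base case.

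For the inductive step, fix $y^0$ in the region and work inside a ball $B$ about $y^0$ contained in the region, on which $\kappa \not\equiv 0$. If $\kappa(y^0) \neq 0$ there is nothing to prove. Suppose $\kappa(y^0) = 0$. The set of points of $B$ at which \emph{all} partial derivatives of $\kappa$ vanish is relatively closed (an intersection of zero sets of continuous functions) and relatively open (where all derivatives vanish, $\kappa$ equals its vanishing Taylor series in a neighborhood), hence empty; so some derivative of $\kappa$ is nonzero at $y^0$. Let $m \geq 1$ be the least order for which this occurs. The homogeneous degree-$m$ part of the Taylor polynomial of $\kappa$ at $y^0$ is then a nonzero homogeneous polynomial, so it is nonzero at some unit vector $e$, whence the $m$-th directional derivative $\partial_e^m \kappa(y^0) \neq 0$. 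After an orthogonal change of coordinates we may take $e = e_n$; write $y = (y', y_n)$ and $y^0 = (\bar y, y_n^0)$. Then
\[
g(y') \equiv \partial_{y_n}^m \kappa(y', y_n^0)
\]
is real analytic in $y' \in \mathbf{R}^{n-1}$ with $g(\bar y) \neq 0$, so $g \not\equiv 0$, and by the inductive hypothesis $\{y' : g(y') = 0\}$ has $(n-1)$-dimensional measure zero. For every $y'$ near $\bar y$ with $g(y') \neq 0$, the function $y_n \mapsto \kappa(y', y_n)$ is real analytic and not identically zero (its $m$-th derivative at $y_n^0$ is nonzero), so by the $n = 1$ case its zero set has one-dimensional measure zero. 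Choosing $U = Q' \times I$ a small box about $y^0$ on which this holds, and noting that the zero set is closed hence measurable, Fubini's theorem gives
\[
|\{y \in U : \kappa(y) = 0\}| = \int_{Q'} |\{y_n \in I : \kappa(y', y_n) = 0\}| \, dy' = 0,
\]
the inner measure vanishing for a.e.\ $y'$. Covering the region as described completes the induction, and the lemma follows.

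The routine calculus aside, the one step that genuinely requires care is the maneuver that makes the induction close: after rotating coordinates one must exhibit a \emph{pure} $y_n$-derivative $\partial_{y_n}^m \kappa$ that is nonzero at $y^0$ and whose restriction to the hyperplane $\{y_n = y_n^0\}$ is a nonzero analytic function of $y'$, so that the inductive hypothesis in dimension $n - 1$ can be invoked. The minimal-order directional-derivative argument is precisely what delivers this.
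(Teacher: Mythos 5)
Your proof is correct, and while it shares the paper's skeleton (induction on the dimension, localization at a zero of $\kappa$, and a Fubini argument at the end), the reduction step runs along a genuinely different line. The paper chooses a multi-index $\bar{k}$ with $\kappa ^{(\bar{k})}(y^{0})\neq 0$ whose first entry is minimal among the nonvanishing Taylor coefficients, factors $\kappa (y)=(y_{1}-y_{1}^{0})^{\bar{k}_{1}}\kappa _{1}(y)$, verifies by a continuity argument on one Taylor coefficient that $\kappa _{1}(y_{1},\cdot )$ is not identically zero for each fixed $y_{1}$ near $y_{1}^{0}$, applies the inductive hypothesis to these $(n-1)$-dimensional slices, and integrates in $y_{1}$, finally absorbing the zero set into $\{y_{1}=y_{1}^{0}\}\cup \{\kappa _{1}=0\}$. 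You instead rotate coordinates so that a pure derivative $\partial _{y_{n}}^{m}\kappa (y^{0})\neq 0$ (via the nonvanishing of the lowest-order homogeneous Taylor part at some unit direction), apply the inductive hypothesis not to slices of $\kappa $ itself but to the auxiliary $(n-1)$-variable function $g(y^{\prime })=\partial _{y_{n}}^{m}\kappa (y^{\prime },y_{n}^{0})$, which controls the exceptional slices, and then dispose of the remaining one-dimensional slices with the base case before invoking Fubini. Your route buys a cleaner reduction: it sidesteps the paper's common-factor extraction, where one must ensure every nonvanishing Taylor term actually carries the factor $(y_{1}-y_{1}^{0})^{\bar{k}_{1}}$ and must check slice-wise nontriviality of $\kappa _{1}$ by hand, at the modest cost of an orthogonal change of variables and the directional-derivative identity. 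The paper's route keeps the induction applied directly to (a factor of) $\kappa $ on hyperplane slices and needs no rotation. The supporting points you flag—the identity theorem to propagate $\kappa \not\equiv 0$ locally, measurability of the closed zero set for Fubini, and the countable covering—are all handled correctly.
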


\begin{proof}
We use an induction on the dimension $n.$ When $n=1,$ we assume $\kappa
(y^{0})=0.$ Since $\kappa $ is analytic, for $y$ near $y^{0},$ we have 
\begin{equation*}
\kappa (y)=\kappa (y^{0})+\sum_{k=1}^{\infty }\frac{\kappa ^{(k)}(y^{0})}{k!}%
(y-y^{0})^{k}.
\end{equation*}%
Since $\kappa (y)$ is not identically zero, we can always assume a smallest $%
k_{1}$ such that $\frac{\kappa ^{(k_{1})}(y^{0})}{k_{1}!}\neq 0.$ We
therefore can rewrite 
\begin{equation*}
\kappa (y)=(y-y^{0})^{k_{1}}\times \left\{ \sum_{k\geq k_{1}}^{\infty }\frac{%
\kappa ^{(k)}(y^{0})}{k!}(y-y^{0})^{k-k_{1}}\right\} .
\end{equation*}%
Hence $\kappa (y)=0$ for $y-y^{0}$ sufficiently small implies $y=y^{0}$ (an
isolated point), which has zero one dimensional measure.

Assume that the lemma is valid for $m.$ For $m+1$ dimensional case, we
assume $\kappa (y^{0})=0.$ We first notice that by finite open coverings for
any compact subset, it suffices to show that for any $y^{0}$ such that $%
\kappa (y^{0})=0,$ then there is a ball $\{y:|y-y^{0}|<\delta \}$ such that $%
|\{y:|y-y^{0}|<\delta $, $\kappa (y)=0\}|=0.$

Now for any $y\neq y^{0}$ and $|y-y^{0}|<\delta ,$ since $\kappa (y)$ is
real analytic, we have 
\begin{equation*}
\kappa (y)=\kappa (y_{0})+\sum_{|k|=1}^{\infty }\frac{\kappa ^{(k)}(y^{0})}{%
k!}(y-y^{0})^{k}
\end{equation*}%
where the multi-index $k=[k_{1},k_{2},...,k_{m}],$ $%
k!=k_{1}!k_{2}!...k_{m}!, $ while $(y-y^{0})^{k}=\Pi
_{l=1}^{m}(y_{l}-y_{l}^{0})^{k_{l}}.$ Since $\kappa (y)$ is not identically
zero, there exists $\bar{k}$ such that $\kappa ^{(\bar{k})}(y^{0})\neq 0.$
Without loss of generality, we can further assume that $\bar{k}_{1}\neq 0$,
so that $(y-y^{0})^{k_{1}}$ contains the factor $(y_{1}-y_{1}^{0})^{\bar{k}%
_{1}}.$ Furthermore, we can assume $\bar{k}_{1}\geq 0$ is the smallest among
those non-zero terms, so that every term $(y-y^{0})^{k}$ contains the common
factor $(y_{1}-y_{1}^{0})^{\bar{k}_{1}}.$ We therefore can rewrite$:$%
\begin{equation*}
\kappa (y)=(y_{1}-y_{1}^{0})^{\bar{k}_{1}}\left\{ \sum \frac{\kappa
^{(k)}(y^{0})}{k!}(y-y^{0})^{k-\bar{k}_{1}}\right\} .
\end{equation*}%
For $y_{1}\neq y_{1}^{0}$, $\kappa (y)=0$ implies 
\begin{equation}
\kappa _{1}(y)\equiv \sum \frac{\kappa ^{(k)}(y^{0})}{k!}(y-y^{0})^{k-\bar{k}%
_{1}}=0.  \label{q1}
\end{equation}%
Clearly, for any given $y_{1},$ $\kappa _{1}(y_{1},y_{2},...,y_{m+1})$ is an
analytical function for $m$ variables $\tilde{y}=[y_{2},...,y_{m+1}]$.
Therefore, for fixed $y_{1},$ we can expand $\kappa _{1}(y_{1},\tilde{y})$
around $[y_{2}^{0},...,y_{m+1}^{0}]$ to get 
\begin{equation*}
\kappa _{1}(y_{1},\tilde{y})\equiv \sum_{k=[0,k_{2},k_{3},...,k_{m+1}]}\frac{%
\kappa _{1}^{(k)}(y_{1},y_{2}^{0},...,y_{m+1}^{0})}{k!}(\tilde{y}-\tilde{y}%
^{0})^{k}.
\end{equation*}%
Since by our choices of $\bar{k}$ and $\bar{k}_{1}$, the multi-index $\bar{k}%
-\bar{k}_{1}=[0,\bar{k}_{2},...,\bar{k}_{m+1}]$, and we can consider the
term 
\begin{equation*}
\frac{\kappa _{1}^{(\bar{k}-\bar{k}_{1})}(y_{1},y_{2}^{0},...,y_{m+1}^{0})}{(%
\bar{k}-\bar{k}_{1})!}(\tilde{y}-\tilde{y}^{0})^{\bar{k}-\bar{k}_{1}}.
\end{equation*}%
We compute $\kappa _{1}^{(\bar{k}-\bar{k}_{1})}$ from (\ref{q1}) as 
\begin{equation*}
\kappa _{1}^{(\bar{k}-\bar{k}%
_{1})}(y_{1},y_{2}^{0},...,y_{m+1}^{0})|_{y_{1}=y_{1}^{0}}=\sum \frac{\kappa
^{(k)}(y^{0})}{k!}\partial ^{\bar{k}-\bar{k}_{1}}\{(y-y^{0})^{k-\bar{k}%
_{1}}\}=\frac{(\bar{k}-\bar{k}_{1})!\kappa ^{(\bar{k})}(y^{0})}{\bar{k}!}%
\neq 0,
\end{equation*}%
by our choices of $\bar{k}$ and $\bar{k}_{1}.$ From the continuity of $%
\kappa _{1}^{(\bar{k}-\bar{k}_{1})}(y_{1},y_{2}^{0},...,y_{m+1}^{0})$ with
respect to $y_{1},$ for $|y_{1}-y_{1}^{0}|<\delta $ small, we deduce that%
\begin{equation*}
\kappa _{1}^{(\bar{k}-\bar{k}_{1})}(y_{1},y_{2}^{0},...,y_{m+1}^{0})\neq 0.
\end{equation*}%
Therefore, $\kappa _{1}(y_{1},y_{2},...,y_{m+1})$ is an analytical function
which is not identically zero for all $|y-y^{0}|<\delta $ sufficiently small$%
.$ By the induction hypothesis, for any fixed $y_{1},$ $\kappa
_{1}(y_{1,}y_{2},...,y_{m+1})=0$ is a set of $m-$dimensional Lebesque zero
measure set for $y_{2},y_{3},...y_{m+1}.$ We now apply the Fubini theorem to
compute 
\begin{eqnarray*}
|\{\kappa _{1}(y) &=&0,|y-y^{0}|<\delta \}|=\int_{\mathbf{R}^{m+1}}\mathbf{1}%
_{\{\kappa _{1}(y)=0\}}(y_{1},y_{2},...,y_{m+1})\mathbf{1}_{|y-y^{0}|<\delta
} \\
&=&\int_{\mathbf{R}}\left( \int_{\mathbf{R}^{m}}\mathbf{1}_{\{\kappa
_{1}(y)=0\}}(y_{1},y_{2},...,y_{m+1})\mathbf{1}_{|y-y^{0}|<\delta
}dy_{2}dy_{3}...dy_{m+1}\right) dy_{1} \\
&\leq &\int_{\mathbf{R}}|\{(y_{2},y_{3},...,y_{m+1}):\kappa
_{1}(y_{1},y_{2},...,y_{m+1})=0,\text{ }|y-y^{0}|<\delta \}|dy_{1} \\
&=&0.
\end{eqnarray*}%
Therefore, inside $|y-y^{0}|<\delta $ for $\delta $ sufficiently small, we
have 
\begin{equation*}
\{y:\kappa (y)=0\}\subset \{y_{1}=y_{1}^{0}\}\cup \{\kappa _{1}(y)=0\},
\end{equation*}%
and both sets have zero $(m+1)-$ dimensional Lebesque measure.
\end{proof}

\begin{lemma}
\label{collision}Recall (\ref{weight}) and (\ref{gamma}). We have 
\begin{equation}
|w\Gamma \lbrack g_{1,}g_{2}](v)|\leq C\{w(v)(|v|+1)^{\gamma
}|g_{1}(v)|+||wg_{1}||_{\infty }\}||wg_{2}||_{\infty }.  \notag
\end{equation}
\end{lemma}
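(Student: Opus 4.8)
The plan is to estimate $w\Gamma[g_1,g_2]$ by splitting it into its gain and loss parts and using the pointwise structure of $\Gamma$ together with the exponential weight $w$. Recall from (\ref{gamma}) that
\begin{equation*}
w(v)\Gamma[g_1,g_2](v)=\frac{w(v)}{\sqrt{\mu(v)}}Q(\sqrt{\mu}g_1,\sqrt{\mu}g_2)(v),
\end{equation*}
so writing out $Q_{\text{gain}}$ and $Q_{\text{loss}}$ from (\ref{qgl}) with $F_i=\sqrt{\mu}g_i$, I would treat the loss term first: there the velocities are not changed by the collision, so $\sqrt{\mu(u)}\sqrt{\mu(v)}$ appears directly and $w(v)/\sqrt{\mu(v)}\cdot\sqrt{\mu(v)}=w(v)$, leaving
\begin{equation*}
w(v)|\Gamma_{\text{loss}}[g_1,g_2](v)|\le C\,w(v)|g_1(v)|\int|v-u|^{\gamma}\sqrt{\mu(u)}|g_2(u)|q_0(\theta)\,d\omega du.
\end{equation*}
Bounding $|g_2(u)|\le w(u)^{-1}\|wg_2\|_\infty$ and noting $w(u)^{-1}\sqrt{\mu(u)}|v-u|^\gamma$ is integrable in $u$ with the $u$-integral controlled by $C(1+|v|)^\gamma$ (same computation as the collision frequency $\nu(v)\sim(1+|v|)^\gamma$ since $w^{-1}\sqrt{\mu}\lesssim\mu^{1/4}$ is rapidly decaying), this contributes the term $Cw(v)(1+|v|)^\gamma|g_1(v)|\,\|wg_2\|_\infty$.

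For the gain term the collision changes $v\mapsto v'$, $u\mapsto u'$ with $|v'|^2+|u'|^2=|v|^2+|u|^2$, and the crucial identity is $\mu(v)=\mu(u')\mu(v')/\mu(u)$, equivalently $\sqrt{\mu(u')}\sqrt{\mu(v')}=\sqrt{\mu(v)}\sqrt{\mu(u)}$. Thus
\begin{equation*}
\frac{w(v)}{\sqrt{\mu(v)}}\sqrt{\mu(u')}\sqrt{\mu(v')}=w(v)\sqrt{\mu(u)},
\end{equation*}
and after inserting $|g_1(u')g_2(v')|\le w(u')^{-1}w(v')^{-1}\|wg_1\|_\infty\|wg_2\|_\infty$ I need the key weight comparison: $w(v)/(w(u')w(v'))\lesssim 1$ for all collision configurations. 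Since $w(v)=(1+\rho|v|^2)^\beta e^{\theta|v|^2}$ with $\theta<1/4$, the exponential part is handled by $e^{\theta|v|^2}=e^{\theta(|u'|^2+|v'|^2-|u|^2)}\le e^{\theta|u'|^2}e^{\theta|v'|^2}$, and the polynomial part is harmless (absorb a small power of the Gaussian), so $w(v)/(w(u')w(v'))\le C e^{-\theta|u|^2}(1+|u|^2)^{|\beta|}$. Then the remaining integral $\int|v-u|^\gamma\sqrt{\mu(u)}e^{-\theta|u|^2}(1+|u|)^{2|\beta|}q_0(\theta)\,d\omega du\le C$ (uniformly in $v$, using the angular cutoff $q_0\le C|\cos\theta|$), giving the bound $C\|wg_1\|_\infty\|wg_2\|_\infty\le C\|wg_1\|_\infty\|wg_2\|_\infty$, which is subsumed in the stated right-hand side.

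Combining the two pieces yields $|w\Gamma[g_1,g_2](v)|\le C\{w(v)(1+|v|)^\gamma|g_1(v)|+\|wg_1\|_\infty\}\|wg_2\|_\infty$, which is exactly the claim (the $\|wg_1\|_\infty$ absorbs the gain-term contribution and the loss-term contribution from $g_2$ at bounded speeds). The main obstacle — and the only step requiring genuine care rather than routine bookkeeping — is the weight comparison $w(v)\le C(1+|u|)^{2|\beta|}e^{-\theta|u|^2}\cdot Cw(u')w(v')$ on the collision manifold, i.e. verifying that the strict inequality $\theta<1/4$ leaves enough room so that after the Gaussian-times-polynomial factors are redistributed, the leftover $u$-integral against $|v-u|^\gamma\sqrt{\mu(u)}$ converges uniformly in $v$; this is precisely where $\theta<1/4$ is used (and is the same threshold appearing in Lemma \ref{kernel}). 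Everything else is a direct substitution using the energy-conservation identity for $\mu$ and the integrability of $|v-u|^\gamma\mu(u)^{1/4}$.
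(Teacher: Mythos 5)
Your proposal follows the paper's proof almost step for step: the same loss/gain splitting, the same treatment of the loss term (yielding the $Cw(v)(1+|v|)^{\gamma}|g_{1}(v)|\,\|wg_{2}\|_{\infty}$ contribution), and for the gain term the same two ingredients, namely the identity $\sqrt{\mu(u')\mu(v')}=\sqrt{\mu(u)\mu(v)}$ coming from $|u'|^{2}+|v'|^{2}=|u|^{2}+|v|^{2}$ and the weight comparison $w(v)\leq Cw(u')w(v')$. So in terms of approach there is nothing new or missing relative to the paper.

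The weak point is your last step in the gain estimate. After the substitution the surviving Gaussian is $\sqrt{\mu(u)}=e^{-|u|^{2}/4}$, which decays only in $u$, so the integral you claim is uniformly bounded, $\int|v-u|^{\gamma}e^{-(\frac14+\theta)|u|^{2}}(1+|u|)^{2|\beta|}\,du$, in fact grows like $(1+|v|)^{\gamma}$ for hard potentials $\gamma>0$ (on the region $|u|=O(1)$ where the Gaussian lives, $|v-u|^{\gamma}\sim|v|^{\gamma}$). What your argument really yields for the gain part is therefore $C(1+|v|)^{\gamma}\|wg_{1}\|_{\infty}\|wg_{2}\|_{\infty}$, not $C\|wg_{1}\|_{\infty}\|wg_{2}\|_{\infty}$; and since the gain term samples $g_{1}$ at $u'\neq v$, the pointwise factor $|g_{1}(v)|$ in the stated right-hand side cannot rescue this. (For comparison, the paper's displayed gain estimate carries $e^{-|u-v|^{2}/4}$ in place of $e^{-|u|^{2}/4}$, which would give uniformity, but that factor is not what the identity produces, so your version is if anything the more honest computation.) The weaker bound with the extra $(1+|v|)^{\gamma}\sim\nu(v)$ is exactly what is used everywhere downstream (e.g. $|w\Gamma(\frac{h}{w},\frac{h}{w})|\leq C\nu(v)\|h\|_{\infty}^{2}$ in Section 5), so nothing in the paper is endangered, but you should state the gain bound with the $\nu(v)$ weight rather than claim uniformity in $v$. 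A secondary caveat: the comparison $w(v)\leq Cw(u')w(v')$ is clean only for $\beta\geq0$; for $\beta<0$ the even-energy-split configuration with $|u|$ small leaves an extra factor $(1+\rho|v|^{2})^{|\beta|}$ that cannot be absorbed by any Gaussian (there is no decay in $v$ available), so your parenthetical "the polynomial part is harmless" needs either the restriction $\beta\geq0$ or an explicit polynomial loss; the paper asserts the same inequality without comment, so here you are merely inheriting its imprecision.
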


\begin{proof}
First consider the second term $\Gamma _{\mathrm{loss}}$ in (\ref{gamma}).
We have 
\begin{equation*}
\int_{\mathbf{R}^{3}}|u-v|^{\gamma }|\mu ^{1/2}(u)g_{2}(x,u)|du\leq
C\{|v|+1\}^{\gamma }||wg_{2}||_{\infty },
\end{equation*}%
Hence $w\Gamma _{\mathrm{loss}}[g_{1,}g_{2}]$ is bounded by 
\begin{equation*}
w|g_{1}|\int |u-v|^{\gamma }|\mu ^{1/2}(u)g_{2}(x,u)|du\leq
Cw(v)\{|v|+1\}^{\gamma }|g_{1}(v)|\times ||wg_{2}||_{\infty }.
\end{equation*}%
For $\Gamma _{\mathrm{gain}}$ in (\ref{gamma}), by $|u^{\prime
}|^{2}+|v^{\prime }|^{2}=|u|^{2}+|v|^{2},$ $w(v)\leq Cw(v^{\prime
})w(u^{\prime }),$ and 
\begin{gather*}
\int q_{0}(\theta )|u-v|^{\gamma }e^{-|u-v|^{2}/4}w(v)|g_{1}(u^{\prime
})g_{2}(v^{\prime })|d\omega du \\
\leq \int q_{0}(\theta )|u-v|^{\gamma }e^{-|u-v|^{2}/4}w(u^{\prime
})w(v^{\prime })|g_{1}(u^{\prime })g_{2}(v^{\prime })|d\omega du \\
\leq ||wg_{1}||_{\infty }\times ||w_{2}g_{2}||_{\infty }\int |u-v|^{\gamma
}e^{-|u-v|^{2}/4}du.
\end{gather*}%
Since $0\leq \gamma \leq 1,$ this completes the proof.
\end{proof}

\section{$L^{2}$ Decay Theory}

We define the boundary integration for $g(x,v),$ $x\in \partial \Omega ,$%
\begin{equation}
\int_{\gamma _{\pm }}gd\gamma =\int_{\gamma _{\pm }}g(x,v)|n_{x}\cdot
v|dS_{x}dv  \label{dgamma}
\end{equation}%
where $dS_{x}$ is the standard surface measure on $\partial \Omega .$ We
also define $||h||_{\gamma }=||h||_{\gamma _{+}}+||h||_{\gamma _{-}}$ to be
the $L^{2}(\gamma )$ with respect to the measure $|n_{x}\cdot v|dS_{x}dv.$
For fixed $x\in \partial \Omega ,$ denote the boundary inner product over $%
\gamma _{\pm }$ in $v$ as 
\begin{equation*}
\langle g_{1},g_{2}\rangle _{\gamma _{\pm }}(t,x)=\int_{\pm v\cdot
n(x)>0}g_{1}(t,x,v)g_{2}(t,x,v)|n(x)\cdot v|dv.
\end{equation*}%
By (\ref{diffusenormal}), we also define a different $L_{v}^{2}$-projection
for any boundary function $g(x,v)$ toward the unit vector$\sqrt{c_{\mu }\mu
(v)}$ with respect to $\langle \cdot ,\cdot \rangle $ as: 
\begin{equation}
P_{\gamma }g=\{\int_{n(x)\cdot v>0}g(t,x,v)\sqrt{\mu (v)}n(x)\cdot
vdv\}c_{\mu }\sqrt{\mu (v)}.  \label{pboundary}
\end{equation}

Our main theorem of this section is

\begin{theorem}
\label{L2decay} Let $f(t,x,v)\in L^{2}$ be the (unique) solution to the
linear Boltzmann equation (\ref{lboltzmann}) with trace $f_{\gamma }\in L_{%
\text{loc }}^{2}(\mathbf{R}_{+};L^{2}(\gamma )).$

(1) If $f$ satisfies the in-flow boundary condition (\ref{inflow}), then
there exists $\lambda >0$ and $C>0$ such that for all $0\leq t\leq \infty ,$ 
\begin{equation*}
e^{2\lambda t}||f(t)||^{2}\leq 2\{||f(0)||^{2}+\int_{0}^{t}e^{2\lambda
s}||g(s)||_{\gamma _{-}}^{2}ds\}.
\end{equation*}

(2) Let $f$ satisfy the bounce-back boundary condition (\ref{bounceback}),
then there exists $\lambda >0$ and $C>0$ such that $\sup_{0\leq t\leq \infty
}\{e^{2\lambda t}||f(t)||^{2}\}\leq 2||f(0)||^{2}.$

(3) Let $f$ satisfy the specular reflection condition (\ref{specular}), and
the mass and energy conservation laws (\ref{mass}) and (\ref{energy}). In
the case $\Omega $ has any axis of rotational symmetry (\ref{axis}), we
further require that the corresponding conservation of the angular momentum (%
\ref{axiscon}). Then there exists $\lambda >0$ and $C>0$ such that $%
\sup_{0\leq t\leq \infty }\{e^{2\lambda t}||f(t)||^{2}\}\leq 2||f(0)||^{2}.$

(4) If $f$ satisfies the diffusive boundary condition (\ref{diffuse}) \ and
the mass conservation (\ref{mass}), then there exists $\lambda >0$ and $C>0$
such that $\sup_{0\leq t\leq \infty }\{e^{2\lambda t}||f(t)||^{2}\}\leq
2||f(0)||^{2}.$
\end{theorem}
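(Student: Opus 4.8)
The plan is to prove Theorem~\ref{L2decay} in two stages: a finite-time ``no-loss'' estimate followed by an iteration argument to upgrade it to global exponential decay. The backbone is the standard energy identity: multiplying (\ref{lboltzmann}) by $f$ and integrating over $\Omega\times\mathbf{R}^3$ gives
\begin{equation*}
\tfrac{1}{2}\tfrac{d}{dt}\|f(t)\|^2 + (Lf,f) + \tfrac{1}{2}\int_{\gamma_+}|f|^2 d\gamma - \tfrac{1}{2}\int_{\gamma_-}|f|^2 d\gamma = 0,
\end{equation*}
where $(Lf,f)\geq \delta_0\|(\mathbf{I}-\mathbf{P})f\|_\nu^2$ by the standard coercivity of $L$. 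For the in-flow case (1) the boundary term $\int_{\gamma_-}|f|^2 d\gamma = \int_{\gamma_-}|g|^2 d\gamma$ is a given forcing and $\int_{\gamma_+}|f|^2 d\gamma\geq 0$ can be dropped; for bounce-back and specular the two boundary integrals cancel exactly (since $|f(t,x,v)|=|f(t,x,-v)|$ resp.\ $|f(t,x,R(x)v)|$ and $|n\cdot v|$ is even under $v\mapsto -v$ and invariant under $v\mapsto R(x)v$); for diffuse reflection, a short computation using Cauchy--Schwarz against the normalization (\ref{diffusenormal}) shows $\int_{\gamma_-}|f|^2 d\gamma = \int_{\gamma_-}|P_\gamma f|^2 d\gamma \leq \int_{\gamma_+}|f|^2 d\gamma$, so the net boundary contribution is $\geq \tfrac{1}{2}\int_{\gamma_+}|(\mathbf{I}-P_\gamma)f|^2 d\gamma \geq 0$. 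Thus in all reflection cases $\|f(t)\|$ is non-increasing, and only $\|(\mathbf{I}-\mathbf{P})f\|_\nu$ is directly controlled by the dissipation.

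The crux is therefore to also control the hydrodynamic part $\|\mathbf{P}f\|_\nu$, which is exactly the content of the finite-time estimate (\ref{pi-p}), i.e.\ Proposition~\ref{lower}:
\begin{equation*}
\int_0^1 \|\mathbf{P}f(s)\|_\nu^2 ds \leq M\Big\{\int_0^1\|(\mathbf{I}-\mathbf{P})f(s)\|_\nu^2 ds + \text{boundary terms}\Big\}.
\end{equation*}
I would invoke this proposition (whose proof, sketched in the introduction, runs by contradiction using the averaging lemma for interior compactness together with Lemma~\ref{nomove} ruling out concentration of $\mathbf{P}f_k$ on the grazing set). Combining the energy identity integrated over $[0,1]$ with Proposition~\ref{lower} yields, for each boundary condition, an inequality of the form $\|f(1)\|^2 + c\int_0^1\|f(s)\|_\nu^2 ds \leq \|f(0)\|^2 + (\text{data})$, hence $\int_0^1\|f(s)\|^2 ds \leq C\big(\|f(0)\|^2 + \text{data}\big)$ after absorbing; together with monotonicity (or near-monotonicity with a controllable forcing) of $\|f(t)\|$ this gives $\|f(1)\|^2 \leq (1-\kappa)\|f(0)\|^2 + (\text{data})$ for some $\kappa\in(0,1)$ in the reflection cases. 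Here the conservation laws (\ref{mass})--(\ref{energy}), and (\ref{axiscon}) when $\Omega$ has rotational symmetry, are essential: they guarantee $\mathbf{P}f$ genuinely sees the dissipation rather than sitting in the null space of the whole evolution (for specular reflection, $v\cdot n(x)\sqrt{\mu}$-type collision invariants and the rigid-rotation invariant $\{(x-x_0)\times\varpi\}\cdot v\sqrt{\mu}$ lie in $\ker L$ and must be quotiented out).

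Finally I would iterate: applying the one-step contraction on consecutive unit intervals $[n,n+1]$ and using the semigroup/time-translation structure of (\ref{lboltzmann}), one gets $\|f(n)\|^2 \leq (1-\kappa)^n\|f(0)\|^2 + (\text{accumulated data})$, which after interpolating with the finite-time bound on each interval produces $e^{2\lambda t}\|f(t)\|^2 \leq 2\|f(0)\|^2$ for $\lambda = -\tfrac{1}{2}\log(1-\kappa)>0$ in cases (2),(3),(4), and the stated Duhamel-type bound with the $e^{2\lambda s}\|g(s)\|_{\gamma_-}^2$ term in case (1) (where the standard trick is to run the estimate on $e^{\lambda t}f$, picking up a harmless $\lambda\|f\|^2$ term absorbed by the dissipation for $\lambda$ small). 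The main obstacle is unquestionably Proposition~\ref{lower}: controlling $\mathbf{P}f$ near $\partial\Omega$ without Sobolev regularity, via transport/averaging compactness rather than the elliptic estimates used in the periodic case, and checking the boundary-term bookkeeping for each of the four conditions. Everything downstream of that is the routine energy-identity-plus-iteration machinery.
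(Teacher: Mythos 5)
Your proposal is correct and follows essentially the same route as the paper: the $L^2$ energy identity with case-by-case boundary bookkeeping (cancellation for bounce-back/specular, $\int_{\gamma_-}f^2\,d\gamma=\int_{\gamma_+}|P_\gamma f|^2\,d\gamma$ for diffuse, data forcing for in-flow), combined with the finite-time positivity estimate of Proposition~\ref{lower} applied on unit intervals and then upgraded to exponential decay. Your one-step contraction $\|f(1)\|^2\leq(1-\kappa)\|f(0)\|^2$ iterated over $[n,n+1]$ is just a discrete repackaging of the paper's $e^{2\lambda\{k+s\}}$-weighted summation over $\cup_k[k,k+1)$ with absorption of the $\lambda\|f\|_\nu^2$ term, so the two arguments coincide in substance.
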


We remark that the existence of such a $L^{2}$ solution $f$ $\ $with its
trace $f_{\gamma }\in L_{\text{loc }}^{2}(\mathbf{R}_{+};L^{2}(\gamma ))$
(which guarantees the uniqueness) is in general not known for the
bounce-back and specular boundary conditions within $L^{2}$ framework. This
is due to the possible blow-up of $L_{\text{loc }}^{2}(\mathbf{R}%
_{+};L^{2}(\gamma ))$ at the grazing set $\gamma _{0}.$ See [BP], [CIP] and
[U1] for more details. On the other hand, $\int_{0}^{t}||f(s)||_{\gamma
}^{2}ds<\infty $ will be established by the study of (\ref{lboltzmannh}) in
Theorems \ref{bouncebackrate}, \ref{specularrate} and \ref{duffusiverate}
with property $h=wf\in L^{\infty }$ and $h=wf\in L^{\infty }(\gamma ).$ We
mainly will establish the following:

\begin{proposition}
\label{lower}

(1) There exists $M>0$ such that for any solution $f(t,x,v)$ to the
linearized Boltzmann equation (\ref{lboltzmann}), 
\begin{equation}
\int_{0}^{1}||\mathbf{P}f(s)||_{\nu }^{2}ds\leq M\{\int_{0}^{1}||(\mathbf{I}-%
\mathbf{P)}f(s)||_{\nu }^{2}ds+\int_{0}^{1}||f(s)||_{\gamma }^{2}ds\}.
\label{inflowpositive}
\end{equation}

(2) There exists $M>0$ such that for any solution $f(t,x,v)$ to the
linearized Boltzmann equation (\ref{lboltzmann}) satisfying the bounce-back
boundary condition (\ref{bounceback}) and the mass-energy conservation laws (%
\ref{mass}) and (\ref{energy})$,$ we have 
\begin{equation}
\int_{0}^{1}||\mathbf{P}f(s)||_{\nu }^{2}ds\leq M\int_{0}^{1}||(\mathbf{I}-%
\mathbf{P)}f(s)||_{\nu }^{2}ds.  \label{bbpositive}
\end{equation}

(3) There exists $M>0$ such that for any solution $f(t,x,v)$ to the
linearized Boltzmann equation (\ref{lboltzmann}) satisfying the specular
reflection condition (\ref{specular}) and the mass-energy conservation laws (%
\ref{mass}) and (\ref{energy}), (in the case $\Omega $ has any axis of
rotational symmetry (\ref{axis}), we further assume conservation of the
angular momentum (\ref{axiscon})), estimate (\ref{bbpositive}) is valid.

(4) There exists $M>0$ such that for any solution $f(t,x,v)$ solution to the
linearized Boltzmann equation (\ref{lboltzmann}) satisfying the diffusive
boundary condition (\ref{diffuse}) and the mass conservation (\ref{mass}), 
\begin{equation}
\int_{0}^{1}||\mathbf{P}f(s)||_{\nu }^{2}ds\leq M\{\int_{0}^{1}||(\mathbf{I}-%
\mathbf{P)}f(s)||_{\nu }^{2}ds+\int_{0}^{1}||\mathbf{\{}I-P_{\gamma
}\}f(s)||_{\gamma _{+}}^{2}ds\}.  \label{diffusepositive}
\end{equation}
\end{proposition}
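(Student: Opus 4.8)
The plan is to argue by contradiction in each case, exactly as sketched in the introduction. Suppose, say for part (1), that no such $M$ exists. Then there is a sequence $f_k$ of solutions to (\ref{lboltzmann}) for which the left side exceeds $k$ times the right side. Normalize by setting $Z_k = f_k / \sqrt{\int_0^1 \|\mathbf{P}f_k(s)\|_\nu^2\,ds}$, so that $\int_0^1 \|\mathbf{P}Z_k(s)\|_\nu^2\,ds = 1$ while $\int_0^1 \|(\mathbf{I}-\mathbf{P})Z_k(s)\|_\nu^2\,ds + \int_0^1\|Z_k(s)\|_\gamma^2\,ds \le 1/k \to 0$. Each $Z_k$ solves the \emph{same} linear equation (\ref{lboltzmann}) with the corresponding boundary condition. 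By weak compactness we may extract $Z_k \rightharpoonup Z$ weakly in $L^2((0,1)\times\Omega\times\mathbf{R}^3)$; passing to the limit in the equation, $Z$ solves $\partial_t Z + v\cdot\nabla_x Z + LZ = 0$ with $(\mathbf{I}-\mathbf{P})Z = 0$ (so $Z = \mathbf{P}Z$) and, in the respective cases, zero inflow / bounce-back / specular / diffuse boundary data. For such a $Z$ that is purely hydrodynamic, $LZ = 0$, so $\partial_t Z + v\cdot\nabla_x Z = 0$; writing $Z = \{a + b\cdot v + c|v|^2\}\sqrt{\mu}$ and taking velocity moments yields the linearized compressible Euler-type (macroscopic) relations forcing $a,b,c$ to be an infinitesimal Maxwellian parameter field that is constant in $t$ and $x$; the boundary condition together with the conservation laws (\ref{mass}), (\ref{energy}) and, when relevant, (\ref{axiscon}) then forces $a=b=c=0$, i.e. $Z \equiv 0$. (For the specular case one also needs the geometric dichotomy: either $\Omega$ is a surface of revolution and the angular momentum is killed by (\ref{axiscon}), or the only $b$ with $b\cdot n = 0$ on $\partial\Omega$ of the admissible rigid form is $b \equiv 0$; this is where one invokes the rotational-symmetry hypothesis.)

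The heart of the matter is upgrading the weak convergence $Z_k \rightharpoonup Z = 0$ to \emph{strong} convergence $Z_k \to 0$ in $L^2((0,1);L^2_\nu)$, which contradicts $\int_0^1\|\mathbf{P}Z_k\|_\nu^2 = 1$. Since $\|\mathbf{P}Z_k\|_\nu \le \|Z_k\|_\nu$ and $\|(\mathbf{I}-\mathbf{P})Z_k\|_\nu \to 0$, it suffices to show $\int_0^1\|Z_k(s)\|_\nu^2\,ds \to 0$, and since the velocity tails are controlled by the $\nu$-weight it is enough to get strong convergence locally in $(x,v)$ on bounded velocity sets. In the interior of $\Omega$ this is precisely the velocity-averaging lemma of DiPerna--Lions: $Z_k$ solves a transport equation $\partial_t Z_k + v\cdot\nabla_x Z_k = g_k$ with $g_k = -L Z_k = -\nu Z_k + K Z_k$ bounded in $L^2$, so velocity averages of $Z_k$ are compact in $L^2_{loc}$; combined with $(\mathbf{I}-\mathbf{P})Z_k \to 0$ strongly and $\mathbf{P}Z_k$ being a finite combination of fixed velocity profiles times moments of $Z_k$, this gives $Z_k \to 0$ strongly on any compact subset of $(0,1)\times\Omega\times\{|v|\le N\}$.

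The main obstacle, and the part requiring the new ideas, is ruling out concentration of $\|Z_k\|^2$ near the boundary $\partial\Omega$. Here I would split the boundary slab into a non-grazing piece $\{|v\cdot n(x)| \ge \delta\}$ and an almost-grazing piece $\{|v\cdot n(x)| < \delta\}$. On the non-grazing piece, every such $(x,v)$ near $\partial\Omega$ can be traced back along the characteristic $X(s) = x - (t-s)v$ into the interior of $\Omega$ in a controlled time (this uses Lemma \ref{huang}(3), i.e. $t_{\mathbf b}$ is bounded below by $|n(x)\cdot v|/(C_\xi|v|^2)$, so trajectories genuinely enter the bulk); representing $Z_k$ along these characteristics in terms of its interior values plus the $L^2_\gamma$-small boundary trace and the $L^2$-small collision forcing, the interior compactness transfers to a strip near $\partial\Omega$, giving no concentration there (this is the content alluded to as Proposition \ref{strong}). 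On the almost-grazing piece, one cannot use transport, but there $Z_k \approx \mathbf{P}Z_k = \{a_k + b_k\cdot v + c_k|v|^2\}\sqrt{\mu(v)}$ up to the $o(1)$ remainder $(\mathbf{I}-\mathbf{P})Z_k$, and the key observation (Lemma \ref{nomove}) is that a function of this special polynomial-times-Gaussian form cannot concentrate its $L^2(\Omega\times\mathbf{R}^3)$ mass on the thin set $\{x \text{ near } \partial\Omega,\ |v\cdot n(x)| < \delta\}$: its mass on that set is $O(\delta)$ uniformly in the coefficients $(a_k,b_k,c_k)$, since the Gaussian weight prevents loss into large $v$ and the set is geometrically small. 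Letting first $k\to\infty$ and then $\delta\to 0$ closes the argument and yields the contradiction. The cases (2)--(4) follow the same scheme; the only changes are the boundary term bookkeeping (for diffuse reflection one keeps $\|\{I - P_\gamma\}f\|_{\gamma_+}$ on the right, exploiting that the part of the trace in the direction of $\sqrt{c_\mu\mu}$ is reproduced by the boundary condition and hence need not be controlled), and, for specular reflection, the geometric/conservation-law input needed to conclude $Z\equiv 0$ in the limiting step.
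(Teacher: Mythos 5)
Your overall architecture matches the paper's: contradiction plus normalization, weak limit $Z$ with $(\mathbf{I}-\mathbf{P})Z=0$, interior compactness by velocity averaging, a non-grazing/almost-grazing split near $\partial\Omega$ with the observation that $\mathbf{P}Z_{k}$ cannot concentrate on the grazing set (Lemma \ref{nomove}), and finally the boundary conditions plus conservation laws killing the limit. However, two load-bearing steps are not right as written. First, the macroscopic equations do \emph{not} force $(a,b,c)$ to be constant in $(t,x)$: their general solution is the larger finite-dimensional family of Lemma \ref{limit}, with $a$ quadratic in $x$, $b$ containing a rigid rotation $\varpi\times x$ and terms linear in $t$, and $c$ quadratic in $t$ (see (\ref{zlimit})). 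Eliminating each of these coefficients is exactly where the specific boundary condition and the conservation laws (\ref{mass}), (\ref{energy}), (\ref{axiscon}) enter, and the specular case is delicate precisely because the field $\varpi\times(x-x_{0})\cdot v\sqrt{\mu}$ as in (\ref{zomega}) survives the specular condition when $\Omega$ is rotationally symmetric; your parenthesis gestures at this, but it contradicts your ``constant in $t$ and $x$'' claim, and the derivation of the full family is a genuine part of the proof, not a corollary of constancy.

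Second, and more seriously, your no-concentration argument on the non-grazing set invokes ``the $L^{2}_{\gamma}$-small boundary trace.'' That smallness is available only in case (1); in cases (2) and (3) the right-hand side of (\ref{bbpositive}) contains no boundary term at all, and in case (4) only $\|(I-P_{\gamma})Z_{k}\|_{\gamma_{+}}$ is small, so the trace of $Z_{k}$ on $\partial\Omega$ cannot be used. The argument must be arranged so that the trace on $\partial\Omega$ never appears: for outgoing velocities one estimates backward in time and for incoming velocities forward in time (your ``trace back into the interior'' only works for the outgoing half), using transported cutoffs as in Lemma \ref{chi} so that the outer-boundary terms either vanish on the support of the cutoff or come with a favorable sign, and the estimate is closed on the interior surface $\{\xi=-\varepsilon^{4}\}$, whose trace is then controlled by the interior compactness via a trace theorem (this is the content of Lemma \ref{strong}). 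Two smaller omissions: you need a uniform-in-time $L^{2}$ bound (Lemma \ref{zinfty}) to rule out concentration near $t=0,1$, since both the averaging compactness and the shell estimate only cover $\varepsilon\leq s\leq 1-\varepsilon$; and passing the bounce-back/specular/diffuse boundary conditions to the limit $Z$ requires a trace-convergence argument, not just passing to the limit in the interior equation.
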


We first show that Proposition \ref{lower} implies Theorem \ref{L2decay}.

\begin{proof}
\textbf{of Theorem }\ref{L2decay}: For any solution $f$ to the linear
Boltzmann equation (\ref{lboltzmann}), $e^{\lambda t}f(t)$ satisfies 
\begin{equation}
\{\partial _{t}+v\cdot \nabla _{x}+L\}\{e^{\lambda t}f\}-\lambda e^{\lambda
t}f=0.  \label{lambdaf}
\end{equation}

Let $0\leq $ $N\leq t\leq N+1,$ $N$ being an integer. We split $[0,t]=$ $%
[0,N]$ $\cup \lbrack N,t].$

For the time interval $[N,t],$ since $f_{\gamma }\in L_{\text{loc }}^{2}(%
\mathbf{R}_{+};L^{2}(\gamma )),$ $\int_{N}^{t}||f(s)||_{\gamma
}^{2}ds<\infty .$ We then establish the $L^{2}$ energy estimate for $[N,t]$
as 
\begin{equation}
||f(t)||^{2}+\int_{N}^{t}(Lf,f)ds+\int_{N}^{t}\int_{\gamma
_{+}}f^{2}(s)d\gamma ds=||f(N)||^{2}+\int_{N}^{t}\int_{\gamma
_{-}}f^{2}(s)d\gamma ds.  \label{ntot}
\end{equation}

For the time interval $[0,N],$ (we may assume $N\geq 1$), since $f_{\gamma
}\in L_{\text{loc }}^{2}(\mathbf{R}_{+};L^{2}(\gamma )),$ $%
\int_{0}^{N}||f(s)||_{\gamma }^{2}ds<\infty .$ We multiply $e^{\lambda t}f$
with (\ref{lambdaf}) and take $L^{2}$ energy estimate over $0\leq s\leq N:$%
\begin{eqnarray*}
&&e^{2\lambda N}||f(N)||^{2}+\int_{0}^{N}e^{2\lambda s}(Lf,f)ds-\lambda
\int_{0}^{N}e^{2\lambda s}||f(s)||^{2}ds \\
&=&||f(0)||^{2}+\int_{0}^{N}\int_{\gamma _{-}}e^{2\lambda s}f^{2}(s)d\gamma
ds-\int_{0}^{N}\int_{\gamma _{+}}e^{2\lambda s}f^{2}(s)d\gamma ds.
\end{eqnarray*}%
Dividing the time interval into $\cup _{k=0}^{N-1}[k,k+1)$ and letting $%
f_{k}(s,x,v)\equiv f(k+s,x,v)$ for $k=0,1,2...N-1,$ we deduce 
\begin{eqnarray}
&&e^{2\lambda N}||f(N)||^{2}+\sum_{k=0}^{N-1}\int_{0}^{1}\left\{ e^{2\lambda
\{k+s\}}(Lf_{k},f_{k})-\lambda e^{2\lambda \{k+s\}}||f_{k}(s)||^{2}\right\}
ds  \label{L2energy} \\
&=&||f(0)||^{2}+\sum_{k=0}^{N-1}\left\{ \int_{0}^{1}\int_{\gamma
_{-}}e^{2\lambda \{k+s\}}f_{k}^{2}(s)d\gamma ds-\int_{0}^{1}\int_{\gamma
_{+}}e^{2\lambda \{k+s\}}f_{k}^{2}(s)d\gamma ds\right\} .  \notag
\end{eqnarray}%
Notice that $f_{k}(k+s,x,v)\,$\ satisfies the same linearized Boltzmann
equation (\ref{lboltzmann}) for $0\leq s\leq 1$.

\textbf{In-flow boundary condition (\ref{inflow}):} Multiplying $\delta
_{0}e^{2\lambda k}$ with (\ref{inflowpositive}) to each $f_{k}(s,x,v)$ and
then summing up over $k$ yields 
\begin{equation}
\frac{\delta _{0}}{2}\sum_{k=0}^{N-1}\{e^{2\lambda k}\int_{0}^{1}||\{\mathbf{%
I}-\mathbf{P}\}f_{k}||_{\nu }^{2}ds+e^{2\lambda k}\int_{0}^{1}\int_{\gamma
}f_{k}^{2}(s)d\gamma ds\}\geq \frac{\delta _{0}}{2M}\sum_{k=0}^{N-1}e^{2%
\lambda k}\int_{0}^{1}||\mathbf{P}f_{k}||_{\nu }^{2}ds.  \label{deltalower}
\end{equation}%
Note that $\int_{\gamma }=\int_{\gamma _{+}}+\int_{\gamma _{-}}$ , is the
total boundary integration. Since 
\begin{equation*}
(Lf_{k},f_{k})\geq \delta _{0}||\{\mathbf{I}-\mathbf{P}\}f_{k}||_{\nu }^{2}=%
\frac{\delta _{0}}{2}||\{\mathbf{I}-\mathbf{P}\}f_{k}||_{\nu }^{2}+\frac{%
\delta _{0}}{2}||\{\mathbf{I}-\mathbf{P}\}f_{k}||_{\nu }^{2}
\end{equation*}%
and $e^{2\lambda ks}\geq 1,$ we apply (\ref{deltalower}) to the first copy
of $\frac{\delta _{0}}{2}||\{\mathbf{I}-\mathbf{P}\}f_{k}||_{\nu }^{2}$ in (%
\ref{L2energy}), and move the boundary integrals in (\ref{deltalower}) to
the right hand side of (\ref{L2energy}). Hence, 
\begin{eqnarray*}
&&e^{2\lambda N}||f(N)||^{2}+\frac{\delta _{0}}{2M}\sum_{k=0}^{N-1}%
\int_{0}^{1}e^{2\lambda k}||\mathbf{P}f_{k}||_{\nu }^{2}ds+\frac{\delta _{0}%
}{2}\sum_{k=0}^{N-1}\int_{0}^{1}e^{2\lambda k}||\{\mathbf{I}-\mathbf{P}%
\}f_{k}||_{\nu }^{2}ds \\
&&-C_{\nu }\lambda \sum_{k=0}^{N-1}\int_{0}^{1}e^{2\lambda
\{k+s\}}||f_{k}(s)||_{\nu }^{2}ds \\
&\leq &||f(0)||^{2}+(1+\frac{\delta _{0}}{2})\sum_{k=0}^{N-1}\int_{0}^{1}%
\int_{\gamma _{-}}e^{2\lambda \{k+s\}}g_{k}^{2}(s)d\gamma ds-(1-\frac{\delta
_{0}}{2})\sum_{k=0}^{N-1}\int_{0}^{1}\int_{\gamma _{+}}e^{2\lambda
\{k+s\}}f_{k}^{2}(s)d\gamma ds.
\end{eqnarray*}%
Here we have used the fact $||\cdot ||\leq C_{\nu }||\cdot ||_{\nu }$ for
hard potentials, and the in-flow boundary condition $f_{k}=g_{k}$ on $\gamma
_{-}.$ Combining $\mathbf{P}f_{k}$ with $\{\mathbf{I}-\mathbf{P}\}f_{k},$
and note $e^{2\lambda k}=e^{2\lambda \{k+s\}}e^{-2\lambda s}\geq e^{2\lambda
\{k+s\}}e^{-2\lambda },$ we obtain a positive lower bound in the left hand
side: 
\begin{equation*}
\left( \min \{\frac{\delta _{0}}{2},\frac{\delta _{0}}{2M}\}e^{-2\lambda
}-C_{\nu }\lambda \right) \sum_{k=0}^{N-1}\int_{0}^{1}e^{2\lambda
\{k+s\}}||f_{k}(s)||_{\nu }^{2}ds>0
\end{equation*}%
for $C_{\nu }\lambda <\min \{\frac{\delta _{0}}{4},\frac{\delta _{0}}{4M}%
\}e^{-2\lambda }.$ Changing back to $f_{k}(t)=f(t+k)$ and letting $1-\frac{%
\delta _{0}}{2}>0,$ we deduce 
\begin{equation}
e^{2\lambda N}||f(N)||^{2}\leq ||f(0)||^{2}+(1+\frac{\delta _{0}}{2}%
)\int_{0}^{N}\int_{\gamma _{-}}e^{2\lambda s}g^{2}(s)d\gamma ds.
\label{ndecay}
\end{equation}

Notice that $e^{2\lambda t}\leq e^{2\lambda \{t-N\}}e^{2\lambda s}$ for $%
s\geq N$, and since $t\leq N+1,$ \ we can choose for $\delta _{0}$ and $%
\lambda $ small such that $e^{2\lambda (t-N)}(1+\frac{\delta _{0}}{2})\leq
2. $ Hence, multiplying $e^{2\lambda t}$ with (\ref{ntot}) and combining
with (\ref{ndecay}) yields 
\begin{eqnarray*}
&&e^{2\lambda t}||f(t)||^{2}+e^{2\lambda t}\int_{N}^{t}\int_{\gamma
_{+}}f^{2}(s)d\gamma ds\leq e^{2\lambda t}||f(N)||^{2}+e^{2\lambda
t}\int_{N}^{t}\int_{\gamma _{-}}g^{2}(s)d\gamma ds \\
&\leq &e^{2\lambda \{t-N\}}\{||f(0)||^{2}+(1+\frac{\delta _{0}}{2}%
)\int_{0}^{N}\int_{\gamma _{-}}e^{2\lambda s}g^{2}(s)d\gamma ds\} \\
&&+e^{2\lambda \{t-N\}}\int_{N}^{t}\int_{\gamma _{-}}e^{2\lambda
s}g^{2}(s)d\gamma ds\text{\ \ } \\
&\leq &2\{||f(0)||^{2}+\int_{0}^{t}\int_{\gamma _{-}}e^{2\lambda
s}g^{2}(s)d\gamma ds\}.
\end{eqnarray*}

\textbf{Bounce-back and specular reflections (\ref{bounceback}) and (\ref%
{specular})}. In both cases, the total boundary contribution in (\ref%
{L2energy}) vanishes: 
\begin{equation}
\sum_{k=0}^{N-1}\int_{0}^{1}\int_{\gamma _{-}}e^{2\lambda
\{k+s\}}f_{k}^{2}(s)d\gamma ds-\sum_{k=0}^{N-1}\int_{0}^{1}\int_{\gamma
_{+}}e^{2\lambda \{k+s\}}f_{k}^{2}(s)d\gamma ds=0.  \label{bvanish}
\end{equation}%
For $N\leq t<N+1,$ we use the same procedure as in the in-flow case (\ref%
{L2energy}) and the positivity (\ref{bbpositive}) to get 
\begin{equation*}
e^{2\lambda N}||f(N)||^{2}+\left( \min \{\frac{\delta _{0}}{2},\frac{\delta
_{0}}{2M}\}e^{-2\lambda }-C_{\nu }\lambda \right)
\sum_{k=0}^{N-1}\int_{0}^{1}e^{2\lambda \{k+s\}}||f_{k}(s)||_{\nu
}^{2}ds\leq ||f(0)||^{2}.
\end{equation*}%
For $C_{\nu }\lambda =\min \{\frac{\delta _{0}}{4},\frac{\delta _{0}}{4M}%
\}e^{-2\lambda }>0\,,$ changing back to the original $f(k+s)$ leads to%
\begin{equation}
e^{2\lambda N}||f(N)||^{2}\leq ||f(0)||^{2}.  \label{bbn}
\end{equation}%
By (\ref{ntot}) and (\ref{bvanish}), $||f(t)||\leq ||f(N)||.$ We deduce if $%
e^{2\lambda (t-N)}\leq 2$ 
\begin{equation*}
e^{2\lambda t}||f(t)||^{2}\leq e^{2\lambda t}||f(N)||^{2}\leq e^{2\lambda
(t-N)}||f(0)||^{2}\leq 2||f(0)||^{2}.
\end{equation*}

\textbf{Diffuse boundary reflection (\ref{diffuse}).} We note from (\ref%
{diffuse}) and\textbf{\ }(\ref{pboundary})\textbf{\ }that $\int_{\gamma
_{-}}f^{2}(s)d\gamma =\int_{\gamma _{+}}[P_{\gamma }f(s)]^{2}d\gamma ,$ so
that the boundary contribution in (\ref{L2energy}) is 
\begin{equation*}
\int_{0}^{1}\int_{\gamma _{-}}e^{2\lambda \{k+s\}}f(s)^{2}d\gamma
ds-\int_{0}^{1}\int_{\gamma _{+}}e^{2\lambda \{k+s\}}f^{2}(s)d\gamma
ds=-\int_{0}^{1}\int_{\gamma _{+}}e^{2\lambda s}[\{I-P_{\gamma
}\}f(s)]^{2}d\gamma ds.
\end{equation*}%
By the same procedure, we obtain from (\ref{L2energy}) and the positivity (%
\ref{diffusepositive}):%
\begin{eqnarray*}
&&e^{2\lambda N}||f(N)||^{2}+\left( \min \{\frac{\delta _{0}}{2},\frac{%
\delta _{0}}{2M}\}e^{-2\lambda }-C_{\nu }\lambda \right)
\sum_{k=0}^{N-1}\int_{0}^{1}e^{2\lambda \{k+s\}}||f_{k}(s)||_{\nu }^{2}ds \\
&\leq &||f(0)||^{2}-(1-\frac{\delta _{0}}{2})\sum_{k=0}^{N-1}\int_{0}^{1}%
\int_{\gamma _{+}}e^{2\lambda \{k+s\}}[\{I-P_{\gamma }\}f(s)]^{2}d\gamma ds.
\end{eqnarray*}%
For $\frac{\delta _{0}}{2}<1$ and $C_{\nu }\lambda <\min \{\frac{\delta _{0}%
}{4},\frac{\delta _{0}}{4M}\}\,e^{-2\lambda }$, we have 
\begin{equation*}
e^{2\lambda N}||f(N)||^{2}\leq ||f(0)||^{2}.
\end{equation*}%
Since for $[N,t],$ we have $||f(t)||^{2}\leq ||f(N)||^{2},$ from (\ref{ntot}%
). We therefore conclude the proposition for $e^{2\lambda (t-N)}\leq 2.$
\end{proof}

\subsection{Strategy for the Proof of Prop. \protect\ref{lower}}

The rest of this section is devoted entirely to the proof of the crucial
Proposition \ref{lower}. The proof of Proposition \ref{lower} is based on a
contradiction argument. If Proposition \ref{lower} were false, then there
are no $M$ exists as in Proposition \ref{lower} for every linear Boltzmann
solution. Hence, for any $k\geq 1,$ there exists a sequence of non-zero
solutions $f_{k}(t,x,v)$ to the linearized Boltzmann equation (\ref%
{lboltzmann}) to satisfy one of the following:

(1) \textbf{In the in-flow case:} $f_{k}$ satisfies (\ref{inflow}) and

\begin{equation*}
\int_{0}^{1}||\mathbf{P}f_{k}(s)||_{\nu }^{2}ds\geq k\{\int_{0}^{1}||(%
\mathbf{I}-\mathbf{P)}f_{k}(s)||_{\nu
}^{2}ds+\int_{0}^{1}||f_{k}(s)||_{\gamma }^{2}ds\}.
\end{equation*}%
Equivalently, in terms of normalization $Z_{k}(t,x,v)\equiv \frac{%
f_{k}(t,x,v)}{\sqrt{\int_{0}^{1}||\mathbf{P}f_{k}(s)||_{\nu }^{2}ds}},$ we
have 
\begin{equation}
\int_{0}^{1}||\mathbf{P}Z_{k}(s)||_{\nu }^{2}ds\equiv 1,  \label{1}
\end{equation}%
and 
\begin{equation}
\int_{0}^{1}||(\mathbf{I}-\mathbf{P)}Z_{k}(s)||_{\nu
}^{2}ds+\int_{0}^{1}||Z_{k}(s)||_{\gamma }^{2}ds\leq \frac{1}{k}.
\label{1/n}
\end{equation}%
We also have from $[\partial _{t}+v\cdot \nabla _{x}+L]f_{k}=0,$%
\begin{equation}
\lbrack \partial _{t}+v\cdot \nabla _{x}+L]Z_{k}=0.  \label{zn}
\end{equation}

(2) \ \textbf{In the bounce-back case:} $f_{k}$ satisfies (\ref{bounceback}%
), the mass-energy conservation laws (\ref{mass}) and (\ref{energy}), and 
\begin{equation}
\int_{0}^{1}||\mathbf{P}f_{k}(s)||^{2}ds\geq k\int_{0}^{1}||(\mathbf{I}-%
\mathbf{P)}f_{k}(s)||_{\nu }^{2}ds.  \label{bbblowup}
\end{equation}%
Hence, the normalized $Z_{k}$ satisfies (\ref{1}), (\ref{zn}), and 
\begin{equation}
\int_{0}^{1}||(\mathbf{I}-\mathbf{P)}Z_{k}(s)||_{\nu }^{2}ds\leq \frac{1}{k}
\label{bb1/n}
\end{equation}

(3) \textbf{In the specular reflection case:} $f_{k}$ satisfies (\ref%
{specular}), and the mass and energy conservation laws (\ref{mass}), (\ref%
{energy}). (If the domain $\Omega $ has any axis of rotation symmetry (\ref%
{axis}) then $f_{k}$ also satisfies (\ref{axiscon})). \ We note that (\ref%
{bbblowup}), (\ref{1}), (\ref{zn}) and (\ref{bb1/n}) are all valid for the
normalized $Z_{k}.$

\QTP{Body Math}
(4) \textbf{In the diffusive reflection case:} \ $f_{k}$ satisfies (\ref%
{diffuse}), and the mass conservation (\ref{mass}), (\ref{zn}), and 
\begin{equation}
\int_{0}^{1}||\mathbf{P}f_{k}(s)||_{\nu }^{2}ds\geq k\{\int_{0}^{1}||(%
\mathbf{I}-\mathbf{P)}f_{k}(s)||_{\nu }^{2}ds+\int_{0}^{1}||\{I-P_{\gamma
}\}f_{k}(s)||_{\gamma _{+}}^{2}ds\}.  \label{diffuseblowup}
\end{equation}%
The normalized $Z_{k}$ satisfies (\ref{1}) and 
\begin{equation}
\int_{0}^{1}||(\mathbf{I}-\mathbf{P)}Z_{k}(s)||_{\nu
}^{2}ds+\int_{0}^{1}||\{I-P_{\gamma }\}Z_{k}(s)||_{\gamma _{+}}^{2}ds\leq 
\frac{1}{k}.  \label{diffuse1/n}
\end{equation}

\QTP{Body Math}
In all four cases, there exists $Z(t,x,v)$ such that 
\begin{equation*}
Z_{k}\rightarrow Z\text{ weakly in }\int_{0}^{1}||\cdot ||_{\nu }^{2}ds,
\end{equation*}%
since $\sup_{k}\int_{0}^{1}||Z_{k}(s)||_{\nu }^{2}ds$\TEXTsymbol{<}+$\infty
, $ and from (\ref{1/n}), (\ref{bb1/n}), (\ref{diffuse1/n}) that 
\begin{equation}
\int_{0}^{1}||(\mathbf{I}-\mathbf{P)}Z_{k}(s)||_{\nu }^{2}ds\rightarrow 0.
\label{i-pto0}
\end{equation}%
Notice that it is straightforward to verify 
\begin{equation*}
\mathbf{P}Z_{k}\rightarrow \mathbf{P}Z\text{ weakly in }\int_{0}^{1}||\cdot
||_{\nu }^{2}ds.
\end{equation*}%
Therefore $(\mathbf{I}-\mathbf{P)}Z_{k}\rightarrow (\mathbf{I}-\mathbf{P)}Z$
weakly, and $(\mathbf{I}-\mathbf{P)}Z=0$ from (\ref{i-pto0}) so that 
\begin{equation}
Z(t,x,v)=\{a(t,x)+v\cdot b(t,x)+|v|^{2}c(t,x)\}\sqrt{\mu }.  \label{zabc}
\end{equation}%
Note $LZ_{k}=L(\mathbf{I}-\mathbf{P)}Z_{k}$ and $\int_{0}^{1}||(\mathbf{I}-%
\mathbf{P)}Z_{k}(s)||_{\nu }^{2}ds\rightarrow 0$ in four cases. Letting $%
k\rightarrow \infty $ in (\ref{zn}), we have, in the sense of distributions, 
\begin{equation}
\partial _{t}Z+v\cdot \nabla _{x}Z=0.  \label{zvlasov}
\end{equation}%
The main strategy is to show, on the one hand, $Z$ has to be zero from (\ref%
{i-pto0}) and one of the inherited boundary conditions (\ref{inflow}), (\ref%
{bounceback}), (\ref{specular}) and (\ref{diffuse}). On the other hand, $%
Z_{k}$ will be shown to converge strongly to $Z$ in $\int_{0}^{1}||\cdot
||^{2}ds,$ and $\int_{0}^{1}||Z||_{{}}^{2}ds\neq 0.$ This leads to a
contradiction.

\subsection{The Limit Function $Z(t,x,v)$}

\begin{lemma}
\label{limit} There exists constants $a_{0},c_{1},c_{2},$ and constant
vectors $b_{0},b_{1}$ and $\varpi $ such that $Z(t,x,v)$ takes the form:%
\begin{equation}
\left( \{\frac{c_{0}}{2}|x|^{2}-b_{0}\cdot
x+a_{0}\}+\{-c_{0}tx-c_{1}x+\varpi \times x+b_{0}t+b_{1}\}\cdot v+\{\frac{%
c_{0}t^{2}}{2}+c_{1}t+c_{2}\}|v|^{2}\right) \sqrt{\mu }.  \label{zlimit}
\end{equation}%
Moreover, these constants are finite:%
\begin{equation}
|a_{0}|+|c_{0}|+|c_{1}|+|c_{2}|+|b_{0}|+|b_{1}|+|\varpi |<+\infty .
\label{constantbound}
\end{equation}
\end{lemma}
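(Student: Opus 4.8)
The plan is to exploit the two facts already in hand for the limit: from $(\mathbf{I}-\mathbf{P})Z=0$ we have the hydrodynamic form $Z=\{a(t,x)+v\cdot b(t,x)+|v|^{2}c(t,x)\}\sqrt{\mu }$ as in (\ref{zabc}), and $Z$ solves the free transport equation (\ref{zvlasov}) in the sense of distributions. Substituting the ansatz into (\ref{zvlasov}), the left side becomes a finite sum $\sum_{\alpha }g_{\alpha }(t,x)\,v^{\alpha }\sqrt{\mu }$ over the pairwise distinct functions $\sqrt{\mu },\ v_{i}\sqrt{\mu },\ v_{i}v_{j}\sqrt{\mu }\ (i\leq j),\ v_{i}|v|^{2}\sqrt{\mu }$, which are linearly independent; hence each coefficient $g_{\alpha }$ vanishes as a distribution in $(t,x)$. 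Collecting monomials gives the closed constant-coefficient system
\begin{equation*}
\partial _{t}a=0,\qquad \nabla _{x}a+\partial _{t}b=0,\qquad \nabla _{x}c=0,
\end{equation*}
together with $\partial _{x_{i}}b_{j}+\partial _{x_{j}}b_{i}=0$ for $i\neq j$ and $\partial _{x_{i}}b_{i}+\partial _{t}c=0$ (no sum on $i$) for each $i$.

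From $\nabla _{x}c=0$ we get $c=c(t)$, and the relations on $b$ read $\partial _{x_{i}}b_{j}+\partial _{x_{j}}b_{i}=-2c'(t)\delta _{ij}$, a function of $t$ alone. Differentiating and forming the usual cyclic combination yields $\partial _{x_{k}}\partial _{x_{i}}b_{j}\equiv 0$ for all $i,j,k$, so for a.e.\ $t$ the map $x\mapsto b(t,x)$ is affine: $b(t,x)=M(t)x+\beta (t)$, where the symmetric part of $M(t)$ equals $-c'(t)I$. Hence $M(t)=-c'(t)I+\Omega (t)$ with $\Omega (t)$ antisymmetric, i.e.\ $\Omega (t)x=\varpi (t)\times x$ for a vector $\varpi (t)$. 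Then $\nabla _{x}a=-\partial _{t}b=c''(t)x-\varpi '(t)\times x-\beta '(t)$; taking the curl in $x$ and using $\nabla _{x}\times \{\varpi '(t)\times x\}=2\varpi '(t)$ forces $\varpi '(t)\equiv 0$, so $\varpi $ is a constant vector, and integrating gives $a(t,x)=\tfrac{1}{2}c''(t)|x|^{2}-\beta '(t)\cdot x+\alpha (t)$.

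Finally, $\partial _{t}a=0$ together with the linear independence of $1,x_{i},|x|^{2}$ on the open set $\Omega $ force $c'''\equiv 0$, $\beta ''\equiv 0$, $\alpha '\equiv 0$; writing $c_{0}=c''$ and $b_{0}=\beta '$ we obtain $c(t)=\tfrac{c_{0}}{2}t^{2}+c_{1}t+c_{2}$, $\beta (t)=b_{0}t+b_{1}$, $\alpha (t)=a_{0}$, and substituting back reproduces exactly (\ref{zlimit}). For the bound (\ref{constantbound}): since $\int_{0}^{1}\|\mathbf{P}Z_{k}(s)\|_{\nu }^{2}ds\equiv 1$ and $\int_{0}^{1}\|(\mathbf{I}-\mathbf{P})Z_{k}(s)\|_{\nu }^{2}ds\to 0$, we have $\sup _{k}\int_{0}^{1}\|Z_{k}(s)\|_{\nu }^{2}ds<\infty $, so weak lower semicontinuity gives $\int_{0}^{1}\|Z(s)\|_{\nu }^{2}ds<\infty $; on the bounded set $[0,1]\times \Omega \times \mathbf{R}^{3}$ the finitely many functions $(\text{monomial in }t)\times (\text{monomial in }x)\times (\text{monomial in }v)\times \sqrt{\mu }$ occurring in (\ref{zlimit}) are linearly independent in $L^{2}(\nu \,dv\,dx\,dt)$ (using that $\Omega $ has nonempty interior), so the linear map from $(a_{0},c_{0},c_{1},c_{2},b_{0},b_{1},\varpi )$ to $Z$ is bounded below, and (\ref{constantbound}) follows.

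This is essentially the ``macroscopic equations'' computation of the periodic theory [G1], but here it is exact (no collision remainder) and hence elementary; the only delicate point is passing from the distributional identity (\ref{zvlasov}) for an a priori merely $L^{2}$ function $Z$ to pointwise equations and to the polynomial regularity of $a,b,c$, which is handled inside the moment system itself, using that a distribution with vanishing mixed second $x$-derivatives is affine in $x$. The remainder is bookkeeping of integration constants.
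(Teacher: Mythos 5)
Your proof is correct and follows essentially the same route as the paper: plug the hydrodynamic form (\ref{zabc}) into the transport equation (\ref{zvlasov}), derive the macroscopic equations, solve them for $a,b,c$, and obtain (\ref{constantbound}) from the linear independence of the resulting monomials times $\sqrt{\mu }$ together with $\int_{0}^{1}\Vert Z(s)\Vert ^{2}ds<\infty $. The only difference is cosmetic: you solve the $b$-system at once via the standard identity expressing $\partial _{x_{k}}\partial _{x_{i}}b_{j}$ through the symmetrized derivatives (so $b$ is affine in $x$ with symmetric part $-c^{\prime }(t)I$), whereas the paper eliminates the components $\tilde{b}^{1},\tilde{b}^{2},\tilde{b}^{3}$ by hand, and both arguments then conclude identically.
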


\begin{proof}
We first derive deduce (\ref{zlimit}). Notice that by plugging (\ref{zabc})
into (\ref{zvlasov}) and comparing coefficients in front of $\sqrt{\mu },v%
\sqrt{\mu },|v|^{2}\sqrt{\mu },$ we deduce the macroscopic equations with $%
b=(b^{1},b^{2},b^{3})$: 
\begin{eqnarray}
\partial _{x_{i}}c &=&0,\text{ for }i=1,2,3  \label{c} \\
\partial _{t}c+\partial _{x_{i}}b^{i} &=&0,\text{ for }i=1,2,3  \label{ct} \\
\partial _{x_{j}}b^{i}+\partial _{x_{i}}b^{j} &=&0,\text{ for }i\neq j,
\label{b} \\
\partial _{x_{i}}a+\partial _{t}b^{i} &=&0,\text{ for }i=1,2,3  \label{bt} \\
\partial _{t}a &=&0.  \label{at}
\end{eqnarray}%
Since $\Omega $ is simply connected, from (\ref{c}), $c(t,x)\equiv c(t).$
Similarly, from (\ref{ct}), 
\begin{eqnarray*}
b^{1}(t,x) &=&-\partial _{t}c(t)x_{1}+\tilde{b}^{1}(t,x_{2},x_{3}), \\
b^{2}(t,x) &=&-\partial _{t}c(t)x_{2}+\tilde{b}^{2}(t,x_{1},x_{3}), \\
b^{3}(t,x) &=&-\partial _{t}c(t)x_{3}+\tilde{b}^{3}(t,x_{1},x_{2}).
\end{eqnarray*}

To determine $\tilde{b}^{1},$ we first make use of (\ref{b}) to get 
\begin{equation*}
\partial _{x_{2}}\tilde{b}^{1}(t,x_{2},x_{3})+\partial _{x_{1}}\tilde{b}%
^{2}(t,x_{1},x_{3})=0
\end{equation*}%
so that $\partial _{x_{2}}^{2}\tilde{b}^{1}(t,x_{2},x_{3})=0$. Therefore $%
\tilde{b}^{1}$ is linear with respect to $x_{2},$ and 
\begin{equation}
\tilde{b}^{1}(t,x_{2},x_{3})=j^{1}(t,x_{3})x_{2}+g^{1}(t,x_{3}).  \label{b1}
\end{equation}%
Similarly, we also have $\partial _{x_{1}}^{2}\tilde{b}^{2}(t,x_{1},x_{3})=0$
so that 
\begin{equation}
\tilde{b}^{2}(t,x_{1},x_{3})=-j^{1}(t,x_{3})x_{1}+g^{2}(t,x_{3}).  \label{b2}
\end{equation}

Next, we make use of another equation of (\ref{b}) to get 
\begin{equation*}
\partial _{x_{3}}\tilde{b}^{2}(t,x_{1},x_{3})+\partial _{x_{2}}\tilde{b}%
^{3}(t,x_{1},x_{2})=0
\end{equation*}%
with $\partial _{x_{3}}\tilde{b}^{2}(t,x_{1},x_{3})=-\partial
_{x_{3}}j^{1}(t,x_{3})x_{1}+\partial _{x_{3}}g^{2}(t,x_{3}).$ From $\partial
_{x_{2}}^{2}\tilde{b}^{3}(t,x_{1},x_{2})=0,$ we have 
\begin{equation}
\tilde{b}^{3}=j^{3}(t,x_{1})x_{2}+g^{3}(t,x_{1})  \label{b3}
\end{equation}%
so that $-\partial _{x_{3}}j^{1}(t,x_{3})x_{1}+\partial
_{x_{3}}g^{2}(x_{3})+j^{3}(t,x_{1})x_{2}=0.$ Taking one more $x_{3}$
derivative, we get 
\begin{equation*}
-\partial _{x_{3}}^{2}j^{1}(t,x_{3})x_{1}+\partial
_{x_{3}}^{2}g^{2}(t,x_{3})=0
\end{equation*}%
so that $\partial _{x_{3}}^{2}j^{1}(t,x_{3})=\partial
_{x_{3}}^{2}g^{2}(t,x_{3})\equiv 0.$ Furthermore, taking two more $x_{1}$
derivatives, we have $\partial _{x_{1}}^{2}j^{3}(t,x_{1})=0.$ Hence $j^{1}$
and $g^{2}$ can be expressed as 
\begin{eqnarray*}
j^{1}(t,x_{3}) &=&l^{1}(t)x_{3}+h^{1}(t), \\
j^{3}(t,x_{1}) &=&l^{1}(t)x_{1}-h^{2}(t), \\
g^{2}(t,x_{3}) &=&h^{2}(t)x_{3}+m^{2}(t).
\end{eqnarray*}%
Plugging back into (\ref{b1}), (\ref{b2}) and (\ref{b3}), we deduce 
\begin{eqnarray}
\tilde{b}^{1} &=&(l^{1}(t)x_{3}+h^{1}(t))x_{2}+g^{1}(t,x_{3}),  \notag \\
\tilde{b}^{2} &=&-(l^{1}(t)x_{3}+h^{1}(t))x_{1}+h^{2}(t)x_{3}+m^{2}(t),
\label{btilde} \\
\tilde{b}^{3} &=&l^{1}(t)x_{1}x_{2}-h^{2}(t)x_{2}+g^{3}(t,x_{1}).  \notag
\end{eqnarray}

Finally, from the remaining equation in (\ref{b}), 
\begin{equation*}
\partial _{x_{3}}\tilde{b}^{1}(t,x_{1},x_{3})+\partial _{x_{1}}\tilde{b}%
^{3}(t,x_{1},x_{2})=0.
\end{equation*}%
By (\ref{btilde}), $l^{1}x_{2}+\partial
_{x_{3}}g^{1}(t,x_{3})+l^{1}x_{2}+\partial _{x_{1}}g^{3}(t,x_{1})=0.$ Hence $%
l^{1}\equiv 0$, $g^{1}$ is a linear function of $x_{3}$ and $g^{3}$ is a
linear function of $x_{1}$: 
\begin{eqnarray*}
g^{3}(t,x_{1}) &=&h^{3}(t)x_{1}+m^{3}(t) \\
g^{1}(t,x_{3}) &=&-h^{3}(t)x_{3}+m^{1}(t).
\end{eqnarray*}%
Therefore, letting $\varpi (t)=-[h^{2}(t),h^{3}(t),h^{1}(t)]$ and $%
m(t)=[m^{1}(t),m^{2}(t),m^{3}(t)],$ we deduce from a direct computation that 
\begin{equation*}
b=-c^{\prime }(t)x+\varpi (t)\times x+m(t).
\end{equation*}%
We also have $\partial _{t}^{2}b(t,x)\equiv 0$ from (\ref{bt}) and (\ref{at}%
). Hence $\partial _{t}^{3}c(t)=0$ from (\ref{ct})$,$ and $c^{\prime
}(t)=c_{0}t+c_{1}$ so that 
\begin{equation*}
c=\frac{c_{0}t^{2}}{2}+c_{1}t+c_{2}.
\end{equation*}%
Hence $\varpi ^{\prime \prime }(t)\times x+m^{\prime \prime }(t)\equiv 0$
and $\varpi ^{\prime \prime }(t)=m^{\prime \prime }(t)\equiv 0.$ We can
denote 
\begin{equation*}
b=-\{c_{0}t+c_{1}\}x+\{\varpi ^{\prime }(0)t+\varpi \}\times x+b_{0}t+b_{1}.
\end{equation*}%
where $\varpi $ is a constant vector. Moreover, from (\ref{bt}), $\nabla
\times \partial _{t}b\equiv 0$ so that 
\begin{equation*}
\nabla \times \{-c_{0}x+\varpi ^{\prime }(0)\times x\}\equiv 0.
\end{equation*}%
This implies that $\varpi ^{\prime }(0)=0$ and from (\ref{bt}) again, 
\begin{equation*}
a=\frac{c_{0}|x|^{2}}{2}-b_{0}\cdot x+a_{0}.
\end{equation*}

Lastly, to prove (\ref{constantbound}), we note that for $1\leq i,j\leq 3,$
functions 
\begin{equation*}
|x|^{2}\sqrt{\mu },x_{i}\sqrt{\mu },\sqrt{\mu },tx\cdot v\sqrt{\mu },x\cdot v%
\sqrt{\mu },x\times v\sqrt{\mu },tv\sqrt{\mu },v\sqrt{\mu },t^{2}|v|^{2}%
\sqrt{\mu },t|v|^{2}\sqrt{\mu },|v|^{2}\sqrt{\mu }
\end{equation*}%
are linearly independent. Therefore, their coefficients $%
c_{0},c_{1},c_{2},a_{0},b_{0},b_{1},\varpi $ are bounded by $C\left\{
\int_{0}^{1}||Z(s)||^{2}ds\right\} ^{1/2},$ which is finite.
\end{proof}

\subsection{Interior Compactness}

\begin{lemma}
\label{interior}For any smooth function $\chi (t,x)$ such that $\sup \chi
\subset \subset (0,1)\times \Omega ,$ then up to a subsequence, $%
\lim_{k\rightarrow \infty }\int_{0}^{1}||\chi \{Z_{k}-Z\}(s)||^{2}ds=0.$
\end{lemma}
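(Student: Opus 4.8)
The plan is to split $Z_{k}=\mathbf{P}Z_{k}+(\mathbf{I}-\mathbf{P})Z_{k}$ and control the two pieces separately. The non-hydrodynamic part is harmless: since $\nu (v)\sim \{1+|v|\}^{\gamma }\geq \nu _{0}>0$, (\ref{i-pto0}) gives $\int_{0}^{1}\|\chi (\mathbf{I}-\mathbf{P})Z_{k}(s)\|^{2}ds\leq \|\chi \|_{\infty }^{2}\nu _{0}^{-1}\int_{0}^{1}\|(\mathbf{I}-\mathbf{P})Z_{k}(s)\|_{\nu }^{2}ds\rightarrow 0$. Hence it suffices to show that the macroscopic coefficients $a_{k},b_{k},c_{k}$ of $\mathbf{P}Z_{k}$ (cf. (\ref{hydro})) converge strongly in $L_{\mathrm{loc}}^{2}((0,1)\times \Omega )$ to the coefficients $a,b,c$ of the limit $Z$ in (\ref{zabc}). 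Once this holds, writing $\mathbf{P}Z_{k}-\mathbf{P}Z=\{(a_{k}-a)+(b_{k}-b)\cdot v+(c_{k}-c)|v|^{2}\}\sqrt{\mu }$ and integrating its square in $v$ produces a quadratic form in $a_{k}-a,b_{k}-b,c_{k}-c$ with coefficients given by finite Gaussian moments $\int \{1+|v|\}^{4}\mu \,dv<\infty $; therefore
$$\int_{0}^{1}\!\int_{\Omega }\!\int_{\mathbf{R}^{3}}\chi ^{2}|\mathbf{P}Z_{k}-\mathbf{P}Z|^{2}dv\,dx\,ds\leq C\|\chi \|_{\infty }^{2}\int_{\mathrm{supp}\,\chi }\big( |a_{k}-a|^{2}+|b_{k}-b|^{2}+|c_{k}-c|^{2}\big) dx\,ds\rightarrow 0,$$
and since $(\mathbf{I}-\mathbf{P})Z=0$ by (\ref{i-pto0}) (so that $\mathbf{P}Z=Z$), combining this with the previous estimate proves the lemma.

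To get the strong convergence of $a_{k},b_{k},c_{k}$ I would invoke the velocity averaging Lemma [DL2]. Because $L\mathbf{P}=0$, equation (\ref{zn}) reads $\partial _{t}Z_{k}+v\cdot \nabla _{x}Z_{k}=g_{k}$ with $g_{k}:=-L(\mathbf{I}-\mathbf{P})Z_{k}$, and $\{Z_{k}\}$ is bounded in $L^{2}((0,1)\times \Omega \times \mathbf{R}^{3})$ by (\ref{1}) together with $\nu \geq \nu _{0}$. Each of $a_{k},b_{k},c_{k}$ is a fixed finite linear combination of velocity moments $\int Z_{k}(t,x,v)q(v)\sqrt{\mu }\,dv$ with $q$ polynomial. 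Given $\varepsilon >0$, choose $R$ so large that $(\int_{|v|>R}q^{2}\mu \,dv)^{1/2}<\varepsilon $; then the contribution of $\{|v|>R\}$ to such a moment is at most $\varepsilon \|Z_{k}(t,x,\cdot )\|_{L_{v}^{2}}$, i.e. $\varepsilon $ times a fixed function in $L_{t,x}^{2}$, uniformly in $k$. For the remaining part, fix a cutoff $\zeta (v)\in C_{c}^{\infty }$ with $\zeta \equiv 1$ on $\{|v|\leq R\}$ and $\mathrm{supp}\,\zeta \subset \{|v|\leq 2R\}$. Then $\zeta Z_{k}$ is bounded in $L^{2}$ and solves $\partial _{t}(\zeta Z_{k})+v\cdot \nabla _{x}(\zeta Z_{k})=\zeta g_{k}$, where $\|\zeta g_{k}\|_{L^{2}}\leq C_{R}\|(\mathbf{I}-\mathbf{P})Z_{k}\|_{\nu }\rightarrow 0$, because $\nu \leq C_{R}$ on $\mathrm{supp}\,\zeta $ and $\|K(\mathbf{I}-\mathbf{P})Z_{k}\|_{L^{2}}\leq C\|(\mathbf{I}-\mathbf{P})Z_{k}\|_{L^{2}}$ by the Grad estimate in Lemma \ref{kernel}. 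Hence [DL2] applies to $\zeta Z_{k}$, so the averages $\int \zeta Z_{k}\psi \,dv=\int Z_{k}\psi \,dv$ are relatively compact in $L_{\mathrm{loc}}^{2}((0,1)\times \Omega )$ for every $\psi \in C_{c}^{\infty }$ with $\mathrm{supp}\,\psi \subset \{|v|\leq R\}$. Letting $\varepsilon \rightarrow 0$ shows that the full moments, hence $a_{k},b_{k},c_{k}$, are relatively compact in $L^{2}$ on a neighbourhood of $\mathrm{supp}\,\chi $. After passing to a further (diagonal) subsequence, $a_{k}\rightarrow \bar{a}$, $b_{k}\rightarrow \bar{b}$, $c_{k}\rightarrow \bar{c}$ strongly in $L_{\mathrm{loc}}^{2}$; since $Z_{k}\rightharpoonup Z$ weakly in $L^{2}$ forces $a_{k}\rightharpoonup a$, $b_{k}\rightharpoonup b$, $c_{k}\rightharpoonup c$, the strong limits coincide with $a,b,c$, which is what was needed.

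The main obstacle is the mismatch between the $\|\cdot \|_{\nu }$-control on $(\mathbf{I}-\mathbf{P})Z_{k}$ afforded by (\ref{i-pto0}) and the $L^{2}$-bound on the transport source that [DL2] requires: because $\nu \sim \{1+|v|\}^{\gamma }$ is unbounded, $g_{k}=-L(\mathbf{I}-\mathbf{P})Z_{k}$ need not be bounded in $L^{2}((0,1)\times \Omega \times \mathbf{R}^{3})$. The velocity cutoff $\zeta $ introduced above, used in tandem with the Gaussian decay of the moment weights $q(v)\sqrt{\mu }$, is exactly what circumvents this; one only needs Lemma \ref{kernel} to check that the $K$-part of $g_{k}$ is already $L^{2}$-bounded with no cutoff. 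Everything else — the Gaussian moment identities, the quadratic-form bound, and the diagonal extraction — is routine.
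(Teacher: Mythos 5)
Your proposal is correct and follows essentially the same route as the paper: split $Z_{k}$ into $\mathbf{P}Z_{k}+(\mathbf{I}-\mathbf{P})Z_{k}$, kill the microscopic part via (\ref{i-pto0}), and obtain strong $L^{2}$ convergence of the macroscopic coefficients from the velocity averaging lemma [DL2] applied after a velocity cutoff, with the Gaussian decay of $[1,v,|v|^{2}]\sqrt{\mu }$ absorbing the large-velocity tail and weak convergence identifying the limit as $a,b,c$. The only differences are cosmetic — the paper localizes in $(t,x)$ with $\chi $ before invoking averaging (citing [G1] for the cutoff details you spell out), whereas you localize at the end — so no further comment is needed.
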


\begin{proof}
We multiply the equation (\ref{zn}) by $\chi $ to get%
\begin{equation*}
\lbrack \partial _{t}+v\cdot \nabla _{x}]\{\chi Z_{k}\}=\{[\partial
_{t}+v\cdot \nabla _{x}]\chi \}Z_{k}-\chi LZ_{k}.
\end{equation*}%
Since $\ \int_{0}^{1}||Z_{k}(s)||_{{}}^{2}ds$ is uniformly bounded for the
hard potentials, by (\ref{1}) and (\ref{i-pto0}), we deduce from the
Averaging Lemma [DL], $\int \chi (t,x)Z_{k}(v)\chi _{v}(v)dv$ are compact in 
$L^{2}([0,1]\times \Omega )$ for any smooth cutoff function $\chi _{v}(v)$
(see [G1]). It then follows that 
\begin{equation*}
\int \chi Z_{k}(v)[1,v,|v|^{2}]\sqrt{\mu }dv
\end{equation*}%
are compact in $L^{2}([0,1]\times \Omega ).$ Therefore, up to a subsequence,
the macroscopic parts of $Z_{k}$ satisfy $\chi \mathbf{P}Z_{k}\rightarrow
\chi \mathbf{P}Z=\chi Z$ strongly in $L^{2}([0,1]\times \Omega \times 
\mathbf{R}^{3}).$ Therefore, in light of $\int_{0}^{1}||(\mathbf{I}-\mathbf{%
P)}Z_{k}(s)||_{\nu }^{2}ds\rightarrow 0$ in (\ref{i-pto0}) for all four
boundary conditions, the remaining microscopic parts $\chi Z_{k}$ satisfy $%
\lim_{k\rightarrow \infty }\int_{0}^{1}||\chi \{\mathbf{I}-\mathbf{P}%
\}Z_{k}(s)||^{2}ds=0,$ and our lemma follows.
\end{proof}

\subsection{No Time Concentration}

We first establish $L^{\infty }$ in time estimate for $Z_{k}$ to rule out
possible concentration in $\,$time, near either $t=0$ or $t=1$.

\begin{lemma}
\label{zinfty} $\sup_{0\leq t\leq 1,k\geq 1}||Z_{k}(t)||<\infty .$
\end{lemma}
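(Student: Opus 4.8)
The plan is to obtain the bound directly from the $L^{2}$ energy identity for $Z_{k}$, exploiting the fact that the dissipation $\int_{0}^{1}(LZ_{k},Z_{k})$ and the relevant boundary defect are all $O(1/k)$ by the normalization (\ref{1}), (\ref{1/n}), (\ref{bb1/n}), (\ref{diffuse1/n}). Since $Z_{k}$ solves (\ref{zn}) with trace in $L_{\text{loc}}^{2}(\mathbf{R}_{+};L^{2}(\gamma ))$, multiplying by $Z_{k}$ and integrating over $\Omega \times \mathbf{R}^{3}$ gives, for all $0\leq s\leq t\leq 1$,
\[
\|Z_{k}(t)\|^{2}-\|Z_{k}(s)\|^{2}+\int_{s}^{t}\|Z_{k}\|_{\gamma _{+}}^{2}-\int_{s}^{t}\|Z_{k}\|_{\gamma _{-}}^{2}+2\int_{s}^{t}(LZ_{k},Z_{k})=0 .
\]
I would use two standard facts about $L$: first $LZ_{k}=L(\mathbf{I}-\mathbf{P})Z_{k}$, hence $0\leq (LZ_{k},Z_{k})=(L(\mathbf{I}-\mathbf{P})Z_{k},(\mathbf{I}-\mathbf{P})Z_{k})$; second, by the classical $L^{2}$ bound on $K$, $(LZ_{k},Z_{k})\leq C_{L}\|(\mathbf{I}-\mathbf{P})Z_{k}\|_{\nu }^{2}$. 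For the boundary terms I would treat the four cases separately: in the bounce-back and specular cases the maps $v\mapsto -v$ and $v\mapsto R(x)v$ preserve $d\gamma $, so $\|Z_{k}\|_{\gamma _{+}}=\|Z_{k}\|_{\gamma _{-}}$ and the two boundary integrals cancel exactly; in the diffuse case (\ref{pboundary}) and (\ref{diffuse}) give $\|Z_{k}\|_{\gamma _{-}}^{2}=\|P_{\gamma }Z_{k}\|_{\gamma _{+}}^{2}$, so $\|Z_{k}\|_{\gamma _{+}}^{2}-\|Z_{k}\|_{\gamma _{-}}^{2}=\|(I-P_{\gamma })Z_{k}\|_{\gamma _{+}}^{2}\geq 0$; in the in-flow case $\|Z_{k}\|_{\gamma _{-}}^{2}=\|g_{k}\|_{\gamma _{-}}^{2}\leq \|Z_{k}\|_{\gamma }^{2}$.

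With these facts, I would read the identity in two directions. Discarding the nonnegative terms $\int (LZ_{k},Z_{k})$, $\int \|Z_{k}\|_{\gamma _{+}}^{2}$ (resp. $\int \|(I-P_{\gamma })Z_{k}\|_{\gamma _{+}}^{2}$) that appear with a favorable sign produces the \emph{forward} bound $\|Z_{k}(t)\|^{2}\leq \|Z_{k}(s)\|^{2}+\int_{0}^{1}\|Z_{k}\|_{\gamma }^{2}$ for $s\leq t$, where the last term is absent in the bounce-back, specular and diffuse cases. Reading the identity the other way and using $\int_{0}^{1}(LZ_{k},Z_{k})\leq C_{L}\int_{0}^{1}\|(\mathbf{I}-\mathbf{P})Z_{k}\|_{\nu }^{2}$ together with the sign of the boundary contribution gives the \emph{backward} bound $\|Z_{k}(s)\|^{2}\leq \|Z_{k}(t)\|^{2}+2C_{L}\int_{0}^{1}\|(\mathbf{I}-\mathbf{P})Z_{k}\|_{\nu }^{2}+(\text{boundary defect})$ for $s\leq t$. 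By (\ref{1/n}), (\ref{bb1/n}), (\ref{diffuse1/n}) both $\int_{0}^{1}\|(\mathbf{I}-\mathbf{P})Z_{k}\|_{\nu }^{2}$ and the relevant boundary defect are $\leq 1/k\leq 1$, so there is an absolute constant $C_{0}$ with $\big|\,\|Z_{k}(s)\|^{2}-\|Z_{k}(t)\|^{2}\,\big|\leq C_{0}$ for all $s,t\in \lbrack 0,1]$ and all $k$.

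Finally I would anchor at a well-chosen time: by the mean value property there is $t_{k}^{\ast }\in \lbrack 0,1]$ with $\|Z_{k}(t_{k}^{\ast })\|^{2}\leq \int_{0}^{1}\|Z_{k}(\tau )\|^{2}d\tau \leq C_{\nu }\int_{0}^{1}\|Z_{k}(\tau )\|_{\nu }^{2}d\tau =C_{\nu }\int_{0}^{1}\big(\|\mathbf{P}Z_{k}\|_{\nu }^{2}+\|(\mathbf{I}-\mathbf{P})Z_{k}\|_{\nu }^{2}\big)d\tau \leq C_{\nu }(1+1/k)\leq 2C_{\nu }$ by (\ref{1}) and (\ref{i-pto0}) (and $\|\cdot \|\leq C_{\nu }\|\cdot \|_{\nu }$ for hard potentials). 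Combining with the oscillation bound, $\|Z_{k}(t)\|^{2}\leq \|Z_{k}(t_{k}^{\ast })\|^{2}+C_{0}\leq 2C_{\nu }+C_{0}$ for every $t\in \lbrack 0,1]$ and every $k\geq 1$, which is the assertion. The only genuinely delicate point is the validity of the energy identity itself, which requires the trace of $Z_{k}$ to lie in $L^{2}([0,1]\times \gamma )$; this is precisely the standing hypothesis on the solutions entering the contradiction argument, so once it is granted the remaining work is the case-by-case bookkeeping of the boundary terms sketched above.
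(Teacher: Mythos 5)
Your proof is correct and follows essentially the same route as the paper: the $L^{2}$ energy identity for $Z_{k}$, the case-by-case sign analysis of the boundary terms in the four reflection cases, the bound $(LZ_{k},Z_{k})\leq C\|(\mathbf{I}-\mathbf{P})Z_{k}\|_{\nu }^{2}$, and the smallness coming from (\ref{1/n}), (\ref{bb1/n}), (\ref{diffuse1/n}) together with the normalization (\ref{1}). The only cosmetic difference is that you anchor at a mean-value time $t_{k}^{\ast }$ and propagate forward and backward, whereas the paper anchors at $t=0$ by integrating its backward inequality over $[0,1]$ -- the same idea in a slightly different order.
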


\begin{proof}
Since $\int_{0}^{1}||f_{k}(s)||_{\gamma }^{2}<\infty ,$ $%
\int_{0}^{1}||Z_{k}(s)||_{\gamma }^{2}<\infty .$ Therefore, by the standard $%
L^{2}$ estimate for (\ref{zn}), we obtain for $0\leq t\leq 1$:%
\begin{eqnarray}
&&||Z_{k}(t)||^{2}+\int_{0}^{t}||Z_{k}(s)||_{\gamma
_{+}}^{2}ds+2\int_{0}^{t}(LZ_{k},Z_{k})(s)ds  \notag \\
&=&||Z_{k}(0)||^{2}+\int_{0}^{t}||Z_{k}(s)||_{\gamma _{-}}^{2}ds.
\label{znenergy}
\end{eqnarray}

We first derive an upper bound for $Z_{k}(t).$ In the case of in flow case (%
\ref{inflow}), because of (\ref{1/n}), (\ref{1}) and $L\geq 0$, we deduce 
\begin{equation}
||Z_{k}(t)||^{2}\leq ||Z_{k}(0)||^{2}+\frac{1}{k}.  \label{z0bound}
\end{equation}

Note $\int_{0}^{t}||Z_{k}(s)||_{\gamma
_{+}}^{2}ds=\int_{0}^{t}||Z_{k}(s)||_{\gamma _{-}}^{2}ds$ for either
bounce-back or specular reflection (\ref{bounceback}) and (\ref{specular}),
hence (\ref{z0bound}) is clearly valid. In the case of diffuse reflection (%
\ref{diffuse}), we deduce (\ref{z0bound}) because 
\begin{equation*}
\int_{0}^{t}||Z_{k}(s)||_{\gamma _{-}}^{2}ds=\int_{0}^{t}||P_{\gamma
}Z_{k}(s)||_{\gamma _{+}}^{2}ds\leq \int_{0}^{t}||Z_{k}(s)||_{\gamma
_{+}}^{2}ds.
\end{equation*}

Next, we derive an upper bound for $Z_{k}(0).$ We note that 
\begin{equation*}
\int_{0}^{1}(LZ_{k}(t),Z_{k}(t))dt\leq C\int_{0}^{1}||\{\mathbf{I}-\mathbf{P}%
\}Z_{k}||_{\nu }^{2}dt\leq \frac{C}{k}.
\end{equation*}

In the case of the in-flow case (\ref{inflow}), by (\ref{1/n}) and (\ref%
{znenergy}), 
\begin{eqnarray}
||Z_{k}(t)||^{2} &\geq &||Z_{k}(0)||^{2}-\int_{0}^{1}||Z_{k}(s)||_{\gamma
_{+}}^{2}ds-\int_{0}^{1}(LZ_{k},Z_{k})(s)ds  \notag \\
&\geq &||Z_{k}(0)||^{2}-\frac{C}{k}.  \label{z0low}
\end{eqnarray}

Note that $\int_{0}^{t}||Z_{k}(s)||_{\gamma
_{+}}^{2}ds=\int_{0}^{t}||Z_{k}(s)||_{\gamma _{-}}^{2}ds$ for either
bounce-back and specular reflection (\ref{bounceback}) or (\ref{specular}),
so that (\ref{z0low}) is clearly valid. In the case of diffuse reflection (%
\ref{diffuse}), (\ref{z0low}) is valid because of (\ref{diffuse1/n}): 
\begin{equation*}
\int_{0}^{t}||Z_{k}(s)||_{\gamma
_{-}}^{2}ds-\int_{0}^{t}||Z_{k}(s)||_{\gamma
_{+}}^{2}ds=-\int_{0}^{t}||\{I-P_{\gamma }\}Z_{k}(s)||_{\gamma
_{+}}^{2}ds\geq -\frac{1}{k}.
\end{equation*}

Since $\int_{0}^{1}||Z_{k}(t)||^{2}dt\leq C\int_{0}^{1}||Z_{k}(t)||_{\nu
}^{2}dt\leq C\{1+\frac{1}{k}\}$ for hard potentials, integrating (\ref{z0low}%
) over $0\leq t\leq 1$ yields 
\begin{eqnarray*}
||Z_{k}(0)||^{2} &\leq &\int_{0}^{1}||Z_{k}(t)||^{2}dt+\frac{C}{k} \\
&\leq &C\int_{0}^{1}||Z_{k}(t)||_{\nu }^{2}dt+\frac{C}{k} \\
&\leq &C\{1+\frac{1}{k}\}+\frac{C}{k},
\end{eqnarray*}%
by (\ref{1}) and (\ref{i-pto0}). Our lemma thus follows from (\ref{z0bound}).
\end{proof}

\subsection{No Boundary Concentration}

The most delicate step is to prove that there is no concentration at the
boundary $\partial \Omega $ so that $Z_{k}\rightarrow Z$ strongly in $%
[0,1]\times \bar{\Omega}\times \mathbf{R}^{3}.\,\ $Let 
\begin{equation*}
\Omega _{\varepsilon ^{4}}\equiv \{x\in \Omega :\xi (x)<-\varepsilon ^{4}\}.
\end{equation*}%
To this end, we will establish a careful energy estimate in the thin
shell-like region near the boundary $[0,1]\times \{\Omega \setminus \Omega
_{\varepsilon ^{4}}\}\times \mathbf{R}^{3}.$

Recall $n(x)=\frac{\nabla \xi (x)}{|\nabla \xi (x)|}\neq 0,$ well-defined
and smooth on $\Omega \setminus \Omega _{\varepsilon ^{4}}$ for $\varepsilon 
$ small. For $m>1/2,$ for any $(x,v),$ we define the outward moving (inward
moving) indicator function $\chi _{+}$ ($\chi _{-}$) as 
\begin{eqnarray*}
\chi _{+}(x,v) &=&\mathbf{1}_{\Omega \setminus \Omega _{\varepsilon ^{4}}}(x)%
\mathbf{1}_{\{|v|\leq \varepsilon ^{-m},n(x)\cdot v>\varepsilon \}}(v) \\
\chi _{-}(x,v) &=&\mathbf{1}_{\Omega \setminus \Omega _{\varepsilon ^{4}}}(x)%
\mathbf{1}_{\{|v|\leq \varepsilon ^{-m},n(x)\cdot v<-\varepsilon \}}(v).
\end{eqnarray*}

Our main strategy is to show that the moving (non-grazing) part $\chi _{\pm
}Z_{k}$ are controlled by the inner boundary values of $Z_{k}$ on $\partial
\Omega _{\varepsilon ^{4}}=\{\xi (x)=-\varepsilon ^{4}\},$ which are further
controlled by the (compact!) interior parts of $Z_{k}.$ Hence, no
concentration is possible. On the remaining almost grazing part $\{1-\chi
_{\pm }\}Z_{k}$, thanks to the fact $\int_{0}^{1}||\{\mathbf{I}-\mathbf{P}%
\}Z_{k}(s)||_{\nu }^{2}ds\rightarrow 0,$ no concentration can occur for the
small velocity set $\{|v|\geq \varepsilon ^{-m}\}\cup \{|n(x)\cdot v|\leq
\varepsilon \}$.

\begin{lemma}
\bigskip \label{nomove}%
\begin{equation}
\sup_{k\geq 1}\int_{0}^{1}\int_{\Omega \setminus \Omega _{\varepsilon
^{4}}}\int_{\substack{ |n(x)\cdot v|\leq \varepsilon  \\ \text{or }|v|\geq
\varepsilon ^{-m}}}|Z_{k}(s,x,v)|^{2}dxdvds\leq C\varepsilon .
\label{smallnomove}
\end{equation}
\end{lemma}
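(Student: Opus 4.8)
The plan is to split the integration region into two pieces—the large-velocity set $\{|v|\geq\varepsilon^{-m}\}$ and the almost-grazing set $\{|n(x)\cdot v|\leq\varepsilon,\ |v|<\varepsilon^{-m}\}$—and bound each by $C\varepsilon$ using the two facts at our disposal: the uniform bound $\sup_k\int_0^1\|Z_k(s)\|_\nu^2ds\leq C$ together with $\int_0^1\|(\mathbf{I}-\mathbf{P})Z_k(s)\|_\nu^2ds\to 0$ from (\ref{i-pto0}), and the explicit form $\mathbf{P}Z_k=\{a_k+b_k\cdot v+c_k|v|^2\}\sqrt{\mu}$. On the large-velocity set the contribution of $(\mathbf{I}-\mathbf{P})Z_k$ is already $o(1)$, and the contribution of $\mathbf{P}Z_k$ is controlled by $\int_{|v|\geq\varepsilon^{-m}}(1+|v|^2)^2\mu(v)\,dv$, which decays faster than any power of $\varepsilon$ because $\mu$ is Gaussian; hence this piece is $\leq C\varepsilon$ (in fact much smaller) once $k$ is large, using Lemma \ref{zinfty} or simply the uniform $L^2$ bound to handle the coefficients $a_k,b_k,c_k$ uniformly in $k$.

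The main work is the almost-grazing set $\Sigma_\varepsilon(x)\equiv\{v:|n(x)\cdot v|\leq\varepsilon,\ |v|<\varepsilon^{-m}\}$. Again decompose $Z_k=\mathbf{P}Z_k+(\mathbf{I}-\mathbf{P})Z_k$; the microscopic part integrates to $o(1)$ as above. For the macroscopic part I would fix $(s,x)$ and estimate
\begin{equation*}
\int_{\Sigma_\varepsilon(x)}|a_k(s,x)+b_k(s,x)\cdot v+c_k(s,x)|v|^2|^2\,\mu(v)\,dv.
\end{equation*}
Since $\mu$ is Gaussian, the Gaussian weight restricted to the slab $\{|n(x)\cdot v|\leq\varepsilon\}$ has total mass $O(\varepsilon)$, and more precisely, after rotating so that $n(x)=e_1$, the $v_1$-integral runs over an interval of length $2\varepsilon$ while the polynomial $|a_k+b_k\cdot v+c_k|v|^2|^2$ is integrable against $\mu$ in the remaining variables with a bound depending only on $|a_k|+|b_k|+|c_k|$. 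Thus this term is $\leq C\varepsilon\big(|a_k(s,x)|^2+|b_k(s,x)|^2+|c_k(s,x)|^2\big)$ pointwise in $(s,x)$.

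Integrating in $x$ over $\Omega\setminus\Omega_{\varepsilon^4}$ and in $s$ over $[0,1]$ then gives a bound $\leq C\varepsilon\int_0^1\!\!\int_\Omega(|a_k|^2+|b_k|^2+|c_k|^2)\,dxds\leq C\varepsilon\int_0^1\|\mathbf{P}Z_k(s)\|_\nu^2ds\leq C\varepsilon$ by the normalization (\ref{1}), with $C$ independent of $k$. Combining the large-velocity and almost-grazing estimates, and taking $k$ large enough that the $(\mathbf{I}-\mathbf{P})Z_k$ pieces are $\leq\varepsilon$, yields (\ref{smallnomove}). The one point requiring a little care—the step I expect to be the crux—is the uniform-in-$k$ control: one must make sure the constants $|a_k|,|b_k|,|c_k|$ never appear without being tied back to $\|\mathbf{P}Z_k\|_\nu^2$ via the linear independence of $\{\sqrt\mu,v\sqrt\mu,|v|^2\sqrt\mu\}$, so that the final bound does not secretly depend on $k$; everything else is the elementary observation that a Gaussian assigns mass $O(\varepsilon)$ to an $\varepsilon$-slab and exponentially small mass to $\{|v|\geq\varepsilon^{-m}\}$.
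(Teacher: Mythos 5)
Your proposal is correct and follows essentially the same route as the paper: split off the microscopic part via (\ref{i-pto0}), and bound the $\mathbf{P}Z_{k}$ contribution by Fubini, noting that the Gaussian assigns mass $O(\varepsilon )$ to the slab $|n(x)\cdot v|\leq \varepsilon $ and to $\{|v|\geq \varepsilon ^{-m}\}$, with the coefficients $a_{k},b_{k},c_{k}$ controlled uniformly in $k$ through the linear independence of $\{1,v,|v|^{2}\}\sqrt{\mu }$ and the normalization (\ref{1}). The paper's proof is the same argument (its estimates (\ref{anbound})--(\ref{nvsmall})), so no further comment is needed.
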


\begin{proof}
Let $\mathbf{P}Z_{k}=\{a_{k}(t,x)+v\cdot b_{k}(t,x)+|v|^{2}c_{k}(t,x)\}\sqrt{%
\mu }.$ Since $\sup_{k}\int_{0}^{1}||Z_{k}(s)||^{2}ds~$is finite and $%
[1,v,|v|^{2}]\sqrt{\mu }$ are linearly independent, there is $C>0$
(independent of $k$) such that 
\begin{equation}
\int_{0}^{1}||a_{k}(s)||^{2}ds+\int_{0}^{1}||b_{k}(s)||^{2}ds+%
\int_{0}^{1}||c_{k}(s)||^{2}ds\leq C\int_{0}^{1}||Z_{k}(s)||^{2}ds\leq C.
\label{anbound}
\end{equation}%
By (\ref{i-pto0}), we can split: 
\begin{eqnarray*}
&&\int_{0}^{1}\int_{\Omega \setminus \Omega _{\varepsilon ^{4}}}\int 
_{\substack{ |n(x)\cdot v|\leq \varepsilon  \\ \text{or }|v|\geq \varepsilon
^{-m}}}|Z_{k}(s,x,v)|^{2}dxdvds \\
&\leq &\int_{\substack{ |n(x)\cdot v|\leq \varepsilon  \\ \text{or }|v|\geq
\varepsilon ^{-m}}}|\mathbf{P}Z_{k}(s,x,v)|^{2}+\int_{\substack{ |n(x)\cdot
v|\leq \varepsilon  \\ \text{or }|v|\geq \varepsilon ^{-m}}}|\{\mathbf{I}-%
\mathbf{P\}}Z_{k}(s,x,v)|^{2} \\
&\leq &\int_{0}^{1}\int_{\Omega \setminus \Omega _{\varepsilon ^{4}}}\int 
_{\substack{ |n(x)\cdot v|\leq \varepsilon  \\ \text{or }|v|\geq \varepsilon
^{-m}}}|\mathbf{P}Z_{k}(s,x,v)|^{2}dxdvds+\frac{C}{k}
\end{eqnarray*}%
Even with an extra weight $\{1+|v|^{2}\}^{l}$ ($l\geq 0$)$,$ the first term
can be bounded by the Fubini Theorem as 
\begin{eqnarray}
&&\int_{\substack{ |n(x)\cdot v|\leq \varepsilon  \\ \text{or }|v|\geq
\varepsilon ^{-m}}}\{1+|v|^{2}\}^{l}|\mathbf{P}Z_{k}(s,x,v)|^{2}dxdvds 
\notag \\
&\leq &\int_{0}^{1}\int_{\Omega \setminus \Omega _{\varepsilon
^{4}}}\{|a_{k}^{2}(s,x)|+|b_{k}^{2}(s,x)|+|c_{k}^{2}(s,x)|\}\times  \notag \\
&&\times \{\int_{\substack{ |n(x)\cdot v|\leq \varepsilon  \\ \text{or }%
|v|\geq \varepsilon ^{-m}}}\{1+|v|^{2}\}^{l+2}\mu dv\}dxds.  \label{pzn}
\end{eqnarray}%
We note that the inner $v-$integral above is bounded, uniformly in $x.$ In
fact, by a change of variable $v_{||}=\{n(x)\cdot v\}n(x),$ and $v_{\perp
}=v-v_{||}$ for $|n(x)\cdot v|\leq \varepsilon ,$ the inner integral is
bounded by the sum of 
\begin{eqnarray}
\int_{|n(x)\cdot v|\leq \varepsilon }\{1+|v|^{2}\}^{l+2}\mu dv &\leq
&C\int_{-\varepsilon }^{\varepsilon }dv_{||}\int_{\mathbf{R}%
^{2}}e^{-|v_{\perp }|^{2}/8}dv_{\perp }\leq C\varepsilon ,  \label{nvsmall}
\\
\text{ and \ \ }\int_{\text{ }|v|\geq \varepsilon
^{-m}}\{1+|v|^{2}\}^{l+2}\mu dv &\leq &C\varepsilon .  \notag
\end{eqnarray}%
Our lemma thus follows from (\ref{anbound}).
\end{proof}

To study the non-grazing parts $\chi _{\pm }Z_{k},$ we fix $(x,v)\in
\{\Omega \setminus \Omega _{\varepsilon ^{4}}\}\times \mathbf{R}^{3}$ and
any moment $s$ such that $\varepsilon \leq s\leq 1-\varepsilon ,$ and $.$ We
define for backward in time $0\leq t\leq s:$ 
\begin{equation}
\tilde{\chi}_{+}(t,x,v)=\mathbf{1}_{\Omega \setminus \Omega _{\varepsilon
^{4}}}(x-v\{t-s\})\mathbf{1}_{\{|v|\leq \varepsilon ^{-m},n(x-v\{t-s\})\cdot
v>\varepsilon \}}(v);\text{ }  \label{chi+}
\end{equation}%
and for forward in time $0\leq s\leq t:$%
\begin{equation}
\tilde{\chi}_{-}(t,x,v)=\mathbf{1}_{\Omega \setminus \Omega _{\varepsilon
^{4}}}(x-v\{t-s\})\mathbf{1}_{\{|v|\leq \varepsilon ^{-m},n(x-v\{t-s\})\cdot
v<-\varepsilon \}}(v).  \label{chi-}
\end{equation}%
Both $\tilde{\chi}_{\pm }$ solve the transport equations: 
\begin{equation}
\partial _{t}\tilde{\chi}_{\pm }+v\cdot \nabla _{x}\tilde{\chi}_{\pm }=0,%
\text{ \ \ \ }\tilde{\chi}_{\pm }(s,x,v)=\chi _{\pm }(x,v).
\label{vlasovchi}
\end{equation}%
We first prove that

\begin{lemma}
\label{chi} (1) For $0\leq s-\varepsilon ^{2}\leq t\leq s,$ if $\tilde{\chi}%
_{+}(t,x,v)\neq 0$ then $n(x)\cdot v>\frac{\varepsilon }{2}>0.$ Moreover, $%
\tilde{\chi}_{+}(s-\varepsilon ^{2},x,v)\equiv 0,$ for $x\in \Omega
\setminus \Omega _{\varepsilon ^{4}}.$ \ 

(2) For $s\leq t\leq s+\varepsilon ^{2}\leq 1,$ if $\tilde{\chi}%
_{-}(t,x,v)\neq 0,$ then $n(x)\cdot v<-\frac{\varepsilon }{2}<0.$ Moreover, $%
\tilde{\chi}_{-}(s+\varepsilon ^{2},x,v)\equiv 0,$ for $x\in \Omega
\setminus \Omega _{\varepsilon ^{4}}.$
\end{lemma}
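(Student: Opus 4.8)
The whole lemma is a statement about how the ``normal velocity'' $v\cdot\nabla\xi$ evolves along the straight characteristic $\tau\mapsto x-v(\tau-s)$, and the plan is to control it through the convexity of $\xi$ — both in the crude form that $u(\tau):=\xi(x-v(\tau-s))$ has $u''(\tau)=v\cdot\nabla^{2}\xi\cdot v\geq c_{\xi}|v|^{2}\geq 0$, and in the sharp form supplied by the Velocity Lemma \ref{velocity}. Throughout I would write $y(\tau)=x-v(\tau-s)$ (so $y(s)=x$ and $u(\tau)=\xi(y(\tau))$, $u'(\tau)=-v\cdot\nabla\xi(y(\tau))$), use $|\nabla\xi|\asymp 1$ and $|\nabla^{2}\xi|\leq C_{\xi}$ on a fixed neighborhood of $\bar{\Omega}$, and recall the standing hypothesis of this subsection that $x\in\Omega\setminus\Omega_{\varepsilon^{4}}$, i.e. $-\varepsilon^{4}<u(s)<0$.

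For the first claim of (1): if $\tilde{\chi}_{+}(t,x,v)\neq 0$ with $s-\varepsilon^{2}\leq t\leq s$, then $|v|\leq\varepsilon^{-m}$, $y(t)$ lies in the shell (so $-\varepsilon^{4}<u(t)<0$), and $n(y(t))\cdot v>\varepsilon$, which reads $u'(t)=-v\cdot\nabla\xi(y(t))\leq -c\varepsilon$ for some $c>0$. Since $u$ is convex with $u(t),u(s)<0$, one has $u\leq\max\{u(t),u(s)\}<0$ on $[t,s]$, so $y(\tau)\in\bar{\Omega}$ there and Lemma \ref{velocity} applies on $[t,s]$: because $s-t\leq\varepsilon^{2}$ and $|v|\leq\varepsilon^{-m}$, the factor $e^{C_{\xi}(|v|+1)(s-t)}$ is $\leq 2$ once $\varepsilon$ is small, so $\alpha(t)$ and $\alpha(s)$ are comparable. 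On the support of $\tilde{\chi}_{+}$ the term $(v\cdot\nabla\xi)^{2}$ dominates $\alpha$: as $u<0$, the remaining term $-2(v\cdot\nabla^{2}\xi\cdot v)u$ is nonnegative and $(v\cdot\nabla\xi(y(t)))^{2}\geq c'\varepsilon^{2}$; transporting to time $s$ and peeling off the lower-order contributions ($u(s)^{2}\leq\varepsilon^{8}$ and $|(v\cdot\nabla^{2}\xi(x)\cdot v)\,u(s)|\leq C_{\xi}|v|^{2}\varepsilon^{4}$) gives $(v\cdot\nabla\xi(x))^{2}\gtrsim\varepsilon^{2}$, hence $|n(x)\cdot v|\gtrsim\varepsilon$; the correct sign $n(x)\cdot v>0$ (and thus $>\varepsilon/2$) then follows from the monotonicity of $u'$ together with the smallness of $\int_{t}^{s}u''$ over the short window. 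For the ``moreover'' assertion, suppose $\tilde{\chi}_{+}(s-\varepsilon^{2},x,v)\neq 0$; the first part, applied with $t=s-\varepsilon^{2}$, forces $n(x)\cdot v>0$, and since $n(y(s-\varepsilon^{2}))\cdot v>\varepsilon>0$ and $u'$ is monotone, $u'<0$ on all of $[s-\varepsilon^{2},s]$ with $|u'|\gtrsim\varepsilon$ there; integrating, $u(s-\varepsilon^{2})-u(s)=\int_{s-\varepsilon^{2}}^{s}|u'|\gtrsim\varepsilon^{3}$, which exceeds the shell width $\varepsilon^{4}$ and contradicts $u(s-\varepsilon^{2}),u(s)\in(-\varepsilon^{4},0)$. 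Geometrically: a non-grazing particle cannot remain in the $\varepsilon^{4}$-thin shell for a time $\varepsilon^{2}$.

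Part (2) for $\tilde{\chi}_{-}$ on $[s,s+\varepsilon^{2}]$ is the time reversal of part (1): the substitution $\tau\mapsto 2s-\tau$, $v\mapsto -v$ carries the incoming condition $n\cdot v<-\varepsilon$ and the forward window into the outgoing condition and backward window already treated, and the identical estimates give $n(x)\cdot v<-\varepsilon/2$ and $\tilde{\chi}_{-}(s+\varepsilon^{2},x,v)\equiv 0$ on the shell. The step I expect to be the main obstacle — and the only one forcing $\varepsilon$ to be small in a way depending on $m$ and on the convexity constants — is controlling the velocity-dependent errors: since $|v|$ is only bounded by $\varepsilon^{-m}$, every curvature contribution carries a factor $|v|^{2}$, and one must check it stays genuinely lower order than the $\sim\varepsilon^{2}$ non-grazing gain and the $\sim\varepsilon^{3}$ shell-crossing gain; this is precisely the place where the interplay of the exponents $m$, the shell thickness $\varepsilon^{4}$, and the window length $\varepsilon^{2}$ enters, and where the clean constant $\varepsilon/2$ (as opposed to $c\varepsilon$) has to be extracted with care.
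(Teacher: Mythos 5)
The computational core of your argument is in fact the paper's proof: writing $u(\tau)=\xi(x-v(\tau-s))$, so $u'(\tau)=-v\cdot\nabla\xi$ and $|u''|\leq C_{\xi}|v|^{2}\leq C_{\xi}\varepsilon^{-2m}$, and playing the window length $\varepsilon^{2}$ and shell thickness $\varepsilon^{4}$ against $2m<1$ is exactly the mean-value estimate for $n$ and the second-order Taylor expansion of $\xi$ that the paper uses; it does deliver $n(x)\cdot v\geq\varepsilon-C(\varepsilon^{2-2m}+\varepsilon^{3-m})\geq\varepsilon/2$ (the $\varepsilon^{3-m}$ term accounts for comparing $|\nabla\xi(y(t))|$ with $|\nabla\xi(x)|$) and, in the ``moreover'' part, a drop of $\xi$ of order $\varepsilon^{3}$ across a shell of width $\varepsilon^{4}$, which is the desired contradiction.

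The genuine defect is the convexity scaffolding you wrap around this. Lemma \ref{chi} belongs to the $L^{2}$ decay theory of Section 3: it feeds into Lemma \ref{strong}, hence into Proposition \ref{lower} and Theorem \ref{L2decay}, none of which assume a convex domain, and the paper's proof correspondingly uses only boundedness of $\nabla n$ and $\nabla^{2}\xi$ near $\partial\Omega$. So every step of yours that invokes (\ref{convexity}) or Lemma \ref{velocity} — the sign $u''\geq0$ and monotonicity of $u'$, the claim $u\leq\max\{u(t),u(s)\}$ keeping $y(\tau)\in\bar{\Omega}$, the sign of the term $-2(v\cdot\nabla^{2}\xi\cdot v)\xi$ in $\alpha$, and the whole transport of $\alpha$ by the Velocity Lemma — is not available in the setting where the lemma must hold and has to be deleted, not just tightened. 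Two further points about those same steps: part (1) of the lemma does not assume $x\in\Omega\setminus\Omega_{\varepsilon^{4}}$ (that hypothesis enters only the ``moreover'' assertions), so the standing assumption $-\varepsilon^{4}<u(s)<0$ on which your Velocity-Lemma application rests is not granted; and even in a strictly convex domain the containment argument is mildly circular, since (\ref{convexity}) is only assumed on $\{\xi\leq0\}$, i.e.\ where you already know the segment stays in $\bar{\Omega}$. Fortunately all of this is redundant: the crude identity $u'(s)=u'(t)+\int_{t}^{s}u''$ with only $|u''|\leq C\varepsilon^{-2m}$ gives the sign and the lower bound $\varepsilon/2$ at once, and integrating $u'\leq-c\varepsilon+C\varepsilon^{2-2m}$ over the window of length $\varepsilon^{2}$ gives the shell-exit contradiction — which is the paper's argument, valid for general smooth domains.
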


\begin{proof}
It suffices to prove (1), the proof for (2) being exactly the same. First of
all, by (\ref{chi+}), if $\tilde{\chi}_{+}(t,x,v)\neq 0,\,$\ then $%
x-v(t-s)\in \Omega \setminus \Omega _{\varepsilon ^{4}},$ $%
n(x-v\{t-s\})\cdot v>\varepsilon ,$ and $|v|\leq \varepsilon ^{-m}.$ Hence
for $|t-s|\leq \varepsilon ^{2},$ for any $0\leq \theta \leq 1,$%
\begin{equation}
|x-\theta v(t-s)-\{x-v(t-s)\}|\leq \varepsilon ^{-m}\varepsilon ^{2}\leq
\varepsilon ,  \label{x}
\end{equation}%
for $2m<1.$ Therefore $x-\theta v(t-s)$ is also near $\partial \Omega $ and $%
|\nabla n(x-\theta v(t-s))|$ is uniformly bounded. Now, 
\begin{eqnarray}
n(x)\cdot v &=&n(x-v\{t-s\})\cdot v-\{n(x-v\{t-s\})-n(x)\}\cdot v  \notag \\
&>&\varepsilon -\sup_{0\leq \theta \leq 1}|\nabla n(x-\theta
v\{t-s\})|\times |t-s||v|^{2}.  \notag \\
&\geq &\varepsilon -C\varepsilon ^{-2m+2}  \notag \\
&=&\varepsilon \lbrack 1-C\varepsilon ^{-2m+1}]\geq \frac{\varepsilon }{2},
\label{nv1}
\end{eqnarray}%
for $2m<1.$ We thus conclude the first assertion.

To prove the second assertion, let $x\in \Omega \setminus \Omega
_{\varepsilon ^{4}}$ so that $-\varepsilon ^{4}\leq \xi (x)<0.$ If $\tilde{%
\chi}_{+}(s-\varepsilon ^{2},x,v)>0,$ by (\ref{chi+}), $-\varepsilon
^{4}\leq \xi (x-\varepsilon ^{2}v)<0$ and $|v|\leq \varepsilon ^{-m}.$ But 
\begin{equation*}
\xi (x-\varepsilon ^{2}v)=\xi (x)-\varepsilon ^{2}\nabla \xi (x)\cdot
v+\varepsilon ^{2}v\cdot \nabla ^{2}\xi (\bar{x})\cdot \varepsilon ^{2}v,
\end{equation*}%
for some $\bar{x}$ is between $x$ and $x-\varepsilon ^{2}v.$ Since $n(x)=%
\frac{\nabla \xi (x)}{|\nabla \xi (x)|},$ there exists a constant $C_{\xi
}>0 $ such that 
\begin{equation*}
-\varepsilon ^{2}\nabla \xi (x)\cdot v=-\varepsilon ^{2}|\nabla \xi
(x)|n(x)\cdot v<-\frac{\varepsilon ^{3}}{2}|\nabla \xi (x)|<-\frac{C_{\xi
}\varepsilon ^{3}}{2}
\end{equation*}%
for $x\in \Omega \setminus \Omega _{\varepsilon ^{4}}.$ Here we have used
the first assertion that $n(x)\cdot v\geq \frac{\varepsilon }{2}$. Therefore%
\begin{eqnarray*}
\xi (x-\varepsilon ^{2}v) &<&0-\frac{C_{\xi }\varepsilon ^{3}}{2}%
+\varepsilon ^{2}v\cdot \nabla ^{2}\xi (\bar{x})\cdot \varepsilon ^{2}v \\
&<&-\frac{C_{\xi }\varepsilon ^{3}}{2}+C|\varepsilon ^{4}v^{2}|=-\frac{%
C_{\xi }\varepsilon ^{3}}{2}\{1-C\varepsilon ^{1-2m}\}<-\varepsilon ^{4}
\end{eqnarray*}%
for $2m<1,$ and small $\varepsilon $. This is a contradiction to $%
-\varepsilon ^{4}\leq \xi (x-\varepsilon ^{2}v)<0.$
\end{proof}

\begin{lemma}
\label{strong}We have the strong convergence%
\begin{equation*}
\lim_{k\rightarrow \infty }\int_{0}^{1}||Z_{k}(s)-Z(s)||^{2}ds=0
\end{equation*}%
and $\int_{0}^{1}||Z(s)||_{\nu }^{2}>0.$ Moreover, $Z$ defined in (\ref%
{zlimit}) satisfies the corresponding boundary conditions (\ref{inflow})
with $g\equiv 0$, (\ref{bounceback}), (\ref{specular}) and (\ref{diffuse})
\end{lemma}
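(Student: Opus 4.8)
The plan is to use the interior compactness of $Z_{k}$ (Lemma \ref{interior}) together with the no-concentration estimates near $\partial\Omega$ to promote the weak convergence $Z_{k}\rightharpoonup Z$ to strong convergence in $\int_{0}^{1}\|\cdot\|^{2}ds$; once that is in hand, both $\int_{0}^{1}\|Z\|_{\nu}^{2}ds>0$ and the inherited boundary conditions follow from the normalization (\ref{1}) and a limiting Green's formula.

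\emph{Step 1 (strong convergence).} Fix $\varepsilon>0$ small and split $\int_{0}^{1}\|Z_{k}-Z\|^{2}ds$ over (i) $s\in\lbrack0,\varepsilon]\cup\lbrack1-\varepsilon,1]$, (ii) $(s,x)\in\lbrack\varepsilon,1-\varepsilon]\times\Omega_{\varepsilon^{4}}$, and (iii) $(s,x)\in\lbrack\varepsilon,1-\varepsilon]\times(\Omega\setminus\Omega_{\varepsilon^{4}})$. Piece (i) is $\leq C\varepsilon$ uniformly in $k$ by Lemma \ref{zinfty} and the boundedness of $Z$ from (\ref{zlimit}). Piece (ii) $\to0$: since $\Omega_{\varepsilon^{4}}\subset\subset\Omega$ and $[\varepsilon,1-\varepsilon]\subset\subset(0,1)$, a cutoff $\chi\equiv1$ there with $\mathrm{supp}\,\chi\subset\subset(0,1)\times\Omega$ together with Lemma \ref{interior} apply, and since $Z$ is the unique weak limit the whole sequence converges. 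For piece (iii) I would use the partition $1=\chi_{+}(x,v)+\chi_{-}(x,v)+\mathbf{1}_{\{|n(x)\cdot v|\leq\varepsilon\ \mathrm{or}\ |v|\geq\varepsilon^{-m}\}}$ and $|Z_{k}-Z|^{2}\leq2|Z_{k}|^{2}+2|Z|^{2}$: the $Z$-part is $O(\varepsilon^{4})$ since $|\Omega\setminus\Omega_{\varepsilon^{4}}|=O(\varepsilon^{4})$ and $Z$ has Gaussian tails in $v$; the grazing/large-velocity part of $\int|Z_{k}|^{2}$ is $\leq C\varepsilon$ by Lemma \ref{nomove}; so everything reduces to controlling $\int_{\varepsilon}^{1-\varepsilon}\int_{\Omega\setminus\Omega_{\varepsilon^{4}}}|\chi_{\pm}Z_{k}|^{2}$.

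\emph{Step 2 (the main obstacle: no concentration on the non-grazing boundary layer).} For $\chi_{+}$, fixing $s$ and $(x,v)$ with $\chi_{+}(x,v)\neq0$, the key identity comes from integrating $\frac{d}{dt}Z_{k}(t,X(t),v)=-\big(L(\mathbf{I}-\mathbf{P})Z_{k}\big)(t,X(t),v)$ along $X(t)=x-v(s-t)$ over $[s-\varepsilon^{2},s]$: by Lemma \ref{chi}(1) the characteristic is non-grazing, stays in $\Omega$, and at $t=s-\varepsilon^{2}$ lands at $x-\varepsilon^{2}v\in\Omega_{\varepsilon^{4}}$, so
\begin{equation*}
\chi_{+}(x,v)Z_{k}(s,x,v)=\tilde{\chi}_{+}Z_{k}(s-\varepsilon^{2},x-\varepsilon^{2}v,v)-\int_{s-\varepsilon^{2}}^{s}\tilde{\chi}_{+}\big(L(\mathbf{I}-\mathbf{P})Z_{k}\big)(t,X(t),v)\,dt .
\end{equation*}
Squaring, using $|v|\leq\varepsilon^{-m}$ on $\mathrm{supp}\,\tilde{\chi}_{+}$, the unit-Jacobian change $y=X(t)$, and Cauchy--Schwarz in $t$, and then integrating in $(s,x,v)$: the last term is $\leq C_{\varepsilon}\int_{0}^{1}\|(\mathbf{I}-\mathbf{P})Z_{k}(t)\|_{\nu}^{2}dt\to0$ by (\ref{i-pto0}); the first term is bounded by $\int_{0}^{1}\int_{\Omega_{\varepsilon^{4}}}\int_{|v|\leq\varepsilon^{-m}}\mathbf{1}_{\{y+\varepsilon^{2}v\in\Omega\setminus\Omega_{\varepsilon^{4}}\}}|Z_{k}(\sigma,y,v)|^{2}$, which (peeling off $\sigma$ near $0,1$ at cost $C\varepsilon$ and using the interior strong convergence of Step 1) tends to the same expression with $Z$ in place of $Z_{k}$ — itself $O(\varepsilon^{4})$ since for each $v$ the $y$-set is a translate of $\Omega\setminus\Omega_{\varepsilon^{4}}$. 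The symmetric forward-in-time argument treats $\chi_{-}$. Collecting, $\limsup_{k}\int_{0}^{1}\|Z_{k}-Z\|^{2}ds\leq C\varepsilon$ for every $\varepsilon>0$, hence the limit is $0$. This transport-to-interior step — in particular getting the window length $\varepsilon^{2}$, the shell width $\varepsilon^{4}$, and the non-grazing threshold $\varepsilon$ to cooperate — is where I expect the real work to lie.

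\emph{Step 3 (positivity and boundary conditions).} Since $\mathbf{P}$ is bounded and the weight $\nu\lesssim(1+|v|)^{\gamma}$ is absorbed by the Gaussian, strong convergence gives $\int_{0}^{1}\|\mathbf{P}Z_{k}\|_{\nu}^{2}ds\to\int_{0}^{1}\|\mathbf{P}Z\|_{\nu}^{2}ds$; the left side is $\equiv1$ by (\ref{1}), and $(\mathbf{I}-\mathbf{P})Z=0$, so $\int_{0}^{1}\|Z\|_{\nu}^{2}ds=1>0$. For the boundary conditions, each $Z_{k}$ has an $L^{2}(\gamma)$ trace, so Green's formula for $[\partial_{t}+v\cdot\nabla_{x}]Z_{k}=-L(\mathbf{I}-\mathbf{P})Z_{k}$, tested against $\phi\in C_{c}^{\infty}\big((0,1)\times\bar{\Omega}\times(\mathbf{R}^{3}\setminus\{0\})\big)$ vanishing near $\gamma_{0}$, passes to the limit (using $Z_{k}\to Z$ in $L^{2}$, $L(\mathbf{I}-\mathbf{P})Z_{k}\to0$, and Green's formula for the explicit $Z$ solving (\ref{zvlasov})), yielding $Z_{k}|_{\gamma}\rightharpoonup Z|_{\gamma}$ distributionally on $(0,1)\times(\gamma\setminus\gamma_{0})$ against $|n\cdot v|dS\,dv\,dt$. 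Each boundary condition is linear and I pass to the limit accordingly: for inflow, $Z_{k}|_{\gamma}\to0$ in $L^{2}$ by (\ref{1/n}), hence $Z|_{\gamma}=0$; for bounce-back and specular, the maps $v\mapsto-v$ and $v\mapsto R(x)v$ are measure-preserving involutions of $\mathbf{R}^{3}$ swapping $\gamma_{-}\setminus\gamma_{0}$ and $\gamma_{+}\setminus\gamma_{0}$, so the trace identity survives; for diffuse, I would first bound $\int_{0}^{1}\|P_{\gamma}Z_{k}\|_{\gamma_{+}}^{2}$ uniformly in $k$ by splitting the defining velocity moment at $|n\cdot v|=\delta$ and combining (\ref{diffuse1/n}) with the transport control of the non-grazing $\gamma_{+}$-trace by the interior, then pass to the limit in $Z_{k}|_{\gamma_{-}}=P_{\gamma}Z_{k}$ and let $\delta\to0$. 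Because $Z$ in (\ref{zlimit}) is explicit and continuous, the a.e.\ relation on $\gamma\setminus\gamma_{0}$ extends to all of $\gamma$; the diffuse case is the delicate one here, since the moment defining $P_{\gamma}$ is supported down to $\gamma_{0}$.
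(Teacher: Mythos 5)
Your proposal is correct in its overall skeleton (the same four-way splitting: time edges via Lemma \ref{zinfty}, interior via Lemma \ref{interior}, grazing/large-velocity shell via Lemma \ref{nomove}, non-grazing shell via Lemma \ref{chi}), but you implement the decisive ``no boundary concentration'' step differently from the paper. The paper performs an $L^{2}$ energy identity on the thin shell $[s-\varepsilon ^{2},s]\times \{\Omega \setminus \Omega _{\varepsilon ^{4}}\}$, so the non-grazing shell mass is controlled by the flux through the inner boundary $\gamma ^{\varepsilon }=\{\xi =-\varepsilon ^{4}\}$, and that trace is then controlled by interior quantities via Ukai's trace theorem, with the residual $Z$-trace contributing $O(\varepsilon ^{2})$ over the short window. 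You instead integrate the pointwise Duhamel formula along the backward characteristic, which by (the proof of) Lemma \ref{chi} stays in $\Omega $, is non-grazing, and lands in $\Omega _{\varepsilon ^{4}}$ at time $s-\varepsilon ^{2}$; after the unit-Jacobian change of variables the main term lives on a translated shell of measure $O(\varepsilon ^{4})$ inside the interior region, where Lemma \ref{interior} lets you replace $Z_{k}$ by the bounded $Z$. This avoids both the inner-boundary flux terms and the trace theorem for this step, and it is a legitimate (arguably more elementary) alternative; the price is that you must justify the a.e.\ mild formulation along characteristics and the fact that the backward characteristic does not exit $\Omega$ (monotonicity of $\xi (X(t))$), points contained in Lemma \ref{chi}'s proof but not literally in its statement. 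Your positivity argument ($\int _{0}^{1}\Vert Z\Vert _{\nu }^{2}=1$ by passing to the limit in the normalization (\ref{1})) is fine and even slightly cleaner than the paper's lower bound via equivalence of norms on the hydrodynamic part. For the boundary conditions you use a Green's-formula (distributional) convergence of traces away from $\gamma _{0}$, whereas the paper derives \emph{strong} $L^{2}$ convergence of non-grazing traces, (\ref{gammalimit}), from Ukai's trace theorem; for inflow, bounce-back and specular your weak-trace route suffices since the conditions are linear and the reflections preserve the measure $|n\cdot v|\,dS\,dv\,dt$.

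The one genuinely under-specified point is the diffuse case, exactly where you flag delicacy. To pass to the limit in $Z_{k}|_{\gamma _{-}}=P_{\gamma }Z_{k}$ you need the \emph{full} outgoing trace $\{Z_{k}\}_{\gamma _{+}}$ (including the grazing strip, since the moment defining $P_{\gamma }$ integrates down to $\gamma _{0}$) to be uniformly bounded in $L^{2}([0,1]\times \gamma _{+})$, so that it has a weak limit identified with $Z$ off the null set $\gamma _{0}$. Your plan ``split the moment at $|n\cdot v|=\delta $ and combine (\ref{diffuse1/n}) with transport control of the non-grazing trace'' can be made to work, but only if you exploit the product structure on $\gamma _{-}$: by (\ref{kdiffuse}), $Z_{k}|_{\gamma _{-}}=\bar{a}_{k}(t,x)\sqrt{c_{\mu }\mu }$, so the non-grazing trace control (which is where a trace estimate of Ukai type, or your transport argument, enters) already pins down the scalar coefficient $\bar{a}_{k}$ in $L^{2}_{t,x}$, since $\int _{\{v\cdot n>\delta ,|v|\leq \delta ^{-1}\}}\mu \,dv$ is bounded below; then $P_{\gamma }Z_{k}=\bar{a}_{k}\sqrt{c_{\mu }\mu }$ is bounded on all of $\gamma _{+}$, and together with $\{I-P_{\gamma }\}Z_{k}\rightarrow 0$ from (\ref{diffuse1/n}) one gets the uniform bound and the weak limit. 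Without this observation, a naive $\delta $-splitting of the moment estimates the grazing part by the full trace norm of $Z_{k}$ on $\gamma _{+}$, which is not a priori uniformly bounded, so the absorption you intend would be circular. This is precisely how the paper closes the diffuse case; with that supplement your argument is complete.
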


\begin{proof}
By (\ref{vlasovchi}), we multiply $\tilde{\chi}_{\pm }$ with (\ref{zn}) to
get 
\begin{equation}
\lbrack \partial _{t}+v\cdot \nabla _{x}]\{\tilde{\chi}_{\pm }Z_{k}\}=-%
\tilde{\chi}_{\pm }LZ_{k}.  \label{chizn}
\end{equation}%
Since $\int_{0}^{1}||Z_{k}(t)||_{\gamma }^{2}dt<\infty ,$ applying the $%
L^{2} $ estimate backward in time over the shell-like region $[s-\varepsilon
^{2},s]\times \{\Omega \setminus \Omega _{\varepsilon ^{4}}\}\times \mathbf{R%
}^{3}$ for outgoing part $\chi _{+}$, we obtain:%
\begin{eqnarray}
|| &&\chi _{+}Z_{k}(s)||^{2}+\int_{s-\varepsilon ^{2}}^{s}||\tilde{\chi}%
_{+}Z_{k}(t)||_{\gamma _{+}}^{2}dt-\int_{s-\varepsilon ^{2}}^{s}||\tilde{\chi%
}_{+}Z_{k}(t)||_{\gamma _{+}^{\varepsilon }}^{2}dt=||Z_{k}\tilde{\chi}%
_{+}(s-\varepsilon ^{2})||^{2}  \notag \\
&&+\int_{s-\varepsilon ^{2}}^{s}||\tilde{\chi}_{+}Z_{k}(t)||_{\gamma
_{-}}^{2}dt-\int_{s-\varepsilon ^{2}}^{s}||\tilde{\chi}_{+}Z_{k}(t)||_{%
\gamma _{-}^{\varepsilon }}^{2}dt-\int_{s-\varepsilon ^{2}}^{s}(\tilde{\chi}%
_{+}LZ_{k},Z_{k})(t)dt,  \label{L2chi+}
\end{eqnarray}%
where at the inner boundary $\gamma ^{\varepsilon }\equiv \{x:\xi
(x)=-\varepsilon ^{4}\}\times \mathbf{R}^{3}$, its normal vector is $n(x)=%
\frac{\nabla \xi (x)}{|\nabla \xi (x)|}.\,$\ We notice that by Lemma \ref%
{chi}, $\tilde{\chi}_{+}\equiv 0$ at $s-\varepsilon ^{2},$ while $\tilde{\chi%
}_{+}\equiv 0$ on $\gamma _{-}$ and $\gamma _{-}^{\varepsilon }$, since $%
n(x)\cdot v>0$ for $\tilde{\chi}_{+}(s,x,v)\neq 0.$ From $\int_{0}^{1}||(%
\mathbf{I}-\mathbf{P)}Z_{k}(s)||_{\nu }^{2}ds\leq \frac{1}{k},$ we get for $%
k $ large, 
\begin{eqnarray}
\int_{s-\varepsilon ^{2}}^{s}(\tilde{\chi}_{+}LZ_{k},Z_{k})(t)dt &\leq
&\int_{0}^{1}\int_{\Omega \times \mathbf{R}^{3}}|L\{\mathbf{I}-\mathbf{P}%
\}Z_{k}||Z_{k}|(t)dxdvdt  \notag \\
&\leq &C\left\{ \int_{0}^{1}||\{\mathbf{I}-\mathbf{P}\}Z_{k}(t)||_{\nu
}^{2}\right\} ^{1/2}\left\{ \int_{0}^{1}||Z_{k}(t)||_{\nu }^{2}\right\}
^{1/2}  \notag \\
&\leq &\frac{C}{\sqrt{k}}.  \label{lzz}
\end{eqnarray}%
Therefore we can simplify (\ref{L2chi+}) as 
\begin{equation*}
||\chi _{+}Z_{k}(s)||^{2}+\int_{s-\varepsilon ^{2}}^{s}||\tilde{\chi}%
_{+}Z_{k}(t)||_{\gamma _{+}}^{2}dt\leq \int_{s-\varepsilon ^{2}}^{s}||\tilde{%
\chi}_{+}Z_{k}(t)||_{\gamma _{+}^{\varepsilon }}^{2}dt+\frac{C}{\sqrt{k}}.
\end{equation*}

Similarly, we use $L^{2}$ estimate forward in time $[s,s+\varepsilon
^{2}]\times \{\Omega \setminus \Omega _{\varepsilon ^{4}}\}\times \mathbf{R}%
^{3}$ for incoming part $\chi _{-}$ to get%
\begin{eqnarray*}
&||&\tilde{\chi}_{-}Z_{k}(s+\varepsilon ^{2})||^{2}+\int_{s}^{s+\varepsilon
^{2}}||\tilde{\chi}_{-}Z_{k}(t)||_{\gamma
_{+}}^{2}dt-\int_{s}^{s+\varepsilon ^{2}}||\tilde{\chi}_{-}Z_{k}(t)||_{%
\gamma _{+}^{\varepsilon }}^{2}dt=||\chi _{-}Z_{k}(s)||^{2} \\
&&+\int_{s}^{s+\varepsilon ^{2}}||\tilde{\chi}_{-}Z_{k}(t)||_{\gamma
_{-}}^{2}dt-\int_{s}^{s+\varepsilon ^{2}}||\tilde{\chi}_{-}Z_{k}(t)||_{%
\gamma _{-}^{\varepsilon }}^{2}dt-\int_{s}^{s+\varepsilon ^{2}}(\tilde{\chi}%
_{-}L\{\mathbf{I}-\mathbf{P}\}Z_{k},Z_{k})(t)dt.
\end{eqnarray*}%
\ We notice that $\tilde{\chi}_{-}\equiv 0$ at $t=s+\varepsilon ^{2},$ while 
$\tilde{\chi}_{-}\equiv 0$ on the incoming part $\gamma _{+}$ and $\gamma
_{+}^{\varepsilon }$ by part (2) of Lemma \ref{chi}$.$ Therefore we deduce
from (\ref{lzz}) 
\begin{equation*}
||\chi _{-}Z_{k}(s)||^{2}+\int_{s}^{s+\varepsilon ^{2}}||\tilde{\chi}%
_{-}Z_{k}(t)||_{\gamma _{-}}^{2}dt\leq \int_{s}^{s+\varepsilon ^{2}}||\tilde{%
\chi}_{-}Z_{k}(t)||_{\gamma _{-}^{\varepsilon }}^{2}dt+\frac{C}{\sqrt{k}}.
\end{equation*}%
By Lemma $\ref{chi},$ the supports of $\tilde{\chi}_{\pm }Z_{k}(t)$ on $%
\gamma _{\pm }^{\varepsilon }$ are contained in $|n(x)\cdot v|\geq \frac{%
\varepsilon }{2}.$ Combining the $\pm $ cases, we are able to estimate $%
\tilde{\chi}_{\pm }Z_{k}$ in terms of the inner boundary contributions as 
\begin{eqnarray}
&&\int_{\Omega \setminus \Omega _{\varepsilon ^{4}}}\int_{\substack{ %
|n(x)\cdot v|>\varepsilon  \\ |v|\leq \varepsilon ^{-m}}}%
|Z_{k}(s,x,v)|^{2}dxdv  \notag \\
&\leq &\int_{s}^{s+\varepsilon ^{2}}||\tilde{\chi}_{-}Z_{k}(t)||_{\gamma
_{-}^{\varepsilon }}^{2}dt+\int_{s-\varepsilon ^{2}}^{s}||\tilde{\chi}%
_{+}Z_{k}(t)||_{\gamma _{+}^{\varepsilon }}^{2}dt+\frac{2C}{\sqrt{k}}  \notag
\\
&\leq &\int_{s-\varepsilon ^{2}}^{s+\varepsilon ^{2}}||\mathbf{1}_{\{|v|\leq
\varepsilon ^{-m},\text{ and }|n(x)\cdot v|\geq \frac{\varepsilon }{2}%
\}}Z_{k}(t)||_{\gamma ^{\varepsilon }}^{2}dt+\frac{2C}{\sqrt{k}}.
\label{gammae}
\end{eqnarray}

Since the outward normal at $x\in \partial \Omega _{\varepsilon ^{4}}$ is $%
n(x)$, the set $\{(x,v):x\in \partial \Omega _{\varepsilon ^{4}},|v\cdot
n(x)|\geq \frac{\varepsilon }{2}\}$ is away from the singular set $\gamma
_{0}^{\varepsilon }=\{(x,v):x\in \partial \Omega _{\varepsilon ^{4}},|v\cdot
n(x)|=0\}.$ Hence, by (\ref{tlower}) in Lemma \ref{huang}, both the backward
or forward trajectories emanating from $\partial \Omega _{\varepsilon
^{4}}\times \mathbf{\{|}v\mathbf{|\leq \varepsilon }^{-m},|v\cdot n(x)|\geq 
\frac{\varepsilon }{2}\}$ spend a positive period of time inside $\bar{\Omega%
}_{\varepsilon ^{4}}.$ Since 
\begin{equation}
\{\partial _{t}+v\cdot \nabla _{x}\}\left\{ \mathbf{1}_{\{|v|\leq
\varepsilon ^{-m}\}}(Z_{k}-Z)\right\} =-\mathbf{1}_{\{|v|\leq \varepsilon
^{-m}\}}L\{\mathbf{I}-\mathbf{P}\}Z_{k},  \label{nuzn}
\end{equation}%
we can apply Ukai's trace theorem (Theorem 5.1.1, [U1]) to $\mathbf{1}%
_{\{|v|\leq \varepsilon ^{-m}\}}(Z_{k}-Z)$ over $\bar{\Omega}_{\varepsilon
^{4}}$ to get 
\begin{eqnarray*}
&&\int_{s-\varepsilon ^{2}}^{s+\varepsilon ^{2}}||\mathbf{1}_{\{|v|\leq
\varepsilon ^{-m},|v\cdot n(x)|\geq \frac{\varepsilon }{2}%
\}}\{Z_{k}(t)-Z(t)\}||_{\gamma ^{\varepsilon }}^{2}ds \\
&=&\int_{s-\varepsilon ^{2}}^{s+\varepsilon ^{2}}||\mathbf{1}_{\{|v\cdot
n(x)|\geq \frac{\varepsilon }{2}\}}\{\mathbf{1}_{\{|v|\leq \varepsilon
^{-m}\}}(Z_{k}(t)-Z(t))\}||_{\gamma ^{\varepsilon }}^{2}ds \\
&\leq &C_{\varepsilon }\int_{\varepsilon }^{1-\varepsilon }\left\{ ||\mathbf{%
1}_{\{|v|\leq \varepsilon ^{-m}\}}(Z_{k}(t)-Z(t))||_{\Omega _{\varepsilon
^{4}}\times \mathbf{R}^{3}}^{2}+||\mathbf{1}_{\{|v|\leq \varepsilon
^{-m}\}}\{L\{\mathbf{I}-\mathbf{P}\}Z_{k}(t)\}||_{\Omega _{\varepsilon
^{4}}\times \mathbf{R}^{3}}^{2}\right\} dt \\
&\leq &C_{\varepsilon }\int_{\varepsilon }^{1-\varepsilon
}||Z_{k}(t)-Z(t)||_{\Omega _{\varepsilon ^{4}}\times \mathbf{R}%
^{3}}^{2}dt+C_{\varepsilon }\int_{0}^{1}||\{\mathbf{I}-\mathbf{P}%
\}Z_{k}(t)||_{\nu }^{2}dt \\
&\leq &C_{\varepsilon }\int_{\varepsilon }^{1-\varepsilon
}||Z_{k}(t)-Z(t)||_{\Omega _{\varepsilon ^{4}}\times \mathbf{R}^{3}}^{2}dt+%
\frac{C_{\varepsilon }}{k}.
\end{eqnarray*}%
Therefore, for fixed $\varepsilon ,$ we have from the interior compactness
in Lemma \ref{interior} 
\begin{equation*}
\lim_{k\rightarrow \infty }\int_{s-\varepsilon ^{2}}^{s+\varepsilon ^{2}}||%
\mathbf{1}_{\{|v|\leq \varepsilon ^{-m},|v\cdot n(x)|\geq \varepsilon
\}}\{Z_{k}(t)-Z(t)\}||_{\gamma ^{\varepsilon }}^{2}dt=0.
\end{equation*}%
Hence, for $k$ large, and for any $\varepsilon \leq s\leq 1-\varepsilon ,$
by (\ref{gammae})%
\begin{eqnarray*}
&&\int_{\Omega \setminus \Omega _{\varepsilon ^{4}}}\int_{\substack{ |v|\leq
\varepsilon ^{-m},  \\ |v\cdot n(x)|\geq \varepsilon }}|Z_{k}(s,x,v)|^{2}dxdv
\\
&\leq &2\int_{s-\varepsilon ^{2}}^{s+\varepsilon ^{2}}||\mathbf{1} 
_{\substack{ |v|\leq \varepsilon ^{-m}  \\ |v\cdot n(x)|\geq \varepsilon }}%
\{Z_{k}(t)-Z(t)\}||_{\gamma ^{\varepsilon }}^{2}dt+2\int_{s-\varepsilon
^{2}}^{s+\varepsilon ^{2}}||\mathbf{1}_{\substack{ |v|\leq \varepsilon ^{-m} 
\\ |v\cdot n(x)|\geq \varepsilon }}Z(t)||_{\gamma ^{\varepsilon }}^{2}dt+%
\frac{2C}{\sqrt{k}} \\
&\leq &\varepsilon +\int_{s-\varepsilon ^{2}}^{s+\varepsilon
^{2}}||Z(t)||_{\gamma ^{\varepsilon }}^{2}ds.
\end{eqnarray*}%
But from Lemma \ref{limit}, $Z(s,x,v)$ is smooth so its trace is given by (%
\ref{zlimit}) as well. By (\ref{constantbound}), since the time interval is
small, 
\begin{equation*}
\int_{s-\varepsilon ^{2}}^{s+\varepsilon ^{2}}||Z(t)||_{\gamma ^{\varepsilon
}}^{2}dt\leq 2\varepsilon ^{2}\times \sup_{0\leq t\leq 1}||Z(t)||_{\gamma
^{\varepsilon }}^{2}\leq C\varepsilon ^{2},
\end{equation*}%
where $C$ depends on $a_{0},c_{0},c_{1},c_{2},b_{0},b_{1}$ and $\varpi .$ We
thus deduce that for $\varepsilon \leq s\leq 1-\varepsilon ,$ for $k$ large, 
\begin{equation}
\int_{\Omega \setminus \Omega _{\varepsilon ^{4}}}\int_{\substack{ |n\cdot
v|\geq \varepsilon  \\ |v|\leq \varepsilon ^{-m}}}|Z_{k}(s,x,v)|^{2}dxdv\leq
C\varepsilon .  \label{moving}
\end{equation}

We are now ready to prove compactness of $Z_{k}$. We split 
\begin{equation*}
\int_{0}^{1}\int \int_{\Omega
}|Z_{k}(s,x,v)-Z(s,x,v)|^{2}dsdxdv=\int_{0}^{\varepsilon }+\int_{\varepsilon
}^{1-\varepsilon }+\int_{1-\varepsilon }^{1}.
\end{equation*}%
By Lemma \ref{zinfty}, we conclude that the integrals $\int_{0}^{\varepsilon
}+\int_{1-\varepsilon }^{1}$ are bounded by $C\varepsilon .$ On the other
hand, we further split the main part $\int_{\varepsilon }^{1-\varepsilon }$
as 
\begin{eqnarray*}
&&2\int_{\varepsilon }^{1-\varepsilon }\int_{\Omega \setminus \Omega
_{\varepsilon ^{4}}}\int_{\substack{ |n\cdot v|\geq \varepsilon  \\ |v|\leq
\varepsilon ^{-m}}}|Z_{k}(s,x,v)|^{2}+2\int_{\varepsilon }^{1-\varepsilon
}\int_{\Omega \setminus \Omega _{\varepsilon ^{4}}}\int_{\substack{ |n\cdot
v|\leq \varepsilon  \\ \text{or }|v|\geq \varepsilon ^{-m}}}%
|Z_{k}(s,x,v)|^{2} \\
&&+2\int_{\varepsilon }^{1-\varepsilon }\int_{\Omega \setminus \Omega
_{\varepsilon ^{4}}}\int |Z(s,x,v)|^{2}+\int_{\varepsilon }^{1-\varepsilon
}\int \int_{\Omega _{\varepsilon ^{4}}}|Z_{k}(s,x,v)-Z(s,x,v)|^{2}.
\end{eqnarray*}%
Clearly, the first term is bounded by (\ref{moving}), the second term is
bounded by $C\varepsilon $ thanks to Lemma \ref{nomove}; by (\ref%
{constantbound}), the third term is bounded by 
\begin{equation*}
\int_{\Omega \setminus \Omega _{\varepsilon ^{4}}}\int
|Z(t,x,v)|^{2}dxdv\leq C|\Omega \setminus \Omega _{\varepsilon ^{4}}|\leq
C\varepsilon ,
\end{equation*}%
where $C$\ depends on $a_{0},c_{0},c_{1},c_{2},b_{0},b_{1}$ and $\varpi .$
The last term goes to zero as $k\rightarrow \infty $ by Lemma \ref{interior}%
. We hence deduce the strong convergence 
\begin{equation*}
\int_{0}^{1}\int \int_{\Omega }|Z_{k}(s,x,v)-Z(s,x,v)|^{2}dsdxdv\rightarrow 0
\end{equation*}%
by first letting $\varepsilon $ small, then letting $k\rightarrow \infty .$
From our normalization $\int_{0}^{1}||\mathbf{P}Z_{k}(s)||_{\nu
}^{2}ds\equiv 1$ with $\mathbf{P}Z_{k}=\{a_{k}+v\cdot b_{k}+c_{k}|v|^{2}\}%
\sqrt{\mu },$ there exists $C>0$ independent of $k$ such that 
\begin{equation*}
\int_{0}^{1}||\mathbf{P}Z_{k}(s)||^{2}ds\geq C\int_{0}^{1}||\mathbf{P}%
Z_{k}(s)||_{\nu }^{2}ds\geq C>0,
\end{equation*}%
because both norms are equivalent to 
\begin{equation*}
\int_{0}^{1}|a_{k}(s,x)|^{2}dxds+\int_{0}^{1}|b_{k}(s,x)|^{2}dxds+%
\int_{0}^{1}|c_{k}(s,x)|^{2}dxds.
\end{equation*}%
Hence $\int_{0}^{1}||Z(s)||^{2}ds=\lim_{k\rightarrow \infty
}\int_{0}^{1}||Z_{k}(s)||^{2}ds\geq C>0.$

Finally, we study the boundary conditions which $Z$ satisfies. In fact,
recalling (\ref{nuzn}) and $\int_{0}^{1}||Z_{k}(t)-Z(t)||^{2}dt\rightarrow
0, $ we use Ukai's trace theorem to conclude, for any fixed $\varepsilon >0,$
\begin{eqnarray}
&&\lim_{k\rightarrow \infty }\int_{0}^{1}||\mathbf{1}_{\{|v\cdot n(x)|\geq 
\frac{\varepsilon }{2},|v|\leq \frac{1}{\varepsilon ^{m}}%
\}}Z_{k}(s)-Z(s)||_{\gamma }^{2}ds  \notag \\
&\leq &C\lim_{k\rightarrow \infty }[\int_{0}^{1}||\mathbf{1}_{|v|\leq \frac{1%
}{\varepsilon ^{m}}}\{Z_{k}(s)-Z(s)\}||^{2}ds+\int_{0}^{1}||[\partial
_{t}+v\cdot \nabla _{x}]\mathbf{1}_{|v|\leq \frac{1}{\varepsilon ^{m}}%
}\{Z_{k}(s)-Z(s)\}||^{2}ds]  \notag \\
&\leq &C\lim_{k\rightarrow \infty }\left( \int_{0}^{1}||\mathbf{1}_{|v|\leq 
\frac{1}{\varepsilon ^{m}}}\{L\{\mathbf{I}-\mathbf{P}\}Z_{k}(t)\}||^{2}dt%
\right) =0.  \label{gammalimit}
\end{eqnarray}

For the in-flow boundary case, by (\ref{1/n}) and the continuity of $Z,$ 
\begin{equation*}
\int_{0}^{1}||\mathbf{1}_{\{|v\cdot n(x)|\geq \frac{\varepsilon }{2}\}}%
\mathbf{1}_{|v|\leq \varepsilon ^{-m}}Z(s)||_{\gamma
}^{2}ds=\lim_{k\rightarrow \infty }\int_{0}^{1}||\mathbf{1}_{\{|v\cdot
n(x)|\geq \frac{\varepsilon }{2}\}}\mathbf{1}_{|v|\leq \varepsilon
^{-m}}Z_{k}(s)||_{\gamma }^{2}ds=0,\text{\ }
\end{equation*}%
so that $Z\equiv 0$ on $\gamma .$

For the bounce-back and specular reflections, $Z_{k}(t,x,v)=Z_{k}(t,x,-v),$
or $Z_{k}(t,x,v)=Z_{k}(t,x,R(x)v).$ Letting $k\rightarrow \infty ,$ we
deduce that $Z$ satisfies the same relation respectively for $\{|v\cdot
n(x)|\geq \frac{\varepsilon }{2}\}.$ Therefore, $Z(t,x,v)=Z(t,x,-v)$ or $%
Z(t,x,v)=Z(t,x,R(x)v)$ respectively by the continuity of $Z.$

For the diffusive reflection, notice that on $\gamma _{-},$ 
\begin{equation}
Z_{k}(t,x,v)=c_{\mu }\{\int_{n\cdot v^{\prime }>0}Z_{k}(t,x,v^{\prime })%
\sqrt{\mu }n\cdot v^{\prime }dv^{\prime }\}\sqrt{\mu (v)}\equiv \bar{a}%
_{k}(t,x)\sqrt{c_{\mu }\mu (v)}  \label{kdiffuse}
\end{equation}%
Fix $\varepsilon >0$ small and for any $x\in \partial \Omega ,$ on the set $%
\{v\cdot n(x)>\varepsilon \},Z_{k}\rightarrow Z$ in $L^{2}([0,1]\times
\gamma ).$ This implies that from (\ref{gammalimit}) 
\begin{equation*}
\left\{ \int_{0}^{1}\int_{\partial \Omega }|\bar{a}_{k}(t,x)|^{2}\int 
_{\substack{ v\cdot n(x)>\varepsilon  \\ \text{ and }|v|\geq \varepsilon
^{-m}}}c_{\mu }\mu dv\right\} dxdt=\int_{0}^{1}||\mathbf{1}_{\substack{ %
|v\cdot n(x)|\geq \varepsilon  \\ |v|\geq \varepsilon ^{-m}}}%
\{Z_{k}(t)\}_{\gamma _{-}}||^{2}dt<C_{\varepsilon }<\infty .
\end{equation*}%
Notice that for $\varepsilon $ small, $\int_{\substack{ v\cdot
n(x)>\varepsilon  \\ \text{ and }|v|\geq \varepsilon ^{-m}}}\mu dv$ is a
finite non-zero constant, independent of $x$. It follows that 
\begin{equation*}
\left\{ \int_{0}^{1}\int_{\partial \Omega }|\bar{a}_{k}(t,x)|^{2}\right\}
dxdt\leq C_{\varepsilon }<\infty .
\end{equation*}%
This implies that $P_{\gamma }\{Z_{k}\}_{\gamma _{+}}\equiv $ $\bar{a}%
_{k}(t,x)\sqrt{c_{\mu }\mu (v)}$ are uniformly bounded in $L^{2}([0,1]\times
\gamma _{+}).$ But from (\ref{diffuse1/n}), $\{I-P_{\gamma
}\}\{Z_{k}\}_{\gamma _{+}}\rightarrow 0$ in $L^{2}([0,1]\times \gamma _{+}),$
we deduce that $\{Z_{k}\}_{\gamma _{+}}$ are uniformly bounded in $%
L^{2}([0,1]\times \gamma _{+})~$\ with a weak limit$.$ But $%
\{Z_{k}\}_{\gamma _{+}}\rightarrow Z$ strongly in $L^{2}([0,1]\times
\{\gamma _{+}\setminus \gamma _{0}\})$ by the trace theorem$,$ so that $%
\{Z_{k}\}_{\gamma _{+}}\rightarrow Z$ weakly in $L^{2}([0,1]\times \gamma
_{+})$ since $\gamma _{0}$ has zero measure. Hence 
\begin{equation*}
c_{\mu }\{\int_{n\cdot v^{\prime }>0}Z_{k}(t,x,v^{\prime })\sqrt{\mu }n\cdot
v^{\prime }dv^{\prime }\}\sqrt{\mu (v)}\rightarrow c_{\mu }\{\int_{n\cdot
v^{\prime }>0}Z(t,x,v^{\prime })\sqrt{\mu }n\cdot v^{\prime }dv^{\prime }\}%
\sqrt{\mu (v)}
\end{equation*}%
weakly $\ L^{2}([0,1]\times \gamma _{+})$. We then recover (\ref{diffuse})
by letting $k\rightarrow \infty $ in (\ref{kdiffuse}).
\end{proof}

\subsection{Boundary Condition Leads to $Z=0.$}

Since $Z$ now satisfies one of the boundary conditions $Z_{\gamma }=0$, (\ref%
{bounceback}), (\ref{specular}), and (\ref{diffuse}), we will show that $Z$
in (\ref{zlimit}) has to be zero and this leads to a contradiction.

In the case of in-flow boundary (\ref{inflow}), since $Z=0$ on $\gamma ,$
from (\ref{zlimit}), for any $t$ and $x\in \partial \Omega ,$ and $v\in 
\mathbf{R}^{3},$ by comparing the coefficients in front of the polynomials
of $v,~$\ we deduce that $\{\frac{c_{0}t^{2}}{2}+c_{1}t+c_{2}\}\equiv 0$ and%
\begin{equation*}
\{-c_{0}tx-c_{1}x+\varpi \times x+b_{0}t+b_{1}\}\equiv \{\frac{c_{0}}{2}%
|x|^{2}-b_{0}\cdot x+a_{0}\}\equiv 0.
\end{equation*}%
Therefore $c_{0}=c_{1}=c_{2}=0,$ $\ $and $b_{0}=0.$ Then $a_{0}=0$ and $%
\varpi \times x+b_{1}\equiv 0,$ or 
\begin{equation}
\varpi ^{2}x_{3}-\varpi ^{3}x_{2}+b_{1}^{1}=-\varpi ^{1}x_{3}+\varpi
^{3}x_{1}+b_{1}^{2}=\varpi ^{1}x_{2}-\varpi ^{2}x_{1}+b_{1}^{3}\equiv 0
\label{cross}
\end{equation}%
for all $x\in \partial \Omega .$ Notice that since $\xi (x)=0$ is two
dimensional, so we may assume that $(x_{1},x_{2})$ are (locally)
independent. Hence $\varpi ^{1}=\varpi ^{2}=b_{1}^{3}=0$ then $\varpi
^{3}=b_{1}^{2}=0,$ and finally $b_{1}^{1}=0.$ Therefore we deduce $Z\equiv
0. $

In the case of the bounce-back case (\ref{bounceback}), for any fixed $t,$
because of (\ref{nuzn}), we apply Ukai's trace theorem over $[0,t]\times
\Omega \times \mathbf{R}^{3}$ to get, for any $0\leq t\leq 1,$ 
\begin{equation}
\lim_{k\rightarrow \infty }||\mathbf{1}_{\{|v|\leq \varepsilon
^{-1}\}}\{Z_{k}(t)-Z(t)\}||=0.  \label{zt}
\end{equation}%
Therefore, by (\ref{mass}) and (\ref{energy}), 
\begin{eqnarray}
\int Z(t)\sqrt{\mu } &=&\lim_{k\rightarrow \infty }\int_{|v|\leq \frac{1}{%
\varepsilon }}Z_{k}(t)\sqrt{\mu }+\lim_{k\rightarrow \infty }\int_{|v|\geq 
\frac{1}{\varepsilon }}Z_{k}(t)\sqrt{\mu }\equiv 0,  \label{zmass} \\
\int Z(t)|v|^{2}\sqrt{\mu } &\equiv &\lim_{k\rightarrow \infty
}\int_{|v|\leq \frac{1}{\varepsilon }}|v|^{2}Z_{k}(t)\sqrt{\mu }%
+\lim_{k\rightarrow \infty }{}\int_{|v|\geq \frac{1}{\varepsilon }%
}|v|^{2}Z_{k}(t)\sqrt{\mu },  \label{zenergy}
\end{eqnarray}%
because the integrations over $|v|\geq \frac{1}{\varepsilon }$ are bounded
by $C||Z_{k}(t)||\int_{|v|\geq \frac{1}{\varepsilon }}\mu
^{1/4}=C\varepsilon $, from the $L^{\infty }$ estimates in Lemma \ref{zinfty}%
. We therefore obtain that for all $t,$ 
\begin{eqnarray}
\int \{\frac{c_{0}}{2}|x|^{2}-b_{0}\cdot x+a_{0}\}\sqrt{\mu }+\{\frac{%
c_{0}t^{2}}{2}+c_{1}t+c_{2}\}|v|^{2}\sqrt{\mu } &\equiv &0,  \label{abcmass}
\\
\int \{\frac{c_{0}}{2}|x|^{2}-b_{0}\cdot x+a_{0}\}|v|^{2}\sqrt{\mu }+\{\frac{%
c_{0}t^{2}}{2}+c_{1}t+c_{2}\}|v|^{4}\sqrt{\mu } &\equiv &0.
\label{abcenergy}
\end{eqnarray}%
This implies that $c_{0}=c_{1}=0.$ Moreover, since from the bounce-back
boundary condition $Z(t,x,v)=Z(t,x,-v),$ we must have $b(t,x)\equiv 0$ in (%
\ref{hydro}), or 
\begin{equation*}
b(t,x)\equiv \varpi _{2}\times x+b_{0}t+b_{1}\equiv 0
\end{equation*}%
for all $x\in \partial \Omega .$ Clearly $b_{0}=0$ as a function of $t.$
From the argument after (\ref{cross}), $\varpi _{2}=0=b_{1}$. We therefore
deduce that from (\ref{abcmass}) and (\ref{abcenergy}) that%
\begin{equation*}
\int a_{0}\sqrt{\mu }dv+c_{2}\int |v|^{2}\sqrt{\mu }dv\equiv \int
a_{0}|v|^{2}\sqrt{\mu }+c_{2}\int |v|^{4}\sqrt{\mu }dv\equiv 0.
\end{equation*}%
We thus have $a_{0}=c_{2}=0,$ then $Z\equiv 0$ for the bounce-back case.

The specular reflection is more delicate. Using the same mass and
conservation laws (\ref{abcmass}) and (\ref{abcenergy}), we again have $%
c_{1}=c_{0}=0$ and $b(t,x)=$ $\varpi _{2}\times x+b_{0}t+b_{1}.$ Now from
the specular reflection, we have for any $x\in \partial \Omega ,$ $%
b(t,x)\cdot n\equiv 0$ or 
\begin{equation*}
\{\varpi \times x+b_{0}t+b_{1}\}\cdot n(x)\equiv 0.
\end{equation*}%
Hence $b_{0}=0$ for all $x\in \partial \Omega $ and 
\begin{equation}
\{\varpi \times x\}\cdot n(x)+b_{1}\cdot n(x)=0.  \label{bn}
\end{equation}%
In the case $\varpi =0,$ we have $b_{1}\cdot n(x)\equiv 0$ on $\partial
\Omega .$ We can choose $x^{\prime }\in \partial \Omega $ such that $b_{1}||$
$n(x^{\prime })$ by taking the minimizer of $\min_{\xi (x)=0}b_{1}\cdot x.$
Hence, $b_{1}\cdot n(x^{\prime })=0$ and $b_{1}=0.$

For $\varpi \neq 0,$ let's decompose $b_{1}=\beta _{1}\frac{\varpi }{|\varpi
|}+\beta _{2}\eta ,$ where $|\eta |=1$ and $\eta \perp \varpi .$ Then $\eta
=\{\frac{\varpi }{|\varpi |}\times \eta \}\times \frac{\varpi }{|\varpi |}.$
Hence 
\begin{equation*}
b_{1}=\beta _{1}\frac{\varpi }{|\varpi |}+\beta _{2}\eta =\beta _{1}\frac{%
\varpi }{|\varpi |}+\beta _{2}\{\frac{\varpi }{|\varpi |^{2}}\times \eta
\}\times \varpi \equiv \beta _{1}\frac{\varpi }{|\varpi |}-x_{0}\times
\varpi .
\end{equation*}%
where $x_{0}=-\frac{\beta _{2}}{|\varpi |^{2}}\varpi \times \eta .$ By
plugging this back into (\ref{bn}), we get 
\begin{equation*}
\beta _{1}\frac{\varpi }{|\varpi |}\cdot n(x)+\varpi \times (x-x_{0})\cdot
n(x)=0.
\end{equation*}%
Once again, we can choose a point $x^{\prime }\in \partial \Omega $ such
that $\varpi \parallel n(x^{\prime })$ (e.g., look for minimizer of $%
\min_{\xi (x)=0}\varpi \cdot x$). We then deduce $\varpi \times (x^{\prime
}-x_{0})\cdot n(x^{\prime })=0$ and hence $\beta _{1}=0.$ So 
\begin{equation}
Z=\varpi \times (x-x_{0})\cdot v\sqrt{\mu }  \label{zomega}
\end{equation}%
and $\varpi \times (x-x_{0})\cdot n(x)\equiv 0$ for all $x\in \partial
\Omega .$ If $\Omega $ is not rotational symmetric, there is no non-zero $%
\varpi $ and $x_{0}$ exist, then we deduce that $Z\equiv 0$ for the specular
case$.$

On the other hand, if $\Omega $ is rotational symmetric, there are such
non-zero $\varpi $ and $x_{0}$ for $\Omega $ so that (\ref{zomega}) is valid$%
,$ we then have to use the additional \ conservation law for angular
momentum:%
\begin{equation*}
\int \varpi \times (x-x_{0})\cdot vZ(t)\sqrt{\mu }dv=0
\end{equation*}%
as $k\rightarrow \infty $ of the same expression for $Z_{k}$ (see the proof
for (\ref{zmass}))$.$ Therefore, we combine (\ref{zomega}) to get 
\begin{equation*}
\int \{\varpi \times (x-x_{0})\cdot v\}^{2}\mu dxdv\equiv 0
\end{equation*}%
Therefore $\varpi \times (x-x_{0})\cdot v\equiv 0$ and $Z\equiv 0$ from (\ref%
{zomega}).

In the case of diffuse boundary condition (\ref{diffuse}), because of (\ref%
{mass}), we have (\ref{abcmass}) and $c_{1}=c_{0}=0.$ Moreover, we have 
\begin{equation*}
Z(t,x,v)=c_{\mu }\{\int_{n\cdot v^{\prime }>0}Z(t,x,v^{\prime })\sqrt{\mu }%
n\cdot v^{\prime }dv^{\prime }\}\sqrt{\mu }.
\end{equation*}%
on $\gamma _{-}.$ Since $v\sqrt{\mu },\sqrt{\mu },|v|^{2}\sqrt{\mu }$ are
linearly independent, this implies for all $t$ and $x\in \partial \Omega ,$ 
\begin{equation*}
b(t,x)\equiv \varpi \times x+b_{0}t+b_{1}\equiv 0,\text{ \ \ \ \ and \ \ }%
c_{2}\equiv 0.
\end{equation*}%
Therefore, $b_{0}=0,$ and $\varpi =b_{1}=0$ as in (\ref{cross}). Therefore,
we have from (\ref{abcmass}) $a_{0}\int \sqrt{\mu }dv=0.$ Hence \thinspace $%
Z\equiv 0.$

\section{$L^{\infty }$ Decay Theory}

\subsection{\protect\bigskip $L^{\infty }$ Decay For In-flow Boundary
Condition}

\subsubsection{G(t,0) and Continuity}

As outlined in Section 1.6, we study the $L^{\infty }$ (pointwise) decay for
the weighted $h=wf$ of the linear Boltzmann equation (\ref{lboltzmannh})
with the in-flow boundary condition. We first derive explicit formula for
solution operator $G(t,0)$ for the homogenous transport equation (\ref%
{transport}) with in-flow boundary condition. Note that for non-zero in-flow
datum at the boundary, $G(t,0)$ in general is not a semigroup.

\begin{lemma}
\label{ginflowdecay}Let $h_{0}(x,v)\in L^{\infty }$ and $wg\in L^{\infty }.$
Let $\{G(t,0)h_{0}\}$ be the solution to the transport equation (\ref%
{transport}) 
\begin{equation*}
\{\partial _{t}+v\cdot \nabla _{x}+\nu \}G(t,0)h_{0}=0,\text{ \ \ \ \ }%
G(0,0)h_{0}=h_{0},\text{ \ \ }\{G(t,0)h_{0}\}_{\gamma _{-}}=wg.
\end{equation*}%
For any $(x,v),$ with $x\in \bar{\Omega},$ let $t_{\mathbf{b}}(x,v)$ be its
back-time exit time defined in Definition \ref{exit}. Then for a.e. $(x,v),$ 
\begin{eqnarray}
\{G(t,0)h_{0}\}(t,x,v) &=&\mathbf{1}_{t-t_{\mathbf{b}}\leq 0}e^{-\nu
(v)t}h_{0}(x-tv,v)  \notag \\
&&+\mathbf{1}_{t-t_{\mathbf{b}}>0}e^{-\nu (v)t_{\mathbf{b}}}\{wg\}(t-t_{%
\mathbf{b}},x-t_{\mathbf{b}}v,v).  \label{inflowformula}
\end{eqnarray}%
Moreover,%
\begin{equation}
\sup_{t\geq 0}e^{\nu _{0}t}||G(t,0)h_{0}||_{\infty }\leq ||h_{0}||_{\infty
}+\sup_{s\geq 0}e^{\nu _{0}s}||wg(s)||_{\infty }.  \label{inflowgdecay}
\end{equation}
\end{lemma}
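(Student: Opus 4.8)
The plan is to integrate the damped transport equation along its characteristics. By (\ref{ode}) these are the straight lines $s\mapsto(x-(t-s)v,v)$, and by the definition (\ref{exit}) of $t_{\mathbf{b}}$ the point $x-(t-s)v$ stays in $\bar{\Omega}$ precisely for $\max\{0,t-t_{\mathbf{b}}(x,v)\}\le s\le t$. Fix $(t,x,v)$ with $x\in\bar{\Omega}$, $v\neq 0$, and set $g(s)=\{G(t,0)h_{0}\}(s,x-(t-s)v,v)$ on this interval. Differentiating along the flow and using $\{\partial_{t}+v\cdot\nabla_{x}+\nu\}G(t,0)h_{0}=0$ gives $g'(s)=-\nu(v)g(s)$, hence $g(s)=g(\sigma)e^{-\nu(v)(s-\sigma)}$ for any admissible reference time $\sigma$. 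If $t\le t_{\mathbf{b}}(x,v)$ the backward characteristic reaches the initial slice $\sigma=0$ before leaving $\bar{\Omega}$, so $g(0)=h_{0}(x-tv,v)$ and $g(t)=e^{-\nu(v)t}h_{0}(x-tv,v)$, the first term of (\ref{inflowformula}). If $t>t_{\mathbf{b}}(x,v)$ the characteristic exits through $x_{\mathbf{b}}(x,v)=x-t_{\mathbf{b}}v\in\partial\Omega$ at $\sigma=t-t_{\mathbf{b}}>0$; since $v\cdot n(x_{\mathbf{b}})\le 0$ this exit point lies in $\gamma_{-}\cup\gamma_{0}$, and away from the grazing exits $(x_{\mathbf{b}},v)\in\gamma_{-}$ so the in-flow condition $\{G(t,0)h_{0}\}_{\gamma_{-}}=wg$ gives $g(t-t_{\mathbf{b}})=\{wg\}(t-t_{\mathbf{b}},x_{\mathbf{b}},v)$, whence $g(t)=e^{-\nu(v)t_{\mathbf{b}}}\{wg\}(t-t_{\mathbf{b}},x-t_{\mathbf{b}}v,v)$, the second term.

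To make this an a.e.\ statement I would check that the set of bad $(x,v)$ is null: $\{v=0\}$ is null; and the set of interior points at time $t$ whose backward trajectory exits through the grazing set $\gamma_{0}$ is the $\Phi_{\mathbf{b}}$-preimage of $\gamma_{0}$, which has measure zero by part (4) of Lemma \ref{huang}. Lower semicontinuity of $t_{\mathbf{b}}$ (part (1) of Lemma \ref{huang}) makes the indicators $\mathbf{1}_{t-t_{\mathbf{b}}\le 0}$ and $\mathbf{1}_{t-t_{\mathbf{b}}>0}$ measurable, so (\ref{inflowformula}) holds for a.e.\ $(x,v)$.

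The decay estimate (\ref{inflowgdecay}) is then read off from (\ref{inflowformula}) using that for hard potentials the collision frequency is bounded below, $\nu(v)\ge\nu_{0}>0$. In the first case $|\{G(t,0)h_{0}\}(t,x,v)|\le e^{-\nu(v)t}\|h_{0}\|_{\infty}\le e^{-\nu_{0}t}\|h_{0}\|_{\infty}$. In the second, bounding $|\{wg\}(t-t_{\mathbf{b}},x_{\mathbf{b}},v)|\le\|wg(t-t_{\mathbf{b}})\|_{\infty}\le e^{-\nu_{0}(t-t_{\mathbf{b}})}\sup_{s\ge0}e^{\nu_{0}s}\|wg(s)\|_{\infty}$ together with $e^{-\nu(v)t_{\mathbf{b}}}\le e^{-\nu_{0}t_{\mathbf{b}}}$, the two exponential factors combine to $e^{-\nu_{0}t}$. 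Adding the two contributions, taking the supremum over $(x,v)$, and multiplying by $e^{\nu_{0}t}$ yields (\ref{inflowgdecay}).

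The only real subtlety, and hence the main point to be careful about, is the measure-theoretic bookkeeping: that the characteristic ODE computation — classical for smooth data — is legitimate for $h_{0},wg\in L^{\infty}$ when $G(t,0)h_{0}$ is understood as the mild solution defined along characteristics, and that the exceptional sets (grazing exits, their flow-preimages, and $v=0$) are genuinely null. This is exactly the role of Lemma \ref{huang}, parts (1) and (4). Everything else is a one-line Gronwall-type integration along the trajectory together with elementary exponential estimates.
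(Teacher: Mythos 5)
Your proposal is correct and follows essentially the same route as the paper: integrate $e^{\nu(v)\tau}G(\tau,0)h_0$ along the straight characteristics, split according to whether the backward trajectory reaches the initial plane or exits through $\gamma_-$, invoke part (4) of Lemma \ref{huang} to dispose of the null set of grazing exits and of points where the a.e.\ boundary data is not attained, and read off (\ref{inflowgdecay}) from (\ref{inflowformula}) using $\nu(v)\ge\nu_0$ and $t=t_{\mathbf{b}}+(t-t_{\mathbf{b}})$. Your write-up merely spells out the exponential bookkeeping that the paper states is immediate.
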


\begin{proof}
For almost every $x,v,$ along the characteristic line $\frac{dx}{d\tau }=v,%
\frac{dv}{d\tau }=0,$ 
\begin{equation*}
\frac{d}{d\tau }\{e^{\nu (v)\tau }G(\tau ,0)h_{0}\}\equiv 0.
\end{equation*}%
Hence $e^{\nu (v)\tau }G(\tau ,0)h_{0}$ is constant along the
characteristic. Choose any point $(t,x,v)$ in $[0,\infty )\times \Omega
\times \mathbf{R}^{3}$ with its backward exit point $(t-t_{\mathbf{b}},x_{%
\mathbf{b}},v)$. If $t-t_{\mathbf{b}}(x,v)\leq 0,$ then the backward
trajectory first hits on the initial plane $\{t=0\}$. On the other hand, if $%
t-t_{\mathbf{b}}(x,v)>0,$ then the backward trajectory first hits the
boundary. Since $\{G(\tau ,0)h_{0}\}_{\gamma _{-}}=wg$ a.e., from part (4)
of Lemma \ref{huang}, (\ref{inflowformula}) is clearly valid for almost
every $x,v,$ $x\in \bar{\Omega},$ and estimate (\ref{inflowgdecay}) follows
immediately from (\ref{inflowformula}) with $t_{\mathbf{b}}=t-(t-t_{\mathbf{b%
}})$.
\end{proof}

\begin{lemma}
\label{inflowcon}Let $\Omega $ be convex as in (\ref{convexity}). Let $%
h_{0}(x,v)$ be continuous in $\bar{\Omega}\times \mathbf{R}^{3}\setminus
\gamma _{0}\,,$ $g$ be continuous in $[0,\infty )\times \{\partial \Omega
\times \mathbf{R}^{3}\setminus \gamma _{0}\},$ $q(t,x,v)$ be continuous in
the interior of $[0,\infty )\times \Omega \times \mathbf{R}^{3}$ and $%
\sup_{[0,\infty )\times \Omega \times \mathbf{R}^{3}}|\frac{q(t,x,v)}{\nu (v)%
}|<\infty .$ Let $h(t,x,v)$ be the solution of 
\begin{equation*}
\{\partial _{t}+v\cdot \nabla _{x}+\nu \}h=q,\text{ \ \ \ \ }h(0)=h_{0},%
\text{ \ \ \ }h_{\gamma _{-}}=wg.
\end{equation*}%
Assume the compatibility condition on $\gamma _{-},$ 
\begin{equation}
h_{0}(x,v)=\{wg\}(0,x,v)  \label{inflowcom1}
\end{equation}%
Then $h(t,x,v)$ is continuous on $[0,\infty )\times \{\bar{\Omega}\times 
\mathbf{R}^{3}\setminus \gamma _{0}\}.$
\end{lemma}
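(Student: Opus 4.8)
The plan is to run the characteristic representation of $h$, reduce continuity to continuity of the backward exit data, and then use the Velocity Lemma together with strict convexity to control those data away from $\gamma_{0}$.

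\textbf{Step 1 (representation formula).} Along the characteristic $\tau\mapsto(\tau,x-(t-\tau)v,v)$ one has $\frac{d}{d\tau}\{e^{\nu(v)\tau}h(\tau,x-(t-\tau)v,v)\}=e^{\nu(v)\tau}q$, so integrating and invoking the transport formula of Lemma \ref{ginflowdecay} together with the Duhamel principle gives, for a.e.\ (in fact every) $(t,x,v)$ with $x\in\bar\Omega$, $v\neq 0$,
\begin{align*}
h(t,x,v)
&=\mathbf{1}_{\{t\le t_{\mathbf{b}}\}}e^{-\nu(v)t}h_{0}(x-tv,v)
+\mathbf{1}_{\{t>t_{\mathbf{b}}\}}e^{-\nu(v)t_{\mathbf{b}}}\{wg\}(t-t_{\mathbf{b}},x_{\mathbf{b}},v)\\
&\quad+\int_{0}^{\min\{t,\,t_{\mathbf{b}}\}}e^{-\nu(v)\sigma}\,q(t-\sigma,x-\sigma v,v)\,d\sigma,
\end{align*}
with $t_{\mathbf{b}}=t_{\mathbf{b}}(x,v)$, $x_{\mathbf{b}}=x_{\mathbf{b}}(x,v)$; when $v=0$ one has $t_{\mathbf{b}}=+\infty$ and only the first and third terms survive. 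Thus continuity of $h$ on $[0,\infty)\times\{\bar\Omega\times\mathbf{R}^{3}\setminus\gamma_{0}\}$ follows once I show that, near any such point, the maps $(x,v)\mapsto t_{\mathbf{b}}(x,v),x_{\mathbf{b}}(x,v)$ are continuous, the two indicator functions are harmless, and the integral term varies continuously.

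\textbf{Step 2 (the backward trajectory exits transversally --- the crux).} Fix $(t_{0},x_{0},v_{0})$ with $(x_{0},v_{0})\notin\gamma_{0}$; I may assume $v_{0}\neq 0$, since if $v_{0}=0$ then $x_{0}\in\Omega$, $t_{\mathbf{b}}\equiv+\infty$ near the point, and $h$ is manifestly continuous there from the formula. Since $\bar\Omega$ is bounded, $t_{\mathbf{b}}(x_{0},v_{0})<\infty$; set $x_{\mathbf{b}}^{0}=x_{0}-t_{\mathbf{b}}(x_{0},v_{0})v_{0}\in\partial\Omega$. I claim $v_{0}\cdot n(x_{\mathbf{b}}^{0})<0$. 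Apply the Velocity Lemma \ref{velocity} on the trajectory segment from $x_{0}$ to $x_{\mathbf{b}}^{0}$ (which lies in $\bar\Omega$): $\alpha(\cdot)$ is nonnegative there and, by (\ref{velocitybound}), its values at the two endpoints are comparable up to a finite constant, so it suffices to check $\alpha\neq0$ at $x_{0}$. If $x_{0}\in\Omega$, then $\xi(x_{0})<0$ and strict convexity (\ref{convexity}) gives $-2\{v_{0}\cdot\nabla^{2}\xi(x_{0})\cdot v_{0}\}\xi(x_{0})\ge 2c_{\xi}|v_{0}|^{2}(-\xi(x_{0}))>0$, so $\alpha(x_{0})>0$; if $x_{0}\in\partial\Omega$ then $(x_{0},v_{0})\notin\gamma_{0}$ forces $v_{0}\cdot n(x_{0})\neq0$, hence $\alpha(x_{0})=[v_{0}\cdot\nabla\xi(x_{0})]^{2}>0$. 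Therefore $\alpha(x_{\mathbf{b}}^{0})>0$; but $\xi(x_{\mathbf{b}}^{0})=0$, so $\alpha(x_{\mathbf{b}}^{0})=[v_{0}\cdot\nabla\xi(x_{\mathbf{b}}^{0})]^{2}$, whence $v_{0}\cdot n(x_{\mathbf{b}}^{0})\neq0$, and since always $v_{0}\cdot n(x_{\mathbf{b}}^{0})\le 0$ the claim follows.

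\textbf{Step 3 (conclusion).} With transversality in hand, part (2) of Lemma \ref{huang} shows $t_{\mathbf{b}}(x,v)$ and $x_{\mathbf{b}}(x,v)$ are $C^{1}$, hence continuous, near $(x_{0},v_{0})$ in $\bar\Omega\times\mathbf{R}^{3}$. If $t_{0}>t_{\mathbf{b}}(x_{0},v_{0})$, then $\mathbf{1}_{\{t>t_{\mathbf{b}}\}}\equiv1$ locally, the first term vanishes, and the second is continuous since $(t-t_{\mathbf{b}},x_{\mathbf{b}},v)\to(t_{0}-t_{\mathbf{b}}(x_{0},v_{0}),x_{\mathbf{b}}^{0},v_{0})$, a point of continuity of $g$ because $(x_{\mathbf{b}}^{0},v_{0})\notin\gamma_{0}$. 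If $t_{0}<t_{\mathbf{b}}(x_{0},v_{0})$, then $\mathbf{1}_{\{t\le t_{\mathbf{b}}\}}\equiv1$ locally and $(x-tv,v)\to(x_{0}-t_{0}v_{0},v_{0})$, which lies in $\bar\Omega\times\mathbf{R}^{3}\setminus\gamma_{0}$ (strict convexity keeps a point stopped strictly before its exit in $\Omega$, or $x_{0}-t_{0}v_{0}=x_{0}$ is a non-grazing boundary point), so $h_{0}$ is continuous there. In the borderline case $t_{0}=t_{\mathbf{b}}(x_{0},v_{0})$, both indicators may be active nearby, but $x-tv$ and $x_{\mathbf{b}}(x,v)$ both tend to $x_{\mathbf{b}}^{0}$, $t-t_{\mathbf{b}}\to0$, and $e^{-\nu(v)t},e^{-\nu(v)t_{\mathbf{b}}}\to e^{-\nu(v_{0})t_{0}}$, so the compatibility condition (\ref{inflowcom1}), $h_{0}(x_{\mathbf{b}}^{0},v_{0})=\{wg\}(0,x_{\mathbf{b}}^{0},v_{0})$, makes the two branches agree in the limit. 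Finally the integral term: its upper limit $\min\{t,t_{\mathbf{b}}(x,v)\}$ is continuous in $(t,x,v)$, the integrand is locally uniformly bounded by $C\,e^{-\nu(v)\sigma}\nu(v)$ using $\sup|q/\nu|<\infty$, and for every $\sigma\in(0,\min\{t_{0},t_{\mathbf{b}}(x_{0},v_{0})\})\setminus\{t_{\mathbf{b}}(x_{0},v_{0})\}$ the point $x-\sigma v$ is interior with $t-\sigma>0$, so the integrand is continuous there; dominated convergence yields continuity of the integral. Summing the three terms gives continuity of $h$ at $(t_{0},x_{0},v_{0})$.

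\textbf{Main obstacle.} The one substantive step is Step 2: in a strictly convex domain a characteristic run backward from an interior point (or an outgoing boundary point) never reaches $\gamma_{0}$. Without it, $t_{\mathbf{b}}$ is merely lower semicontinuous (Lemma \ref{huang}(1)) and may jump, which would break continuity of $h$ at the boundary; everything else is bookkeeping with the representation formula, the smoothness of $t_{\mathbf{b}},x_{\mathbf{b}}$ from Lemma \ref{huang}(2), and dominated convergence, with (\ref{inflowcom1}) needed only to glue the two branches of the formula across $\{t=t_{\mathbf{b}}\}$.
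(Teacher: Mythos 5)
Your proposal is correct and follows essentially the same route as the paper: the characteristic representation (the paper's (\ref{inflowh1})--(\ref{inflowh2})), the Velocity Lemma \ref{velocity} combined with Lemma \ref{huang} to show the backward exit is non-grazing so that $t_{\mathbf{b}},x_{\mathbf{b}}$ are smooth near $(x_{0},v_{0})\notin\gamma_{0}$, and the three-case analysis in $t$ versus $t_{\mathbf{b}}$ with the compatibility condition (\ref{inflowcom1}) gluing the two branches at $t=t_{\mathbf{b}}$. The only cosmetic differences are your use of dominated convergence in place of the paper's explicit $\varepsilon$-splitting of the $q$-integral and your explicit treatment of $v_{0}=0$.
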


\begin{proof}
Let $(x,v)$ $\notin \gamma _{0}$ and denote its backward exit time $[t-t_{%
\mathbf{b}},x_{\mathbf{b}},v]$. Since $\frac{d}{d\tau }\{e^{\nu (v)\tau
}G(\tau ,s)h\}=q$ along the characteristic $\frac{dx}{dt}=v,\frac{dv}{dt}=0,$
for $t-t_{\mathbf{b}}\leq 0,$ 
\begin{equation}
h(t,x,v)=e^{-\nu (v)t}h_{0}(x-vt,v)+\int_{0}^{t}e^{-\nu
(v)(t-s)}q(s,x-v(t-s),v)ds.  \label{inflowh1}
\end{equation}%
If $t-t_{\mathbf{b}}>0,$ we then have 
\begin{equation}
h(t,x,v)=e^{-\nu (v)t_{\mathbf{b}}}\{wg\}(t-t_{\mathbf{b}},x_{\mathbf{b}%
},v)+\int_{t-t_{\mathbf{b}}}^{t}e^{-\nu (v)(t-s)}q(s,x-v(t-s),v)ds.
\label{inflowh2}
\end{equation}

Since $(x,v)$ $\notin \gamma _{0},$ if $x$ $\notin \partial \Omega ,$ then $%
\xi (x)<0;$ and if $x\in \partial \Omega ,$ then $v\cdot \nabla \xi (x)\neq
0.$ This implies in (\ref{alpha}), $\alpha (t)>0.$ Since $\xi $ is convex
and $\xi (x_{\mathbf{b}})=0,$ we now apply Velocity Lemma \ref{velocity} to
get 
\begin{equation}
\alpha (t-t_{\mathbf{b}})=\{v\cdot \nabla \xi (x_{\mathbf{b}})\}^{2}\geq
c\alpha (t)>0.  \label{t-tb}
\end{equation}%
We thus conclude $v\cdot n(x_{\mathbf{b}})\neq 0$ and also $t_{\mathbf{b}%
}(x,v)>0$ by (\ref{tlower})$.$ Therefore, by Lemma \ref{huang}, $t_{\mathbf{b%
}}(x,v),$ $x_{\mathbf{b}}(x,v)$ are both smooth functions of $(x,v).$

Now take any point $(\bar{t},\bar{x},\bar{v})$ close to $(t,x,v)$ and we
separate three cases. If $t-t_{\mathbf{b}}(t,x,v)>0,$ when $(\bar{t},\bar{x},%
\bar{v})$ is close to $(t,x,v),$ $\bar{t}-t_{\mathbf{b}}(\bar{x},\bar{v})>0$
by continuity. Therefore 
\begin{equation}
h(\bar{t},\bar{x},\bar{v})=e^{-\nu (\bar{v})\bar{t}_{\mathbf{b}}}\{wg\}(\bar{%
t}-\bar{t}_{\mathbf{b}},\bar{x}_{\mathbf{b}},v)+\int_{\bar{t}-\bar{t}_{%
\mathbf{b}}}^{\bar{t}}e^{-\nu (\bar{v})(\bar{t}-s)}q(s,\bar{x}-\bar{v}(\bar{t%
}-s),\bar{v})ds.  \label{t1i}
\end{equation}%
From the continuity of $g$ away from $\gamma _{0},$ the second term above
tends to the second term in (\ref{inflowh2}). We split the third term into 
\begin{equation*}
\int_{\bar{t}-\bar{t}_{\mathbf{b}}}^{\bar{t}}=\int_{\bar{t}-\varepsilon }^{%
\bar{t}}+\int_{\bar{t}-\bar{t}_{\mathbf{b}}+\varepsilon }^{\bar{t}%
-\varepsilon }+\int_{\bar{t}-\bar{t}_{\mathbf{b}}}^{\bar{t}-\bar{t}_{\mathbf{%
b}}+\varepsilon },
\end{equation*}%
where $\varepsilon >0$ is small. The first and the third parts above are
small since $\frac{q}{\nu }$ is bounded, from our assumption. Notice that $%
x-v(t-s)$ is inside the interior of $\Omega $ for $\bar{t}-\bar{t}_{\mathbf{b%
}}+\varepsilon \leq s\leq \bar{t}-\varepsilon ,$ \ the middle term above
tends to $\int_{t-t_{\mathbf{b}}+\varepsilon }^{t-\varepsilon }e^{-\nu
(v)(t-s)}q(s,x-v(t-s),v)ds$ in (\ref{inflowh2}), from the interior
continuity of $q.$ Clearly $|h(t,x,v)-h(\bar{t},\bar{x},\bar{v})|\rightarrow
0$ as $(\bar{t},\bar{x},\bar{v})\rightarrow (t,x,v)$ in this case.

In the case $t-t_{\mathbf{b}}(x,v)<0,$ $x-vt\notin \partial \Omega .$ Then
for $(\bar{t},\bar{x},\bar{v})$ close to $(t,x,v),$ we have $\bar{t}-t_{%
\mathbf{b}}(\bar{x},\bar{v})<0,$ $\bar{x}-\bar{v}\bar{t}\notin \partial
\Omega ,$ and 
\begin{equation}
h(\bar{t},\bar{x},\bar{v})=e^{-\nu (\bar{v})\bar{t}}h_{0}(\bar{x}-\bar{v}%
\bar{t},\bar{v})+\int_{0}^{\bar{t}}e^{-\nu (\bar{v})(\bar{t}-s)}q(s,\bar{x}-%
\bar{v}(t-s),\bar{v})ds.  \label{t1b}
\end{equation}%
Since $h_{0}$ is continuous away from $\gamma _{0},$ and $q$ is continuous
and $\frac{q}{\nu }$ is bounded in the interior, we again deduce that $h(%
\bar{t},\bar{x},\bar{v})\rightarrow h(t,x,v)$ by the same argument as in the
first case $t-t_{\mathbf{b}}>0.$

Lastly, if $t-t_{\mathbf{b}}(x,v)=0$ and (\ref{inflowh1}) is valid. By (\ref%
{t-tb}), $x_{\mathbf{b}}=x-t_{\mathbf{b}}v=x-tv,$ and $(x-tv,v)\notin \gamma
_{0}.$ Then for any $(\bar{t},\bar{x},\bar{v})$ near $(t,x,v),$ $\bar{t}-%
\bar{t}_{\mathbf{b}}$ could be either $>0$ or $\leq 0.$ If $\bar{t}-\bar{t}_{%
\mathbf{b}}\leq 0,$ then $h(\bar{t},\bar{x},\bar{v})$ still has the same
expression (\ref{t1b}) as $h(t,x,v)$ and $h(\bar{t},\bar{x},\bar{v}%
)\rightarrow h(t,x,v)$ as before. On the other hand, if $\bar{t}-\bar{t}_{%
\mathbf{b}}>0,$ $h(\bar{t},\bar{x},\bar{v})$ is given by (\ref{t1i}). By the
Velocity Lemma \ref{velocity} and Lemma \ref{huang}, we have that $|\bar{t}-%
\bar{t}_{\mathbf{b}}|+|\bar{x}_{\mathbf{b}}-x_{\mathbf{b}}|\rightarrow 0,$
so that by the previous argument, 
\begin{equation*}
\lim_{(\bar{t},\bar{x},\bar{v})\rightarrow (t,x,v)}h(\bar{t},\bar{x},\bar{v}%
)=e^{-\nu (v)t}\{wg\}(0,x_{\mathbf{b}},v)+\int_{0}^{t}e^{-\nu
(v)t}q(s,x-v(t-s),v)ds.
\end{equation*}%
But $\{wg\}(0,x_{\mathbf{b}},v)=h_{0}(x_{\mathbf{b}},v)$ by the
compatibility condition (\ref{inflowcom1})$,$ hence this limit equals to $%
h(t,x,v)$ given by (\ref{inflowh1}).
\end{proof}

\subsubsection{Decay of In-flow $U(t,0)$}

\begin{theorem}
\label{inflowrate}Let $\{U(t,0)h_{0}\}$ be the solution to the weighted
linear Boltzmann equation (\ref{lboltzmannh}) as 
\begin{equation*}
\{\partial _{t}+v\cdot \nabla _{x}+\nu -K_{w}\}U(t,0)h_{0}=0,\text{ \ }%
U(0,0)h_{0}=h_{0},\text{ \ \ \ }\{U(t,0)h_{0}\}_{\gamma _{-}}=wg.
\end{equation*}%
There exists $0<\lambda <\lambda _{0}$ such that%
\begin{equation*}
\sup_{t\geq 0}e^{\lambda t}||U(t,0)h_{0}||_{\infty }\leq
C\{||h_{0}||_{\infty }+\sup_{0\leq s\leq \infty }e^{\lambda
_{0}s}||wg(s)||_{\infty }\}.
\end{equation*}
\end{theorem}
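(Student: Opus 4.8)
The plan is to combine the explicit decay of the transport semigroup $G(t,0)$ from Lemma \ref{ginflowdecay} with the $L^2$ decay theory of Theorem \ref{L2decay}(1), via the double-Duhamel iteration \eqref{duhamel2} and the compactness of $K_w$ from Lemma \ref{kernel}. First I would fix a decay rate $0<\lambda<\min\{\lambda_0,\nu_0/2\}$, set $h=wf$, and write out the analogue of \eqref{duhamel2} for the inhomogeneous (non-semigroup) in-flow problem: since along each backward characteristic $h$ solves an ODE hitting either the initial plane or the boundary, iterating the Duhamel formula twice gives $U(t,0)h_0$ as the sum of a purely transport term $G(t,0)h_0$ (handled by \eqref{inflowgdecay}), a single-$K_w$ term $\int_0^t G(t-s_1,0)K_wG(s_1,0)h_0\,ds_1$, and a double-$K_w$ term $\int_0^t\int_0^{s_1}G(t-s_1)K_wG(s_1-s)K_wU(s)\,ds\,ds_1$. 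The first two terms are bounded by $C e^{-\lambda t}\{\|h_0\|_\infty+\sup_s e^{\lambda_0 s}\|wg(s)\|_\infty\}$ using \eqref{inflowgdecay} together with the decay estimate $\int\{|v-v'|+|v-v'|^{-1}\}e^{-\cdots}\frac{w(v)}{w(v')}dv'\le \frac{C}{1+|v|}$ from Lemma \ref{kernel}, which makes $K_w$ bounded on $L^\infty$ with a $(1+|v|)^{-1}$ gain that absorbs into the exponential via $\nu(v)\gtrsim (1+|v|)^\gamma$.

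The heart of the argument is the double-$K_w$ term. Following Vidav's method as outlined in Section 1.6, I would insert the kernel bounds for both copies of $K_w$, giving the expression \eqref{k-k}: a triple $v',v''$ integral of $|h(s,X(s;s_1,X(s_1;t,x,v),v'),v'')|$ against rapidly decaying Gaussian weights. Splitting the $v'$-integral into $|v'|\le N$ and $|v'|>N$ (the latter contributing $O(1/N)$ with the full $L^\infty$ norm, absorbed by choosing $N$ large relative to the constants), on the bounded piece I perform the change of variables $y\equiv x-(t-s_1)v-(s_1-s)v'$ of \eqref{ychange}. Since the Jacobian $\det(dy/dv')=(s_1-s)^3\neq 0$ for $s<s_1$ (with the degenerate set $\{s=s_1\}$ of measure zero), this converts the $v'$-integral into a spatial integral, and Cauchy--Schwarz bounds it by $C\bigl(\int_\Omega\int_{|v''|\le N}|f(s,y,v'')|^2\,dy\,dv''\bigr)^{1/2}\le C\|f(s)\|$. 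At this point the $L^2$ decay estimate from Theorem \ref{L2decay}(1) — namely $e^{2\lambda s}\|f(s)\|^2\le 2\{\|f_0\|^2+\int_0^s e^{2\lambda\tau}\|g(\tau)\|_{\gamma_-}^2 d\tau\}$ — gives exponential decay of $\|f(s)\|$, and since $\|f_0\|\le C\|h_0\|_\infty$ and $\|g(\tau)\|_{\gamma_-}\le Ce^{-\lambda_0\tau}\|wg(\tau)\|_\infty$ by the weight integrability hypothesis $w^{-2}\{1+|v|\}^3\in L^1$, the whole double integral is controlled.

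The last step is to close the estimate: define $\Lambda(t)=\sup_{0\le s\le t}e^{\lambda s}\|U(s,0)h_0\|_\infty$, and observe that the double-$K_w$ term, after the small-velocity truncation, still contains a factor $\Lambda(t)$ multiplied by $O(1/N)$ plus the fully-controlled $L^2$ contribution; choosing $N$ large makes the $\Lambda(t)$ coefficient less than $1/2$, yielding $\Lambda(t)\le C\{\|h_0\|_\infty+\sup_s e^{\lambda_0 s}\|wg(s)\|_\infty\}+\frac12\Lambda(t)$, which absorbs. The main obstacle I expect is the bookkeeping near the grazing set: the change of variables \eqref{ychange} and the identification $f=h/w$ are only valid almost everywhere, and one must invoke part (4) of Lemma \ref{huang} (the boundary map $\Phi_{\mathbf b}$ preserves zero-measure sets) to ensure that the trajectories emanating from the boundary with $g\neq 0$ do not conspire to put mass on a set where the Jacobian vanishes; handling the inhomogeneous in-flow term (for which $G(t,0)$ is not a semigroup, so the iterated representation must be re-derived by tracking whether each backward segment terminates on $\{t=0\}$ or on $\gamma_-$) is the delicate accounting one must carry out carefully.
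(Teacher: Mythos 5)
Your overall architecture is the same as the paper's (double Duhamel iteration, the explicit transport formula of Lemma \ref{ginflowdecay}, the $K_{w}$ bound of Lemma \ref{kernel}, the change of variables (\ref{ychange}) followed by Cauchy--Schwarz and the $L^{2}$ decay of Theorem \ref{L2decay}(1), then absorption of the remainder). But there is a genuine gap in how you treat the change of variables. You dismiss the degeneracy at $s=s_{1}$ by saying the set $\{s=s_{1}\}$ has measure zero; that is not the issue. The Jacobian is $\det(dy/dv^{\prime})=(s_{1}-s)^{3}$, so after changing variables and applying Cauchy--Schwarz the $v^{\prime}$-integral is bounded by a constant times $(s_{1}-s)^{-3/2}\bigl(\int_{\Omega}|h(s,y,v^{\prime\prime})|^{2}dy\bigr)^{1/2}$ (the paper records the constant as $C_{N}/\varepsilon^{3}$ after restricting to $s_{1}-s\geq\varepsilon$), and $(s_{1}-s)^{-3/2}$ is \emph{not} integrable in $s$ up to $s=s_{1}$. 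So the estimate does not close as written. The paper's proof splits off the near-diagonal region $s_{1}-s\leq\varepsilon$ as a separate case ((\ref{inflowstep1})), where the double time integral itself produces a small factor $\varepsilon$ multiplying $\sup_{s}e^{\nu_{0}s/2}\|U(s,0)h_{0}\|_{\infty}$, and only on $s_{1}-s\geq\varepsilon$ is the change of variables used, with the uniform lower bound on the Jacobian supplying the constant $C_{N}/\varepsilon^{3}$ in (\ref{inflowstep42}). Consequently the final absorption has coefficient $\{\varepsilon+C_{\varepsilon}/N\}$, and one must choose $\varepsilon$ small \emph{first} and then $N$ large, not merely $N$ large as in your closing step.

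A secondary omission: before the change of variables one cannot simply ``insert the kernel bounds,'' because the Grad kernel (\ref{grad}) carries the integrable singularity $|v-v^{\prime}|^{-1}$, so the product $K_{w}(v,v^{\prime})K_{w}(v^{\prime},v^{\prime\prime})$ is not pointwise bounded in $v^{\prime}$. The paper first reduces to the compact region $|v|\leq N$, $|v^{\prime}|\leq 2N$, $|v^{\prime\prime}|\leq 3N$ (needing the extra case $|v^{\prime}|\leq 2N$, $|v^{\prime\prime}|\geq 3N$ with the $e^{-\varepsilon N^{2}/8}$ gain from (\ref{kwe}), which your split of only the $v^{\prime}$-integral misses) and then replaces $K_{w}$ by a smooth compactly supported approximation $K_{N}$ with $L^{1}$ error $1/N$ as in (\ref{approximate}); only then is the kernel pulled out and Cauchy--Schwarz applied in $v^{\prime}$. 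One could instead exploit that $K_{w}(v,\cdot)\in L^{2}_{loc}$, but some such device is needed and your write-up supplies none. The remaining ingredients you list (re-deriving the iterated representation for the non-semigroup inflow problem, the measure-theoretic use of part (4) of Lemma \ref{huang}, and the conversion $\|f_{0}\|\leq C\|h_{0}\|_{\infty}$, $\|g\|_{\gamma_{-}}\leq C\|wg\|_{\infty}$ from $w^{-2}\{1+|v|\}^{3}\in L^{1}$) match the paper.
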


\begin{proof}
By (\ref{inflowh1}) and (\ref{inflowh2}), we have $\{U(t,0)h_{0}\}(t,x,v)=$ 
\begin{eqnarray*}
&&\mathbf{1}_{t-t_{\mathbf{b}}\leq 0}e^{-\nu (v)t}h_{0}(x-vt,v)+\mathbf{1}%
_{t-t_{\mathbf{b}}>0}e^{-\nu (v)t_{\mathbf{b}}}\{wg\}(t-t_{\mathbf{b}},x_{%
\mathbf{b}},v) \\
&&+\int_{\max \{0,t-t_{\mathbf{b}}\}}^{t}e^{-\nu
(v)(t-s_{1})}\{K_{w}U(s_{1},0)h_{0}\}(s_{1},x-v(t-s_{1}),v)ds_{1}.
\end{eqnarray*}%
Let $x_{1}=x-v(t-s_{1})$, $t_{\mathbf{b}}^{\prime }$ be the exit time for $%
(x_{1},v^{\prime })$ and $x_{\mathbf{b}}^{\prime }=x_{1}-v^{\prime }t_{%
\mathbf{b}}^{\prime }$. We now further iterate this formula to evaluate $%
\{K_{w}U(s_{1},0)h_{0}\}$ as 
\begin{eqnarray}
&&\int_{\mathbf{R}^{3}}K_{w}(v,v^{\prime
})\{U(s_{1},0)h_{0}\}(s_{1},x_{1},v^{\prime })dv^{\prime }  \label{double} \\
&=&\int_{\mathbf{R}^{3}}K_{w}(v,v^{\prime })\mathbf{1}_{s_{1}-t_{\mathbf{b}%
}^{\prime }\leq 0}e^{-\nu (v^{\prime })s_{1}}h_{0}(x_{1}-v^{\prime
}s_{1},v^{\prime })dv^{\prime }  \notag \\
&&+\int_{\mathbf{R}^{3}}K_{w}(v,v^{\prime })\mathbf{1}_{0<s_{1}-t_{\mathbf{b}%
}^{\prime }}e^{-\nu (v^{\prime })t_{\mathbf{b}}^{\prime }}\{wg\}(s_{1}-t_{%
\mathbf{b}}^{\prime },x_{\mathbf{b}}^{\prime },v^{\prime })\}dv^{\prime } 
\notag \\
&&+\int_{\max \{0,s_{1}-t_{\mathbf{b}}^{\prime }\}}^{s_{1}}e^{-\nu
(v^{\prime })(s_{1}-s)}\int K_{w}(v,v^{\prime })K_{w}(v^{\prime },v^{\prime
\prime })\{U(s,0)h_{0}\}(s,x_{1}-v^{\prime }(s_{1}-s),v^{\prime \prime
})dv^{\prime }dv^{\prime \prime }ds  \notag
\end{eqnarray}%
We note that $||K_{w}h||_{\infty }\leq C||h||_{\infty }$ from (\ref{wk}) in
Lemma \ref{kernel}. Clearly, $\ $since $\nu (v),\nu (v^{\prime })\geq \nu
_{0}>0$ for hard potentials, 
\begin{equation*}
e^{-\nu (t-s_{1})}e^{-\nu (v^{\prime })(s_{1}-s)}\leq e^{-\nu _{0}(t-s)},%
\text{ \ }e^{-\nu (v)(t-s_{1})}e^{-\nu (v^{\prime })t_{\mathbf{b}}^{\prime
}}\leq e^{-\nu _{0}t}e^{\nu _{0}(s_{1}-t_{\mathbf{b}}^{\prime })}.
\end{equation*}%
Plugging (\ref{double}) back into $\{U(t,0)h_{0}\}(t,x,v)$ yields that all
the terms except for the last term in (\ref{double}) are bounded by ($%
0<\lambda <\nu _{0}$): 
\begin{eqnarray}
&&e^{-\nu _{0}t}||h_{0}||_{\infty }+e^{-\nu _{0}t}\sup_{0\leq s\leq \infty
}e^{\lambda s}||wg(s)||_{\infty }+  \notag \\
&&+C_{K}\int_{\min \{0,t-t_{\mathbf{b}}\}}^{t}\{e^{-\nu
_{0}t}||h_{0}||_{\infty }+e^{-\nu _{0}t}\sup_{0\leq s\leq \infty }e^{\nu
_{0}s}||wg(s)||_{\infty }\}ds_{1}  \notag \\
&\leq &C_{K}\{t+1\}e^{-\nu _{0}t}\{||h_{0}||_{\infty }+\sup_{0\leq s\leq
\infty }e^{\nu _{0}s}||wg(s)||_{\infty }\}.  \label{gbound}
\end{eqnarray}

We now concentrate on the last term in (\ref{double}) and split the
velocity-time integration into several regions. We first consider the case $%
|v|\geq N.$

\textbf{CASE 1:} For $|v|\geq N.$ Since from (\ref{wk}) with $\varepsilon =0$
in Lemma \ref{kernel}, 
\begin{equation*}
\int \int K_{w}(v,v^{\prime })K_{w}(v^{\prime },v^{\prime \prime
})dv^{\prime }dv^{\prime \prime }\leq \frac{C_{K}}{1+|v|}\leq \frac{C_{K}}{N}%
,
\end{equation*}%
By Lemma \ref{kernel} again, the double-time integration $\int_{\max
\{0,t-t_{\mathbf{b}}\}}^{t}\int_{\max \{0,s_{1}-t_{\mathbf{b}}^{\prime
}\}}^{s_{1}}$ for $|v|\geq N$ is controlled by 
\begin{eqnarray}
\frac{C_{K}}{N}\int_{0}^{t}\int_{0}^{s_{1}}e^{-\nu
_{0}(t-s)}||U(s,0)h_{0}||_{\infty }dsds_{1} &\leq &  \label{inflowstep2} \\
\frac{C_{K}e^{-\frac{\nu _{0}t}{2}}}{N}\sup_{s}\{e^{\frac{\nu _{0}s}{2}%
}||U(s,0)h_{0}||_{\infty }\}\int_{0}^{t}\int_{0}^{s_{1}}e^{-\frac{\nu
_{0}(t-s)}{2}}dsds_{1} &\leq &\frac{C_{K}e^{-\frac{\nu _{0}t}{2}}}{N}%
\sup_{s}\{e^{\frac{\nu _{0}s}{2}}||U(s,0)h_{0}||_{\infty }\},  \notag
\end{eqnarray}%
where we have split the exponent as 
\begin{equation}
e^{-\nu _{0}(t-s)}=e^{-\frac{\nu _{0}t}{2}}e^{-\frac{\nu _{0}(t-s)}{2}}e^{%
\frac{\nu _{0}s}{2}},  \label{tsplit}
\end{equation}%
and used the fact $\int_{0}^{t}\int_{0}^{s_{1}}e^{-\frac{\nu _{0}(t-s)}{2}%
}dsds_{1}<+\infty $ by a direct computation.

\textbf{CASE 2:}\textit{\ }For $|v|\leq N,$ $|v^{\prime }|\geq 2N,$ or $%
|v^{\prime }|\leq 2N$, $|v^{\prime \prime }|\geq 3N.$ Notice that we have
either $|v^{\prime }-v|\geq N$ or $|v^{\prime }-v^{\prime \prime }|\geq N,$
and either one of the following are valid correspondingly: 
\begin{equation}
|K_{w}(v,v^{\prime })|\leq e^{-\frac{\varepsilon }{8}N^{2}}|K_{w}(v,v^{%
\prime })e^{\frac{\varepsilon }{8}|v-v^{\prime }|^{2}}|,\text{ \ \ \ \ \ }%
|K_{w}(v^{\prime },v^{\prime \prime })|\leq e^{-\frac{\varepsilon }{8}%
N^{2}}|K_{w}(v^{\prime },v^{\prime \prime })e^{\frac{\varepsilon }{8}%
|v^{\prime }-v^{\prime \prime }|^{2}}|.  \label{kwe}
\end{equation}%
From (\ref{wk}) in Lemma \ref{kernel}, both $\int |K_{w}(v,v^{\prime })e^{%
\frac{\varepsilon }{8}|v-v^{\prime }|^{2}}|$ and $\int |K_{w}(v^{\prime
},v^{\prime \prime })e^{\frac{\varepsilon }{8}|v^{\prime }-v^{\prime \prime
}|^{2}}|$ are still finite. By (\ref{tsplit}), we use (\ref{kwe}) to combine
the cases of $|v^{\prime }-v|\geq N$ or $|v^{\prime }-v^{\prime \prime
}|\geq N$ as: 
\begin{eqnarray}
&&\int_{\max \{0,t-t_{\mathbf{b}}\}}^{t}\int_{\max \{0,s_{1}-t_{\mathbf{b}%
}^{\prime }\}}^{s_{1}}\left\{ \int_{|v|\leq N,|v^{\prime }|\geq 2N,\text{ \
\ }}+\int_{|v^{\prime }|\leq 2N,|v^{\prime \prime }|\geq 3N}\right\}   \notag
\\
&\leq &C_{K}\int_{0}^{t}\int_{0}^{s_{1}}||U(s,0)h_{0}||_{\infty }\left\{
\int_{|v|\leq N,|v^{\prime }|\geq 2N,\text{ \ \ }}|K_{w}(v,v^{\prime
})|dv^{\prime }+\sup_{v^{\prime }}\int_{|v^{\prime }|\leq 2N,|v^{\prime
\prime }|\geq 3N}|K_{w}(v^{\prime },v^{\prime \prime })|dv^{\prime \prime
}\right\}   \notag \\
&\leq &C_{\varepsilon ,K}e^{-\frac{\varepsilon }{8}N^{2}}\int_{0}^{t}%
\int_{0}^{s_{1}}e^{-\nu _{0}(t-s)}||U(s,0)h_{0}||_{\infty }dsds_{1}  \notag
\\
&\leq &C_{\varepsilon ,K}e^{-\frac{\varepsilon }{8}N^{2}}e^{-\frac{\nu _{0}t%
}{2}}\sup_{s\geq 0}\{e^{\frac{\nu _{0}}{2}s}||U(s,0)h_{0}||_{\infty }\}.
\label{inflowstep3}
\end{eqnarray}

\textbf{CASE 3:}\textit{\ \thinspace }$s_{1}-s\leq \varepsilon ,$ for $%
\varepsilon >0$ small. We bound the last term in (\ref{double}) by%
\begin{eqnarray}
&&\int_{\min \{0,t-t_{\mathbf{b}}\}}^{t}\int_{s_{1}-\varepsilon
}^{s_{1}}C_{K}e^{-\nu _{0}(t-s)}||U(s,0)h_{0}||_{\infty }dsds_{1}  \notag \\
&\leq &C_{K}e^{\frac{-\nu _{0}t}{2}}\int_{0}^{t}\int_{s_{1}-\varepsilon
}^{s_{1}}e^{\frac{-\nu _{0}(t-s)}{2}}\{e^{\frac{\nu _{0}s}{2}%
}||U(s,0)h_{0}||_{\infty }\}dsds_{1}  \notag \\
&\leq &C_{K}e^{\frac{-\nu _{0}t}{2}}\sup_{s\geq 0}\{e^{\frac{\nu _{0}s}{2}%
}||U(s,0)h_{0}||_{\infty }\}\times \int_{0}^{t}\int_{s_{1}-\varepsilon
}^{s_{1}}e^{\frac{-\nu _{0}(t-s_{1})}{2}}dsds_{1}  \notag \\
&\leq &C_{K}e^{\frac{-\nu _{0}t}{2}}\sup_{s\geq 0}\{e^{\frac{\nu _{0}s}{2}%
}||U(s,0)h_{0}||_{\infty }\}\times \varepsilon \int_{0}^{t}e^{\frac{-\nu
_{0}(t-s_{1})}{2}}ds_{1}  \notag \\
&\leq &C_{K}\varepsilon e^{\frac{-\nu _{0}t}{2}}\sup_{s\geq 0}\{e^{\frac{\nu
_{0}s}{2}}||U(s,0)h_{0}||_{\infty }\}.  \label{inflowstep1}
\end{eqnarray}

\textbf{CASE 4.} $s_{1}-s\geq \varepsilon ,$ and $|v|\leq N,$ $|v^{\prime
}|\leq 2N,|v^{\prime \prime }|\leq 3N.$ This is the last remaining case
because if $|v^{\prime }|>2N,$ it is included in Case 2; while if $%
|v^{\prime \prime }|>3N,$ either $|v^{\prime }|\leq 2N$ or $|v^{\prime
}|\geq 2N$ are also included in Case 2. We now can bound the integral of the
third term in (\ref{double}) by 
\begin{equation*}
C\int_{\max \{0,t-t_{\mathbf{b}}\}}^{t}\int_{B}\int_{\max \{0,s_{1}-t_{%
\mathbf{b}}^{\prime }\}}^{s_{1}-\varepsilon }e^{-\nu
_{0}(t-s)}|K_{w}(v,v^{\prime })K_{w}(v^{\prime },v^{\prime \prime
})\{U(s,0)h_{0}\}(s,x_{1}-(s_{1}-s)v^{\prime },v^{\prime \prime })|
\end{equation*}%
where $B=\{|v^{\prime }|\leq 2N,$ $|v^{\prime \prime }|\leq 3N\}.$ By (\ref%
{grad}), $K_{w}(v,v^{\prime })$ has possible integrable singularity of $%
\frac{1}{|v-v^{\prime }|},$ we can choose $K_{N}(v,v^{\prime },v^{\prime
\prime })$ smooth with compact support such that 
\begin{equation}
\sup_{|p|\leq 3N}\int_{|v^{\prime }|\leq 3N}|K_{N}(p,v^{\prime
})-K_{w}(p,v^{\prime })|dv^{\prime }\leq \frac{1}{N}.  \label{approximate}
\end{equation}%
Splitting 
\begin{eqnarray*}
K_{w}(v,v^{\prime })K_{w}(v^{\prime },v^{\prime \prime })
&=&\{K_{w}(v,v^{\prime })-K_{N}(v,v^{\prime })\}K_{w}(v^{\prime },v^{\prime
\prime }) \\
&&+\{K_{w}(v^{\prime },v^{\prime \prime })-K_{N}(v^{\prime },v^{\prime
\prime })\}K_{N}(v,v^{\prime })+K_{N}(v,v^{\prime })K_{N}(v^{\prime
},v^{\prime \prime }),
\end{eqnarray*}%
we can use such an approximation (\ref{approximate}) to bound the above $%
s_{1},s$ integration by 
\begin{eqnarray}
&&\frac{Ce^{-\frac{\nu _{0}t}{2}}}{N}\sup_{s}\{e^{\frac{\nu _{0}}{2}%
s}||U(s,0)h_{0}||_{\infty }\}\times \left\{ \sup_{|v^{\prime }|\leq 2N}\int
|K_{w}(v^{\prime },v^{\prime \prime })|dv^{\prime \prime }+\sup_{|v|\leq
2N}\int |K_{N}(v,v^{\prime })|dv^{\prime }\}\right\}   \label{inflowstep41}
\\
&&+C\int_{\max \{0,t-t_{\mathbf{b}}\}}^{t}\int_{B}\int_{\max \{0,s_{1}-t_{%
\mathbf{b}}^{\prime }\}}^{s_{1}-\varepsilon }e^{-\nu
_{0}(t-s)}|K_{N}(v,v^{\prime })K_{N}(v^{\prime },v^{\prime \prime
})|\{U(s,0)h_{0}\}(s,x_{1}-(s_{1}-s)v^{\prime },v^{\prime \prime })|.  \notag
\end{eqnarray}%
Note that $x_{1}-(s_{1}-s)v^{\prime }\in \Omega $ for either $s_{1}-t_{%
\mathbf{b}}^{\prime }<0,$ $s\geq 0,$ or $0\leq s_{1}-t_{\mathbf{b}}^{\prime
}\leq s$. Split 
\begin{equation*}
\int_{\max \{0,s_{1}-t_{\mathbf{b}}^{\prime }\}}^{s_{1}-\varepsilon
}=\int_{0}^{s_{1}-\varepsilon }\{\mathbf{1}_{s_{1}-t_{\mathbf{b}}^{\prime
}<0}+\mathbf{1}_{0\leq s_{1}-t_{\mathbf{b}}^{\prime }\leq s}\}.
\end{equation*}%
for the last main term in (\ref{inflowstep41}). Since $K_{N}(v,v^{\prime
})K_{N}(v^{\prime },v^{\prime \prime })$ is bounded, we first integrate over 
$v^{\prime }$ to get 
\begin{eqnarray*}
&&C_{N}\int_{|v^{\prime }|\leq 2N}\{\mathbf{1}_{s_{1}-t_{\mathbf{b}}^{\prime
}<\tau }(v^{\prime })+\mathbf{1}_{0\leq s_{1}-t_{\mathbf{b}}^{\prime }\leq
s}(v^{\prime })\}|\{U(s,0)h_{0}\}(s,x_{1}-(s_{1}-s)v^{\prime },v^{\prime
\prime })|dv^{\prime } \\
&\leq &C_{N}\left\{ \int_{|v^{\prime }|\leq 2N}\mathbf{1}_{\Omega
}(x_{1}-(s_{1}-s)v^{\prime })|\{U(s,0)h_{0}\}(s,x_{1}-(s_{1}-s)v^{\prime
},v^{\prime \prime })|^{2}dv^{\prime }\right\} ^{1/2} \\
&\leq &\frac{C_{N}}{\varepsilon ^{3}}\left\{ \int_{\Omega
}|\{U(s,0)h_{0}\}(y,v^{\prime \prime })|^{2}dy\right\} ^{1/2}.
\end{eqnarray*}%
Here we have made a change of variable $y=x_{1}-(s_{1}-s)v^{\prime }\in
\Omega ,$ and for $s_{1}-s\geq \varepsilon ,$ $\frac{dy}{dv^{\prime }}\geq 
\frac{1}{\varepsilon ^{3}}.$ Denote $U(s,0)h_{0}=wf(s)$ so that $f$ \ is a $%
L^{2}$ solution to the linear Boltzmann equation (\ref{lboltzmann}) with $%
f(0)=\frac{h_{0}}{w}$ and $f_{\gamma _{-}}=g$. We then further control the
last term in (\ref{inflowstep41}) by:%
\begin{eqnarray}
&&\frac{C_{N}}{\varepsilon ^{3}}\int_{\max \{0,t-t_{\mathbf{b}%
}\}}^{t}\int_{0}^{s_{1}-\varepsilon }e^{-\nu _{0}(t-s)}\int_{|v^{\prime
\prime }|\leq 3N}\left\{ \int_{\Omega }|\{U(s,0)h_{0}\}(y,v^{\prime \prime
})|^{2}dy\right\} ^{1/2}dv^{\prime \prime }dsds_{1}  \notag \\
&\leq &\frac{C_{N}}{\varepsilon ^{3}}\int_{0}^{t}\int_{0}^{s_{1}-\varepsilon
}e^{-\nu _{0}(t-s)}\left\{ \int_{|v^{\prime \prime }|\leq 3N}\int_{\Omega
}|\{U(s,0)h_{0}\}(y,v^{\prime \prime })|^{2}dydv^{\prime \prime }\right\}
^{1/2}dsds_{1}  \notag \\
&\leq &\frac{C_{N}}{\varepsilon ^{3}}\int_{0}^{t}\int_{0}^{s_{1}-\varepsilon
}e^{-\nu _{0}(t-s)}\left\{ \int_{|v^{\prime \prime }|\leq 3N}\int_{\Omega
}|f(s,y,v^{\prime \prime })|^{2}dydv^{\prime \prime }\right\} ^{1/2}dsds_{1}
\notag \\
&\leq &\frac{C_{N}}{\varepsilon ^{3}}e^{-\lambda t}\sup_{s\geq
0}\{e^{\lambda s}||f(s)||\}\int_{0}^{t}\int_{0}^{s_{1}}e^{-\frac{\nu _{0}}{2}%
(t-s)}dsds_{1}=\frac{C_{N}}{\varepsilon ^{3}}e^{-\lambda t}\sup_{s\geq
0}\{e^{\lambda s}||f(s)||\}  \label{inflowstep42} \\
&\leq &\frac{C_{N}}{\varepsilon ^{3}}e^{-\lambda t}\left[ ||f(0)||+\left\{
\int_{0}^{s}e^{2\lambda \theta }||g(\theta )||_{\gamma _{-}}^{2}d\theta
\right\} ^{1/2}\right] ,  \notag
\end{eqnarray}%
where we have used crucially part (1) of Theorem \ref{L2decay} with some $%
0<\lambda <\frac{\nu _{0.}}{2}$ in the last line$.$ Note that since $%
\{1+|v|\}w^{-2}\in L^{1}(\mathbf{R}^{3}),$ $||f(0)||=||w^{-1}h_{0}||\leq
C||h_{0}||_{\infty },$ and $||g||_{\gamma _{-}}=||\frac{w}{w}g||_{\gamma
_{-}}\leq C||wg||_{\infty },$ and $\int_{0}^{s}e^{2\{\lambda -\lambda
_{0}\}\theta }d\theta <\infty ,$ where $\lambda _{0}$ is in Theorem \ref%
{inflownl}.  We can then further bound (\ref{inflowstep42}) by 
\begin{equation*}
\frac{C_{N,\lambda }}{\varepsilon ^{3}}e^{-\lambda t}\left[
||h_{0}||_{\infty }+\sup_{0\leq s\leq \infty }e^{\lambda
_{0}s}||wg(s)||_{\infty }\right] .
\end{equation*}

In summary, replacing $\nu _{0},\frac{\nu _{0}}{2}$ by $\lambda $ and
combining (\ref{gbound}), (\ref{inflowstep1}), (\ref{inflowstep2}), (\ref%
{inflowstep3}), (\ref{inflowstep41}) and (\ref{inflowstep42}), we have
established, for any $\varepsilon >0$ and large $N>0,$%
\begin{equation*}
\sup_{t\geq 0}\{e^{\lambda t}||U(t,0)h_{0}||_{\infty }\}\leq \{\varepsilon +%
\frac{C_{\varepsilon }}{N}\}\sup_{s\geq 0}\{e^{\lambda
s}||U(s,0)h_{0}||_{\infty }\}+C_{K}\sup_{0\leq s}e^{2\lambda
_{0}s}||wg(s)||_{\infty }+C_{\varepsilon ,N}||h_{0}||_{\infty }.
\end{equation*}%
First choosing $\varepsilon $ small, then $N$ sufficiently large so that $%
\{\varepsilon +\frac{C_{\varepsilon }}{N}\}<\frac{1}{2}$, 
\begin{equation*}
\sup_{t\geq 0}\{e^{\lambda t}||U(t,\tau )h||_{\infty }\}\leq
2C_{K}\sup_{0\leq s\leq \infty }e^{\lambda _{0}s}||wg(s)||_{\infty
}+2C_{\varepsilon ,N}||h_{0}||_{\infty },
\end{equation*}%
and we conclude our proof.
\end{proof}

\subsection{$L^{\infty }$ Decay for the Bounce-Back Reflection}

\subsubsection{Bounce-Back Cycles and Continuity of $G(t)$}

\begin{definition}
\label{bouncebackcycles}(\textbf{Bounce-Back Cycles}) Let $(t,x,v)\notin
\gamma _{0}.$ Let $(t_{0},x_{0},v_{0})=(t,x,v)$ and inductively define for $%
k\geq 1:$%
\begin{equation*}
(t_{k+1},x_{k+1},v_{k+1})=(t_{k}-t_{\mathbf{b}}(x_{k},v_{k}),x_{\mathbf{b}%
}(x_{k},v_{k}),-v_{k}).
\end{equation*}%
We define the back-time cycles as: 
\begin{equation}
X_{\mathbf{cl}}(s;t,x,v)=\sum_{k}\mathbf{1}_{[t_{k+1},t_{k})}(s)%
\{x_{k}+(s-t_{k})v_{k}\},\text{ \ }V_{\mathbf{cl}}(s;t,x,v)=\sum_{k}\mathbf{1%
}_{[t_{k+1},t_{k})}(s)v_{k}.  \label{bouncebackcycle}
\end{equation}
\end{definition}

\begin{remark}
Clearly, we have $v_{k+1}\equiv (-1)^{k+1}v,$ for $k\geq 1,$ 
\begin{equation}
x_{k}=\frac{1-(-1)^{k}}{2}x_{1}+\frac{1+(-1)^{k}}{2}x_{2},
\label{bouncebackx}
\end{equation}%
and let $d=t_{1}-t_{2},$ then for $k\geq 1,$ 
\begin{equation}
t_{k}-t_{k+1}=d\geq t-t_{\mathbf{b}}>0.  \label{bouncebackt}
\end{equation}
\end{remark}

We follow the outline in Section 1.6 and first establish an abstract lemma.

\begin{lemma}
\label{abstract}Let $\mathcal{M}$ be an operator on $L^{\infty }(\gamma
_{+})\rightarrow L^{\infty }(\gamma _{-})$ such that $||\mathcal{M}||_{%
\mathcal{L}(L^{\infty },L^{\infty })}=1.$ Then for any $\varepsilon >0,$
there exists $h(t)\in L^{\infty }$ and $h_{\gamma }\in L^{\infty }$ solving 
\begin{equation*}
\{\partial _{t}+v\cdot \nabla _{x}+\nu \}h=0,\text{ \ \ }h_{\gamma
_{-}}=(1-\varepsilon )Mh_{\gamma _{+}},\text{ \ }h(0,x,v)=h_{0}\in L^{\infty
}.
\end{equation*}
\end{lemma}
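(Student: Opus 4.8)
The plan is to decouple the problem into a transport solve with prescribed incoming data followed by an affine fixed‑point equation for the outgoing trace alone, the whole point being that the factor $1-\varepsilon$ upgrades the a priori non‑strict bound coming from $||\mathcal{M}||_{\mathcal{L}(L^{\infty},L^{\infty})}=1$ to a genuine contraction. First I would record, exactly as in the derivation of (\ref{inflowformula}) in Lemma \ref{ginflowdecay}, that for any prescribed incoming datum $\psi\in L^{\infty}(\mathbf{R}_{+}\times\gamma_{-})$ the transport problem $\{\partial_{t}+v\cdot\nabla_{x}+\nu\}h=0$, $h(0)=h_{0}$, $h|_{\gamma_{-}}=\psi$ has the (mild) solution
\[
h(t,x,v)=\mathbf{1}_{t-t_{\mathbf{b}}\leq 0}\,e^{-\nu(v)t}h_{0}(x-tv,v)+\mathbf{1}_{t-t_{\mathbf{b}}>0}\,e^{-\nu(v)t_{\mathbf{b}}}\,\psi(t-t_{\mathbf{b}},x_{\mathbf{b}},v),
\]
with $t_{\mathbf{b}}=t_{\mathbf{b}}(x,v)$, $x_{\mathbf{b}}=x_{\mathbf{b}}(x,v)$ from (\ref{exit})--(\ref{xb}); this $h$ lies in $L^{\infty}$ with $||h||_{\infty}\leq||h_{0}||_{\infty}+||\psi||_{\infty}$, and reading the same formula on $\gamma_{+}$ exhibits its outgoing trace as a sum of a ``free'' piece and a boundary‑fed piece. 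Accordingly I introduce the bounded operator $\mathcal{A}_{0}(t,x,v)=\mathbf{1}_{t-t_{\mathbf{b}}\leq 0}e^{-\nu(v)t}h_{0}(x-tv,v)$ on $\gamma_{+}$, with $||\mathcal{A}_{0}||_{\infty}\leq||h_{0}||_{\infty}$, and the pull‑back to the boundary $(\mathcal{R}\psi)(t,x,v)=\mathbf{1}_{t-t_{\mathbf{b}}>0}e^{-\nu(v)t_{\mathbf{b}}}\psi(t-t_{\mathbf{b}},x_{\mathbf{b}},v)$, which maps $L^{\infty}(\mathbf{R}_{+}\times\gamma_{-})$ to $L^{\infty}(\mathbf{R}_{+}\times\gamma_{+})$ with norm $\leq 1$ because $e^{-\nu(v)t_{\mathbf{b}}}\leq 1$ (here Lemma \ref{huang}(4) guarantees $(x_{\mathbf{b}},v)\in\gamma_{-}$ for a.e.\ $(x,v)\in\gamma_{+}$).

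The self‑consistency requirement $h|_{\gamma_{-}}=(1-\varepsilon)\mathcal{M}h|_{\gamma_{+}}$ then becomes the affine fixed‑point equation $g=\mathcal{A}_{0}+(1-\varepsilon)\,\mathcal{R}\mathcal{M}\,g$ on $L^{\infty}(\mathbf{R}_{+}\times\gamma_{+})$, with $\mathcal{M}$ applied slice‑by‑slice in $t$. Since $||\mathcal{R}||\leq 1$ and $||\mathcal{M}||_{\mathcal{L}(L^{\infty},L^{\infty})}=1$ by hypothesis, the operator $(1-\varepsilon)\mathcal{R}\mathcal{M}$ has norm at most $1-\varepsilon<1$, so the Banach fixed‑point theorem — equivalently the Neumann series $g_{\star}=\sum_{n\geq 0}\bigl((1-\varepsilon)\mathcal{R}\mathcal{M}\bigr)^{n}\mathcal{A}_{0}$ — produces a unique $g_{\star}\in L^{\infty}(\mathbf{R}_{+}\times\gamma_{+})$ with $||g_{\star}||_{\infty}\leq\varepsilon^{-1}||h_{0}||_{\infty}$. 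I then set $h_{\gamma_{-}}:=(1-\varepsilon)\mathcal{M}g_{\star}$ (so $||h_{\gamma_{-}}||_{\infty}\leq(1-\varepsilon)||g_{\star}||_{\infty}$) and define $h$ on $[0,\infty)\times\bar{\Omega}\times\mathbf{R}^{3}$ by the displayed formula with $\psi=h_{\gamma_{-}}$, obtaining $h\in L^{\infty}$ with $||h||_{\infty}\leq\varepsilon^{-1}||h_{0}||_{\infty}$ and $h_{\gamma}\in L^{\infty}$.

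It then remains to check that this $h$ actually solves the stated problem. That $h(0,\cdot,\cdot)=h_{0}$ a.e.\ and that the incoming trace of $h$ equals $h_{\gamma_{-}}$ (recall $t_{\mathbf{b}}\equiv 0$ at incoming points) are immediate from the formula; that the outgoing trace of $h$ is $g_{\star}$ is precisely the fixed‑point identity $\mathcal{A}_{0}+(1-\varepsilon)\mathcal{R}\mathcal{M}g_{\star}=g_{\star}$, whence the boundary relation $h|_{\gamma_{-}}=(1-\varepsilon)\mathcal{M}h|_{\gamma_{+}}$ follows. The transport equation holds because, by construction, $\tau\mapsto e^{\nu(v)\tau}h(\tau,x+(\tau-t)v,v)$ is constant on the characteristic segment between $\max\{0,t-t_{\mathbf{b}}\}$ and $t$; to make the ``a.e.'' statements and the traces rigorous I would again invoke Lemma \ref{huang}(4), so that $\gamma_{0}$ and its preimages under $\Phi_{\mathbf{b}}$ are null and do not interfere.

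The only delicate point — and the one I would emphasize — is that a near‑grazing characteristic can strike $\partial\Omega$ infinitely often in finite time, so there is no uniform bound on the number of reflections a backward trajectory undergoes before reaching $\{t=0\}$, and some trajectories never reach $\{t=0\}$ at all (cf.\ the discussion of diffuse cycles in Section 1.6). The device that dissolves this is exactly the factor $1-\varepsilon$: in the Neumann series for $g_{\star}$ each additional reflection costs a factor $\leq 1-\varepsilon$ irrespective of how small the corresponding $t_{\mathbf{b}}$ is, so the series converges absolutely in $L^{\infty}$, and the contribution of trajectories never meeting $\{t=0\}$ is the $n\to\infty$ limit of terms of size $(1-\varepsilon)^{n}||h_{0}||_{\infty}$, hence zero. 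Thus neither decay of the collision frequency, nor a trace theorem, nor any control on the number of bounces is needed beyond the strict contraction — which is the entire purpose of coupling the abstract hypothesis $||\mathcal{M}||=1$ with the damping $1-\varepsilon$.
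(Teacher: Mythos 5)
Your argument is correct and is essentially the paper's proof in operator form: the paper solves the in-flow transport problem iteratively with $h^{k+1}_{\gamma_-}=(1-\varepsilon)\mathcal{M}h^{k}_{\gamma_+}$ and uses Lemma \ref{ginflowdecay} plus the factor $1-\varepsilon$ to show the iterates are Cauchy in $L^{\infty}$, which is exactly the Picard iteration whose limit is your Neumann series $\sum_{n\geq 0}\bigl((1-\varepsilon)\mathcal{R}\mathcal{M}\bigr)^{n}\mathcal{A}_{0}$ for the outgoing trace. The only difference is presentational (Banach fixed point on the trace space versus iterating the full solution), so no further comment is needed.
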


\begin{proof}
Fix $\varepsilon >0,$ we construct a solution by the following iteration
(with $h_{\gamma _{+}}^{0}\equiv 0$) for $k=0,1,2....$ 
\begin{equation*}
\{\partial _{t}+v\cdot \nabla _{x}+\nu \}h^{k+1}=0,\text{ \ \ }h_{\gamma
_{-}}^{k+1}=(1-\varepsilon )Mh_{\gamma _{+}}^{k},\text{ \ }%
h^{k+1}(0,x,v)=h_{0}.
\end{equation*}%
We now show $h^{k}$ $\ $and $h_{\gamma }^{k}$ is a Cauchy sequence. Taking
differences, we get 
\begin{equation*}
\{\partial _{t}+v\cdot \nabla _{x}+\nu \}\{h^{k+1}-h^{k}\}=0,\text{ \ \ }%
h_{\gamma _{-}}^{k+1}-h_{\gamma _{-}}^{k}=(1-\varepsilon )M\{h_{\gamma
_{+}}^{k}-h_{\gamma _{+}}^{k-1}\},\text{ \ }
\end{equation*}%
with zero initial datum \ $\{h^{k+1}-h^{k}\}_{t=0}=0.$ Note that from Lemma %
\ref{ginflowdecay},%
\begin{equation*}
\sup_{s}||h_{\gamma _{+}}^{k+1}(s)-h_{\gamma _{+}}^{k}(s)||_{\infty }\leq
(1-\varepsilon )\sup_{s}||h_{\gamma _{+}}^{k}(s)-h_{\gamma
_{+}}^{k-1}(s)||_{\infty }.
\end{equation*}%
Repeatedly using such inequality for $k=1,2,...,$ we obtain%
\begin{equation*}
\sup_{s}||h_{\gamma _{+}}^{k+1}(s)-h_{\gamma _{+}}^{k}(s)||_{\infty }\leq
(1-\varepsilon )^{k}\sup_{s}||h_{\gamma _{+}}^{1}(s)-h_{\gamma
_{+}}^{0}(s)||_{\infty }.
\end{equation*}%
Hence $\{h_{\gamma _{+}}^{k}\}$ is Cauchy in $L^{\infty }(\mathbf{R}\times
\gamma _{-}),~$\ and then both $\{h_{\gamma _{-}}^{k}\}$ and $\{h^{k}\}$ are
Cauchy respectively by Lemma \ref{ginflowdecay}. We deduce our lemma by
letting $k\rightarrow \infty .$
\end{proof}

\begin{lemma}
\label{bouncebackformula}Let $h_{0}\in L^{\infty }(\Omega \times \mathbf{R}%
^{3})$. There exists a unique solution $G(t)h_{0}$ of 
\begin{equation}
\{\partial _{t}+v\cdot \nabla _{x}+\nu \}\{G(t)h_{0}\}=0,\text{ \ \ \ \ }%
\{G(0)h_{0}\}=h_{0},  \label{gh0}
\end{equation}%
with the bounce-back reflection $\{G(t)h_{0}\}(t,x,v)=\{G(t)h_{0}\}(t,x,-v)$
for $x\in \partial \Omega .$ For almost any $(x,v)\in \bar{\Omega}\times 
\mathbf{R}^{3}\setminus \gamma _{0},$ 
\begin{equation}
\{G(t)h_{0}\}(t,x,v)=\sum_{k}\mathbf{1}_{[t_{k+1},t_{k})}(0)e^{-\nu
(v)t}h_{0}\left( X_{\mathbf{cl}}(0),V_{\mathbf{cl}}(0)\right) .
\label{bouncebackformular}
\end{equation}%
Moreover, $e^{\nu _{0}t}||G(t)h_{0}||_{\infty }\leq ||h_{0}||_{\infty }.$
\end{lemma}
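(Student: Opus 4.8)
The plan is to exhibit $G(t)h_{0}$ by the explicit cycle formula (\ref{bouncebackformular}), check directly that it solves (\ref{gh0}) with the bounce-back reflection, read off the decay bound, and then deduce uniqueness from an $L^{2}$ energy identity that exploits the $v\mapsto -v$ symmetry of the bounce-back condition. First I would fix $(x,v)\notin \gamma _{0}$ with $v\cdot n(x_{\mathbf{b}}(x,v))<0$ \emph{and} $v\cdot n(x_{\mathbf{b}}(x_{\mathbf{b}}(x,v),-v))<0$; by parts (3) and (4) of Lemma \ref{huang} this holds for a.e. $(x,v)\in \bar{\Omega}\times \mathbf{R}^{3}$, since $\Phi _{\mathbf{b}}$ preserves null sets and $t_{\mathbf{b}}>0$ away from the grazing set. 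For such $(x,v)$ the bounce-back cycle of Definition \ref{bouncebackcycles} is unambiguously defined: by (\ref{bouncebackx}) it oscillates between the two non-grazing boundary points $x_{1}=x_{\mathbf{b}}(x,v)$ and $x_{2}=x_{\mathbf{b}}(x_{1},-v)$, by (\ref{bouncebackt}) the times satisfy $t_{k}-t_{k+1}=d:=t_{1}-t_{2}>0$ for every $k\geq 1$, and $|V_{\mathbf{cl}}(s)|\equiv |v|$, so that $\nu (V_{\mathbf{cl}}(s))\equiv \nu (v)$ along the entire cycle (this is why no accumulated integral of $\nu$ appears, only the factor $e^{-\nu (v)t}$). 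Hence $t_{k}\downarrow -\infty $ with a uniform gap, there is exactly one index $k$ with $t_{k+1}\leq 0<t_{k}$, the sum in (\ref{bouncebackformular}) has a single nonzero term, and we may \emph{define} $(G(t)h_{0})(t,x,v)$ by that formula. The estimate is then immediate: $|(G(t)h_{0})(t,x,v)|\leq e^{-\nu (v)t}\|h_{0}\|_{\infty }\leq e^{-\nu _{0}t}\|h_{0}\|_{\infty }$ for hard potentials, i.e. $e^{\nu _{0}t}\|G(t)h_{0}\|_{\infty }\leq \|h_{0}\|_{\infty }$.

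Next I would verify that this function is a solution. On each leg $s\in \lbrack t_{k+1},t_{k})$ the formula is, along the characteristic $\tfrac{dx}{ds}=v,\tfrac{dv}{ds}=0$, just $e^{-\nu (v)(t-s)}$ times a constant in $s$, so a direct differentiation gives $\{\partial _{t}+v\cdot \nabla _{x}+\nu \}(G(t)h_{0})=0$ in $\Omega $ in the mild (a.e. along characteristics, equivalently distributional) sense, with $(G(0)h_{0})=h_{0}$. For $x\in \partial \Omega $ with $v\cdot n(x)<0$ one has $t_{\mathbf{b}}(x,v)=0$ and $x_{\mathbf{b}}(x,v)=x$, so the cycle issued from $(t,x,v)$ coincides with the cycle issued from $(t,x,-v)$ with the index shifted by one; since $\nu $ is radial this forces $(G(t)h_{0})(t,x,v)=(G(t)h_{0})(t,x,-v)$, i.e. the bounce-back boundary condition holds a.e. on $\partial \Omega \times \mathbf{R}^{3}\setminus \gamma _{0}$. (Alternatively, existence can be obtained from Lemma \ref{abstract} applied to the reflection operator $\mathcal{M}g(x,v)=g(x,-v)$, which has norm $1$, together with the limit $\varepsilon \rightarrow 0$; the finiteness of the number of bounces before time $0$ makes the limiting boundary map well posed, and the two constructions agree a.e.)

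Finally, for uniqueness let $h^{1},h^{2}$ both solve (\ref{gh0}) with bounce-back and $h^{1}(0)=h^{2}(0)=h_{0}$, and set $w=h^{1}-h^{2}$; then $w$ solves $\{\partial _{t}+v\cdot \nabla _{x}+\nu \}w=0$ with $w(0)=0$ and $w(t,x,v)=w(t,x,-v)$ on $\partial \Omega $. The standard $L^{2}$ energy identity reads
\[
\frac{1}{2}\|w(t)\|^{2}+\int_{0}^{t}(\nu w,w)(s)\,ds+\frac{1}{2}\int_{0}^{t}\int_{\gamma _{+}}w^{2}\,d\gamma \,ds=\frac{1}{2}\int_{0}^{t}\int_{\gamma _{-}}w^{2}\,d\gamma \,ds ,
\]
and since $d\gamma =|n(x)\cdot v|\,dS_{x}\,dv$ is invariant under $v\mapsto -v$ while $w(t,x,-v)=w(t,x,v)$ on $\partial \Omega $, the two boundary integrals coincide and cancel; hence $\tfrac{1}{2}\tfrac{d}{dt}\|w\|^{2}\leq -\nu _{0}\|w\|^{2}$ and $w\equiv 0$. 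The main obstacle is the bookkeeping on the measure-zero ``bad set'' — cycles that reach $\gamma _{0}$ or first hit the boundary tangentially — so that (\ref{bouncebackformular}) is a.e. well defined and is genuinely a (mild) solution; here Lemma \ref{huang}(4) does the essential work, and because the bounce-back cycle is confined to the two points $x_{1},x_{2}$ this is routine. The only genuinely delicate point I expect is to justify the energy identity rigorously within the $L^{\infty }$ class in which the lemma is stated (where a priori $L^{2}$ control is not assumed), which can be handled by an approximation/Ukai-trace argument as in the proof of Lemma \ref{strong}.
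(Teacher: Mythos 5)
Your proposal is correct, and on the points that matter (the cycle structure, the a.e.\ well-definedness, the formula, the decay bound, and the energy argument for uniqueness) it reaches the same conclusions as the paper, but the existence step is organized differently. The paper does not define $G(t)h_{0}$ by the formula and verify it; instead it first constructs, via Lemma \ref{abstract}, a solution $h^{\varepsilon }$ of the damped problem $h^{\varepsilon }(t,x,v)=(1-\varepsilon )h^{\varepsilon }(t,x,-v)$, derives the explicit cycle representation for $h^{\varepsilon }$ (with factors $(1-\varepsilon )^{k}$, the sum being finite because of the uniform gap (\ref{bouncebackt})), obtains $\varepsilon $-uniform $L^{\infty }$ bounds on $h^{\varepsilon }$ and on its boundary trace, and passes to the weak-$\ast $ limit $\varepsilon \rightarrow 0$; uniqueness is then obtained exactly as you do, by dividing by an auxiliary velocity weight with $w^{-2}\{1+|v|\}\in L^{1}$ so that the difference lies in $L^{2}$ with $L^{2}_{\mathrm{loc}}(L^{2}(\gamma ))$ trace and the boundary integrals over $\gamma _{\pm }$ cancel under $v\mapsto -v$. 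Your direct route is shorter here precisely because bounce-back cycles are trivial (two fixed boundary points, constant gap $d>0$, $|V_{\mathbf{cl}}|\equiv |v|$ so only $e^{-\nu (v)t}$ appears), but it puts the burden on you to check by hand the a.e.\ well-posedness (your use of Lemma \ref{huang}(4) to exclude grazing first and second bounces is the right tool) and the boundary condition (your index-shift argument for the cycles of $(t,x,v)$ and $(t,x,-v)$ is correct); the paper's route via Lemma \ref{abstract} buys two things: the trace bound $\sup_{t}\|h_{\gamma }\|_{\infty }<\infty $ comes for free from the construction, which is exactly what its uniqueness argument needs, and the same abstract scheme is reused verbatim for specular and diffuse reflection, where no closed formula reaching $t=0$ exists. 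One small repair to your last paragraph: rather than an unspecified ``approximation/Ukai-trace argument,'' the clean fix for running the $L^{2}$ energy identity in the $L^{\infty }$ class is the paper's weight trick — since multiplication by a function of $v$ alone commutes with $\partial _{t}+v\cdot \nabla _{x}+\nu $ and with the bounce-back condition, the difference of two bounded solutions divided by such a $w$ is an $L^{2}$ solution with square-integrable trace, and your cancellation argument then applies verbatim.
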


\begin{proof}
For any $\varepsilon >0,$ by Lemma \ref{abstract}, there exists a solution $%
h^{\varepsilon }$ of 
\begin{equation*}
\{\partial _{t}+v\cdot \nabla _{x}+\nu \}h^{\varepsilon }=0,\text{ \ \ }%
h^{\varepsilon }(t,x,v)=(1-\varepsilon )h^{\varepsilon }(t,x,-v),\text{ \ }%
h^{\varepsilon }(0,x,v)=h_{0}.
\end{equation*}%
with finite $||h^{\varepsilon }(t,\cdot )||_{\infty }$ and $%
\sup_{t}||h_{\gamma }^{\varepsilon }(t,\cdot )||_{\infty }$. Such a solution
is necessary unique. This is because we can choose $w^{-2}\{1+|v|\}\in
L^{1}\,\ $so that $f^{\varepsilon }=\frac{h^{\varepsilon }}{w}\in L^{2}$ is
a $L^{2}$ solution to the same equation in (\ref{gh0}) with the same
boundary condition, with an additional property $\int_{0}^{t}||f^{%
\varepsilon }(s)||_{\gamma }^{2}ds<\infty .$ Then uniqueness follows from
the energy identity for $f^{\varepsilon }$.

Given any point $(t,x,v)\notin \gamma ^{0}$ and its back-time cycle $[X_{%
\mathbf{cl}}(s),V_{\mathbf{cl}}(s)].$ We notice $|V_{\mathbf{cl}}(s)|=|v|,$
for all $s,$ and $\frac{d}{ds}G(s)h_{0}\equiv -\nu G(s)h_{0}$ along the
back-time cycle $[X_{\mathbf{cl}}(s)$,$V_{\mathbf{cl}}(s)]$ for $t_{k+1}\leq
s<t_{k}.$ Together with the boundary condition at $s=t_{k}$, and part (4) of
Lemma \ref{huang}, we deduce that for almost every $(x,v),$ $e^{\nu
(v)t}G(s)h_{0}$ is constant along its back-time cycle $[X_{\mathbf{cl}}(s)$,$%
V_{\mathbf{cl}}(s)]$ in (\ref{bouncebackcycle})$.$ If $(x,v)\in \bar{\Omega}%
\times \mathbf{R}^{3}\setminus \gamma _{0},$ then $t_{\mathbf{b}}(x,v)>0,~$\
and 
\begin{equation*}
h^{\varepsilon }(t,x,v)=\sum_{k}\mathbf{1}_{[t_{k+1},t_{k})}(0)[1-%
\varepsilon ]^{k}e^{-\nu (v)t}h_{0}\left( X_{\mathbf{cl}}(0),V_{\mathbf{cl}%
}(0)\right) ,
\end{equation*}%
where the summation over $k$ is finite for finite $t$ by (\ref{bouncebackt})$%
.$ For all $\varepsilon ,$ 
\begin{equation*}
e^{\nu _{0}t}||h^{\varepsilon }(t)||_{\infty }\leq ||h_{0}||_{\infty },\text{
\ \ \ }\sup_{t\geq s,\gamma _{-}}|h^{\varepsilon }(t,x,v)|\leq \sup_{t\geq
s,\gamma _{+}}|h^{\varepsilon }(t,x,v)|\leq ||h_{0}||_{\infty },
\end{equation*}%
uniformly bounded. We thus can construct the solution $h$ to (\ref{gh0})
with the original bounce-back boundary condition by taking $w-\ast $ limit: $%
h(t,x,v)=\lim_{\varepsilon \rightarrow 0}h^{\varepsilon }(t,x,v),$ and $%
h_{\gamma }(t,x,v)=\lim_{\varepsilon \rightarrow 0}h_{\gamma }^{\varepsilon
}(t,x,v).$ We thus deduce our lemma by letting $\varepsilon \rightarrow 0.$
Once again, such a solution $h(t,x,v)$ is necessarily unique in the $%
L^{\infty }$ class because $f_{\gamma }=\frac{h_{\gamma }}{w}\in
L_{loc}^{2}(L^{2}(\gamma )).$
\end{proof}

\begin{lemma}
\label{bouncebackcon}Let $\xi $ be convex as in (\ref{convexity}). Let $%
h_{0} $ be continuous in $\bar{\Omega}\times \mathbf{R}^{3}\setminus \gamma
_{0}\,\ $\ and $q(t,x,v)$ be continuous in the interior of $[0,\infty
)\times \Omega \times \mathbf{R}^{3}$ and $\sup_{[0,\infty )\times \Omega
\times \mathbf{R}^{3}}|\frac{q(t,x,v)}{\nu (v)}|<\infty .$ Assume the
compatibility condition on $\gamma _{-}:$ $h_{0}(x,v)=h_{0}(x,-v)$. Then the
solution $h(t,x,v)$ of 
\begin{equation}
\{\partial _{t}+v\cdot \nabla _{x}+\nu \}h=q,\text{ \ \ \ \ }h(0,x,v)=h_{0},
\label{gq}
\end{equation}%
with $h(t,x,v)=h(t,x,-v)$, $x\in \partial \Omega $ is continuous on $%
[0,\infty )\times \{\bar{\Omega}\times \mathbf{R}^{3}\setminus \gamma
_{0}\}. $
\end{lemma}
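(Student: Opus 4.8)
The plan is to repeat the proof of Lemma \ref{inflowcon}, with the single straight back-characteristic replaced by the bounce-back cycle of Definition \ref{bouncebackcycles}. First I would record the representation formula obtained by combining Lemma \ref{bouncebackformula} with the Duhamel term for the forcing $q$: since $\frac{d}{ds}\{e^{\nu(v)s}h(s)\}=e^{\nu(v)s}q(s)$ along the cycle and $|V_{\mathbf{cl}}|\equiv|v|$, for a.e. $(x,v)$
\[
h(t,x,v)=\sum_{k}\mathbf{1}_{[t_{k+1},t_{k})}(0)\,e^{-\nu(v)t}h_{0}\big(X_{\mathbf{cl}}(0),V_{\mathbf{cl}}(0)\big)+\int_{0}^{t}e^{-\nu(v)(t-s)}q\big(s,X_{\mathbf{cl}}(s),V_{\mathbf{cl}}(s)\big)\,ds ,
\]
where $[X_{\mathbf{cl}},V_{\mathbf{cl}}]$ is the back-time cycle from $(t,x,v)$ and the sum is finite for finite $t$ by (\ref{bouncebackt}).

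Next, fix $(t,x,v)$ with $(x,v)\notin\gamma_{0}$. Since $\xi$ is strictly convex, the functional $\alpha$ in (\ref{alpha}) satisfies $\alpha(t)>0$: it is $\geq\xi^{2}(x)$ when $x\in\Omega$, and equals $[v\cdot\nabla\xi(x)]^{2}>0$ when $x\in\partial\Omega$ with $v\cdot n(x)\neq0$. Because $\alpha$ is quadratic in $V$, it is continuous across the velocity reversals at the bounce points (at $s=t_{k}$ one gets $\alpha(t_{k})=[v_{k}\cdot\nabla\xi(x_{k})]^{2}=[v_{k-1}\cdot\nabla\xi(x_{k})]^{2}$ since $\xi(x_{k})=0$ and $v_{k}=-v_{k-1}$). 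Applying the Velocity Lemma \ref{velocity} on each segment $[t_{k+1},t_{k}]$ (where $V$ is constant with $|V|=|v|$) and chaining over the segments, using $t_{1}\leq t$, yields a uniform lower bound $\alpha(s)\geq c(t,x,v)\,\alpha(t)>0$ along the whole cycle. In particular $[v\cdot n(x_{k})]^{2}\geq c\,\alpha(t)>0$ at every bounce point, so by (\ref{tlower}) each $t_{\mathbf{b}}(x_{k},v_{k})>0$, the per-segment time $d=t_{1}-t_{2}$ is bounded below by a positive constant, hence only a locally bounded number of bounces occur in $[0,t]$, and by Lemma \ref{huang}(2) all of $t_{k},x_{k},v_{k}$ are smooth functions of $(x,v)$ near the chosen point. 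Moreover, by convexity the open segments of the cycle lie in $\Omega$, so $h_{0}$ is evaluated only at interior points or at boundary points away from $\gamma_{0}$, i.e. only at points of continuity of $h_{0}$.

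Then I would run the limit argument. Given $(\bar t,\bar x,\bar v)\to(t,x,v)$, the smooth dependence just established shows that the cycle data $\bar t_{k},\bar x_{k},\bar v_{k}$ converge to those of $(t,x,v)$. If $0$ lies strictly between two consecutive bounce times, $t_{k+1}<0<t_{k}$ (including the no-bounce case $t_{1}<0$), the same strict inequalities hold for the nearby point, the representation has the identical structure, and continuity follows termwise from continuity of $h_{0}$ at $X_{\mathbf{cl}}(0)$, continuity of $q$ in the interior, and $\sup|q/\nu|<\infty$ (splitting $\int_{0}^{t}$ into integrals over small neighborhoods of the finitely many transition times $t_{j}$ plus the complement, exactly as in Lemma \ref{inflowcon}). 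The remaining, and main, case is the transition $0=t_{k}$ for some $k\geq1$, where the nearby cycle may bounce one extra or one fewer time. Here $X_{\mathbf{cl}}(0)\to x_{k}\in\partial\Omega$, and the velocity recorded on the two sides of this event is $v_{k-1}$ versus $v_{k}=-v_{k-1}$, so the two one-sided limits of $h_{0}(X_{\mathbf{cl}}(0),V_{\mathbf{cl}}(0))$ are $h_{0}(x_{k},v_{k-1})$ and $h_{0}(x_{k},-v_{k-1})$; these coincide by the compatibility hypothesis $h_{0}(x,v)=h_{0}(x,-v)$ on $\partial\Omega\setminus\gamma_{0}$ (and $(x_{k},v_{k-1})\notin\gamma_{0}$ by the lower bound on $\alpha$), while the $q$-contributions match after the same cutoff. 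Hence $h(\bar t,\bar x,\bar v)\to h(t,x,v)$ in every case.

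I expect the transition case $0=t_{k}$ to be the only real point: one must simultaneously control the jump in the number of bounces of the nearby cycle — handled by the uniform positive lower bound on the segment time $d$ and the smoothness of $x_{\mathbf{b}}$ away from grazing — and observe that the compatibility condition $h_{0}(x,v)=h_{0}(x,-v)$ is precisely what makes the two one-sided limits of the $h_{0}$-evaluation agree. This is the bounce-back analogue of the role of $h_{0}(x,v)=\{wg\}(0,x,v)$ in Lemma \ref{inflowcon}; everything else is a routine repetition of the inflow argument.
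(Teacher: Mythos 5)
Your proposal is correct and follows essentially the same route as the paper's proof: the cycle representation with the Duhamel $q$-term, the Velocity Lemma \ref{velocity} chained along the cycle to get $\{v_k\cdot n(x_k)\}^2\geq c\,\alpha(t)>0$ (hence finitely many bounces and smooth dependence of $t_k,x_k,v_k$ via Lemma \ref{huang}), and the compatibility condition $h_0(x,v)=h_0(x,-v)$ invoked exactly at the transition case where the nearby cycle gains or loses one bounce. No gaps.
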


\begin{proof}
Take any point $(t,x,v)$ $\notin \lbrack 0,\infty )\times \gamma _{0}$ and
denote its backward exit point $[t_{\mathbf{b}},x_{\mathbf{b}},v]$ along the
trajectory. Recall its back-time cycle and (\ref{bouncebackformula}). Assume 
$t_{m+1}\leq 0<t_{m}.$ Since $\frac{d}{d\tau }\{e^{\nu (v)}h\}=q$ along the
characteristics, $h(t,x,v)$ takes the form 
\begin{eqnarray}
&&e^{-\nu (v)t}h_{0}\left( x_{m}-t_{m}v_{m},v_{m}\right) +  \label{hg} \\
&&\sum_{k=0}^{m-1}\int_{t_{k+1}}^{t_{k}}e^{-\nu (v)(t-s)}q\left(
s,x_{k}+(s-t_{k})v_{k},v_{k}\right) ds+\int_{0}^{t_{m}}e^{-\nu
(v)(t-s)}q\left( s,x_{m}+(s-t_{m})v_{m},v_{m}\right) ds.  \notag
\end{eqnarray}%
Since $\Omega $ is convex and $(x,v)\notin \gamma _{0},$ then from the
Velocity Lemma \ref{velocity}, $n(x_{1})\cdot v_{1}\neq 0.$ Notice that $%
x_{k}\in \partial \Omega $ and $\xi (x_{k})=0$ for $k\geq 1$ so that 
\begin{equation*}
\alpha (t_{k})=(v_{k}\cdot \nabla \xi (x_{k}))^{2}.
\end{equation*}%
We now apply the Velocity Lemma \ref{velocity} to conclude $\alpha
(t_{k})>C\alpha (t)>0$ and $v_{k}\cdot n(x_{k})\neq 0$ for all $k\geq 1.$ By
Lemma \ref{huang}, $t_{k},$ and $x_{k}$ for $1\leq k\leq n$ are smooth
functions of $(t,x,v).$ For any other point $(\bar{t},\bar{x},\bar{v})$
which is close to $(t,x,v),$ we deduce that $\bar{t}_{m}>0.$

In the case that $t_{m+1}<0,$ or equivalently, $x_{m}-t_{m}x_{m}\in \Omega ,$
from continuity, $\bar{t}_{m+1}(\bar{t},\bar{x},\bar{v})<0.$ Therefore, $h(%
\bar{t},\bar{x},\bar{v})$ has the same expression as $h(t,x,v)$ in (\ref{hg}%
). Therefore, $h(\bar{t},\bar{x},\bar{v})\rightarrow h(t,x,v)$ because $\bar{%
x}_{k}\rightarrow x_{k},$ and $\bar{t}_{k}\rightarrow t_{k},\bar{v}%
_{k}\rightarrow v_{k}$ for $1\leq k\leq m+1,$ as in the proof of Lemma \ref%
{inflowcon}.

On the other hand, if $t_{m+1}=0,$ or equivalently, $%
x_{m+1}=x_{m}-t_{m}x_{m}\in \partial \Omega ,$ and $(x_{m+1},v_{m})\notin
\gamma _{0}.$ Then by continuity, we know that $\bar{t}_{m+1}(\bar{t},\bar{x}%
,\bar{v})$ is close to zero. In the case that $\bar{t}_{m+1}(\bar{t},\bar{x},%
\bar{v})\leq 0,$ then (\ref{hg}) is again valid and the continuity follows
as before. However, if $\bar{t}_{m+1}(\bar{t},\bar{x},\bar{v})>0,$ then $%
\bar{t}_{m+2}(\bar{t},\bar{x},\bar{v})<0$ (because $t_{m+2}^{{}}<0$), and we
have a different expression for $h(\bar{t},\bar{x},\bar{v})$ as: 
\begin{eqnarray}
&&e^{-\nu (\bar{v})\bar{t}}h_{0}\left( \bar{x}_{m+1}-\bar{t}_{m+1}\bar{v}%
_{m+1},\bar{v}_{m+1}\right) +\sum_{k=0}^{m}\int_{\bar{t}_{k+1}}^{\bar{t}%
_{k}}e^{-\nu (\bar{v})(\bar{t}-s)}q\left( s,\bar{x}_{k}+(s-\bar{t}_{k})\bar{v%
}_{k},\bar{v}_{k}\right) ds  \notag \\
&&+\int_{0}^{\bar{t}_{m+1}}e^{-\nu (\bar{v})(\bar{t}-s)}q\left( s,\bar{x}%
_{m+1}+(s-\bar{t}_{m+1})\bar{v}_{m+1},\bar{v}_{m+1}\right) ds.  \label{hgbar}
\end{eqnarray}%
The last term in (\ref{hgbar}) goes to zero because $\bar{t}%
_{m+1}\rightarrow 0,$ and the second term in (\ref{hgbar}) tends to the $q$
integration in (\ref{hg}) since $\int_{0}^{t_{m}}=\int_{t_{m+1}}^{t_{m}}$.
We now show that the first term (\ref{hgbar}) tends to the first term in (%
\ref{hg}) as well. Since $\bar{x}_{m+1}-\bar{t}_{m+1}\bar{v}%
_{m+1}\rightarrow \bar{x}_{m+1}\rightarrow x_{m+1}=x_{m}-t_{m}v_{m}\in
\partial \Omega ,$ the first term in (\ref{hgbar}) tends to 
\begin{equation*}
h_{0}\left( x_{m}-t_{m}x_{m},v_{m}\right) =h_{0}(x_{m}-t_{m}x_{m},-v_{m}),
\end{equation*}%
which is exactly the first term in (\ref{hg}), from the compatibility
condition $h_{0}(x,v)=h_{0}(x,-v)$ on $\gamma .$ We therefore conclude the
continuity.
\end{proof}

\subsubsection{Non-Grazing Condition $|S_{x}|=0.$}

The following lemma is due to Hongjie Dong:

\begin{lemma}
\label{s=0}For any $x\in \bar{\Omega},$ define the set 
\begin{equation}
S_{x}(v)=\{v\in \mathbf{R}^{3}:v\cdot n(x_{\mathbf{b}}(x,v))=0\}=\{v\in 
\mathbf{R}^{3}:v\cdot \nabla \xi (x-t_{\mathbf{b}}(x,v)v)=0\}.
\label{0measure}
\end{equation}%
If $\partial \Omega $ is $C^{1},$ then $|S_{x}(v)|=0,$ where $|\cdot |$ is
the Lebesaue measure.
\end{lemma}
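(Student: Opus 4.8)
The plan is to exploit two structural features of $S_x(v)$: its invariance under positive scalings of $v$, which reduces the claim to a null-set statement on the unit sphere, and the observation that a \emph{grazing} backward exit point seen from $x$ is exactly a \emph{critical point} of the radial projection of $\partial\Omega$ from $x$ — so that Sard's theorem does the rest. First I would record the homogeneity: from \eqref{exit}, $t_{\mathbf b}(x,\lambda v)=t_{\mathbf b}(x,v)/\lambda$ for $\lambda>0$, hence by \eqref{xb} $x_{\mathbf b}(x,\lambda v)=x_{\mathbf b}(x,v)$, so $v\in S_x(v)\iff\lambda v\in S_x(v)$; i.e. $S_x(v)$ is a cone. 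Writing $v=r\omega$ and using $dv=r^{2}\,dr\,d\omega$ one gets $|S_x(v)|=\big(\int_0^\infty r^2\,dr\big)\,|\sigma_x|_{S^2}$ in the sense that $|S_x(v)|=0$ as soon as $\sigma_x:=\{\omega\in S^2:\ \omega\cdot n(x_{\mathbf b}(x,\omega))=0\}$ is a null subset of $S^2$. Thus it suffices to prove $|\sigma_x|_{S^2}=0$; assume first $x\in\Omega$, deferring $x\in\partial\Omega$ to the end.

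Next I would introduce the radial projection $\psi:\partial\Omega\to S^2$, $\psi(y)=(x-y)/|x-y|$. Since $\Omega$ is bounded and $x\in\Omega$, $\psi$ is the restriction to the $C^1$ surface $\partial\Omega$ of a $C^\infty$ map on $\mathbf R^3\setminus\{x\}$, hence a $C^1$ map between two $2$-dimensional manifolds. A one-line differentiation gives, for $w\in T_y\partial\Omega$, $d\psi_y(w)=-|x-y|^{-1}\,\Pi_{(x-y)^\perp}(w)$, where $\Pi_{(x-y)^\perp}$ is orthogonal projection onto the plane perpendicular to $x-y$. As a linear map $T_y\partial\Omega\to(x-y)^\perp$ this is singular exactly when its kernel $T_y\partial\Omega\cap\mathrm{span}(x-y)$ is nontrivial, i.e. exactly when $x-y\in T_y\partial\Omega$, i.e. when $(x-y)\cdot n(y)=0$. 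Therefore the critical set of $\psi$ is precisely $\Sigma_x:=\{y\in\partial\Omega:(x-y)\cdot n(y)=0\}$, and by Sard's theorem for $C^1$ maps between equidimensional manifolds, $\psi(\Sigma_x)$ is a null set on $S^2$.

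To conclude, take $\omega\in\sigma_x$ and set $y=x_{\mathbf b}(x,\omega)\in\partial\Omega$. Since $x\in\Omega$ we have $t_{\mathbf b}(x,\omega)>0$, so $x-y=t_{\mathbf b}(x,\omega)\,\omega$ with positive coefficient, hence $\omega=\psi(y)$; and $\omega\cdot n(y)=0$ forces $(x-y)\cdot n(y)=0$, so $y\in\Sigma_x$. Thus $\sigma_x\subseteq\psi(\Sigma_x)$, giving $|\sigma_x|_{S^2}=0$ and hence $|S_x(v)|=0$. For the case $x\in\partial\Omega$ the same computation shows $\psi$ is $C^1$ on $\partial\Omega\setminus\{x\}$ with critical set $\Sigma_x\setminus\{x\}$, and Sard still applies; if $v\in S_x(v)$ with $x_{\mathbf b}(x,v)\neq x$ the argument above puts $v/|v|\in\psi(\Sigma_x\setminus\{x\})$, while if $x_{\mathbf b}(x,v)=x$ then the reversed trajectory leaves $\bar\Omega$ at once, forcing $v\cdot n(x)\le0$, and $v\in S_x(v)$ then forces $v\cdot n(x)=0$, so $v/|v|$ lies on the great circle $\{\omega\in S^2:\omega\cdot n(x)=0\}$; both the image set and the great circle are null, so again $|\sigma_x|_{S^2}=0$.

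The main obstacle — really the only piece of content — is the identification in the second step of the critical set of the radial projection with the set of boundary points whose line to $x$ is tangent; everything else is the homogeneity reduction plus Sard. It is worth noting that this argument needs only $\partial\Omega\in C^1$ and $\Omega$ bounded: it does not use smoothness of $\xi$, Lemma \ref{huang}, or any convexity. It is also essential that one works with $\psi(\Sigma_x)$ rather than $\Sigma_x$ itself, since $\Sigma_x$ can have positive surface measure (for instance if $\partial\Omega$ contains a piece of a cone with apex $x$); the point is precisely that $\psi$ collapses such pieces to lower-dimensional sets, which is exactly the content of the critical-set computation.
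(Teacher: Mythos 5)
Your proposal is correct and follows essentially the same route as the paper's proof: reduce by homogeneity of $t_{\mathbf b}$ to the unit sphere, observe that a grazing backward exit point $x_{\mathbf b}$ is a critical point of the radial projection $y\mapsto (x-y)/|x-y|$ from $\partial\Omega$ to $\mathbf{S}^{2}$, and invoke Sard's theorem so that $v/|v|$ lies in a null set of critical values. The only difference is cosmetic: you verify criticality intrinsically via $d\psi_y(w)=-|x-y|^{-1}\Pi_{(x-y)^{\perp}}(w)$ and its kernel, whereas the paper does the same check in local graph coordinates by exhibiting the null vector $\{x_{\mathbf b 1}-x_{1}\}\partial_{y_{1}}\phi+\{x_{\mathbf b 2}-x_{2}\}\partial_{y_{2}}\phi=0$.
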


\begin{proof}
We first note that if $v\in S_{x}(v),$ then $kv\in S_{x}(v)$ for all $k>0.$
It therefore suffices to show that the surface measure $|S_{x}(v)\cap 
\mathbf{S}^{2}|=0.$

We fix $x\in \bar{\Omega}$ and recall $x_{\mathbf{b}}=x-t_{\mathbf{b}}v\in
\partial \Omega $. If $x\in \partial \Omega ,$ we require $v\cdot n(x)\neq 0$
(a zero measure set)$.$ Hence $\frac{v}{|v|}=-\frac{x_{\mathbf{b}}-x}{|x_{%
\mathbf{b}}-x|}$ and $x_{\mathbf{b}}\neq x.$ Our goal is to show that, if $%
v\in S_{x}(v),$ then $\frac{v}{|v|}$ is a critical value of the mapping from 
$\partial \Omega \rightarrow \mathbf{S}^{2}:$ 
\begin{equation}
\phi (y)=-\frac{y-x}{|y-x|}
\end{equation}%
at $y=x_{\mathbf{b}}.$ Since $\phi (y)$ is smooth for $y\neq x,$ by Sard's
theorem, $\frac{v}{|v|}$ has zero measure in $\mathbf{S}^{2}$ and our lemma
is valid.

Indeed, we assume locally around $x_{\mathbf{b}},$ $\partial \Omega
=(y_{1},y_{2},\eta (y_{1},y_{2}))$ and if $v\in S_{x}(v),$ then at $%
(y_{1},y_{2})=(x_{\mathbf{b}1},x_{\mathbf{b}2}):$ 
\begin{equation}
0=\frac{v}{|v|}\cdot n(x_{\mathbf{b}})=\{x_{\mathbf{b}1}-x_{1}\}\partial
_{1}\eta +\{x_{\mathbf{b}2}-x_{2}\}\partial _{1}\eta -\eta (x_{\mathbf{b}%
1},x_{\mathbf{b}2})+x_{3}=0.  \label{para}
\end{equation}%
Clearly, since $x_{\mathbf{b}}\neq x,$ from (\ref{para}), $[x_{\mathbf{b}%
1}-x_{1},x_{\mathbf{b}2}-x_{2}]\neq 0.$ But $[\partial _{y_{1}}\phi
,\partial _{y_{2}}\phi ]=$ 
\begin{equation}
\begin{bmatrix}
-\frac{1}{|y-x|}+\frac{(y_{1}-x_{1})^{2}+(y_{1}-x_{1})(\eta -x_{3})\partial
_{1}\eta }{|y-x|^{3}} & +\frac{(y_{1}-x_{1})\{y_{2}-x_{2}+(\eta
-x_{3})\partial _{2}\eta \}}{|y-x|^{3}} \\ 
\frac{(y_{2}-x_{2})\{y_{1}-x_{1}+(\eta -x_{3})\partial _{1}\eta \}}{|y-x|^{3}%
} & -\frac{1}{|y-x|}+\frac{(y_{2}-x_{2})^{2}+(y_{2}-x_{2})(\eta
-x_{3})\partial _{2}\eta }{|y-x|^{3}} \\ 
-\frac{\partial _{1}\eta }{|y-x|}+\frac{(\eta -x_{3})\{y_{1}-x_{1}+(\eta
-x_{3})\partial _{1}\eta \}}{|y-x|^{3}} & -\frac{\partial _{2}\eta }{|y-x|}+%
\frac{(\eta -x_{3})\{y_{2}-x_{2}+(\eta -x_{3})\partial _{2}\eta \}}{|y-x|^{3}%
}%
\end{bmatrix}%
,
\end{equation}%
by (\ref{para}), a direct comupation yields 
\begin{equation}
\{x_{\mathbf{b}1}-x_{1}\}\partial _{y_{1}}\phi +\{x_{\mathbf{b}%
2}-x_{2}\}\partial _{y_{2}}\phi =0
\end{equation}%
at $(y_{1},y_{2})=(x_{\mathbf{b}1},x_{\mathbf{b}2}).$ This implies that $%
\frac{v}{|v|}$ is a critical value of $\phi $.
\end{proof}

\begin{lemma}
\label{bouncebacklower}Assume $|v|\leq 2N.$ Then for any $\varepsilon >0,$
there exist $\delta _{\varepsilon ,N}>0,$ and $l_{\varepsilon ,N,\xi }$
balls $B(x_{1};r_{1}),B(x_{2},r_{2})...,B(x_{l};r_{l})\subset \bar{\Omega}$,
as well as open sets $O_{x_{1}},O_{x_{2},}...O_{x_{l}}$ of the velocity $v$
with $|O_{x_{i}}|<\varepsilon $ for $1\leq i\leq l,$ such that for any $x\in 
\bar{\Omega},$ there exists $x_{i}$ so that $x\in B(x_{i};r_{i})\,\ $and for 
$v\notin O_{x_{i}},$ 
\begin{equation}
|v\cdot n(x-t_{\mathbf{b}}(x,v)v)|>\delta _{\varepsilon ,N}>0,\text{ \ \ }%
|v\cdot n(x+t_{\mathbf{b}}(x,-v)v)|>\delta _{\varepsilon ,N}>0.
\end{equation}
\end{lemma}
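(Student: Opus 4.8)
The plan is to prove the statement by combining the measure--zero property of the grazing set from Lemma \ref{s=0}, the smoothness of $t_{\mathbf{b}}$ and $x_{\mathbf{b}}$ away from grazing from Lemma \ref{huang}(2), and two successive compactness arguments: first in the velocity variable over the ball $\{|v|\le 2N\}$, then in $x$ over $\bar\Omega$. Throughout, write $g(x,v)=v\cdot n(x_{\mathbf{b}}(x,v))=v\cdot\nabla\xi(x-t_{\mathbf{b}}(x,v)v)/|\nabla\xi(x-t_{\mathbf{b}}(x,v)v)|$, so that the two quantities in the conclusion are exactly $|g(x,v)|$ and $|g(x,-v)|$, since $x+t_{\mathbf{b}}(x,-v)v=x_{\mathbf{b}}(x,-v)$. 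Recall $g\le 0$ always, and $g(x,v)=0$ iff $v\in S_x(v)$ (equation (\ref{0measure})).

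First I would fix $x\in\bar\Omega$ and record the basic continuity fact: if $g(x_0,v_0)\neq 0$ then $v_0\cdot n(x_{\mathbf{b}}(x_0,v_0))<0$, so by Lemma \ref{huang}(2) the maps $(x,v)\mapsto(t_{\mathbf{b}},x_{\mathbf{b}})$ are smooth in a full neighborhood of $(x_0,v_0)$, hence $g$ is continuous (indeed smooth) there. Consequently, for each $\rho>0$ the set $\{v:\rho\le|v|\le 2N,\ g(x,v)=0\text{ or }g(x,-v)=0\}$ is a \emph{compact} subset of the annulus $\{\rho\le|v|\le 2N\}$, and it has Lebesgue measure zero by Lemma \ref{s=0} applied to $S_x$ and to $-S_x$. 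By outer regularity of Lebesgue measure it is contained in an open set of measure $<\varepsilon/2$; choosing $\rho$ also small enough that $|\{|v|<\rho\}|<\varepsilon/2$ and setting $O_x$ to be the union of these two open sets yields $|O_x|<\varepsilon$. (The inclusion of $\{|v|<\rho\}$ is necessary because $|g(x,v)|\le|v|$ degenerates as $v\to 0$, so no uniform lower bound is possible for small velocities.)

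Next, on the compact set $K_x:=\{|v|\le 2N\}\setminus O_x$ the functions $v\mapsto|g(x,v)|$ and $v\mapsto|g(x,-v)|$ are continuous and strictly positive, hence $\ge\delta_x>0$ there. Since $g(x,v)\neq 0$ on the compact set $\{x\}\times K_x$, the continuity fact above together with a standard tube--lemma / uniform--continuity argument produces a radius $r_x>0$ such that $|g(x',v)|>\delta_x/2$ and $|g(x',-v)|>\delta_x/2$ for every $x'\in B(x;r_x)\cap\bar\Omega$ and every $v\in K_x$ — that is, for every $|v|\le 2N$ with $v\notin O_x$. Finally, $\{B(x;r_x)\}_{x\in\bar\Omega}$ is an open cover of the compact set $\bar\Omega$; extracting a finite subcover $B(x_1;r_1),\dots,B(x_l;r_l)$ and putting $\delta_{\varepsilon,N}:=\tfrac12\min_{1\le i\le l}\delta_{x_i}>0$ gives the claim: any $x\in\bar\Omega$ lies in some $B(x_i;r_i)$, and then for $|v|\le 2N$ with $v\notin O_{x_i}$ both $|v\cdot n(x-t_{\mathbf{b}}(x,v)v)|=|g(x,v)|$ and $|v\cdot n(x+t_{\mathbf{b}}(x,-v)v)|=|g(x,-v)|$ exceed $\delta_{x_i}/2\ge\delta_{\varepsilon,N}$.

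The genuinely delicate points are exactly the two that feed off earlier results: verifying that $g$ is continuous off its zero set (this is where Lemma \ref{huang}(2), i.e. the implicit function theorem applied to $\xi(x-t_{\mathbf{b}}v)=0$ under the non--grazing condition, is indispensable), and checking that Lemma \ref{s=0} indeed renders the relevant exceptional set a compact null set once intersected with the annulus $\rho\le|v|\le 2N$. The remainder — the two compactness extractions and the absorption of the small--velocity ball into $O_x$ — is routine.
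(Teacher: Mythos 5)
Your proof is correct and follows essentially the same route as the paper's: Lemma \ref{s=0} provides the measure-zero grazing set, Lemma \ref{huang}(2) provides smoothness of $v\cdot n(x_{\mathbf{b}}(x,v))$ away from it, a compactness argument in $v$ on $\{|v|\le 2N\}\setminus O_{x}$ produces $\delta _{x}$, and a second compactness argument (finite subcover of $\bar{\Omega}$, then the minimum of the $\delta _{x_{i}}$) finishes, exactly as in the paper. Your explicit inclusion of the small ball $\{|v|<\rho \}$ in $O_{x}$ to handle the degeneracy $|v\cdot n(x_{\mathbf{b}})|\leq |v|$ as $v\rightarrow 0$ is a tidy refinement of a point the paper leaves implicit, but it does not alter the argument.
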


\begin{proof}
Fix $\varepsilon >0.$ For any $x\in \bar{\Omega},$ since $|S_{x}|=0$ by
Lemma \ref{s=0}, there exists an open set $O_{x}^{+}$ such that $%
|O_{x}^{+}|<\varepsilon /2,$ and $|v\cdot n(x-t_{\mathbf{b}}(x,v)v)|\neq 0,$
for $v\notin O_{x}^{+}.$ But from part (2) of Lemma \ref{huang}, this
implies that $v\cdot n(x-t_{\mathbf{b}}(x,v)v)$ is a smooth function on the
compact set $\{|v|\leq 2N\}\cap \{O_{x}^{+}\}^{c}.$ Hence, there exists $%
\delta _{x,\varepsilon ,N}>0,$ such that on the set $\{|v|\leq 2N\}\cap
\{O_{x}^{+}\}^{c},$ 
\begin{equation}
|v\cdot n(x-t_{\mathbf{b}}(x,v)v)|\geq \delta _{x,\varepsilon ,N}>0.
\end{equation}%
In particular, for $-v\notin O_{x}^{+}$ and $|v|\leq 2N,$ or equivalently, $%
v\notin -O_{x}^{+}\equiv \{-v^{\prime }:v^{\prime }\in O_{x}^{+}\},$ $%
|v\cdot n(x+t_{\mathbf{b}}(x,-v)v)|>\delta _{x,\varepsilon ,N}>0.$ We define 
$O_{x}\equiv O_{x}^{+}\cup \{-O_{x}^{+}\},$ clearly $|O_{x}|<\varepsilon .$
But by part (2) of Lemma \ref{huang}, for such $v\notin O_{x},$ both $t_{%
\mathbf{b}}(x,v)$ and $t_{\mathbf{b}}(x,-v)$ are smooth functions of both
variables $x$ and $v.$ In other words, there exists $B(x;r_{x})$ such that
if $y\in B(x;r_{x})$ and $v\notin O_{x}$%
\begin{equation*}
|v\cdot n(y\mp t_{\mathbf{b}}(y,\pm v)v)|>\delta _{x,\varepsilon ,N}/2>0.
\end{equation*}%
Now for any $x\in \bar{\Omega},$ all $B(x,r_{x})$ form an open covering for
the compact set $\bar{\Omega},$ hence there is a finite $l-$ subcovering $%
B(x_{1};r_{1}),B(x_{2},r_{2})...,B(x_{l};r_{l})$. From our construction, for
any $x\in \bar{\Omega},$ there exists $i,$ so that $x\in B(x_{i},r_{i})$ and
moreover, $|v\cdot \nabla n(x\mp t_{\mathbf{b}}(\pm v)v)|>\delta
_{x_{i},\varepsilon ,N}/2>0.$ We conclude our lemma by choosing $\delta
_{\varepsilon ,N}=\min_{i}\frac{\delta _{x_{i},\varepsilon ,N}}{2}.$
\end{proof}

\subsubsection{$L^{\infty }$ Decay of Bounce-back $U(t)$}

\begin{theorem}
\label{bouncebackrate}Assume $w^{-2}\{1+|v|\}\in L^{1}.$ Let $%
h_{0}=wf_{0}\in L^{\infty }.$ There exits a unique solution $f(t,x,v)$ to
the linear Boltzmann equation (\ref{lboltzmann}) satisfying $f(0,x,v)=f_{0},$
and $h(t,x,v)=U(t)h_{0}$ to the weighted linear Boltzmann equation (\ref%
{lboltzmannh}) satisfying $h(0,x,v)=h_{0}$, both with the bounce-back
boundary condition. Then there exist $\lambda >0$ and $C>0$ such that 
\begin{equation}
e^{\lambda t}||U(t)h_{0}||_{\infty }\leq C||h_{0}||_{\infty }.
\label{bouncebackdecay}
\end{equation}
\end{theorem}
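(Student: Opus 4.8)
The plan is to run the Vidav-type double Duhamel iteration (\ref{duhamel2}) for $U(t)$ and to estimate each of its three pieces, reducing the genuinely delicate contribution to the $L^{2}$ decay of Theorem \ref{L2decay}(2). First I would record the two inputs: by Lemma \ref{bouncebackformula}, $G(t)$ is the explicit bounce-back solution operator of the pure transport problem with $e^{\nu _{0}t}\|G(t)h_{0}\|_{\infty }\leq \|h_{0}\|_{\infty }$; and by Lemma \ref{kernel}, $\|K_{w}h\|_{\infty }\leq C\|h\|_{\infty }$ with the refined bound $\int\int K_{w}(v,v^{\prime })K_{w}(v^{\prime },v^{\prime \prime })\,dv^{\prime }dv^{\prime \prime }\leq C/(1+|v|)$. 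Existence and uniqueness of $U(t)h_{0}$ in $L^{\infty }$ follow by iterating (\ref{duhamel}) as a contraction on short time intervals (using decay of $G$ and boundedness of $K_{w}$) and gluing, together with the observation that $f=h/w\in L^{2}$ has trace in $L^{2}_{\mathrm{loc}}(L^{2}(\gamma ))$, so the $L^{2}$ energy identity forces uniqueness — exactly as in Lemma \ref{bouncebackformula}.

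Next, the first two terms $G(t)h_{0}$ and $\int_{0}^{t}G(t-s_{1})K_{w}G(s_{1})h_{0}\,ds_{1}$ in (\ref{duhamel2}) are immediately bounded by $C(1+t)e^{-\nu _{0}t}\|h_{0}\|_{\infty }\leq Ce^{-\lambda t}\|h_{0}\|_{\infty }$ for any $\lambda <\nu _{0}$. The whole difficulty is the last double integral $\int_{0}^{t}\int_{0}^{s_{1}}G(t-s_{1})K_{w}G(s_{1}-s)K_{w}U(s)\,ds\,ds_{1}$. Using the cycle representation (\ref{bouncebackformular}) for $G$, this is, up to harmless $e^{-\nu _{0}(t-s)}$ factors, of the schematic form (\ref{k-k}) with straight characteristics replaced by the bounce-back cycles $X_{\mathbf{cl}}$ of Definition \ref{bouncebackcycles}. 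I would split the $(v^{\prime },v^{\prime \prime },s_{1},s)$ integration into the same four regions as in the proof of Theorem \ref{inflowrate}: (i) $|v|\geq N$, where $\int\int K_{w}K_{w}\leq C/N$ extracts a factor $1/N$; (ii) $|v^{\prime }|\geq 2N$ or $|v^{\prime \prime }|\geq 3N$, where one kernel carries an extra $e^{-\varepsilon N^{2}/8}$; (iii) $s_{1}-s\leq \varepsilon $, contributing a factor $\varepsilon $; in each of (i)–(iii) one bounds $U(s)h_{0}$ by $e^{-\lambda s/2}\sup_{r}e^{\lambda r/2}\|U(r)h_{0}\|_{\infty }$ and closes via the time splitting $e^{-\nu _{0}(t-s)}=e^{-\nu _{0}t/2}e^{-\nu _{0}(t-s)/2}e^{\nu _{0}s/2}$. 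Region (iv), $s_{1}-s\geq \varepsilon $ and $|v|\leq N,|v^{\prime }|\leq 2N,|v^{\prime \prime }|\leq 3N$, is where the $L^{2}$ theory enters.

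In region (iv) I would first approximate the possibly singular kernels $K_{w}$ by smooth compactly supported $K_{N}$ via (\ref{approximate}), and then perform the change of variables $v^{\prime }\mapsto y\equiv X_{\mathbf{cl}}(s;s_{1},X_{\mathbf{cl}}(s_{1};t,x,v),v^{\prime })$ of (\ref{ycycle}). For bounce-back cycles the geometry is simple: the cycle only involves the two boundary points $x_{\mathbf{b}}(x_{1},v^{\prime })$ and $x_{\mathbf{b}}(x_{\mathbf{b}}(x_{1},v^{\prime }),-v^{\prime })$, so by Lemma \ref{bouncebacklower} — whose proof rests on Hongjie Dong's Lemma \ref{s=0} that $|S_{x}|=0$ — one can cover $\bar{\Omega}$ by finitely many balls $B(x_{i};r_{i})$ and, for each, remove a velocity set $O_{x_{i}}$ with $|O_{x_{i}}|<\varepsilon $ outside of which $|v^{\prime }\cdot n(x_{\mathbf{b}})|$ stays bounded below by $\delta _{\varepsilon ,N}$; there, by Lemma \ref{huang}(2) the cycle is smooth and $\det (dy/dv^{\prime })\neq 0$, while Lemma \ref{huang}(4) guarantees measure is preserved, so $\int_{|v^{\prime }|\leq 2N}|U(s)h_{0}(y,v^{\prime \prime })|\,dv^{\prime }\leq C_{N,\varepsilon }(\int_{\Omega }|U(s)h_{0}(y,v^{\prime \prime })|^{2}dy)^{1/2}$. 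The excised set $O_{x_{i}}$ contributes at most $C\varepsilon \sup_{s}e^{-\lambda s}\|U(s)h_{0}\|_{\infty }$ and is absorbed. Writing $U(s)h_{0}=wf(s)$ with $f$ the $L^{2}$ bounce-back solution of (\ref{lboltzmann}), and using $w^{-2}\{1+|v|\}\in L^{1}$ together with Theorem \ref{L2decay}(2), the $y,v^{\prime \prime }$ integral is $\leq Ce^{-\lambda s}\|f(0)\|\leq Ce^{-\lambda s}\|h_{0}\|_{\infty }$ for $0<\lambda <\nu _{0}/2$, and the $s_{1},s$ integration then yields $C_{N,\varepsilon }e^{-\lambda t}\|h_{0}\|_{\infty }$. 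Collecting all four regions gives
$$\sup_{t\geq 0}e^{\lambda t}\|U(t)h_{0}\|_{\infty }\leq (\varepsilon +C_{\varepsilon }/N)\sup_{s\geq 0}e^{\lambda s}\|U(s)h_{0}\|_{\infty }+C_{\varepsilon ,N}\|h_{0}\|_{\infty },$$
and choosing first $\varepsilon $ small, then $N$ large, makes the coefficient $<1/2$ and absorbs the first term, yielding (\ref{bouncebackdecay}). The main obstacle is region (iv): legitimizing the change of variables (\ref{ycycle}) uniformly in $x$ in spite of almost-grazing velocities, which is exactly what Lemmas \ref{s=0}, \ref{bouncebacklower} and the measure-preservation of Lemma \ref{huang}(4) are built to supply — and it is markedly easier here than for specular reflection since a bounce-back cycle never accumulates more than two distinct boundary points.
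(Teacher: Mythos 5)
Your overall architecture (double Duhamel, the four-region splitting, reduction of the main region to the $L^{2}$ decay) matches the paper's strategy, but there is a concrete gap at the heart of region (iv): you assert that outside the excised sets $O_{x_{i}}$ of Lemma \ref{bouncebacklower}, smoothness of the bounce-back cycle plus non-grazing already gives $\det (dy/dv^{\prime })\neq 0$, and you invoke Lemma \ref{huang}(4) as if it supplied what the change of variables needs. Neither is true. Non-degeneracy of $v^{\prime }\cdot n(x_{\mathbf{b}})$ only makes $t_{l}^{\prime },x_{l}^{\prime }$ smooth in $v^{\prime }$; the Jacobian $\det \left( \partial y/\partial v^{\prime }\right) $ for $y=X_{\mathbf{cl}}(s;s_{1},X_{\mathbf{cl}}(s_{1}),v^{\prime })$ is a cubic polynomial in $s$ with leading coefficient $(-1)^{k}$ and smooth coefficients $q_{i}(v^{\prime })$, so it genuinely vanishes along (up to three) root curves $s=\eta _{j}(v^{\prime })$ — indeed the paper points out (remark before Lemma \ref{specularlower}) that already after one reflection the determinant changes sign near the boundary, so zeros are unavoidable. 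The paper's proof must therefore excise the strips $\{|s-\eta _{j}(v^{\prime })|\leq \varepsilon \}$ (small contribution) and then obtain a \emph{quantitative} lower bound $|\det (\partial y/\partial v^{\prime })|\geq \zeta _{\varepsilon ,N,T_{0}}>0$ on the complement by a compactness/finite-covering argument in $(s,v^{\prime })$; Lemma \ref{huang}(4) (preservation of null sets by the boundary map) gives no such quantitative control, and Cauchy--Schwarz against $\|f(s)\|$ is useless without it.

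This missing step also undermines your global-in-time closure. The compactness argument that produces $\zeta _{\varepsilon ,N,T_{0}}$, as well as the bound on the number of bounces $k\leq C_{T_{0},N,\varepsilon }$ (which you never address, and which requires the paper's further split into $X_{\mathbf{cl}}(s_{1})$ near/far from $\partial \Omega $ with $v^{\prime }$ almost tangential or not), are only available on a compact time interval $[0,T_{0}]$; the resulting constant is $C_{N,\varepsilon ,T_{0}}$, not $C_{N,\varepsilon }$, so your absorption inequality with $\sup_{s\geq 0}e^{\lambda s}\|U(s)h_{0}\|_{\infty }$ does not close as written. This is exactly why the paper does \emph{not} imitate the inflow proof (where the Jacobian is simply $(s_{1}-s)^{3}\geq \varepsilon ^{3}$, uniformly in time and geometry): instead it proves the finite-time estimate (\ref{finitetime}), $\|U(T_{0})h_{0}\|_{\infty }\leq e^{-\lambda T_{0}}\|h_{0}\|_{\infty }+C_{T_{0}}\int_{0}^{T_{0}}\|f(s)\|ds$, and then upgrades it to exponential decay through the bootstrap Lemma \ref{bootstrap} combined with the $L^{2}$ decay of Theorem \ref{L2decay}(2). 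To repair your argument you should (a) insert the cubic-root excision and the compactness lower bound for the Jacobian, (b) prove the bound on the number of bounces on $[0,T_{0}]$, and (c) restructure the conclusion as finite-time estimate plus Lemma \ref{bootstrap} rather than a direct global absorption.
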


For the well-posedness for both problems, we know from the Duhamel principle
(\ref{duhamel}), there exists a $L^{\infty }$ solution $h(t)=U(t)h_{0}$ to
the weighted linear Boltzamnn equation (\ref{lboltzmannh})$.$ By Ukai's
trace theorem, it follows that $h_{\gamma }$ is also in $L^{\infty }.$
Therefore, since $w^{-2}\{1+|v|\}\in L^{1},$ $\ f=\frac{h}{w}\in L^{2}$ and $%
f_{\gamma }=\frac{h_{\gamma }}{w}\in L_{loc}^{2}(L^{2}(\gamma ))$ is a
solution to the original linear Boltzmann equation (\ref{lboltzmann}), which
is unique by the standard energy estimate. Based on the $L^{2}$ decay
estimate for $f,$ to prove the decay estimate, it suffices to establish a
finite-time estimate (\ref{finitetime}).

\begin{lemma}
\label{bootstrap} Assume that there exists $\lambda >0$ so that the solution 
$f(t,x,v)$ of (\ref{lboltzmann}) satisfies $e^{\lambda t}||f(t)||\leq
C||f_{0}||.$ Let $h_{0}=wf_{0}\in L^{\infty }$ and $h(t)=U(t)h_{0}=wf(t)$ is
the solution of (\ref{lboltzmannh}) where $w^{-2}\in L^{1}.$ Assume there
exist $T_{0}>0$ and $C_{T_{0}}>0$ such that the satisfies 
\begin{equation}
||U(T_{0})h_{0}||_{\infty }\leq e^{-\lambda T_{0}}||h_{0}||_{\infty
}+C_{T_{0}}\int_{0}^{T_{0}}||f(s)||ds.  \label{finitetime}
\end{equation}%
Then (\ref{bouncebackdecay}) is valid.
\end{lemma}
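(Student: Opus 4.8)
The plan is to bootstrap from the finite-time estimate (\ref{finitetime}) to global exponential decay by iterating it along the grid of times $\{mT_0\}_{m\ge 0}$, exploiting that for the bounce-back problem the solution operator satisfies the semigroup identity $U(t)U(s)=U(t+s)$ (the equation and the boundary condition being autonomous), and using the $L^{2}$ decay hypothesis to render the inhomogeneous term in (\ref{finitetime}) summable. The finite-time estimate is taken to hold, with the \emph{same} $T_0,C_{T_0}$, for every $L^{\infty}$ initial datum, which is how it will be verified in the theorems that invoke this lemma.

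First I would record two elementary facts. Since $\Omega$ is bounded and $w^{-2}\in L^{1}(\mathbf{R}^{3})$, one has $\|f_0\|=\|w^{-1}h_0\|\le C_w\|h_0\|_{\infty}$, so the hypothesis $e^{\lambda t}\|f(t)\|\le C\|f_0\|$ gives $\|f(s)\|\le CC_w\,e^{-\lambda s}\|h_0\|_{\infty}$ for all $s\ge 0$. Second, from the Duhamel representation (\ref{duhamel}), the contraction $\|G(t)h_0\|_{\infty}\le\|h_0\|_{\infty}$ of Lemma \ref{bouncebackformula}, the bound $\|K_w h\|_{\infty}\le C_K\|h\|_{\infty}$ coming from (\ref{wk}) in Lemma \ref{kernel}, and Gronwall's inequality, I get the crude growth bound $\|U(\tau)g\|_{\infty}\le e^{C_K\tau}\|g\|_{\infty}$, which will serve to fill in the estimate between the grid points.

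Now set $a_m=\|U(mT_0)h_0\|_{\infty}$ and $\rho=e^{-\lambda T_0}\in(0,1)$. By uniqueness of the $L^{2}$ solution (available because $f_\gamma\in L^{2}_{\mathrm{loc}}(L^{2}(\gamma))$ via Ukai's trace theorem), the $L^{2}$ solution with datum $U(mT_0)h_0=wf(mT_0)$ at time $mT_0$ is exactly the time-translate $f(mT_0+\cdot)$; applying (\ref{finitetime}) to the solution restarted at time $mT_0$ therefore yields
\[
a_{m+1}\le \rho\,a_m+C_{T_0}\int_{mT_0}^{(m+1)T_0}\|f(s)\|\,ds\le \rho\,a_m+B\rho^{m}\|h_0\|_{\infty},
\]
with $B=C_{T_0}CC_w(1-\rho)/\lambda$. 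A one-line induction solves this linear recursion: $a_m\le\big(\rho^{m}+B\,m\,\rho^{m-1}\big)\|h_0\|_{\infty}$. Finally, for $t\in[mT_0,(m+1)T_0)$ write $U(t)=U(t-mT_0)U(mT_0)$ and use the crude bound to get $\|U(t)h_0\|_{\infty}\le e^{C_KT_0}a_m$; multiplying by $e^{\lambda t}\le e^{\lambda T_0}\rho^{-m}$ and absorbing the polynomial factor $m\rho^{m-1}$ into an exponential of slightly smaller rate gives $\sup_{t\ge 0}e^{\lambda' t}\|U(t)h_0\|_{\infty}\le C\|h_0\|_{\infty}$ for every $\lambda'<\lambda$, i.e.\ (\ref{bouncebackdecay}) after relabeling $\lambda'$ as $\lambda$.

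The genuine difficulty of the decay theorem is not in this lemma, which is only the packaging step, but in verifying its hypotheses: the $L^{2}$ decay, Theorem \ref{L2decay}, and above all the finite-time $L^{\infty}$ estimate (\ref{finitetime}), whose proof rests on the double Duhamel iteration (\ref{duhamel2}), the change of variables along bounce-back cycles, and the non-grazing control $|S_x|=0$ of Lemmas \ref{s=0} and \ref{bouncebacklower}. Within the present lemma the only points requiring care are the time-translation identification of the $L^{2}$ solution (hence the appeal to uniqueness) and the harmless loss of a polynomial factor, which forces passing to a marginally smaller exponential rate.
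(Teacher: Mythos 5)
Your proof is correct and follows essentially the same route as the paper: iterate the finite-time estimate (\ref{finitetime}) on the grid $\{mT_0\}$ (restarting the solution at each grid time), use the $L^{2}$ decay hypothesis to sum the source terms into a geometric series with a linear-in-$m$ loss, absorb that polynomial factor into a slightly smaller exponential rate, and fill in between grid points with a bounded-growth estimate. The only cosmetic difference is that you phrase the iteration as an explicit linear recursion for $a_m$ and justify the restart via $L^{2}$ uniqueness and a Gronwall bound, steps the paper performs implicitly.
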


\begin{proof}
It suffices to only prove (\ref{bouncebackdecay}) for $t\geq 1.$ For any $%
m\geq 1,$ we apply the finite-time estimate (\ref{finitetime}) repeatedly to
functions $h(lT_{0}+s)$ for $l=m-1,m-2,...0$:%
\begin{eqnarray*}
||h(mT_{0})||_{\infty } &\leq &e^{-\lambda T_{0}}||h(\{m-1\}T_{0})||_{\infty
}+C_{T_{0}}\int_{0}^{T_{0}}||f(\{m-1\}T_{0}+s)||ds \\
&=&e^{-\lambda T_{0}}||h(\{m-1\}T_{0})||_{\infty
}+C_{T_{0}}\int_{\{m-1\}T_{0}}^{mT_{0}}||f(s)||ds \\
&\leq &e^{-2\lambda T_{0}}||h(\{m-2\}T_{0})||_{\infty }+e^{-\lambda
T_{0}}C_{T_{0}}\int_{\{m-2\}T_{0}}^{\{m-1\}T_{0}}||f(s)||ds \\
&&+C_{T_{0}}\int_{\{m-1\}T_{0}}^{mT_{0}}||f(s)||ds \\
&\leq &e^{-m\lambda T_{0}}||h(0)||_{\infty
}+C_{T_{0}}\sum_{k=0}^{m-1}e^{-k\lambda
T_{0}}\int_{\{m-k-1\}T_{0}}^{\{m-k\}T_{0}}||f(s)||ds,
\end{eqnarray*}%
where $h(t)=U(t)h_{0}.$ Now by the $L^{2}$ decay assumption, in the interval 
$\{m-k-1\}T_{0}\leq s\leq \{m-k\}T_{0},$ we have $||f(s)||\leq e^{-\lambda
s}||f_{0}||\leq e^{-\lambda \{m-k-1\}T_{0}}||f_{0}||.$ Hence, $%
||h(mT_{0})||_{\infty }$ is further bounded by%
\begin{eqnarray*}
&&e^{-m\lambda T_{0}}||h_{0}||_{\infty
}+C_{T_{0}}\sum_{k=0}^{m-1}e^{-k\lambda
T_{0}}\int_{\{m-k-1\}T_{0}}^{\{m-k\}T_{0}}e^{-\lambda
\{m-k-1\}T_{0}}||f_{0}||ds \\
&\leq &e^{-m\lambda T_{0}}||h_{0}||_{\infty }+C_{T_{0}}e^{\lambda
T_{0}}mT_{0}e^{-m\lambda T_{0}}||f_{0}|| \\
&\leq &C_{T_{0},\lambda }e^{-\frac{m\lambda T_{0}}{2}}||h_{0}||_{\infty },
\end{eqnarray*}%
where by $w^{-2}\in L^{1},$ $||f_{0}||=||w^{-1}h_{0}||\leq
C||h_{0}||_{\infty }$ and $mT_{0}e^{-m\lambda T_{0}}\leq $ $e^{-\frac{%
m\lambda T_{0}}{2}}.$ For any $t,$ we can find $m$ such that $mT_{0}\leq
t\leq \{m+1\}T_{0},$ and 
\begin{equation*}
||h(t)||_{\infty }\leq C||h(mT_{0})||_{\infty }\leq C_{T_{0},\lambda }e^{-%
\frac{m\lambda T_{0}}{2}}||h_{0}||_{\infty }\leq \{C_{T_{0},\lambda }e^{%
\frac{\lambda T_{0}}{2}}\}e^{-\frac{\lambda }{2}t}||h_{0}||_{\infty },
\end{equation*}%
since $e^{-\frac{m\lambda T_{0}}{2}}\leq e^{-\frac{\lambda }{2}t}e^{\frac{%
\lambda T_{0}}{2}}$.
\end{proof}

\begin{proof}
of Theorem \ref{bouncebackrate}: By Lemma \ref{bootstrap}, we only need to
prove the finite-time estimate (\ref{finitetime}). We use the double Duhamal
Principle (\ref{duhamel2}) for semigroup $U(t)$ and $G(t).$ We first
estimate the first term in (\ref{duhamel2}) by Lemma \ref{bouncebackformula}%
, 
\begin{equation}
e^{\nu _{0}t}||G(t)h_{0}||_{\infty }\leq ||h_{0}||_{\infty }.
\label{bouncebackfirst}
\end{equation}

For the second term in (\ref{duhamel2}), we note that $||K_{w}h||_{\infty
}\leq C||h||_{\infty },$ then by Lemma \ref{bouncebackformula}, 
\begin{subequations}
\begin{equation}
\left\Vert \int_{0}^{t}G(t-s_{1})K_{w}G(s_{1})h_{0}ds_{1}\right\Vert
_{\infty }\leq \int_{0}^{t}e^{-\nu
_{0}(t-s_{1})}||K_{w}G(s_{1})h_{0}||_{\infty }ds_{1}=Cte^{-\nu
_{0}t}||h_{0}||_{\infty }.  \label{bouncebacksecond}
\end{equation}

We now concentrate on the third term in (\ref{duhamel2}) with the double
time integral. We now fix any point $(t,x,v)$ so that $(x,v)\notin \gamma
_{0}$ with its bounce-back cycle. Using (\ref{bouncebackformular}) twice, we
obtain 
\end{subequations}
\begin{eqnarray*}
&&G(t-s_{1})K_{w}G(s_{1}-s)K_{w}h(s) \\
&=&e^{-\nu (v)(t-s_{1})}\{K_{w}G(s_{1}-s)K_{w}h(s)\}\left( s_{1},X_{\mathbf{%
cl}}(s_{1}),V_{\mathbf{cl}}(s_{1})\right) \\
&=&e^{-\nu (v)(t-s_{1})}\int K_{w}(V_{\mathbf{cl}}(s_{1}),v^{\prime
})\{G(s_{1}-s)K_{w}h(s)\}\left( s_{1},X_{\mathbf{cl}}(s_{1}),v^{\prime
}\right) dv^{\prime } \\
&=&e^{-\nu (v)(t-s_{1})}\int \int K_{w}(V_{\mathbf{cl}}(s_{1}),v^{\prime
})K_{w}(V_{\mathbf{cl}}^{\prime }(s),v^{\prime \prime })e^{-\nu (v^{\prime
})(s_{1}-s)}h\left( s,X_{\mathbf{cl}}^{\prime }(s),v^{\prime \prime }\right)
dv^{\prime }dv^{\prime \prime }
\end{eqnarray*}%
where $X_{\mathbf{cl}}^{\prime }(s)\equiv X_{\mathbf{cl}}(s;s_{1},X_{\mathbf{%
cl}}(s_{1}),v^{\prime }),$ and $V_{\mathbf{cl}}^{\prime }(s)\equiv V_{%
\mathbf{cl}}(s;s_{1},X_{\mathbf{cl}}(s_{1}),v^{\prime }).$ In the case that $%
|v|\geq N,$ we use the same argument in Case 1, (\ref{inflowstep2}) in the
proof of Theorem \ref{inflowrate} to conclude:%
\begin{equation}
\int_{0}^{t}\int_{0}^{s_{1}}||G(t-s_{1})K_{w}G(s_{1}-s)K_{w}h(s)||_{\infty
}dsds_{1}\leq \frac{C_{K}}{N}e^{-\frac{\nu _{0}t}{2}}\sup_{s}\{e^{\frac{\nu
_{0}s}{2}}||h(s)||_{\infty }\}.  \label{bouncebackn}
\end{equation}

Moreover, \ since $|V_{\mathbf{cl}}(s_{1})|=|v|,$ $|V_{\mathbf{cl}}^{\prime
}(s_{1})|=|v^{\prime }|,$ hence as in Case 2, (\ref{inflowstep3}) in the
proof of Theorem \ref{inflowrate}, for $|v|\leq N,|v^{\prime }|\geq 2N$ or $%
|v|\leq N,|v^{\prime }|\leq 2N,|v^{\prime \prime }|\geq 3N,$ we deduce for $%
\varepsilon $ small,%
\begin{eqnarray}
&&\int_{0}^{t}\int_{0}^{s_{1}}\int_{\substack{ |v|\leq N,|v^{\prime }|\geq
2N  \\ \text{or }|v|\leq N,|v^{\prime }|\leq 2N,|v^{\prime \prime }|\geq 3N, 
}}e^{-\nu _{0}(t-s)}K_{w}(V_{\mathbf{cl}}(s_{1}),v^{\prime })K_{w}(V_{%
\mathbf{cl}}^{\prime }(s),v^{\prime \prime })h\left( s,X_{\mathbf{cl}%
}^{\prime }(s_{1}),v^{\prime \prime }\right)  \notag \\
&\leq &C_{\varepsilon ,N}e^{-\frac{\varepsilon }{8}N^{2}}e^{-\frac{\nu _{0}t%
}{2}}\sup_{s}\{e^{\frac{\nu _{0}}{2}s}||h(s)||_{\infty }\}.
\label{bouncebacknn}
\end{eqnarray}

We need to only consider the case $|v|\leq N,|v^{\prime }|\leq 2N,|v^{\prime
\prime }|\leq 3N,$ for which we can use the same approximation in Case 4, (%
\ref{inflowstep41}) to obtain an upper bound%
\begin{eqnarray}
&&\frac{C}{N}e^{-\frac{\nu _{0}t}{2}}\sup_{s}\{e^{\frac{\nu _{0}}{2}%
s}||h(s)||_{\infty }\}  \label{bouncebackbd} \\
&&+C_{N}\int_{0}^{t}\int_{0}^{s_{1}}\int_{\substack{  \\ \text{ }|v|\leq
N,|v^{\prime }|\leq 2N,|v^{\prime \prime }|\leq 3N,}}e^{-\nu
_{0}(t-s)}|h\left( s,X_{\mathbf{cl}}^{\prime }(s),v^{\prime \prime }\right)
|.  \notag
\end{eqnarray}

Recall $\Omega _{\varepsilon ^{4}}=\{x:\xi (x)<-\varepsilon ^{4}\}.$ We
focus on the second main term in (\ref{bouncebackbd}) and separate two cases:

\textbf{CASE 1:} $X_{\mathbf{cl}}(s_{1})\in \bar{\Omega}\setminus \Omega
_{\varepsilon ^{4}}$ and $\{v^{\prime }:|v^{\prime }\cdot \frac{\nabla \xi
(X_{\mathbf{cl}}(s_{1}))}{|\nabla \xi (X_{\mathbf{cl}}(s_{1}))|}|\leq
\varepsilon \}.$ In this case, since $|\nabla \xi (X_{\mathbf{cl}%
}(s_{1}))|\neq 0$ for $\varepsilon $ small$,$ the second term in (\ref%
{bouncebackbd}) is bounded by 
\begin{eqnarray}
&&C_{N}\int_{0}^{t}\int_{0}^{s_{1}}e^{-\nu _{0}(t-s)}\int_{\{v^{\prime
}:|v^{\prime }\cdot \frac{\nabla \xi (X_{\mathbf{cl}}(s_{1}))}{|\nabla \xi
(X_{\mathbf{cl}}(s_{1}))|}|\leq \varepsilon ,|v^{\prime }|+|v^{\prime \prime
}|\leq 5N\}}dv^{\prime }||h(s)||_{\infty }  \notag \\
&\leq &C_{N}\varepsilon e^{-\frac{\nu _{0}t}{2}}\sup_{s}\{e^{\frac{\nu _{0}s%
}{2}}||h(s)||_{\infty }\}\int_{0}^{t}\int_{0}^{s_{1}}e^{-\frac{\nu _{0}(t-s)%
}{2}}dsds_{1}  \notag \\
&\leq &C_{N}\varepsilon e^{-\frac{\nu _{0}t}{2}}\sup_{s}\{e^{\frac{\nu _{0}s%
}{2}}||h(s)||_{\infty }\}.  \label{bouncebacksmallangle}
\end{eqnarray}

\textbf{CASE 2: }$|v|\leq N,|v^{\prime }|\leq 2N,|v^{\prime \prime }|\leq 3N$
and either $X_{\mathbf{cl}}(s_{1})\in \Omega _{\varepsilon ^{4}}$ or $X_{%
\mathbf{cl}}(s_{1})\in \bar{\Omega}\setminus \Omega _{\varepsilon ^{4}}$ but 
$\{v^{\prime }:|v^{\prime }\cdot \frac{\nabla \xi (X_{\mathbf{cl}}(s_{1}))}{%
|\nabla \xi (X_{\mathbf{cl}}(s_{1}))|}|>\varepsilon \}$. We denote such a
set of $v,v^{\prime }$ and $v^{\prime \prime }$ by $A.$ By using the formula
for cycles (\ref{bouncebackx}), we get 
\begin{equation}
\int_{0}^{t}e^{-\nu (v)(t-s)}\int_{A}\sum_{k}\int_{t_{k+1}^{\prime
}}^{t_{k}^{\prime }}\mathbf{1}_{[0,s_{1}]}(s)h\left( s,X_{\mathbf{cl}%
}(s_{1})+\sum_{l=0}^{k-1}(t_{l+1}^{\prime }-t_{l}^{\prime
})(-1)^{l}v^{\prime }+(s-t_{k}^{\prime })(-1)^{k}v^{\prime },v^{\prime
\prime }\right) .  \label{onseta}
\end{equation}

We first claim that the number of bounces are bounded on $A:$ 
\begin{equation}
k\leq C_{T_{0},N,\varepsilon }.  \label{bouncebackclaim}
\end{equation}

\textit{Proof of the claim (\ref{bouncebackclaim}): }In the first case $X_{%
\mathbf{cl}}(s_{1})\in \Omega _{\varepsilon ^{4}},$ we have from the
mean-value theorem, 
\begin{equation*}
0=\xi (X_{\mathbf{cl}}(s_{1})-t_{\mathbf{b}}^{\prime }(X_{\mathbf{cl}%
}(s_{1}),v^{\prime })v^{\prime })=\xi (X_{\mathbf{cl}}(s_{1}))-t_{\mathbf{b}%
}^{\prime }v^{\prime }\cdot \nabla \xi (\bar{x}).
\end{equation*}%
Since $|v^{\prime }|\leq 2N,$ and $|\nabla \xi (\bar{x})|\leq C,$ 
\begin{equation*}
t_{\mathbf{b}}^{\prime }\geq \frac{|\xi (X_{\mathbf{cl}}(s_{1}))|}{%
|v^{\prime }\cdot \nabla \xi (\bar{x})|}\geq \frac{\varepsilon ^{4}}{C_{N}}.
\end{equation*}%
Because $t_{k}^{\prime }-t_{k+1}^{\prime }\geq t_{\mathbf{b}}^{\prime }$, $%
k\leq \frac{C_{N}T_{0}}{\varepsilon ^{4}}$ and (\ref{bouncebackclaim}) is
valid.

In the case $X_{\mathbf{cl}}(s_{1})\in \bar{\Omega}\setminus \Omega
_{\varepsilon ^{4}},$ and $\{v^{\prime }:|v^{\prime }\cdot \frac{\nabla \xi
(X_{\mathbf{cl}}(s_{1}))}{|\nabla \xi (X_{\mathbf{cl}}(s_{1}))|}%
|>\varepsilon \},$ denote $t_{\mathbf{b}}^{\prime }(v^{\prime })=t_{\mathbf{b%
}}^{\prime }(X_{\mathbf{cl}}(s_{1}),v^{\prime })$ and $t_{\mathbf{b}%
}^{\prime }(-v^{\prime })=t_{\mathbf{b}}^{\prime }(X_{\mathbf{cl}%
}(s_{1}),-v^{\prime }).$ We expand 
\begin{eqnarray*}
0 &=&\xi (X_{\mathbf{cl}}(s_{1})-t_{\mathbf{b}}^{\prime }(v^{\prime
})v^{\prime })=\xi (X_{\mathbf{cl}}(s_{1}))-t_{\mathbf{b}}^{\prime
}(v^{\prime })\nabla \xi (X_{\mathbf{cl}}(s_{1}))\cdot v^{\prime }+\frac{%
\{t_{\mathbf{b}}^{\prime }(v^{\prime })\}^{2}}{2}v^{\prime }\nabla ^{2}\xi
(x_{+})v^{\prime }, \\
0 &=&\xi (X_{\mathbf{cl}}(s_{1})+t_{\mathbf{b}}^{\prime }(-v^{\prime
})v^{\prime })=\xi (X_{\mathbf{cl}}(s_{1}))+t_{\mathbf{b}}^{\prime
}(-v^{\prime })\nabla \xi (X_{\mathbf{cl}}(s_{1}))\cdot v^{\prime }+\frac{%
\{t_{\mathbf{b}}^{\prime }(-v^{\prime })\}^{2}}{2}v^{\prime }\nabla ^{2}\xi
(x_{-})v^{\prime }.
\end{eqnarray*}%
Since $\nabla ^{2}\xi (x_{\pm })$ are bounded, $|v^{\prime }|\leq 2N,$ and $%
-\varepsilon ^{4}<\xi (X_{\mathbf{cl}}(s_{1}))<0,$ for some constant $C_{N},$
we have 
\begin{eqnarray*}
-t_{\mathbf{b}}^{\prime }(v^{\prime })\nabla \xi (X_{\mathbf{cl}%
}(s_{1}))\cdot v^{\prime }+C_{N}\{t_{\mathbf{b}}^{\prime }(v^{\prime
})\}^{2} &>&0, \\
t_{\mathbf{b}}^{\prime }(-v^{\prime })\nabla \xi (X_{\mathbf{cl}%
}(s_{1}))\cdot v^{\prime }+C_{N}\{t_{\mathbf{b}}^{\prime }(-v^{\prime
})\}^{2} &>&0.
\end{eqnarray*}%
We thus have $t_{\mathbf{b}}^{\prime }(v^{\prime })\geq \frac{\nabla \xi (X_{%
\mathbf{cl}}(s_{1}))\cdot v^{\prime }}{C_{N}}$ and $t_{\mathbf{b}}^{\prime
}(-v^{\prime })\geq -\frac{\nabla \xi (X_{\mathbf{cl}}(s_{1}))\cdot
v^{\prime }}{C_{N}}.$ Since $|\nabla \xi (X_{\mathbf{cl}}(s_{1}))\cdot
v^{\prime }|\geq C_{\xi }\varepsilon $, either $t_{\mathbf{b}}^{\prime
}(v^{\prime })\geq C_{\xi }\varepsilon $ or $t_{\mathbf{b}}^{\prime
}(-v^{\prime })\geq C_{\xi }\varepsilon .$ But for bounce-back cycles, 
\begin{equation}
t_{k}^{\prime }-t_{k+1}^{\prime }=t_{1}-t_{2}=t_{\mathbf{b}}^{\prime
}(v^{\prime })+t_{\mathbf{b}}^{\prime }(-v^{\prime })\geq C_{\xi
}\varepsilon ,  \label{tk-tk}
\end{equation}%
for all $k\geq 1.$ We therefore have verified the claim (\ref%
{bouncebackclaim}).

We are now ready to estimate (\ref{onseta}). By Lemma \ref{bouncebacklower},
for the given $\varepsilon >0,$ there is $\delta _{\varepsilon _{,}N}>0,$
and $[B(x_{i},r_{i}),O_{x_{i}}]$ for $i=1,...,l,$ $|O_{x_{i}}|<\varepsilon .$
For $X_{\mathbf{cl}}(s_{1})\in \bar{\Omega},$ there exists $i$ such that $X_{%
\mathbf{cl}}(s_{1})\in B(x_{i},r_{i})\,\ $and for $v^{\prime }\notin
O_{x_{i}}$ 
\begin{equation*}
|v^{\prime }\cdot n(X_{\mathbf{cl}}(s_{1})\mp t_{\mathbf{b}}^{\prime }(\pm
v^{\prime })v^{\prime })|\geq \delta _{\varepsilon ,N}>0.
\end{equation*}%
Hence $t_{1}^{\prime }=s_{1}-t_{\mathbf{b}}(x,v^{\prime })$, $t_{2}^{\prime
}=t_{1}-t_{\mathbf{b}}(x,v^{\prime })-t_{\mathbf{b}}(x,-v^{\prime })$ are
both also smooth with bounded derivatives over $(s,v^{\prime })\in \lbrack
0,T_{0}]\times \{O_{x_{i}}^{c}\cap |v^{\prime }|\leq 2N\}$. It thus follows
from Lemma \ref{huang} that $t_{l}^{\prime }$ and $x_{l}^{\prime }$ are all
smooth functions of $v^{\prime }$ $\notin $ $O_{x_{i}}.$ We then split $%
\{v^{\prime }:|v^{\prime }|\leq 2N\}$ into 
\begin{equation}
\int_{A}=\int_{\substack{  \\ A\cap \{v^{\prime }\in O_{x_{i}}\}}}+\int 
_{\substack{  \\ A\cap \{v^{\prime }\in O_{x_{i}}^{c}\}}}  \label{splita}
\end{equation}%
Since $\sum_{k}\int_{t_{k+1}^{\prime }}^{t_{k}^{\prime }}=\int_{0}^{s_{1}},$
the first part is bounded by 
\begin{eqnarray}
&&\int_{0}^{t}e^{-\nu _{0}(t-s)}\int_{\substack{  \\ v^{\prime }\in
O_{x_{i}},|v^{\prime \prime }|\leq 3N}}\sum_{k}^{{}}\int_{t_{k+1}^{\prime
}}^{t_{k}^{\prime }}\mathbf{1}_{[0,s_{1}]}(s)|h\left( s,X_{\mathbf{cl}%
}^{\prime }(s),v^{\prime \prime }\right) |dv^{\prime }dv^{\prime \prime } 
\notag \\
&\leq &C_{N}\sup_{s}\{e^{\frac{\nu _{0}}{2}s}||h(s)||_{\infty
}\}\int_{0}^{t}e^{-\nu _{0}t}\int_{\substack{  \\ v^{\prime }\in
O_{x_{i}},|v^{\prime \prime }|\leq 3N}}\sum_{k}\int_{t_{k+1}^{\prime
}}^{t_{k}^{\prime }}e^{\frac{\nu _{0}}{2}s}ds  \notag \\
&\leq &C_{N}|O_{x_{i}}|\sup_{s}\{e^{\frac{\nu _{0}}{2}s}||h(s)||_{\infty
}\}\int_{0}^{t}\int_{0}^{s_{1}}e^{-\nu _{0}t}e^{\frac{\nu _{0}}{2}s}dsds_{1}
\notag \\
&=&C_{N}\varepsilon e^{-\frac{\nu _{0}}{2}t}\times \sup_{s}\{e^{\frac{\nu
_{0}}{2}s}||h(s)||_{\infty }\}.  \label{smallo}
\end{eqnarray}%
By (\ref{bouncebackclaim}), we therefore only need to consider the second
part in (\ref{splita}): 
\begin{equation*}
\int_{0}^{t}\int_{\substack{  \\ v^{\prime }\notin O_{x_{i}},|v^{\prime
}|\leq 2N,|v^{\prime \prime }|\leq 3N}}\sum_{k}^{C_{T_{0},N,\varepsilon
}}\int_{t_{k+1}^{\prime }}^{t_{k}^{\prime }}e^{-\nu _{0}(t-s)}\mathbf{1}%
_{[0,s_{1}]}(s)|h\left( s,X_{\mathbf{cl}}^{\prime }(s),v^{\prime \prime
}\right) |dsds_{1}dv^{\prime }dv^{\prime \prime }.
\end{equation*}

We now wish to change variables as 
\begin{equation*}
X_{\mathbf{cl}}^{\prime }(s)\equiv X_{\mathbf{cl}}(s_{1})+%
\sum_{l=0}^{k-1}(t_{l+1}^{\prime }-t_{l}^{\prime })(-1)^{l}v^{\prime
}+(s-t_{k}^{\prime })(-1)^{k}v^{\prime }\rightarrow y.
\end{equation*}%
Since for $1\leq l\leq k,$ $t_{l}^{\prime }$ is a smooth function of $%
v^{\prime }$ on the set of integration: $\{v^{\prime }\notin
O_{x_{i}},|v^{\prime }|\leq 2N\},$ we can expand the determinant as a cubic
function of $s:$ 
\begin{equation*}
\det \left\vert \frac{\partial y}{\partial v^{\prime }}\right\vert
=(-1)^{k}s^{3}+q_{1}(v^{\prime })s^{2}+q_{2}(v^{\prime })s+q_{3}(v^{\prime
}),
\end{equation*}%
where $q_{i}(v)$ are smooth functions of $v^{\prime }.$ Therefore, by the
analytical formula of the algebraic cubic equation, there exists up to three
(real) continuous functions $\eta _{j}(v^{\prime })$ for $1\leq j\leq 3$ so
that 
\begin{equation*}
\left\{ (s,v^{\prime }):\det \left\vert \frac{\partial y}{\partial v^{\prime
}}\right\vert =0\right\} =\cup _{j}\{s:s=\eta _{j}(v^{\prime })\}.
\end{equation*}%
For $\varepsilon >0,$ we then split 
\begin{equation*}
\int_{0}^{t}\int_{\substack{  \\ v^{\prime }\notin O_{x_{i}},|v^{\prime
}|\leq 2N,|v^{\prime \prime }|\leq 3N}}\sum_{k}\int_{t_{k+1}^{\prime
}}^{t_{k}^{\prime }}\mathbf{1}_{\cup _{j}\{|s-\eta _{j}|\leq \varepsilon \}}+%
\mathbf{1}_{\cap _{i}\{|s-\eta _{j}|\geq \varepsilon \}}.
\end{equation*}%
The first part with a small $s$ interval $\{|s-\eta _{j}|\leq \varepsilon \}$
is bounded by%
\begin{eqnarray}
&&C\sum_{k,i}\int_{0}^{t}\int_{\substack{  \\ v^{\prime }\notin
O_{x_{i}},|v^{\prime }|\leq 2N,|v^{\prime \prime }|\leq 3N}}%
\int_{t_{k+1}^{\prime }}^{t_{k}^{\prime }}e^{-\nu (v)(t-s)}||h(s)||_{\infty }%
\mathbf{1}_{\{|s-\eta _{j}|\leq \varepsilon _{1},0\leq s\leq s_{1}\}}(s) 
\notag \\
&\leq &C\sup_{s}\{e^{\frac{\nu _{0}}{2}s}||h(s)||_{\infty
}\}\int_{0}^{t}\int _{\substack{  \\ |v^{\prime }|\leq 2N,|v^{\prime \prime
}|\leq 3N}}e^{-\nu _{0}t}\left\{ \sum_{k,j}\int_{t_{k+1}^{\prime
}}^{t_{k}^{\prime }}\mathbf{1}_{\{|s-\eta _{j}|\leq \varepsilon ,0\leq s\leq
s_{1}\}}(s)e^{\frac{\nu _{0}}{2}s}ds\right\} ds_{1}  \notag \\
&\leq &C_{N}\varepsilon e^{-\frac{\nu _{0}}{2}t}\sup_{s}\{e^{\frac{\nu _{0}}{%
2}s}||h(s)||_{\infty }\}.  \label{bouncebacks}
\end{eqnarray}

On the other hand, for the second main term, we notice that on the compact
set in $0\leq s\leq T_{0}$ and $v^{\prime }\in \cap _{j}\{|s-\eta _{j}|\geq
\varepsilon \}\cap \{v^{\prime }\notin O_{x_{i}},|v^{\prime }|\leq 2N\}$,
the function $J\{\frac{\partial y}{\partial v^{\prime }}\}$ is uniformly
continuous$,$ with uniformly bounded derivatives for $s,v^{\prime }.$ There
exists a $\zeta _{\varepsilon ,N,T_{0}}>0$ such that 
\begin{equation}
J\{\frac{\partial y}{\partial v^{\prime }}\}\geq \zeta _{\varepsilon
,N,T_{0}}>0.  \label{juniform}
\end{equation}%
And for any point $(s,v^{\prime })\in \lbrack 0,T_{0}]\times \{\cap
_{j}\{|s-\eta _{j}|\geq \varepsilon \}\cap \{v^{\prime }\in
O_{x_{i}},|v^{\prime }|\leq 2N\},$ there exists open set $O_{s,v^{\prime }}$
such that \thinspace $v^{\prime }\rightarrow y$ is one-to-one and
invertible. We therefore have a finite covering (depending on $\varepsilon ,$
$T_{0}$,$N$) $O_{s_{m},v_{m}^{\prime }}$ such that 
\begin{equation*}
\cap _{j}\{|s-r_{j}|\geq \varepsilon \}\cap \{v^{\prime }\in
O^{c},|v^{\prime }|\leq 2N\}\subset \cup _{m}O_{s_{m},v_{m}^{\prime }}
\end{equation*}%
and $v^{\prime }\rightarrow y$ is invertible on each $O_{s_{m},v_{m}^{\prime
}}.$ We therefore can change variable $v^{\prime }\rightarrow y$ locally as 
\begin{eqnarray*}
&&\int_{0}^{t}e^{-\nu _{0}(t-s)}\sum_{k,m}\int_{t_{k+1}^{\prime
}}^{t_{k}^{\prime }}\int_{O_{s_{m},v_{m}^{\prime }}}\int_{|v^{\prime \prime
}|\leq 3N}\mathbf{1}_{[0,s_{1}]}(s)h\left( s,X_{\mathbf{cl}}^{\prime
}(s),v^{\prime \prime }\right) dv^{\prime }dv^{\prime \prime }dsds_{1} \\
&\leq &C_{N}\int_{0}^{t}e^{-\nu _{0}t}\sum_{k,m}\int_{t_{k+1}^{\prime
}(y)}^{t_{k}^{\prime }(y)}\int_{O_{s_{m},v_{m}^{\prime }}}\int_{|v^{\prime
\prime }|\leq 3N}\mathbf{1}_{[0,s_{1}]}(s)e^{\nu _{0}s}h\left( s,y,v^{\prime
\prime }\right) \frac{1}{J\{\frac{\partial y}{\partial v^{\prime }}\}}%
dydv^{\prime \prime }dsds_{1} \\
&\leq &C_{N},_{\varepsilon ,T_{0}}\int_{0}^{t}e^{-\nu
_{0}t}\int_{0}^{s_{1}}\int_{|v^{\prime \prime }|\leq 3N}e^{\nu _{0}s}\left\{
\int_{\Omega }h^{2}\left( s,y,v^{\prime \prime }\right) dy\right\}
^{1/2}dv^{\prime \prime }\text{ }dsds_{1} \\
&\leq &C_{N},_{\varepsilon ,T_{0}}\int_{0}^{t}e^{-\nu
_{0}t}\int_{0}^{s_{1}}e^{\nu _{0}s}\left\{ \int_{\Omega \times |v^{\prime
\prime }|\leq 3N}h^{2}\left( s,y,v^{\prime \prime }\right) dydv^{\prime
\prime }\right\} ^{1/2}dsds_{1} \\
&\leq &C_{N},_{\varepsilon ,T_{0}}\int_{0}^{t}e^{-\nu
_{0}t}\int_{0}^{s_{1}}e^{\nu _{0}s}||f(s)||dsds_{1}\text{ \ \ \ \ \ \ \ \ \
\ \ \ \ \ \ \ \ }(f=\frac{h}{w}) \\
&\leq &C_{N},_{\varepsilon ,T_{0}}\times \int_{0}^{T_{0}}||f(s)||ds,
\end{eqnarray*}%
where $k\leq C_{T_{0},N,\varepsilon }$ and $m\leq C_{T_{0},N,\varepsilon }.$
We thus conclude from (\ref{bouncebacks}), (\ref{smallo}), (\ref%
{bouncebacksmallangle}), (\ref{bouncebackbd}), (\ref{bouncebacknn}), (\ref%
{bouncebackn}), for $t\leq T_{0},$ $e^{\frac{\nu _{0}}{2}t}||h(t)||_{\infty
} $ is bounded by 
\begin{equation*}
C_{K}(1+t)e^{-\frac{\nu _{0}}{2}t}||h_{0}||_{\infty }+(\frac{C_{K}}{N}%
+C_{N}\varepsilon )\sup_{s\leq t}e^{\frac{\nu _{0}}{2}s}||h(s)||_{\infty
}+C_{N},_{\varepsilon ,T_{0}}\int_{0}^{T_{0}}||f(s)||ds.
\end{equation*}%
We first choose $T_{0}$ so that $2C_{K}(1+T_{0})e^{-\frac{\nu _{0}}{2}%
T_{0}}=e^{-\lambda T_{0}},$ next choose $N$ large, then $\varepsilon $
sufficiently small to get $(\frac{C_{K}}{N}+C_{N}\varepsilon )<\frac{1}{2}.$
We therefore conclude 
\begin{equation*}
\sup_{0\leq t\leq T_{0}}e^{\frac{\nu _{0}}{2}t}||h(t)||_{\infty }\leq
2C_{K}(1+T_{0})||h_{0}||_{\infty }+2C_{N},_{\varepsilon
,T_{0}}\int_{0}^{T_{0}}||f(s)||ds.
\end{equation*}%
We thus conclude our theorem by letting $t=T_{0}$ on the left hand side.
\end{proof}

\subsection{$L^{\infty }$ Decay for Specular Reflection}

\subsubsection{Specular Cycles and Continuity of $G(t)$}

\begin{definition}
\label{specularcycles}Fix any point $(t,x,v)\notin \gamma _{0},$ and define $%
(t_{0},x_{0},v_{0})=(t,x,v)$, and for $k\geq 1$ 
\begin{equation}
(t_{k+1},x_{k+1},v_{k+1})=(t_{k}-t_{\mathbf{b}}(t_{k},x_{k},v_{k}),x_{%
\mathbf{b}}(x_{k},v_{k}),R(x_{k+1})v_{k}),  \label{specularcycle}
\end{equation}%
where $R(x_{k+1})v_{k}=v_{k}-2(v_{k}\cdot n(x_{k+1}))n(x_{k+1}).$ And we
define the specular back-time cycle 
\begin{equation}
X_{\mathbf{cl}}^{{}}(s)\equiv \sum_{k=1}^{\infty }\mathbf{1}%
_{[t_{k},t_{k+1})}(s)\left\{ x_{k}+v_{k}(s-t_{k})\right\} ,\text{ \ \ }V_{%
\mathbf{cl}}^{{}}(s)\equiv \sum_{k=1}^{\infty }\mathbf{1}%
_{[t_{k},t_{k+1})}(s)v_{k}.  \label{backtimecycle}
\end{equation}
\end{definition}

\begin{lemma}
\label{speculargdecay}Let $\Omega $ be convex (\ref{convexity}). Let $%
h_{0}\in L^{\infty }$ and $G(t)h_{0}$ solves (\ref{gh0}) with specular
boundary condition $h(t,x,v)=h(t,x,R(x)v)$ for $x\in \partial \Omega .$ Then
for almost all $(x,v)\notin \gamma _{0},$%
\begin{eqnarray}
&&\{G(t)h_{0}\}(t,x,v)=e^{-\nu (v)t}h_{0}(X_{\mathbf{cl}}^{{}}(0),V_{\mathbf{%
cl}}^{{}}(0))  \notag \\
&=&\sum_{k}\mathbf{1}_{[t_{k+1},t_{k})}(0)e^{-\nu
(v)t}h_{0}(x_{k}-t_{k}v_{k},v_{k}).  \label{specularformula}
\end{eqnarray}%
And $e^{\nu _{0}t}||G(t)h_{0}||_{\infty }\leq ||h_{0}||_{\infty }.$
\end{lemma}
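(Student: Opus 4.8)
The plan is to follow the proof of Lemma~\ref{bouncebackformula} for bounce-back reflection, replacing the map $v\mapsto-v$ by the specular map $v\mapsto R(x)v$ and supplying, in place of the trivial geometry of bounce-back cycles, the control of specular cycles that comes from convexity and the Velocity Lemma~\ref{velocity}. First I would establish existence and uniqueness: since $v\mapsto R(x)v$ is an $L^{\infty}$-isometry $L^{\infty}(\gamma_+)\to L^{\infty}(\gamma_-)$ of norm $1$, Lemma~\ref{abstract} yields for each $\varepsilon>0$ a solution $h^{\varepsilon}$ of the transport equation with the damped condition $h^{\varepsilon}(t,x,v)=(1-\varepsilon)h^{\varepsilon}(t,x,R(x)v)$ on $\partial\Omega$ and $h^{\varepsilon}(0)=h_0$, with $\|h^{\varepsilon}(t)\|_{\infty}$ and $\sup_t\|h^{\varepsilon}_{\gamma}(t)\|_{\infty}$ finite. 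Choosing $w$ with $w^{-2}\{1+|v|\}\in L^1$, the function $f^{\varepsilon}=h^{\varepsilon}/w\in L^2$ solves the same problem with $\int_0^t\|f^{\varepsilon}(s)\|_{\gamma}^2\,ds<\infty$, and since the specular boundary term in the $L^2$ energy identity vanishes, $f^{\varepsilon}$ (hence $h^{\varepsilon}$) is unique. Passing to the weak-$\ast$ limit $\varepsilon\to0$ produces a solution $G(t)h_0$ of (\ref{gh0}) with the genuine specular condition, unique in $L^{\infty}$ because $f_{\gamma}=h_{\gamma}/w\in L^2_{\mathrm{loc}}(L^2(\gamma))$.

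Second --- and this is the heart of the matter --- I would show the specular back-time cycle of Definition~\ref{specularcycles} is well defined for every $(x,v)\notin\gamma_0$: it never meets $\gamma_0$ and it reaches $s=0$ after finitely many bounces. The key fact is that $\alpha(s)$ of (\ref{alpha}) is continuous across a specular bounce at $x_{k+1}\in\partial\Omega$: there $\xi(x_{k+1})=0$, so $\alpha$ reduces to $(v\cdot\nabla\xi(x_{k+1}))^2$, and because $n=\nabla\xi/|\nabla\xi|$ and $R(x_{k+1})$ reverses the normal component, $v_{k+1}\cdot\nabla\xi(x_{k+1})=-v_k\cdot\nabla\xi(x_{k+1})$, leaving $\alpha$ unchanged. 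Thus $\alpha$ is continuous along the whole cycle, $|V_{\mathbf{cl}}(s)|\equiv|v|$, and applying the Velocity Lemma~\ref{velocity} segment by segment gives $\alpha(s)\ge e^{-C_{\xi}(|v|+1)t}\alpha(t)>0$ for $s\in[0,t]$, where $\alpha(t)>0$ holds since $(x,v)\notin\gamma_0$ (if $\xi(x)<0$ convexity makes the last term of $\alpha$ nonnegative, and if $x\in\partial\Omega$ then $v\cdot n(x)\ne0$). Hence $|v_k\cdot n(x_k)|=\sqrt{\alpha(t_k)}/|\nabla\xi(x_k)|$ is bounded below by a constant $c=c(t,x,v)>0$, so the inter-bounce velocities strike $\partial\Omega$ non-tangentially and (\ref{tlower}) of Lemma~\ref{huang} gives $t_k-t_{k+1}\ge c/(C_{\xi}|v|^2)>0$ uniformly in $k$; therefore only finitely many bounces occur in $[0,t]$, and by Lemma~\ref{huang}(2) the $(t_k,x_k,v_k)$ are smooth functions of $(x,v)$.

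Third, with the cycle under control, formula (\ref{specularformula}) follows exactly as in Lemma~\ref{bouncebackformula}: between bounces $\frac{d}{ds}\{e^{\nu(v)s}G(s)h_0\}=0$ along $\frac{dx}{ds}=v,\ \frac{dv}{ds}=0$; at each $s=t_k$ the specular boundary condition identifies the outgoing and incoming traces; and since $|v_k|=|v|$ the collision frequency is the common value $\nu(v)$ throughout, so $e^{\nu(v)s}G(s)h_0$ is constant on $[0,t]$. The passage from ``along trajectories'' to ``almost every $(x,v)$'' is supplied by the measure-preservation in Lemma~\ref{huang}(4). For the approximants this reads $h^{\varepsilon}(t,x,v)=\sum_k\mathbf{1}_{[t_{k+1},t_k)}(0)[1-\varepsilon]^k e^{-\nu(v)t}h_0(x_k-t_kv_k,v_k)$, a finite sum with $0<[1-\varepsilon]^k\le1$, so $|h^{\varepsilon}(t,x,v)|\le e^{-\nu(v)t}\|h_0\|_{\infty}\le e^{-\nu_0 t}\|h_0\|_{\infty}$ uniformly in $\varepsilon$; letting $\varepsilon\to0$ yields both (\ref{specularformula}) and $e^{\nu_0 t}\|G(t)h_0\|_{\infty}\le\|h_0\|_{\infty}$. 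The main obstacle is the second step: it is the only place where convexity is indispensable, since in a non-convex domain there is no lower bound on the times $t_k-t_{k+1}$, the cycle may accumulate infinitely many bounces in finite time or approach $\gamma_0$, and then neither the representation nor the estimate would be meaningful.
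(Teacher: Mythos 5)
Your proposal is correct and follows essentially the same route as the paper: existence/uniqueness via Lemma \ref{abstract} with the damped specular condition and the $L^{2}$ transformation $f^{\varepsilon}=h^{\varepsilon}/w$, constancy of $e^{\nu(v)s}G(s)h_{0}$ along the cycle using Lemma \ref{huang}, and finiteness of bounces by chaining the Velocity Lemma \ref{velocity} (with $\alpha(t_{k})=\{v_{k}\cdot\nabla\xi(x_{k})\}^{2}$ invariant under $R(x_{k})$) together with (\ref{tlower}). Your explicit remark on the continuity of $\alpha$ across a specular bounce is exactly the step the paper leaves implicit.
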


\begin{proof}
The existence and uniqueness of the solution follows exactly the argument in
the proof in Lemma \ref{bouncebackformula}, with the bounce-back condition
replaced by the specular reflection.

If $(x,v)\in \bar{\Omega}\times \mathbf{R}^{3}\setminus \gamma _{0},$ then $%
t_{\mathbf{b}}(x,v)>0.$ We consider the back-time specular cycles of $%
(t,x,v) $ as $[X_{\mathbf{cl}}^{{}}(s),V_{\mathbf{cl}}^{{}}(s)]$ as in (\ref%
{specularcycle}). Clearly, $|V_{\mathbf{cl}}(s)|\equiv v.$ Since $\frac{d}{ds%
}\{e^{-\nu (v)}G(s)h_{0}\}\equiv 0$ for $t_{k+1}<s<t_{k},$ any $k,$ by part
4 of Lemma \ref{huang} and the specular boundary condition at $t_{k+1}$ and $%
t_{k},$ it follows that $e^{-\nu (v)}G(s)h_{0}$ is a constant along the
cycle $[X_{\mathbf{cl}}^{{}}(s),V_{\mathbf{cl}}^{{}}(s)]$.

We now show for fixed $t,$ the number of bounces $k$ is finite. Since $%
(x,v)\in \bar{\Omega}\times \mathbf{R}^{3}\setminus \gamma _{0},$ by (\ref%
{alpha}), $\alpha (t)>0.$ By repeatedly applying Velocity Lemma \ref%
{velocity} along the back-time cycle $[X_{\mathbf{cl}}^{{}}(s),V_{\mathbf{cl}%
}^{{}}(s)],$ we have for all $k\geq 1:$ $e^{-Ct_{k}}\alpha (t_{k})\geq
e^{-Ct_{k-1}}\alpha (t_{k-1})\geq ...\geq e^{-Ct}\alpha (t)>0.$ But $\alpha
(t_{k})=\{v_{k}\cdot \nabla \xi (x_{k})\}^{2},$ we then have 
\begin{equation}
\{v_{k}\cdot n(x_{k})\}^{2}\geq C\alpha (t)>0,  \label{vknk}
\end{equation}%
for all $k\geq 1,$ where $C$ depends on $t$ and $v.$ Therefore by (\ref%
{tlower}) in Lemma \ref{huang}, that $t_{k}-t_{k+1}\geq \frac{\delta (t)}{%
C(t,v)|v|^{2}}>0.$ So that the summation over $k$ is finite$.$
\end{proof}

\begin{lemma}
\label{specularcon}Let $\xi $ be convex as in (\ref{convexity}). Let $h_{0}$
be continuous in $\bar{\Omega}\times \mathbf{R}^{3}\setminus \gamma _{0}\,\ $
and $q(t,x,v)$ be continuous in the interior of $[0,\infty )\times \Omega
\times \mathbf{R}^{3}$ and $\sup_{[0,\infty )\times \Omega \times \mathbf{R}%
^{3}}|\frac{q(t,x,v)}{\nu (v)}|<\infty .$ Assume that on $\gamma _{-},$ $%
h_{0}(x,v)=h_{0}(x,R(x)v)$. Then the specular solution $h(t,x,v)$ to (\ref%
{gq}) is continuous on $[0,\infty )\times \{\bar{\Omega}\times \mathbf{R}%
^{3}\setminus \gamma _{0}\}.$
\end{lemma}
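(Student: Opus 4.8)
The plan is to transcribe the proof of Lemma \ref{bouncebackcon}, replacing the simple two‑point bounce‑back geometry by the specular back‑time cycle of Definition \ref{specularcycles} and using the Velocity Lemma \ref{velocity} to control the bounces. Existence of $h$ with $h(0)=h_0$, the specular boundary condition, and $\sup|h(t,\cdot)|<\infty$, $\sup_t\|h_\gamma(t,\cdot)\|_\infty<\infty$, follows as in Lemma \ref{speculargdecay} (for $q=0$) together with the Duhamel formula for the bounded forcing $q/\nu\in L^\infty$; the content of the lemma is continuity. Fix $(t,x,v)\in[0,\infty)\times\{\bar\Omega\times\mathbf{R}^3\setminus\gamma_0\}$ with specular cycle $[X_{\mathbf{cl}}(s),V_{\mathbf{cl}}(s)]$ and breakpoints $t=t_0>t_1>t_2>\cdots$, and choose $m$ with $t_{m+1}\le 0<t_m$. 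Since $(x,v)\notin\gamma_0$ we have $\alpha(t)>0$ in (\ref{alpha}); iterating the Velocity Lemma \ref{velocity} along the cycle together with the identity $\alpha(t_k)=\{v_k\cdot\nabla\xi(x_k)\}^2$ yields $\{v_k\cdot n(x_k)\}^2\ge c\,\alpha(t)>0$ for every $k\ge 1$. Hence the number of bounces before time $0$ is finite, and by part (2) of Lemma \ref{huang} each of $t_k,x_k$ and $v_k=R(x_k)v_{k-1}$ is a smooth function of $(x,v)$ for $1\le k\le m+1$. Integrating $\frac{d}{d\tau}\{e^{\nu(v)\tau}h\}=e^{\nu(v)\tau}q$ along the cycle and using the specular boundary condition at each $t_k$ gives
\begin{align*}
h(t,x,v)&=e^{-\nu(v)t}h_0\!\left(x_m-t_mv_m,v_m\right)+\sum_{k=0}^{m-1}\int_{t_{k+1}}^{t_k}e^{-\nu(v)(t-s)}q\!\left(s,x_k+(s-t_k)v_k,v_k\right)ds\\
&\qquad+\int_0^{t_m}e^{-\nu(v)(t-s)}q\!\left(s,x_m+(s-t_m)v_m,v_m\right)ds .
\end{align*}

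Next I would take $(\bar t,\bar x,\bar v)\to(t,x,v)$ and split into the same three cases as in Lemma \ref{bouncebackcon}. If $t_{m+1}<0$, then $x_m-t_mv_m$ lies in the interior, so by continuity of the cycle data $\bar t_{m+1}<0$ and $h(\bar t,\bar x,\bar v)$ has the identical form; the first term converges because $h_0$ is continuous off $\gamma_0$ and $(x_m-t_mv_m,v_m)\notin\gamma_0$ (the latter again from the Velocity Lemma bound $\{v_m\cdot n\}^2\ge c\alpha(t)>0$), and the $q$‑integrals converge by cutting $\varepsilon$‑neighbourhoods of the breakpoints $t_k$ (small because $q/\nu$ is bounded) and using interior continuity of $q$ on the complement, exactly as in Lemma \ref{inflowcon}. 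If $t_{m+1}=0$, then $x_{m+1}=x_m-t_mv_m\in\partial\Omega$ and $(x_{m+1},v_m)\notin\gamma_0$; for the perturbed point either $\bar t_{m+1}\le 0$, which is the previous situation, or $\bar t_{m+1}>0$ (hence $\bar t_{m+2}<0$), in which case $h(\bar t,\bar x,\bar v)$ picks up one more bounce and its leading term is $e^{-\nu(\bar v)\bar t}h_0(\bar x_{m+1}-\bar t_{m+1}\bar v_{m+1},\bar v_{m+1})$, with a short extra $q$‑integral over $[0,\bar t_{m+1}]$ that vanishes as $\bar t_{m+1}\to 0$ and a finite sum of $q$‑integrals that merges into that of $h(t,x,v)$ since $\int_0^{t_m}=\int_{t_{m+1}}^{t_m}$ when $t_{m+1}=0$.

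The main obstacle is matching this leading term across the jump in the bounce count, and this is the one place where the argument genuinely differs from the bounce‑back case because the reflection operator $R(x)$ depends on $x$. One must track the joint convergence: $\bar x_{m+1}\to x_{m+1}$, $\bar t_k\to t_k$, $\bar v_k\to v_k$ (all from the non‑grazing bounds of Lemma \ref{velocity} and the smoothness of $t_{\mathbf b},x_{\mathbf b}$ in Lemma \ref{huang}), whence $\bar v_{m+1}=R(\bar x_{m+1})\bar v_m\to R(x_{m+1})v_m$ using that $n$, hence $R$, is smooth near $\partial\Omega$, and $\bar x_{m+1}-\bar t_{m+1}\bar v_{m+1}\to x_{m+1}$. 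Therefore $\lim h(\bar t,\bar x,\bar v)=e^{-\nu(v)t}h_0\!\left(x_{m+1},R(x_{m+1})v_m\right)+(q\text{-terms})$, and the compatibility hypothesis $h_0(x,v)=h_0(x,R(x)v)$ on $\partial\Omega$ converts $h_0(x_{m+1},R(x_{m+1})v_m)$ into $h_0(x_{m+1},v_m)=h_0(x_m-t_mv_m,v_m)$, which is precisely the leading term of $h(t,x,v)$. This gives $h(\bar t,\bar x,\bar v)\to h(t,x,v)$ in every case, and since $(t,x,v)\in[0,\infty)\times\{\bar\Omega\times\mathbf{R}^3\setminus\gamma_0\}$ was arbitrary, $h$ is continuous there. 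Apart from the $x$‑dependence of $R$ just highlighted, every step is a direct adaptation of Lemmas \ref{inflowcon} and \ref{bouncebackcon}.
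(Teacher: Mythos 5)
Your proposal is correct and follows essentially the same route as the paper: the same representation of $h$ along the specular back-time cycle, the same non-grazing bounds $\{v_k\cdot n(x_k)\}^2\geq c\,\alpha(t)>0$ from the Velocity Lemma combined with Lemma \ref{huang} for smoothness of $(t_k,x_k,v_k)$, the same two-case split at $t_{m+1}<0$ versus $t_{m+1}=0$, and the same use of the compatibility condition $h_0(x,v)=h_0(x,R(x)v)$ to match the leading term across the extra bounce. Your explicit tracking of the $x$-dependence of $R(x)$ in the limit $\bar v_{m+1}=R(\bar x_{m+1})\bar v_m\to R(x_{m+1})v_m$ is exactly the point the paper's sketch passes over quickly, so nothing is missing.
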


\begin{proof}
We only sketch the proof, which is similar to that for Lemma \ref%
{bouncebackcon}. Take any point $(t,x,v)$ $\notin \lbrack 0,\infty )\times
\gamma _{0}$ and consider its specular back-time cycle $[X_{\mathbf{cl}%
}^{{}}(s),V_{\mathbf{cl}}^{{}}(s)]$ as in (\ref{specularformula}). By
repeatedly applying the Velocity Lemma \ref{velocity} and Lemma \ref{huang},
it follows that $t_{k}(t,x,v),x_{k}(t,x,v)$ and $v_{k}(t,x,v)$ are all
smooth functions of $(t,x,v).$ We assume that $t_{m+1}\leq 0<t_{m},$ then $%
h(t,x,v)$ is given by (\ref{hg}) with specular cycles $[t_{k},x_{k},v_{k}]%
\in \lbrack X_{\mathbf{cl}}^{{}}(s),V_{\mathbf{cl}}^{{}}(s)].$ For any other
point $(\bar{t},\bar{x},\bar{v})$ which is close to $(t,x,v).$ We now show
that $h(\bar{t},\bar{x},\bar{v})$ is close to $h(t,x,v)$ by separating two
different cases.

In the case that $t_{m+1}<0,$ or equivalently, $x_{m}-(t_{m}-s)v_{m}\in
\Omega ,$ away from the boundary. By continuity, $\bar{t}_{m+1}<0$.
Therefore, we have the same expression for $h(\bar{t},\bar{x},\bar{v})$ as $%
h(t,x,v)$ in (\ref{hg}) with $t_{k},x_{k},v_{k}$ replaced by $\bar{t}_{k},%
\bar{x}_{k},\bar{v}_{k}.$ Therefore, since $|v_{l}|\equiv |v|$, $h(\bar{t},%
\bar{x},\bar{v})\rightarrow h(t,x,v)$ following from the continuity of $\bar{%
t}_{l}\rightarrow t_{l}$, $\bar{x}_{l}\rightarrow x_{l},$ $\bar{v}%
_{l}\rightarrow v_{l}.$

On the other hand, in the case $t_{m+1}=0,$ $x_{m+1}=x_{m}-t_{m}v_{m}\in
\partial \Omega .$ From (\ref{vknk}), $(x_{k+1},v_{k})\notin \gamma _{0}.$
Then by continuity, we know that $\bar{t}_{m}>0,$ and $\bar{t}_{m+1}$ is
close to zero. In the case that $\bar{t}_{m+1}(\bar{t},\bar{x},\bar{v})<0,$
then (\ref{hg}) is still valid and the continuity follows. However, if $\bar{%
t}_{m+1}>0,$ then $\bar{t}_{m+2}<0,$ due to $t_{m+2}<t_{m+1}=0.$ Therefore $%
h(\bar{t},\bar{x},\bar{v})$ is given by a different expression (\ref{hgbar})
with specular cycles $[t_{k},x_{k},v_{k}]\in \lbrack X_{\mathbf{cl}%
}^{{}}(s),V_{\mathbf{cl}}^{{}}(s)]$. We notice that since $\bar{t}%
_{m+1}\rightarrow 0,$ the $q-$integrals in (\ref{hgbar}) tend to $q-$%
integrals in (\ref{hg}) because of $\int_{0}^{t_{m}}=\int_{t_{m+1}}^{t_{m}}.$
On the other hand, since $\bar{x}_{m+1}-\bar{t}_{m+1}\bar{v}%
_{m+1}\rightarrow x_{m+1},$ \ \ $\bar{v}_{m+1}\rightarrow
v_{m+1}=R(x_{m})v_{m},$ the first term in (\ref{hgbar}) tends to the first
term in (\ref{hg}) as 
\begin{equation*}
h_{0}\left( \bar{x}_{m+1}-\bar{t}_{m+1}\bar{v}_{m+1},\bar{v}_{m+1}\right)
\rightarrow h_{0}\left( x_{m+1},R(x_{m})v_{m}\right) =h_{0}\left(
x_{m},v_{m}\right) ,
\end{equation*}%
from $h_{0}(x,v)=h_{0}(x,R(x)v)$ on $\gamma .$ We thus complete the proof.
\end{proof}

\subsubsection{$\det \left( \frac{\partial v_{k}}{\partial v_{1}}\right) $
Near $\partial \Omega $}

Assume $\Omega $ is convex as in (\ref{convexity}). We now compute $\det (%
\frac{\partial v_{k}}{\partial v_{1}})$ for a carefully chosen specular
back-time cycle near the boundary $\partial \Omega $. We assume $x_{1}\in
\partial \Omega .$ Given $\varepsilon _{0}$ small, we choose $v_{1}$ such
that 
\begin{equation}
|v_{1}|=\varepsilon _{0},\text{ \ }v_{1}\cdot n(x_{1})=\frac{v\cdot \nabla
\xi (x_{1})}{|\nabla \xi (x_{1})|}=\varepsilon _{0}^{2}.  \label{v1}
\end{equation}

We shall analyze the specular back-time cycle of $%
(0,x_{1},v_{1}):(t_{k},x_{k},v_{k}).$ Letting $s_{k}=t_{\mathbf{b}%
}(x_{k},v_{k})$, we have $\xi (x_{1}-s_{1}v_{1})=0,$ $x_{2}=x_{1}-s_{1}v_{1}%
\in \partial \Omega $ and for $k\geq 2:$ 
\begin{equation*}
\xi (x_{1}-\sum_{j=1}^{k}s_{j}v_{j})=0,\text{ \ \ \ \ }v_{k}=R(x_{k})v_{k-1},%
\text{ \ }x_{k}=x_{k-1}-s_{k}v_{k}\in \partial \Omega .
\end{equation*}

\begin{proposition}
\label{jacobian} For any finite $k\geq 1,$%
\begin{equation}
\frac{\partial v_{k}^{i}}{\partial v_{1}^{l}}=\delta _{li}+\zeta
(k)n^{i}(x_{1})n^{l}(x_{1})+O(\varepsilon _{0}),  \label{vkv1}
\end{equation}%
where $O$ depends on $k,$ and $\zeta $ is defined as $\zeta (1)=0,$ 
\begin{equation}
\zeta (k)=4\sum_{p=1}^{k-2}(-1)^{k-p+1}+4\sum_{p=1}^{k-2}(-1)^{k-1-p}\zeta
(p)+2+3\zeta (k-1),\text{ for }k\geq 2.  \label{zetak}
\end{equation}%
In particular, $\zeta (k)$ is an even integer so that 
\begin{equation*}
\det \left( \frac{\partial v_{k}^{i}}{\partial v_{1}^{l}}\right) =\{\zeta
(k)+1\}+O(\varepsilon _{0})\neq 0.
\end{equation*}
\end{proposition}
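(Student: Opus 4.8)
The plan is to differentiate the specular cycle recursion (\ref{specularcycle}) with respect to $v_{1}$, track the resulting linear recursion for the sensitivity matrices, and exploit the grazing structure forced by the choice (\ref{v1}). First I would use the Velocity Lemma \ref{velocity} together with strict convexity (\ref{convexity}) to control the cycle $(t_{k},x_{k},v_{k})$ of $(0,x_{1},v_{1})$. Specular reflection preserves speed, so $|v_{k}|\equiv \varepsilon _{0}$; and since $\alpha (t_{k})=\{v_{k}\cdot \nabla \xi (x_{k})\}^{2}$ with $\alpha (0)=\{v_{1}\cdot \nabla \xi (x_{1})\}^{2}\asymp \varepsilon _{0}^{4}$ by (\ref{v1}), Lemma \ref{velocity} gives $c_{k}\varepsilon _{0}^{4}\leq \alpha (t_{k})\leq C_{k}\varepsilon _{0}^{4}$ for each fixed $k$, hence $|v_{k}\cdot n(x_{k+1})|\asymp \varepsilon _{0}^{2}$; in particular $v_{k}\cdot n(x_{k+1})\neq 0$, so Lemma \ref{huang}(2) applies at every bounce and each $(s_{k},x_{k+1},v_{k+1})$ is a smooth function of $(x_{k},v_{k})$. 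Expanding $0=\xi (x_{k}-s_{k}v_{k})$ and using convexity shows $s_{k}=t_{\mathbf{b}}(x_{k},v_{k})$ is bounded above and below by constants independent of $\varepsilon _{0}$, so the chords satisfy $|x_{k+1}-x_{k}|=s_{k}|v_{k}|=O(\varepsilon _{0})$; consequently $x_{k}=x_{1}+O(\varepsilon _{0})$ and $n(x_{k}),\nabla n(x_{k}),\nabla ^{2}\xi (x_{k})$ equal their values at $x_{1}$ up to $O(\varepsilon _{0})$.

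Next I would set $A_{k}=\partial v_{k}/\partial v_{1}$ and $B_{k}=\partial x_{k}/\partial v_{1}$, with $A_{1}=I$ and $B_{1}=0$. Differentiating $x_{k+1}=x_{k}-s_{k}v_{k}$ and $v_{k+1}=v_{k}-2(v_{k}\cdot n(x_{k+1}))n(x_{k+1})$, and using $\nabla _{x}t_{\mathbf{b}}=n(x_{\mathbf{b}})/(v\cdot n(x_{\mathbf{b}}))$ and $\nabla _{v}t_{\mathbf{b}}=t_{\mathbf{b}}n(x_{\mathbf{b}})/(v\cdot n(x_{\mathbf{b}}))$ from Lemma \ref{huang}(2) to compute $\partial s_{k}/\partial v_{1}$, one obtains linear recursions schematically of the form
\[
B_{k+1}=B_{k}-s_{k}A_{k}-v_{k}\otimes \frac{n(x_{k+1})^{T}B_{k}+s_{k}\,n(x_{k+1})^{T}A_{k}}{v_{k}\cdot n(x_{k+1})},\qquad A_{k+1}=A_{k}-2\,w_{k}\otimes n(x_{k+1})-2(v_{k}\cdot n(x_{k+1}))\,\nabla n(x_{k+1})B_{k+1},
\]
where $w_{k}=A_{k}^{T}n(x_{k+1})+B_{k+1}^{T}(\nabla n(x_{k+1})^{T}v_{k})$.

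The heart of the argument is the asymptotic reduction. Tracking orders in $\varepsilon _{0}$: the normal velocity in the denominators is $\asymp \varepsilon _{0}^{2}$ while $v_{k}=O(\varepsilon _{0})$, so $\partial s_{k}/\partial v_{1}=O(\varepsilon _{0}^{-2})$ and $B_{k}=O(\varepsilon _{0}^{-1})$ for $k\geq 2$; correspondingly the term $B_{k+1}^{T}(\nabla n\,v_{k})$ inside $w_{k}$ is $O(1)$ and must be kept, whereas $(v_{k}\cdot n(x_{k+1}))\nabla n(x_{k+1})B_{k+1}=O(\varepsilon _{0})$ is negligible. The key observation is that the exact identity $n(x_{k})^{T}B_{k}\equiv 0$ (from $\xi (x_{k})\equiv 0$), combined with $n(x_{k+1})=n(x_{k})+O(\varepsilon _{0})$, forces cancellations among the large contributions; working in coordinates adapted to $x_{1}$ (say $x_{1}$ at the origin and $n(x_{1})=e_{3}$) and splitting every matrix into its normal direction and its restriction to the tangent plane, one shows by induction on $k$ that $A_{k}=I+\zeta (k)\,n(x_{1})n(x_{1})^{T}+O(\varepsilon _{0})$ with $\zeta (1)=0$, the history $B_{k+1}=B_{k}+\cdots $ contributing exactly the alternating sums $\sum _{p\leq k-1}(-1)^{\cdots }\zeta (p)$ and thereby producing the recursion (\ref{zetak}). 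This step is the main obstacle: $A_{k}\sim 1$, $B_{k}\sim \varepsilon _{0}^{-1}$, $\partial s_{k}/\partial v_{1}\sim \varepsilon _{0}^{-2}$ and $v_{k}\cdot n\sim \varepsilon _{0}^{2}$ all interact, and one must show the large terms cancel to leave precisely the rank-one update with coefficient $\zeta (k)$ — this is where strict convexity and the exact normalization (\ref{v1}) are used in an essential way.

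Finally, from (\ref{zetak}) a simple induction gives that $\zeta (k)$ is an integer (each summand is an integer, the alternating sums lying in $\{-1,0,1\}$) and that $\zeta (k)$ is even, since modulo $2$ the right side of (\ref{zetak}) reduces to $3\zeta (k-1)\equiv \zeta (k-1)\equiv 0$; hence $\zeta (k)+1$ is a nonzero odd integer. Since $\det \bigl(\delta _{li}+\zeta (k)\,n^{i}(x_{1})n^{l}(x_{1})\bigr)=1+\zeta (k)$ for the unit vector $n(x_{1})$ and the determinant is a smooth function of the matrix entries, the expansion (\ref{vkv1}) yields $\det \left( \partial v_{k}^{i}/\partial v_{1}^{l}\right) =\{\zeta (k)+1\}+O(\varepsilon _{0})$, which is nonzero as soon as $\varepsilon _{0}$ is taken sufficiently small. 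This is exactly the conclusion needed to feed (together with analyticity and Lemma \ref{analytic}) into the change of variables (\ref{ycycle})--(\ref{detnotzero}) for the specular reflection.
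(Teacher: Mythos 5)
Your scaffolding coincides with the paper's own proof: the Velocity Lemma \ref{velocity} with the normalization (\ref{v1}) gives $v_j\cdot n(x_j)\backsim\varepsilon_0^2$, bounded bounce times and $|x_k-x_1|=O(\varepsilon_0)$; the order count $\partial_{v_1^l}s_j=O(\varepsilon_0^{-2})$, $\partial v_j/\partial v_1=O(1)$, $\partial x_k/\partial v_1=O(\varepsilon_0^{-1})$; the differentiated reflection law in which the term $2(v_{k-1}\cdot n_k)\,\partial_{v_1^l}n_k^i=O(\varepsilon_0)$ is discarded while the products of the $O(\varepsilon_0^{-2})$ time-derivatives with $v\nabla^2\xi v=O(\varepsilon_0^2)$ are retained; and your endgame (parity of $\zeta$ modulo $2$ from (\ref{zetak}), $\det(I+\zeta\, n n^{T})=1+\zeta$ for unit $n$, hence a nonzero odd integer up to $O(\varepsilon_0)$) is exactly the paper's conclusion.

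The gap is at the point you yourself flag as ``the main obstacle'': the proposition \emph{is} the recursion (\ref{zetak}) with its specific alternating sums and coefficients, and your proposal does not derive it — it asserts that an induction ``thereby produces'' it. What is missing is the mechanism that generates the alternation and the integers $4,2,3$. The paper does not track the full matrix $\partial x_k/\partial v_1$; it differentiates the two exit identities $\xi(x_1-\sum_{p=1}^{j}s_pv_p)=0$ and $\xi(x_1-\sum_{p=1}^{j-1}s_pv_p)=0$ in $v_1^l$ and subtracts, so that the accumulated quantity $A_j=\sum_{p\le j}\partial_{v_1^l}s_p$ satisfies $A_j=\sum_o\frac{\partial_o\xi(x_{j+1})}{d_{j+1}}\frac{\partial v_j^o}{\partial v_1^l}s_j-A_{j-1}+O(\varepsilon_0^{-1})$ and telescopes to the explicit alternating-sign formula (\ref{aj}); substituting this into the differentiated reflection law and using the Taylor asymptotics (\ref{tj}), which make $\frac{s_p}{d_{p+1}}\,v_1\nabla^2\xi(x_1)v_1$ a definite constant up to $O(\varepsilon_0)$, is precisely what produces the terms $4\sum_{p}(-1)^{k-p-1}(\cdots)$, the $+2$, and the $3\zeta(k-1)$ in (\ref{zetak}). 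Your identity $n(x_k)\cdot\partial_{v_1}x_k=0$ is equivalent to the differentiated exit condition, so it points in the right direction, but without the subtraction/telescoping step and the $s_p/d_{p+1}$ asymptotics you have no argument that the surviving $O(1)$ contributions assemble into a rank-one update whose coefficient is an integer obeying (\ref{zetak}); since it is exactly the integrality and evenness of $\zeta(k)$ that make $\zeta(k)+1\neq 0$ and hence the Jacobian nondegenerate, this cancellation cannot be left as an asserted step.
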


\begin{proof}
From $n(x_{j})=\frac{\nabla \xi (x_{j})}{|\nabla \xi (x_{j})|},$ since $%
v_{j}=v_{j-1}-2\{n(x_{j})\cdot v_{j-1}\}n(x_{j}),$ we define 
\begin{equation}
d_{j}\equiv v_{j}\cdot \nabla \xi (x_{j})=-v_{j-1}\cdot \nabla \xi
(x_{j})\geq 0.  \label{dj}
\end{equation}%
By the Velocity Lemma \ref{velocity} and our choice of $v_{1}$ in (\ref{v1}%
), if $\sum_{j=1}^{k-1}s_{j}<C,$ we have $C_{1}\alpha (0)\leq \alpha
(\sum_{p=1}^{j}s_{p})\leq C_{2}\alpha (0),$ for all $j=1,2,...,k-1.$ But $%
\alpha (0)=\{v_{1}\cdot \nabla \xi (x_{1})\}^{2}\backsim \varepsilon
_{0}^{4} $ and $\alpha (\sum_{p=1}^{j}s_{p})=\{v_{j}\cdot \nabla \xi
(x_{j})\}^{2},~$\ we then have 
\begin{equation}
v_{j}\cdot n(x_{j})=-v_{j-1}\cdot n(x_{j})\backsim C\varepsilon _{0}^{2}.
\label{d}
\end{equation}%
We therefore deduce that, by denoting $n_{j}=n(x_{j}),$ 
\begin{eqnarray}
|v_{j}-v_{1}| &\leq &|v_{j}-v_{j-1}|+|v_{j-1}-v_{j-2}|+...+|v_{2}-v_{1}| 
\notag \\
&\leq &2|v_{j-1}\cdot n_{j}|+2|v_{j-2}\cdot n_{j-1}|+...+2|v_{1}\cdot n_{2}|
\notag \\
&\leq &2jC\varepsilon _{0}^{2}.  \label{j-1}
\end{eqnarray}%
With the assumption $\sum_{j}^{k-1}s_{j}<C,$ by $|v_{1}|=\varepsilon _{0},$
we deduce that 
\begin{equation}
|x_{k}-x_{1}|\leq C\sum_{j=1}^{k-1}|v_{j}|\leq C_{k}\varepsilon _{0}.
\label{xe}
\end{equation}

We first estimate the next $s_{k}.$ Note that for $k\geq 2,$ 
\begin{equation*}
\xi (x_{k}+s_{k-1}v_{k-1})=0,\text{ \ \ \ \ }\xi (x_{k}-s_{k}v_{k})=0.
\end{equation*}%
We then use Taylor expansion at $x_{k}$ to get 
\begin{eqnarray*}
\xi (x_{k}+s_{k-1}v_{k-1}) &=&\xi (x_{k})+s_{k-1}v_{k-1}\cdot \nabla \xi
(x_{k})+\frac{1}{2}s_{k-1}^{2}v_{k-1}\nabla ^{2}\xi
(x_{k})v_{k-1}+O(s_{k-1}^{3}v_{k-1}^{3}); \\
\xi (x_{k}-s_{k}v_{k}) &=&\xi (x_{k})-s_{k}v_{k}\cdot \nabla \xi (x_{k})+%
\frac{1}{2}s_{k}^{2}v_{k}\nabla ^{2}\xi (x_{k})v_{k}+O(s_{k}^{3}v_{k}^{3}).
\end{eqnarray*}%
But $\frac{\nabla \xi (x_{k})}{|\nabla \xi (x_{k})|}=n_{k},$ $\xi (x_{k})=0,$
$|v_{k}|=|v_{k+1}|=O(\varepsilon _{0}),$ by (\ref{dj}), (\ref{d}) and (\ref%
{j-1}) , we have 
\begin{eqnarray*}
1-\frac{1}{2}s_{k-1}^{{}}\frac{v_{k-1}\nabla ^{2}\xi (x_{k})v_{k-1}}{d_{k}}%
+O(\varepsilon _{0})s_{k-1}^{2} &=&0, \\
1-\frac{1}{2}s_{k}^{{}}\frac{v_{k}\nabla ^{2}\xi (x_{k})v_{k}}{d_{k}}%
+O(\varepsilon _{0})s_{k}^{2} &=&0,
\end{eqnarray*}%
where $d_{k}\backsim \varepsilon _{0}^{2}.$ Therefore, by (\ref{j-1}) and (%
\ref{xe}), 
\begin{eqnarray}
s_{k-1} &=&\frac{2d_{k}}{v_{k-1}\nabla ^{2}\xi (x_{k})v_{k-1}}+O(\varepsilon
_{0})=\frac{2d_{k}}{v_{1}\nabla ^{2}\xi (x_{1})v_{1}}+O(\varepsilon _{0}), 
\notag \\
s_{k} &=&\frac{2d_{k}}{v_{k}\nabla ^{2}\xi (x_{k})v_{k}}+O(\varepsilon _{0})=%
\frac{2d_{k}}{v_{1}\nabla ^{2}\xi (x_{1})v_{1}}+O(\varepsilon _{0}),
\label{tj}
\end{eqnarray}%
so that $s_{k+1}-s_{k}=O(\varepsilon _{0}),$ for finite $k.$

We now compute from $v_{k}=v_{k-1}-2\{n(x_{k})\cdot v_{k-1}\}n(x_{k}),$ 
\begin{equation}
\frac{\partial v_{k}^{i}}{\partial v_{1}^{l}}=\frac{\partial v_{k-1}^{i}}{%
\partial v_{1}^{l}}-2(v_{k-1}\cdot n_{k})\partial
_{v_{1}^{l}}n_{k}^{i}-2(v_{k-1}\cdot \partial _{v_{1}^{l}}n_{k})n_{k}^{i}-2(%
\frac{\partial v_{k-1}}{\partial v_{1}^{l}}\cdot n_{k})n_{k}^{i}.
\label{partialv}
\end{equation}

To compute $\partial _{v_{1}^{l}}n_{k}^{m}$ in (\ref{partialv})$,$ we note $%
n^{m}(y)=\frac{\partial _{m}\xi (y)}{|\nabla \xi (y)|}$ and 
\begin{eqnarray}
\partial _{v_{1}^{l}}n_{k}^{m} &=&\partial
_{v_{1}^{l}}\{n^{m}(x_{1}-\sum_{j=1}^{k-1}s_{j}v_{j})\}  \notag \\
&=&\partial _{q}n^{m}(x_{k})\times \{-\sum_{j=1}^{k-1}\partial
_{v_{1}^{l}}s_{j}v_{j}^{q}-\sum_{j=1}^{k-1}s_{j}\frac{\partial v_{j}^{q}}{%
\partial v_{1}^{l}}\}  \notag \\
&=&(\frac{\partial _{mq}\xi }{|\nabla \xi |}-\frac{n^{m}\partial _{oq}\xi
n^{o}}{|\nabla \xi |})|_{x_{k}}\times \{-\sum_{j=1}^{k-1}\partial
_{v_{1}^{l}}s_{j}v_{j}^{q}-\sum_{j=1}^{k-1}s_{j}\frac{\partial v_{j}^{q}}{%
\partial v_{1}^{l}}\}.  \label{partialn}
\end{eqnarray}

To compute $\partial _{v_{1}^{l}}s_{j},$ we recall $\xi (x_{j+1})=\xi
(x_{j})=0$ so that 
\begin{equation*}
\xi (x_{1}-\sum_{p=1}^{j}s_{p}v_{p})=0,\text{ \ \ }\xi
(x_{1}-\sum_{p=1}^{j-1}s_{p}v_{p})=0.
\end{equation*}%
Taking their $v_{1}^{l}$ derivatives, we split $\sum_{p=1}^{j}$ into $%
\sum_{p=1}^{j-1}+$ $\sum_{p=j}^{j}~$\ to get 
\begin{eqnarray*}
\sum_{o}\partial _{o}\xi (x_{j+1})\{-\sum_{p=1}^{j-1}\frac{\partial v_{p}^{o}%
}{\partial v_{1}^{l}}s_{p}-\sum_{p=1}^{j-1}v_{p}^{o}\partial
_{v_{1}^{l}}s_{p}\} &=&\sum_{o}\partial _{o}\xi (x_{j+1})\{\frac{\partial
v_{j}^{o}}{\partial v_{1}^{l}}s_{j}+v_{j}^{o}\partial _{v_{1}^{l}}s_{j}\}, \\
\sum_{o}\partial _{o}\xi (x_{j})\{-\sum_{p=1}^{j-1}\frac{\partial v_{p}^{o}}{%
\partial v_{1}^{l}}s_{p}-\sum_{p=1}^{j-1}v_{p}^{o}\partial
_{v_{1}^{l}}s_{p}\} &=&0.
\end{eqnarray*}%
Subtracting these two identities, we deduce%
\begin{eqnarray*}
\sum_{o}\partial _{o}\xi (x_{j+1})v_{j}^{o}\partial _{v_{1}^{l}}s_{j}
&=&-\sum_{o}\partial _{o}\xi (x_{j+1})\frac{\partial v_{j}^{o}}{\partial
v_{1}^{l}}s_{j} \\
&&+\sum_{o}\{\partial _{o}\xi (x_{j+1})-\partial _{o}\xi
(x_{j})\}\{-\sum_{p=1}^{j-1}\frac{\partial v_{p}^{o}}{\partial v_{1}^{l}}%
s_{p}-\sum_{p=1}^{j-1}v_{p}^{o}\partial _{v_{1}^{l}}s_{p}\}.
\end{eqnarray*}%
But by the Taylor expansion and (\ref{xe}),%
\begin{eqnarray*}
\partial _{o}\xi (x_{j+1})-\partial _{o}\xi (x_{j}) &=&\partial _{oe}\xi
(x_{j})(x_{j+1}^{e}-x_{j}^{e})+O(|x_{j+1}-x_{j}|^{2}) \\
&=&-\partial _{oe}\xi (x_{j})s_{j}v_{j}^{e}+O(\varepsilon _{0}^{2}).
\end{eqnarray*}%
Rewriting $\sum_{o}\partial _{o}\xi (x_{j+1})v_{j}^{o}=v_{j}\cdot \nabla \xi
(x_{j+1}),$ we therefore have 
\begin{eqnarray*}
v_{j}\cdot \nabla \xi (x_{j+1})\partial _{v_{1}^{l}}s_{j}
&=&-\sum_{o}\partial _{o}\xi (x_{j+1})\frac{\partial v_{j}^{o}}{\partial
v_{1}^{l}}s_{j} \\
&&+\sum_{o,e}\partial _{oe}\xi (x_{j})s_{j}v_{j}^{e}\{\sum_{p=1}^{j-1}\frac{%
\partial v_{p}^{o}}{\partial v_{1}^{l}}s_{p}+\sum_{p=1}^{j-1}v_{p}^{o}%
\partial _{v_{1}^{l}}s_{p}\} \\
&&+O(\varepsilon _{0}^{2})\{\sum_{p=1}^{j-1}\frac{\partial v_{p}^{0}}{%
\partial v_{1}^{l}}s_{p}+\sum_{p=1}^{j-1}v_{p}^{0}\partial
_{v_{1}^{l}}s_{p}\}.
\end{eqnarray*}%
Since $v_{j}\cdot \nabla \xi (x_{j+1})=-d_{j+1},$ from (\ref{tj}), $%
\sum_{o,e}\frac{\partial _{oe}\xi (x_{j})s_{j}v_{j}^{e}v_{p}^{o}}{-d_{j+1}}%
=-2+O(\varepsilon _{0})$ and 
\begin{eqnarray}
\partial _{v_{1}^{l}}s_{j} &=&\sum_{o}\frac{\partial _{o}\xi (x_{j+1})}{%
d_{j+1}}\frac{\partial v_{j}^{o}}{\partial v_{1}^{l}}s_{j}-\{2-O(\varepsilon
_{0})\}\sum_{p=1}^{j-1}\partial _{v_{1}^{l}}s_{p}-\sum_{o,e}\sum_{p=1}^{j-1}%
\frac{\partial _{oe}\xi (x_{j})s_{j}v_{j}^{e}}{d_{j+1}}\frac{\partial
v_{p}^{o}}{\partial v_{1}^{l}}s_{p}  \notag \\
&&+O(1)\{\sum_{p=1}^{j-1}\frac{\partial v_{p}^{o}}{\partial v_{1}^{l}}%
s_{p}+\sum_{p=1}^{j-1}v_{p}^{o}\partial _{v_{1}^{l}}s_{p}\}.
\label{partialt}
\end{eqnarray}%
We first claim that for $1\leq j\leq k$%
\begin{equation}
|\partial _{v_{1}^{l}}s_{j}|\leq \frac{C_{k}}{\varepsilon _{0}^{2}},\text{ \
\ }|\frac{\partial v_{j}^{0}}{\partial v_{1}^{l}}|\leq C_{k}.
\label{claimbound}
\end{equation}%
We shall prove this via an induction of $j.$ In fact, when $j=1,$ $\frac{%
\partial v_{1}^{0}}{\partial v_{1}^{l}}=\delta _{ol},$ and from $\xi
(x_{1}-s_{1}v_{1})=0,$ we deduce 
\begin{equation}
\partial _{v_{1}^{l}}s_{1}=\frac{\partial _{l}\xi (x_{2})s_{1}}{d_{2}}%
=O(\varepsilon _{0}^{-2}).  \label{s1v1}
\end{equation}%
And a simple induction leads to the desired result (\ref{claimbound}).

From (\ref{claimbound}) and (\ref{partialt}), we have 
\begin{equation*}
\partial _{v_{1}^{l}}s_{j}=\sum_{o}\frac{\partial _{o}\xi (x_{j+1})}{d_{j+1}}%
\frac{\partial v_{j}^{o}}{\partial v_{1}^{l}}s_{j}-2\sum_{p=1}^{j-1}\partial
_{v_{1}^{l}}s_{p}+O(\frac{1}{\varepsilon _{0}}).
\end{equation*}%
By letting $A_{j}\equiv \sum_{p=1}^{j}\partial _{v_{1}^{l}}s_{p}$ and moving
one copy of $\sum_{p=1}^{j-1}\partial _{v_{1}^{l}}s_{p}$ to the right hand
side, we deduce%
\begin{equation*}
A_{j}=\sum_{o}\frac{\partial _{o}\xi (x_{j+1})}{d_{j+1}}\frac{\partial
v_{j}^{o}}{\partial v_{1}^{l}}s_{j}-A_{j-1}+O(\frac{1}{\varepsilon _{0}})
\end{equation*}%
so that we can obtain explicit formula for $A_{j}$ as 
\begin{eqnarray}
A_{j} &=&\sum_{o}\frac{\partial _{o}\xi (x_{j+1})}{d_{j+1}}\frac{\partial
v_{j}^{o}}{\partial v_{1}^{l}}s_{j}-A_{j-1}+O(\frac{1}{\varepsilon _{0}}) 
\notag \\
&=&\sum_{o}\frac{\partial _{o}\xi (x_{j+1})}{d_{j+1}}\frac{\partial v_{j}^{o}%
}{\partial v_{1}^{l}}s_{j}-\sum_{o}\frac{\partial _{o}\xi (x_{j})}{d_{j}}%
\frac{\partial v_{j-1}^{o}}{\partial v_{1}^{l}}s_{j-1}+A_{j-2}+O(\frac{1}{%
\varepsilon _{0}})...  \notag \\
&=&\sum_{p=1}^{j}\sum_{o}(-1)^{j-p}\frac{\partial _{o}\xi (x_{p+1})}{d_{p+1}}%
\frac{\partial v_{p}^{o}}{\partial v_{1}^{l}}s_{p}+O(\frac{1}{\varepsilon
_{0}}),  \label{aj}
\end{eqnarray}%
we have used the fact by (\ref{s1v1}), $A_{1}=\partial
_{v_{1}^{l}}s_{1}=\sum_{o}\frac{\partial _{o}\xi (x_{2})}{d_{2}}\frac{%
\partial v_{1}^{o}}{\partial v_{1}^{l}}s_{1}.$ Now finally we recall (\ref%
{partialn}), $\partial _{v_{1}^{l}}n_{k}^{i}=O(\frac{1}{\varepsilon _{0}}),$
so that the second term on the right hand side in (\ref{partialv}) is of the
order $O(\varepsilon _{0})$. Hence 
\begin{eqnarray*}
\frac{\partial v_{k}^{i}}{\partial v_{1}^{l}} &=&\frac{\partial v_{k-1}^{i}}{%
\partial v_{1}^{l}}-2(v_{k-1}^{m}(\frac{\partial _{mq}\xi }{|\nabla \xi |}-%
\frac{n^{m}\partial _{oq}\xi n^{o}}{|\nabla \xi |})|_{x_{k}}\times
\{-\sum_{j=1}^{k-1}\partial _{v_{1}^{l}}s_{j}v_{j}^{q}-\sum_{j=1}^{k-1}s_{j}%
\frac{\partial v_{j}^{q}}{\partial v_{1}^{l}}\})n_{k}^{i} \\
&&-2(\frac{\partial v_{k-1}^{i}}{\partial v_{1}^{l}}\cdot
n_{k})n_{k}^{i}+O(\varepsilon _{0}).
\end{eqnarray*}%
Since $\sum_{m}v_{k-1}^{m}n^{m}(x_{k})=O(\varepsilon _{0}^{2}),$ the second
term on the right hand side is%
\begin{eqnarray*}
&&2\sum_{j=1}^{k-1}\frac{v_{k-1}^{m}\partial _{mq}\xi (x_{k})v_{j}^{q}}{%
|\nabla \xi (x_{k})|}\times \partial _{v_{1}^{l}}s_{j}n_{k}^{i}-2(\frac{%
\partial v_{k-1}^{{}}}{\partial v_{1}^{l}}\cdot
n_{k})n_{k}^{i}+O(\varepsilon _{0}) \\
&=&2\frac{v_{1}^{m}\partial _{mq}\xi (x_{1})v_{1}^{q}}{|\nabla \xi (x_{1})|}%
\times \{\sum_{j=1}^{k-1}\partial _{v_{1}^{l}}s_{j}\}n_{1}^{i}-2(\frac{%
\partial v_{k-1}^{{}}}{\partial v_{1}^{l}}\cdot
n_{1})n_{1}^{i}+O(\varepsilon _{0})\text{ by (\ref{xe})} \\
&=&2\frac{v_{1}^{m}\partial _{mq}\xi (x_{1})v_{1}^{q}}{|\nabla \xi (x_{1})|}%
\times \sum_{j=1}^{k-1}(-1)^{k-p-1}\frac{\partial _{o}\xi (x_{p+1})}{d_{p+1}}%
\frac{\partial v_{p}^{o}}{\partial v_{1}^{l}}s_{p}n_{1}^{i}+O(\varepsilon
_{0})\text{ by (\ref{aj}).}
\end{eqnarray*}%
Note $\frac{s_{p}}{d_{p+1}}v_{1}^{m}\partial _{mq}\xi
(x_{1})v_{1}^{q}=4+O(\varepsilon _{0})$ from (\ref{dj}) and (\ref{tj}), we
deduce 
\begin{equation*}
\frac{\partial v_{k}^{i}}{\partial v_{1}^{l}}=\frac{\partial v_{k-1}^{i}}{%
\partial v_{1}^{l}}+4\sum_{j=1}^{k-1}(-1)^{k-p-1}n_{1}^{o}\frac{\partial
v_{p}^{o}}{\partial v_{1}^{l}}n_{1}^{i}-2(\frac{\partial v_{k-1}^{i}}{%
\partial v_{1}^{l}}\cdot n_{1})n_{1}^{i}+O(\varepsilon _{0})\text{.}
\end{equation*}%
Clearly, $\xi (1)=0,$ and assume (\ref{vkv1}) is valid up to $k-1.$ Then 
\begin{eqnarray*}
\frac{\partial v_{k}^{i}}{\partial v_{1}^{l}} &=&\delta _{li}+\zeta
(k-1)n_{1}^{i}n_{1}^{l}+4\sum_{p}^{k-1}(-1)^{k-p-1}n_{1}^{o}(\delta
_{ol}+\zeta (p)n_{1}^{o}n_{1}^{l})n_{1}^{i} \\
&&-2(\delta _{lo}+\zeta (k-1)n_{1}^{o}n_{1}^{l})\cdot
n_{1}^{o}n_{1}^{i}+O(\varepsilon _{0}).
\end{eqnarray*}%
Notice that $\sum_{o}n_{1}^{o}n_{1}^{o}=1,$ and splitting $%
\sum_{j}^{k-1}=\sum_{j}^{k-2}+\sum_{j=k-1}^{k-1}$ and collecting terms, we
conclude our proposition.
\end{proof}

\subsubsection{$L^{\infty }$ Decay for $U(t)$}

We now fix any point $(t,x,v)$ so that $(x,v)\notin \gamma _{0}.$ Let the
back-time specular cycle of $(t,x,v)$ be $[X_{\mathbf{cl}}^{{}}(s_{1}),V_{%
\mathbf{cl}}^{{}}(s_{1})].$ By (\ref{duhamel2}), we use twice (\ref%
{specularformula}) to derive $h(t,x,v)=$ 
\begin{eqnarray}
&&e^{-\nu (v)t}h_{0}(X_{\mathbf{cl}}^{{}}(0),V_{\mathbf{cl}}^{{}}(0))  \notag
\\
&&+\int_{0}^{t}e^{-\nu (v)(t-s_{1})}\int K_{w}(V_{\mathbf{cl}%
}^{{}}(s_{1}),v^{\prime })e^{-\nu (v^{\prime })s_{1}}h_{0}\left( X_{\mathbf{%
cl}}^{\prime }(0),V_{\mathbf{cl}}^{\prime }(0)\right) dv^{\prime }
\label{hspecular} \\
&&+\int_{0}^{t}\int_{0}^{s_{1}}\int e^{-\nu (v)(t-s_{1})-\nu (v^{\prime
})(s_{1}-s)}K_{w}(V_{\mathbf{cl}}(s_{1}),v^{\prime })K_{w}(V_{\mathbf{cl}%
}^{\prime }(s),v^{\prime \prime })h\left( X_{\mathbf{cl}}^{\prime
}(s),v^{\prime \prime }\right) .  \notag
\end{eqnarray}%
where the back-time specular cycle from $(s_{1},X_{\mathbf{cl}%
}^{{}}(s_{1}),v^{\prime })$ is denoted by 
\begin{equation}
X_{\mathbf{cl}}^{\prime }(s)=X_{\mathbf{cl}}(s;s_{1},X_{\mathbf{cl}%
}(s_{1}),v^{\prime }),\text{ \ \ \ \ }V_{\mathbf{cl}}^{\prime }(s)=V_{%
\mathbf{cl}}^{{}}(s;s_{1},X_{\mathbf{cl}}^{{}}(s_{1}),v^{\prime }).
\label{x'}
\end{equation}%
More explicitly, let $t_{k}$ and $t_{k^{\prime }}^{\prime }$ be the
corresponding times for both specular cycles as in (\ref{specularcycle}).
For $t_{k+1}\leq s_{1}<t_{k},$ $t_{k^{\prime }+1}^{\prime }\leq
s<t_{k^{\prime }}^{\prime }$ 
\begin{equation}
X_{\mathbf{cl}}^{\prime }(s)=X_{\mathbf{cl}}(s;s_{1},X_{\mathbf{cl}%
}(s_{1}),v^{\prime })\equiv x_{k^{\prime }}^{\prime }+(s-t_{k^{\prime
}}^{\prime })v_{k^{\prime }}^{\prime }  \label{x'explicit}
\end{equation}%
where $x_{k^{\prime }}^{\prime }=X_{\mathbf{cl}}(t_{k^{\prime
}};s_{1},x_{k}+(s_{1}-t_{k})v_{k},v^{\prime }),v_{k^{\prime }}^{\prime }=V_{%
\mathbf{cl}}(t_{k^{\prime }};s_{1},x_{k}+(s_{1}-t_{k})v_{k},v^{\prime }).\,$%
\ Recall $\alpha $ in (\ref{alpha}) and define naturally 
\begin{equation}
\alpha (x,v)\equiv \alpha (t)=\xi ^{2}(x)+[v\cdot \nabla \xi
(x)]^{2}-[v\nabla ^{2}\xi (x)v]\xi (x).  \label{alphaxv}
\end{equation}%
We define the main set 
\begin{equation}
A_{\alpha }=\{(x,v):x\in \bar{\Omega},\text{ }\frac{1}{N}\leq |v|\leq N,%
\text{ and }\alpha (x,v)\geq \frac{1}{N}\}.  \label{aalpha}
\end{equation}

We remark that for $x$ is near $\partial \Omega ,$ $\det \{\frac{\partial
v_{2}}{\partial v_{1}}\}\backsim 3$ in Lemma \ref{jacobian} for $v_{1}$ is
almost tangential to $n(x).$ On the other hand, by (\ref{tbderivative}), it
is easy to compute for $v_{1}=n(x),$ $\det \{\frac{\partial v_{2}}{\partial
v_{1}}\}\backsim -1$ since $t_{\mathbf{b}}$ $\backsim 0.$ This implies from
continuity that there is $v_{1}$ such that $\det \{\frac{\partial v_{2}}{%
\partial v_{1}}\}=0$ even after one specular reflection. However, as shown
next, such a zero set is small if $\Omega $ is both analytic and convex.

\begin{lemma}
\label{specularlower} Fix $k$ and $k^{\prime }.$ Define for $t_{k+1}\leq
s_{1}\leq t_{k},s\in \mathbf{R}$%
\begin{equation*}
J\equiv J_{k,k^{\prime }}(t,x,v,s_{1},s,v^{\prime })\equiv \det \left( \frac{%
\partial \{x_{k^{\prime }}^{\prime }+(s-t_{k^{\prime }}^{\prime
})v_{k^{\prime }}^{\prime }\}}{\partial v^{\prime }}\right) .
\end{equation*}%
For any $\varepsilon >0$ sufficiently small, there is $\delta (N,\varepsilon
,T_{0},k,k^{\prime })>0$ and an open covering $\cup
_{i=1}^{m}B(t_{i},x_{i},v_{i};r_{i})$ of $[0,T_{0}]\times A_{\alpha }$ and
corresponding open sets $O_{t_{i},x_{i},v_{i}}$ for $[t_{k+1}+\varepsilon
,t_{k}-\varepsilon ]\times \mathbf{R}\times \mathbf{R}^{3}$ with $%
|O_{t_{i},x_{i},v_{i}}|<\varepsilon ,$ such that 
\begin{equation*}
|J_{k,k^{\prime }}(t,x,v,s_{1},s,v^{\prime })|\geq \delta >0,
\end{equation*}%
for $0\leq t\leq T_{0},$ $(x,v)\in A_{\alpha }$ and $(s_{1},s,v^{\prime })$
in 
\begin{equation*}
O_{t_{i,}x_{i},v_{i}}^{c}\cap \lbrack t_{k+1}+\varepsilon ,t_{k}-\varepsilon
]\times \lbrack 0,T_{0}]\times \{|v^{\prime }|\leq 2N\}.
\end{equation*}
\end{lemma}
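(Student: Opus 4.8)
The plan, structurally parallel to the proof of Lemma \ref{bouncebacklower}, is: (1) show $J_{k,k'}$ is real-analytic in all its arguments off a harmless null set; (2) show $J_{k,k'}$ is not identically zero, producing a nonvanishing test configuration out of Proposition \ref{jacobian}; and (3) deduce from Lemma \ref{analytic} that its zero set is Lebesgue-null, then package this, via outer regularity of Lebesgue measure and compactness, into the stated covering and uniform lower bound.

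For (1): if $(x,v)\in A_{\alpha}$ then $\alpha(x,v)\geq 1/N$, so the Velocity Lemma \ref{velocity} applied along the specular cycle of $(t,x,v)$ (exactly as in the proof of Lemma \ref{speculargdecay}) gives $\{v_{j}\cdot n(x_{j})\}^{2}=\alpha(t_{j})\geq c(N,T_{0})>0$ at every bounce $j$ occurring in $[0,T_{0}]$; moreover the set of $v'$ for which some bounce of the primed cycle started from $(s_{1},X_{\mathbf{cl}}(s_{1}),v')$, $|v'|\leq 2N$, is tangential to $\partial\Omega$ is Lebesgue-null by the Sard-theorem argument of Lemma \ref{s=0}, and off it the Velocity Lemma keeps the whole primed cycle non-grazing on $[0,T_{0}]$. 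Thus every exit time entering $x_{k'}',t_{k'}',v_{k'}'$ is taken in the regime $v\cdot n(x_{\mathbf{b}})\neq 0$ of Lemma \ref{huang}(2), and since $\xi$ is real analytic the maps $t_{j},x_{j},v_{j}$ and $t_{k'}',x_{k'}',v_{k'}'$, hence $J_{k,k'}$, are real-analytic in $(t,x,v,s_{1},s,v')$ off that null set.

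For (2), the crux: I would exhibit a single configuration at which $J_{k,k'}\neq 0$. As $s_{1}\uparrow t_{k}$ one has $X_{\mathbf{cl}}(s_{1})\to x_{k}\in\partial\Omega$ (using $X_{\mathbf{cl}}(s_{1})=x_{k}+(s_{1}-t_{k})v_{k}$ on that leg), and $J_{k,k'}$ extends real-analytically to $s_{1}=t_{k}$; there the primed cycle issues from the boundary point $x_{k}$, so one may take $v'$ almost tangential in the precise sense of (\ref{v1}) — $v'\cdot n(x_{k})$ of order $|v'|^{2}$ — with speed $|v'|$ of order $2N$ so that all its legs are short and the cycle still descends to $s\geq 0$ within $[0,T_{0}]$. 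For such an almost-tangential cycle Proposition \ref{jacobian} computes the velocity Jacobian to be a nonzero even integer plus $O(\varepsilon_{0})$; combining this with the derivative formulas (\ref{tbderivative}) for $t_{\mathbf{b}}$ and $x_{\mathbf{b}}$ and the explicit form $X_{\mathbf{cl}}'(s)=x_{k'}'+(s-t_{k'}')v_{k'}'$, and bookkeeping the $O(\varepsilon_{0})$ errors as in the proof of Proposition \ref{jacobian}, shows $J_{k,k'}$ is nonzero at this configuration, so $J_{k,k'}\not\equiv 0$. This reconciliation — accessing Proposition \ref{jacobian}'s near-grazing cycle as the limit $s_{1}\to t_{k}$ of admissible configurations, and passing from its velocity Jacobian to the full spatial Jacobian $J_{k,k'}$ with control of the error terms — is the step I expect to be the main obstacle. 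Once $J_{k,k'}\not\equiv 0$, Lemma \ref{analytic} gives that $Z=\{J_{k,k'}=0\}$ (together with the null grazing set) has measure zero, and by Fubini the slice $Z_{t,x,v}=\{(s_{1},s,v'):J_{k,k'}(t,x,v,\cdot)=0\}$ is null for all $(t,x,v)$ outside a null set; I would therefore pick the centers of the covering balls from the complementary full-measure set.

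For (3): fix such a center $(t_{0},x_{0},v_{0})$. Its slice zero set $Z_{t_{0},x_{0},v_{0}}$, intersected with the compact box $[t_{k+1}+\varepsilon,t_{k}-\varepsilon]\times[0,T_{0}]\times\{|v'|\leq 2N\}$, is compact and Lebesgue-null, so by outer regularity there is an open set $O_{t_{0},x_{0},v_{0}}$ containing it with $|O_{t_{0},x_{0},v_{0}}|<\varepsilon$; on the box minus $O_{t_{0},x_{0},v_{0}}$, continuity and compactness give $|J_{k,k'}(t_{0},x_{0},v_{0},\cdot)|\geq 2\delta_{t_{0},x_{0},v_{0}}>0$. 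Joint uniform continuity of $J_{k,k'}$ then yields a radius $r_{0}>0$ with $|J_{k,k'}(t,x,v,s_{1},s,v')|\geq\delta_{t_{0},x_{0},v_{0}}>0$ whenever $(t,x,v)\in B(t_{0},x_{0},v_{0};r_{0})\cap([0,T_{0}]\times A_{\alpha})$ and $(s_{1},s,v')$ lies in the box outside $O_{t_{0},x_{0},v_{0}}$. Covering the compact set $[0,T_{0}]\times A_{\alpha}$ by finitely many such balls $B(t_{i},x_{i},v_{i};r_{i})$ and setting $\delta=\min_{i}\delta_{t_{i},x_{i},v_{i}}>0$ gives the asserted open covering, the small sets $O_{t_{i},x_{i},v_{i}}$, and the uniform bound $|J_{k,k'}|\geq\delta$.
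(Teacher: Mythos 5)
Your scaffolding (analyticity of $J$, Lemma \ref{analytic} to make the zero set small, then outer regularity, continuity and a finite covering of the compact set $[0,T_{0}]\times A_{\alpha}$) is the same as the paper's, and your step (3) is essentially its final covering argument. The genuine gap is exactly where you flag it: deducing $J_{k,k'}\not\equiv 0$ from Proposition \ref{jacobian}. Your plan is to exhibit a physical configuration ($s_{1}$ near $t_{k}$, $s\in[0,T_{0}]$, $v'$ almost tangential) at which the full spatial Jacobian is nonzero by ``bookkeeping the $O(\varepsilon_{0})$ errors''; but the spatial Jacobian is $\partial_{v'}x_{k'}'-v_{k'}'\otimes\partial_{v'}t_{k'}'+(s-t_{k'}')\partial_{v'}v_{k'}'$, and in the almost-tangential regime of Proposition \ref{jacobian} the first two blocks are \emph{not} small: by (\ref{s1v1}) and (\ref{claimbound}), $\partial_{v'}t_{k'}'=O(\varepsilon_{0}^{-2})$ and hence $v_{k'}'\otimes\partial_{v'}t_{k'}'$ and $\partial_{v'}x_{k'}'$ are $O(\varepsilon_{0}^{-1})$, so for $s$ in the bounded physical range the determinant is not a perturbation of $(s-t_{k'}')^{3}\det(\partial v_{k'}'/\partial v')$, and nonvanishing does not follow. (Also, Proposition \ref{jacobian} is proved for $|v'|=\varepsilon_{0}$ small, not $|v'|\sim 2N$.) The paper's missing device is to exploit that $s$ enters only affinely, so $J=\det(\partial v_{k'}'/\partial v')\,s^{3}+p_{1}s^{2}+p_{2}s+p_{3}$ is a cubic polynomial in $s$, and the lemma deliberately allows $s\in\mathbf{R}$: it then suffices that the leading coefficient $\det(\partial v_{k'}'/\partial v')$ be nonzero at a single admissible $(s_{1},v')$ — supplied by Proposition \ref{jacobian} at the boundary point ($s_{1}$ near $t_{k+1}$ in the paper; your endpoint $t_{k}$ would do equally well) together with continuity — to conclude $J\not\equiv 0$ as an analytic function, after which Lemma \ref{analytic} applies. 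Without this (or some substitute cancellation analysis), your step (2) does not close.

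Two smaller points. First, your excision of a ``grazing null set'' of $v'$ via a Sard-type argument is unnecessary and weakens the later steps: for $s_{1}\in(t_{k+1}+\tfrac{\varepsilon}{2},t_{k}-\tfrac{\varepsilon}{2})$ the base point $X_{\mathbf{cl}}(s_{1})$ lies in the \emph{interior} of $\Omega$, so $\alpha(X_{\mathbf{cl}}(s_{1}),v')\geq\xi^{2}(X_{\mathbf{cl}}(s_{1}))>0$ for \emph{every} $v'$, and the Velocity Lemma \ref{velocity} then keeps the whole primed cycle non-grazing; hence $J$ is analytic on the full region, which is what a clean application of Lemma \ref{analytic} (whose proof needs $\kappa\not\equiv 0$ near every zero point of a region) and your compactness/continuity step really require — analyticity only off an unlocated null set leaves both the connectedness hypothesis and the continuity of $J$ on ``box minus $O$'' unjustified. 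Second, the interval $[t_{k+1}+\varepsilon,t_{k}-\varepsilon]$ depends on $(t,x,v)$ through $t_{k},t_{k+1}$, so when you pass from the center $(t_{0},x_{0},v_{0})$ of a ball to nearby $(t,x,v)$ you must nest the shifted interval inside the slightly larger one at the center, as the paper does in (\ref{taut}); this is fixable but should be said.
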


\begin{proof}
Fix $(t,x,v)$ such that $x,v\in A_{\alpha }$ in (\ref{aalpha}), and fix $%
k,k^{\prime }.$ Since $x,v\notin \gamma _{0}$, by Velocity Lemma \ref%
{velocity}, we deduce that $\alpha (t_{k})=\{n(x_{k})\cdot v_{k}\}^{2}\neq 0$
so that $t_{k}-t_{k+1}>0$ from (\ref{tlower}). We note since $|v^{\prime
}|\geq \frac{1}{N}$ from (\ref{aalpha}), $t_{k}-t_{k+1}\leq N$diam$\Omega .$
Since for $t_{k+1}+\frac{\varepsilon }{2}\leq s_{1}\leq t_{k}-\frac{%
\varepsilon }{2},$ $x_{k}-(s_{1}-t_{k})v_{k}\in \Omega \mathbf{,}$ the
interior of the domain with $\varepsilon $ sufficiently small. From (\ref%
{alphaxv}) $\alpha (x_{k}+(s_{1}-t_{k})v_{k},v^{\prime })>0$ for all $%
v^{\prime }$. This implies that along its back-time specular cycle $[X_{%
\mathbf{cl}}^{\prime }(s),V_{\mathbf{cl}}^{\prime }(s)],$ $\alpha
(t_{l^{\prime }}^{\prime })>0$ and $v_{l^{\prime }}^{\prime }\cdot
n(x_{l^{\prime }}^{\prime })\neq 0$ from the Velocity Lemma \ref{velocity}$.$
Clearly, by the Velocity Lemma \ref{velocity} and part (2) of Lemma \ref%
{huang}, $t_{l}^{\prime },x_{l}^{\prime },v_{l}^{\prime }$ are analytical
functions of $s_{1},s,v^{\prime }.$ Therefore the function $J_{k,k^{\prime
}}(t,x,v,s_{1},s,v^{\prime })$ is well-defined, and analytic for all $%
v^{\prime }\in \mathbf{R}^{3},$ $s\in \mathbf{R}$, and $t_{k+1}+\frac{%
\varepsilon }{2}\leq s_{1}\leq t_{k}-\frac{\varepsilon }{2}$. Moreover,
expanding as a polynomial of $s,$ we obtain 
\begin{equation*}
J(t,x,v,s_{1},s,v^{\prime })=\det \left( \frac{\partial v_{k^{\prime
}}^{\prime }}{\partial v^{\prime }}\right) s^{3}+p_{1}s^{2}+p_{2}s+p_{3}
\end{equation*}%
where $p_{i}=p_{i}(t,x,v,s_{1},v^{\prime })$ is an analytical function of $%
(s_{1},v^{\prime })\in (t_{k+1}+\frac{\varepsilon }{2},t_{k}-\frac{%
\varepsilon }{2})\times \mathbf{R}^{3}.$ But at $s_{1}=t_{k+1}$, $X_{\mathbf{%
cl}}(s_{1})=x_{k+1}\in \partial \Omega .$ From Proposition \ref{jacobian},
there exists $v_{0}^{\prime }$ with $\alpha (t_{k+1})=\{v_{0}^{\prime }\cdot
n(x_{k+1})\}^{2}=\varepsilon _{0}^{4}>0$ such that $\det \left( \frac{%
\partial v_{k^{\prime }}^{\prime }}{\partial v^{\prime }}\right)
|_{v^{\prime }=v_{0}}\neq 0.$ Since $v_{0}\cdot n(x_{k+1})\neq 0,$ by the
Velocity lemma \ref{velocity} and Lemma \ref{huang} for such $v_{0}^{\prime
},$ $\det \left( \frac{\partial v_{k}^{\prime }}{\partial v^{\prime }}%
\right) $ is continuous with respect to $y$ near $x_{k+1}.$ In particular,
along $[X_{\mathbf{cl}}^{\prime }(s),V_{\mathbf{cl}}^{\prime }(s)]$ in (\ref%
{x'}), $\det \left( \frac{\partial v_{k^{\prime }}^{\prime }}{\partial
v^{\prime }}\right) |_{v^{\prime }=v_{0}}\neq 0$ for some $s_{1}$ at $%
t_{k+1}+\frac{3\varepsilon }{4},$ for $\varepsilon $ sufficiently small so
that $x_{k}+(s_{1}-t_{k})v_{k}\backsim x_{k}+(t_{k+1}-t_{k})v_{k}\backsim
x_{k+1}$. Therefore, $\det \left( \frac{\partial v_{k}^{\prime }}{\partial
v^{\prime }}\right) $ is an analytical function which is not identically
zero, so is $J_{k,k^{\prime }}(t,x,v,s_{1},s,v^{\prime })$ as an analytical
function of $(s_{1},s,v^{\prime })\in (t_{k+1}+\frac{\varepsilon }{2},t_{k}-%
\frac{\varepsilon }{2})\times \mathbf{R\times R}^{3}$. By Lemma \ref%
{analytic}, for each $(t,x,v),$ there exists an open set $O_{t,x,v}$ of $%
s_{1},s,v^{\prime }$ in $(t_{k+1}+\frac{\varepsilon }{2},t_{k}-\frac{%
\varepsilon }{2})\times \mathbf{R}\times \mathbf{R}^{3}$ such that $%
|O_{t,x,v}|<\varepsilon ,$ and for $(s_{1},s,v^{\prime })\notin O_{t,x,v},$ $%
J(t,x,v,s_{1},s,v^{\prime })\neq 0.$ Therefore, by continuity of $%
J(t,x,v,s_{1},s,v^{\prime })$ with respect to $s_{1},s,v^{\prime }$, there
exists $\delta _{t,x,v,N,T_{0},\varepsilon ,\varepsilon _{1},k,k^{\prime
}}>0,$ such that 
\begin{equation*}
|J(t,x,v,s_{1},s,v^{\prime })|>\delta _{t,x,v,N,T_{0},\varepsilon
,k,k^{\prime }}>0
\end{equation*}%
for the compact set: 
\begin{equation*}
(s_{1},s,v^{\prime })\in O_{t,x,v}^{c}\cap \lbrack t_{k+1}+\frac{%
3\varepsilon }{4},t_{k}-\frac{3\varepsilon }{4}]\times \lbrack
0,T_{0}]\times \{|v^{\prime }|\leq 2N\}.
\end{equation*}

Since $\alpha (x,v)\geq \frac{1}{N},$ by the Velocity Lemma \ref{velocity}
and par (2) of Lemma \ref{huang}, $t_{k},x_{k},$ and $v_{k}$ are analytic
functions respect to $(t,x,v),$ and $x_{k^{\prime }}^{\prime }$ and $%
t_{k^{\prime }}^{\prime }~$are analytic with respect to $(t,x,v)$ as well$.$
Therefore, there exists an open ball $B(t,x,v;r_{(t,x,v,\varepsilon )})$
such that if $(\tau ,y,w)\in B(t,x,v;r_{(t,x,v,\varepsilon )}),$ 
\begin{eqnarray}
t_{k+1}(\tau ,y,w) &<&t_{k+1}(t,x,v)+\frac{\varepsilon }{2}%
<s_{1}<t_{k}(t,x,v)-\frac{\varepsilon }{2}<t_{k}(\tau ,y,w),  \notag \\
t_{k+1}(t,x,v)+\frac{\varepsilon }{2} &<&t_{k+1}(\tau ,y,w)+\varepsilon ,%
\text{ \ }t_{k}(\tau ,y,w)-\varepsilon <t_{k}(t,x,v)-\frac{\varepsilon }{2}.
\label{taut}
\end{eqnarray}%
Hence by (\ref{taut}), $J_{k,k^{\prime }}(\tau ,y,z,s_{1},s,v^{\prime })$ is
well-defined, continuous, and we may then assume 
\begin{equation*}
|J_{k,k^{\prime }}(\tau ,y,z,s_{1},s,v^{\prime })|>\frac{\delta
_{t,x,v,T_{0},N,\varepsilon ,k,k^{\prime }}}{2}>0,
\end{equation*}%
in $B(t,x,v;r_{(t,x,v,\varepsilon )})\times O_{t,x,v}^{c}\cap \lbrack
t_{k+1}(t,x,v)+\frac{\varepsilon }{2},t_{k}(t,x,v)-\frac{\varepsilon }{2}%
]\times \lbrack 0,T_{0}]\times \{|v^{\prime }|\leq 2N\},$ and clearly also
on the smaller set (by (\ref{taut})): 
\begin{equation*}
B(t,x,v;r_{(t,x,v,\varepsilon )})\times O_{t,x,v}^{c}\cap \lbrack
t_{k+1}(\tau ,y,z)+\varepsilon ,t_{k}(\tau ,y,z)-\varepsilon ]\times \lbrack
0,T_{0}]\times \{|v^{\prime }|\leq 2N\}.
\end{equation*}%
Now by a finite covering for the compact set $[0,T_{0}]\times A_{\alpha }$
by such $B(t,x,v;r),$ there are $[t_{1},x%
\,_{1},v_{1}],...[t_{m},x_{m},v_{m}] $ such that $[0,T_{0}]\times A_{\alpha
}\subset \cup _{i=1}^{m}B(t_{i},x_{i},v_{i};r_{i}).$ For any point $(t,x,v),$
there is $i$ so $(t,x,v)\in B(t_{i},x_{i},v_{i};r_{i}(\varepsilon ))$ and%
\begin{equation*}
|J(t,,x,v,s_{1,}s,v^{\prime })|>\min_{1\leq i\leq m}\frac{\delta
_{i,T_{0},N,\varepsilon .k,k^{\prime }}}{2}>0
\end{equation*}%
for $(s_{1},s,v^{\prime })\in O_{t_{i},x_{i},v_{i}}^{c}\cap \lbrack
t_{k+1}+\varepsilon ,t_{k}-\varepsilon ]\times \lbrack 0,T_{0}]\times
\{|v^{\prime }|\leq 2N\}.$
\end{proof}

\begin{theorem}
\label{specularrate}Assume $w^{-2}\{1+|v|\}\in L^{1}.$ Assume that $\xi $ is
both strictly convex (\ref{convexity}) and analytic, and the mass (\ref{mass}%
) and energy (\ref{energy}) are conserved. In the case of $\Omega $ has
rotational symmetry (\ref{axis}), we also assume conservation of
corresponding angular momentum (\ref{axiscon}). Let $h_{0}\in L^{\infty }.$
There exits a unique solution to both the (\ref{lboltzmann}) and (\ref%
{lboltzmannh}) with boundary specular condition, and the exponential decay (%
\ref{bouncebackdecay}) is valid.
\end{theorem}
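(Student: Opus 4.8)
The argument parallels that of Theorem \ref{bouncebackrate}, with the bounce-back cycles replaced by the specular cycles of Definition \ref{specularcycles} and the delicate change-of-variable input supplied by Proposition \ref{jacobian} and Lemma \ref{specularlower}. First I would dispose of well-posedness exactly as in the bounce-back case: the Duhamel iteration (\ref{duhamel}) yields an $L^{\infty }$ solution $h(t)=U(t)h_{0}$ of (\ref{lboltzmannh}), Ukai's trace theorem gives $h_{\gamma }\in L^{\infty }$, and since $w^{-2}\{1+|v|\}\in L^{1}$ the function $f=h/w$ lies in $L^{2}$ with $f_{\gamma }\in L_{\mathrm{loc}}^{2}(L^{2}(\gamma ))$ and solves (\ref{lboltzmann}); uniqueness is the standard $L^{2}$ energy identity. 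By Theorem \ref{L2decay}(3) --- which is exactly where the mass (\ref{mass}), energy (\ref{energy}) and, under rotational symmetry, angular-momentum (\ref{axiscon}) conservation laws enter --- one has $e^{\lambda t}\|f(t)\|\leq C\|f_{0}\|$. Hence, by the bootstrap Lemma \ref{bootstrap}, it suffices to establish the finite-time estimate (\ref{finitetime}), $\|U(T_{0})h_{0}\|_{\infty }\leq e^{-\lambda T_{0}}\|h_{0}\|_{\infty }+C_{T_{0}}\int_{0}^{T_{0}}\|f(s)\|ds$, for one large $T_{0}$.

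To prove (\ref{finitetime}) I would iterate Duhamel once, (\ref{duhamel2}), for the pair $U(t),G(t)$. The first two terms are controlled by Lemma \ref{speculargdecay} and $\|K_{w}h\|_{\infty }\leq C\|h\|_{\infty }$ from Lemma \ref{kernel}: they are $\leq (1+t)e^{-\nu _{0}t}\|h_{0}\|_{\infty }$. For the third term I would fix $(t,x,v)\notin \gamma _{0}$, apply the specular representation (\ref{specularformula}) twice to arrive at (\ref{hspecular}), and split the $(s_{1},s,v^{\prime },v^{\prime \prime })$-integral into the same regions as in Theorem \ref{inflowrate} and Theorem \ref{bouncebackrate}: large outer velocity $|v|\geq N$ (Case 1, using $\int \int K_{w}K_{w}\leq C/N$); large intermediate velocities $|v^{\prime }|\geq 2N$ or $|v^{\prime \prime }|\geq 3N$ (Case 2, using the Gaussian tail (\ref{kwe})); and the short-time slab $s_{1}-s\leq \varepsilon $ (Case 3). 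Each contributes $(\varepsilon +C_{\varepsilon }/N)\sup_{s}\{e^{\nu _{0}s/2}\|h(s)\|_{\infty }\}$ plus decaying data terms, verbatim as before, leaving only the bounded, non-short-time piece with $|v|\leq N$, $|v^{\prime }|\leq 2N$, $|v^{\prime \prime }|\leq 3N$, $s_{1}-s\geq \varepsilon $, on which I would replace $K_{w}$ by a smooth compactly supported $K_{N}$ as in (\ref{approximate}), absorbing the error into the same small coefficient.

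The heart of the matter is the remaining term, of the form $\int \int \int e^{-\nu _{0}(t-s)}|K_{N}(V_{\mathbf{cl}}(s_{1}),v^{\prime })K_{N}(V_{\mathbf{cl}}^{\prime }(s),v^{\prime \prime })|\,|h(s,X_{\mathbf{cl}}^{\prime }(s),v^{\prime \prime })|$, which must be bounded by $C_{N,\varepsilon ,T_{0}}\int_{0}^{T_{0}}\|f(s)\|ds$. Here I would restrict the outer point to the non-grazing set $A_{\alpha }$ of (\ref{aalpha}); on its complement the analysis reduces to the velocity-size cases already treated together with a small-angle estimate of the type (\ref{bouncebacksmallangle}). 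On $A_{\alpha }$, the Velocity Lemma \ref{velocity} (via (\ref{vknk}) and (\ref{tlower})) gives a uniform lower bound $t_{k}-t_{k+1}\geq c(N)>0$ along the specular cycle, so both the outer cycle and the inner cycle of $(s_{1},X_{\mathbf{cl}}(s_{1}),v^{\prime })$ (with $|v^{\prime }|\leq 2N$, $v^{\prime }$ non-grazing) make at most $C_{T_{0},N,\varepsilon }$ bounces, bounded as in the claim (\ref{bouncebackclaim}). Writing $X_{\mathbf{cl}}^{\prime }(s)=x_{k^{\prime }}^{\prime }+(s-t_{k^{\prime }}^{\prime })v_{k^{\prime }}^{\prime }$ on each cycle segment, the change of variables $v^{\prime }\mapsto y=X_{\mathbf{cl}}^{\prime }(s)$ has Jacobian $J_{k,k^{\prime }}(t,x,v,s_{1},s,v^{\prime })$; by Lemma \ref{specularlower} --- whose proof rests on the asymptotic formula $\det (\partial v_{k}/\partial v_{1})=\zeta (k)+1+O(\varepsilon _{0})\neq 0$ for nearly-tangential cycles in Proposition \ref{jacobian}, combined with analyticity of $\xi $ and Lemma \ref{analytic} --- one has $|J_{k,k^{\prime }}|\geq \delta >0$ off exceptional velocity sets $O_{i}$ with $|O_{i}|<\varepsilon $, uniformly over a finite covering of $[0,T_{0}]\times A_{\alpha }$. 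On $O_{i}$ the $v^{\prime }$-integral is $\leq C_{N}\varepsilon $; off $O_{i}$, using a further finite covering on which $v^{\prime }\mapsto y$ is invertible, the change of variables and Cauchy--Schwarz turn the inner integral into $\delta ^{-1}(\int_{\Omega }h^{2}(s,y,v^{\prime \prime })\,dy)^{1/2}$, and integration in $v^{\prime \prime },s,s_{1}$ produces $C_{N,\varepsilon ,T_{0}}\int_{0}^{T_{0}}\|f(s)\|ds$ since $f=h/w$. Collecting everything, $e^{\nu _{0}t/2}\|h(t)\|_{\infty }$ for $t\leq T_{0}$ is $\leq C_{K}(1+t)e^{-\nu _{0}t/2}\|h_{0}\|_{\infty }+(\varepsilon +C_{\varepsilon }/N)\sup_{s\leq t}e^{\nu _{0}s/2}\|h(s)\|_{\infty }+C_{N,\varepsilon ,T_{0}}\int_{0}^{T_{0}}\|f(s)\|ds$; choosing $T_{0}$ with $2C_{K}(1+T_{0})e^{-\nu _{0}T_{0}/2}=e^{-\lambda T_{0}}$, then $N$ large and $\varepsilon $ small so the middle coefficient is $<1/2$, and evaluating at $t=T_{0}$, yields (\ref{finitetime}) and hence (\ref{bouncebackdecay}) via Lemma \ref{bootstrap}.

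The main obstacle is precisely the change of variables $v^{\prime }\mapsto X_{\mathbf{cl}}^{\prime }(s)$: a specular cycle reflects repeatedly, and $\partial X_{\mathbf{cl}}^{\prime }(s)/\partial v^{\prime }$ cannot be handled by a naive induction over the many bounces. The resolution is to abandon any pointwise lower bound on the Jacobian and show only that its zero set is small --- possible here because $\xi $ is analytic, so $J_{k,k^{\prime }}$ is analytic in $(s_{1},s,v^{\prime })$ and not identically zero (its non-vanishing near the boundary being verified by the explicit recursion (\ref{zetak})), whence Lemma \ref{analytic} applies. Everything else is a repackaging of the inflow and bounce-back arguments; the analyticity hypothesis is used nowhere else and is essential only at this step.
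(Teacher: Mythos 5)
Your outline reproduces the paper's Step 1 faithfully: well-posedness and the reduction to the finite-time estimate (\ref{finitetime}) via Lemma \ref{bootstrap} and part (3) of Theorem \ref{L2decay}, then the double Duhamel formula (\ref{hspecular}), the velocity-size cases, the bound (\ref{kk'bound}) on the number of bounces from the Velocity Lemma, and the change of variable $v'\mapsto X_{\mathbf{cl}}'(s)$ justified off small sets by Proposition \ref{jacobian}, Lemma \ref{analytic} and Lemma \ref{specularlower}, closing with Cauchy--Schwarz and the $L^{2}$ decay. But this only yields the paper's intermediate estimate (\ref{hm}), i.e.\ a bound on $\|h(t)\mathbf{1}_{A_{\alpha }}\|_{\infty }$, and your treatment of the complement of $A_{\alpha }$ is where the gap lies. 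You assert that for an outer point $(x,v)\notin A_{\alpha }$ the analysis "reduces to the velocity-size cases already treated together with a small-angle estimate of the type (\ref{bouncebacksmallangle})." That cannot work as stated: a small-angle estimate exploits that the bad directions form a set of small measure in an \emph{integration} variable, whereas the outer $(x,v)$ is the point at which the sup-norm is evaluated and is not integrated. For a nearly grazing outer point ($\alpha (x,v)<\frac{1}{N}$ with $|v|$ moderate) the outer specular cycle may bounce arbitrarily many times, the gaps $t_{k}-t_{k+1}$ are not bounded below, so the splitting $\int_{t_{k+1}+\varepsilon }^{t_{k}-\varepsilon }$ plus end pieces does not close (unboundedly many end pieces of size $\varepsilon $), and Lemma \ref{specularlower} is simply unavailable, since its finite covering and uniform Jacobian lower bound are constructed only over $[0,T_{0}]\times A_{\alpha }$. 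Yet the finite-time estimate (\ref{finitetime}) you must prove concerns the full $\|U(T_{0})h_{0}\|_{\infty }$, grazing points included.

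The missing idea is the paper's Step 2: after establishing (\ref{hm}) on $A_{\alpha }$, apply Duhamel \emph{once more}, writing $h(t)=G(t)h_{0}+\int_{0}^{t}G(t-s_{1})K_{w}h(s_{1})ds_{1}$, and split the $v'$-integral in $K_{w}h(s_{1},x,v)=\int K_{w}(v,v')h(s_{1},x,v')dv'$ according to whether $(x,v')\in A_{\alpha }$. On the complement the kernel integral is $o(1)$ as $N\rightarrow \infty $: $\int_{|v'|\geq N\text{ or }|v'|\leq 1/N}|K_{w}(v,v')|dv'=o(1)$ by (\ref{wk}), and $\alpha (x,v')\leq \frac{1}{N}$ forces $|v'\cdot n(x)|\lesssim N^{-1/2}$ with $x$ near $\partial \Omega $, so $\int_{\alpha (x,v')\leq 1/N}|K_{w}(v,v')|dv'=o(1)$ as well; on $A_{\alpha }$ one inserts (\ref{hm}). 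This converts the evaluation at a bad (grazing or extreme-velocity) outer point into an integral over $v'$, where smallness of the bad set finally helps, and only then can one choose $T_{0}$, $N$, $\varepsilon $ to close the estimate. Without this extra iteration your final inequality for $e^{\nu _{0}t/2}\|h(t)\|_{\infty }$ is not justified for $(x,v)\notin A_{\alpha }$, so the proof as proposed does not go through.
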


\begin{proof}
The well-posedness follows from the exact argument in the proof of Theorem %
\ref{bouncebackrate}. Thanks to Lemma \ref{bootstrap}, we only need to
establish the finite time estimate (\ref{finitetime}). Recall $A_{\alpha }$
in (\ref{aalpha}).

\textbf{STEP 1:}\textit{\ } Estimate of $h(t,x,v)\mathbf{1}_{A_{\alpha }}.$
We first express and estimate the main part $h(t,x,v)\mathbf{1}_{A}$ through
(\ref{hspecular}). As in the case of bounce-back reflection, the first and
the second terms in (\ref{hspecular}) are bounded by (\ref{bouncebackfirst})
and (\ref{bouncebacksecond}) respectively.

For the third main contribution in (\ref{hspecular}), notice that along the
back-time specular cycles $[X_{\mathbf{cl}}^{{}}(s),V_{\mathbf{cl}}^{{}}(s)]$
and $[X_{\mathbf{cl}}^{\prime }(s),V_{\mathbf{cl}}^{\prime }(s)]$ in (\ref%
{x'}), $|V_{\mathbf{cl}}^{{}}(s_{1})|\equiv |v|$ and $|V_{\mathbf{cl}%
}^{\prime }(s_{1})|\equiv |v^{\prime }|.$ Hence, the integration over $%
|v^{\prime }|\geq 2N$ or $|v^{\prime }|\leq 2N$ but $|v^{\prime \prime
}|\geq 3N$ are bounded by (\ref{bouncebacknn}). By using the same
approximation, we only need to concentrate on the bounded set $\{|v^{\prime
}|\leq 2N$ and $|v^{\prime \prime }|\leq 3N\}$ as in (\ref{bouncebackbd}) of%
\begin{eqnarray*}
&&\int_{0}^{t}\int_{0}^{s_{1}}\int_{|v^{\prime }|\leq 2N,|v^{\prime \prime
}|\leq 3N}e^{-v_{0}(t-s)}\mathbf{1}_{A_{\alpha }}|h\left( s,X_{\mathbf{cl}%
}^{\prime }(s),v^{\prime \prime }\right) |dv^{\prime }dv^{\prime \prime
}ds_{1}ds \\
&=&\int_{\substack{ \alpha (X_{\mathbf{cl}}(s_{1}),v^{\prime })<\varepsilon 
\\ |v^{\prime }|\leq 2N,|v^{\prime \prime }|\leq 3N,}}+\int_{\substack{ %
\alpha (X_{\mathbf{cl}}(s_{1}),v^{\prime })\geq \varepsilon  \\ |v^{\prime
}|\leq 2N,|v^{\prime \prime }|\leq 3N,}}
\end{eqnarray*}%
where we have further spit into $\alpha (X_{\mathbf{cl}}(s_{1}),v^{\prime
})\leq \varepsilon $ and $\alpha (X_{\mathbf{cl}}(s_{1}),v^{\prime
})>\varepsilon .$

In the case $\alpha (X_{\mathbf{cl}}(s_{1}),v^{\prime })\leq \varepsilon $, $%
\xi ^{2}(X_{\mathbf{cl}}(s_{1}))+[v^{\prime }\cdot $ $\nabla \xi (X_{\mathbf{%
cl}}(s_{1}))]^{2}\leq \varepsilon .$ Hence for $\varepsilon $ small, $X_{%
\mathbf{cl}}(s_{1})\backsim \partial \Omega ,$ and $|\nabla \xi (X_{\mathbf{%
cl}}(s_{1}))|\geq \frac{1}{2}$ . The first part integral is bounded by 
\begin{eqnarray*}
&&C_{N}\int_{0}^{t}\int_{0}^{s_{1}}e^{-v_{0}(t-s)}||h(s)||_{\infty
}dsds_{1}\int_{\substack{ \alpha (X_{\mathbf{cl}}(s_{1}),v^{\prime })\leq
\varepsilon  \\ |v^{\prime }|\leq 2N,|v^{\prime \prime }|\leq 3N,}} \\
&\leq &C_{N}\sup_{t\geq s}e^{-\frac{v_{0}}{2}(t-s)}||h(s)||_{\infty
}\int_{|v^{\prime }\cdot \frac{\nabla \xi (X_{\mathbf{cl}}(s_{1}))}{|\nabla
\xi (X_{\mathbf{cl}}(s_{1}))|}|\leq 2\varepsilon ,|v^{\prime }|\leq
2N,|v^{\prime \prime }|\leq 3N}\leq C_{N}\varepsilon \sup_{t\geq s}e^{-\frac{%
v_{0}}{2}(t-s)}||h(s)||_{\infty }.
\end{eqnarray*}

Finally, from (\ref{x'explicit}), we bound the first main term $\alpha (X_{%
\mathbf{cl}}(s_{1}),v^{\prime })\geq \varepsilon $ as 
\begin{eqnarray*}
&&C_{N}\int_{0}^{t}e^{-\nu _{0}(t-s)}\int_{0}^{s_{1}}\int_{\substack{ \alpha
(X_{\mathbf{cl}}(s_{1}),v^{\prime })\geq \varepsilon  \\ |v^{\prime }|\leq
2N,|v^{\prime \prime }|\leq 3N}}\mathbf{1}_{A_{\alpha }}|h\left( s,X_{%
\mathbf{cl}}^{\prime }(s),v^{\prime \prime }\right) |dv^{\prime }dv^{\prime
\prime } \\
&=&C_{N}\sum_{k,k^{\prime }}\int_{t_{k+1}}^{t_{k}}\int_{t_{k^{\prime
}+1}^{\prime }}^{t_{k^{\prime }}^{\prime }}\int_{\substack{ \alpha (X_{%
\mathbf{cl}}(s_{1}),v^{\prime })\geq \varepsilon  \\ |v^{\prime }|\leq
2N,|v^{\prime \prime }|\leq 3N}}\mathbf{1}_{A_{\alpha }}e^{-\nu
_{0}(t-s)}|h\left( s,x_{k^{\prime }}^{\prime }+(s-t_{k^{\prime }}^{\prime
})v_{k^{\prime }}^{\prime },v^{\prime \prime }\right) |,
\end{eqnarray*}%
where $[t_{k^{\prime }}^{\prime },x_{k^{\prime }}^{\prime },v_{k^{\prime
}}^{\prime }]$ is the back-time cycle of $%
(s_{1},x_{k}+(s_{1}-t_{k})v_{k},v_{k}),$ for $t_{k+1}\leq s_{1}\leq t_{k}.$

We now study $x_{k^{\prime }}^{\prime }+(s-t_{k^{\prime }}^{\prime
})v_{k^{\prime }}^{\prime }.$ By the repeatedly using Velocity Lemma \ref%
{velocity}, we deduce for $(t,x,v)\in A_{\alpha }$ and $\alpha
(X(s_{1}),v^{\prime })\geq \varepsilon ,|v^{\prime }|\leq 2N:$ 
\begin{eqnarray*}
\alpha (t_{l}) &=&\{v_{l}\cdot n_{x_{l}}\}^{2}\geq e^{-\{C_{\xi
}N-1\}T_{0}}\alpha (s_{1})\geq C_{T_{0},\xi ,N}>0; \\
\alpha (t_{l}^{\prime }) &=&\{v_{l}^{\prime }\cdot n_{x_{l}^{\prime
}}\}^{2}\geq e^{-\{C_{\xi }N-1\}T_{0}}\alpha (X_{\mathbf{cl}%
}(s_{1}),v^{\prime })\geq C_{T_{0},N,\xi }\varepsilon >0.
\end{eqnarray*}%
Therefore, applying (\ref{tlower}) in Lemma \ref{huang} yields $%
t_{l}-t_{l+1}\geq \frac{c_{T_{0},\xi ,N}}{N^{2}}$ and $t_{l}^{\prime
}-t_{l+1}^{\prime }\geq \frac{c_{T_{0},\xi ,N}\varepsilon }{4N^{2}}$ so that 
\begin{equation}
k\leq \frac{T_{0}N^{2}}{c_{T_{0},\xi ,N}}=C_{T_{0},\xi ,N},\text{ \ \ }%
k^{\prime }\leq \frac{T_{0}N^{2}}{c_{T_{0},\xi ,N}\varepsilon }=C_{T_{0},\xi
,N,\varepsilon }.  \label{kk'bound}
\end{equation}

We therefore further split the $s_{1}-$integral as 
\begin{eqnarray*}
&&C_{K,N}\int_{t_{k+1}}^{t_{k}}\int_{\substack{  \\ |v^{\prime }|\leq
2N,|v^{\prime \prime }|\leq 3N}}\sum_{k\leq C_{T_{0},N},k^{\prime }\leq
C_{T_{0},N,}\varepsilon }\int_{t_{k^{\prime }+1}^{\prime }}^{t_{k^{\prime
}}^{\prime }}\mathbf{1}_{A_{\alpha }}e^{-\nu _{0}(t-s)}|h\left(
s,x_{k^{\prime }}^{\prime }+(s-t_{k^{\prime }}^{\prime })v_{k^{\prime
}}^{\prime },v^{\prime \prime }\right) | \\
&=&\int_{t_{k+1}+\varepsilon }^{t_{k}-\varepsilon
}+\int_{t_{k}}^{t_{k}-\varepsilon }+\int_{t_{k+1}}^{t_{k+1}+\varepsilon }.
\end{eqnarray*}

Since $\sum_{k^{\prime }}\int_{t_{k^{\prime }+1}^{\prime }}^{t_{k^{\prime
}}^{\prime }}=\int_{0}^{s_{1}},$ the last two terms make small contribution
as 
\begin{equation*}
\varepsilon C_{K,N}\sup_{0\leq s\leq t}e^{-\nu _{0}(t-s)}||h(s)||_{\infty
}\int_{0}^{T_{0}}\int_{\substack{  \\ |v^{\prime }|\leq 2N,|v^{\prime \prime
}|\leq 3N}}=\varepsilon C_{K,N}\sup_{0\leq s\leq t}e^{-\nu
_{0}(t-s)}||h(s)||_{\infty }.
\end{equation*}%
For the main contribution $\int_{t_{k+1}+\varepsilon }^{t_{k}-\varepsilon },$
By Lemma \ref{specularlower}, $\ $\ on the set $O_{t_{i},x_{i},v_{i}}^{c}%
\cap \lbrack t_{k+1}+\varepsilon ,t_{k}-\varepsilon ]\times \lbrack
0,T_{0}]\times \{|v^{\prime }|\leq N\},$ we can define a change of variable 
\begin{equation*}
y\equiv x_{k^{\prime }}^{\prime }+(s-t_{k^{\prime }}^{\prime })v_{k^{\prime
}}^{\prime },
\end{equation*}%
so that $\det (\frac{\partial y}{\partial v^{\prime }})>\delta $ on the same
set. By the Implicit Function Theorem, there are an finite open covering $%
\cup _{j=1}^{m}V_{j}$ of $O_{t_{i},x_{i},v_{i}}^{c}\cap \lbrack
t_{k+1}+\varepsilon ,t_{k}-\varepsilon ]\times \lbrack 0,T_{0}]\times
\{|v^{\prime }|\leq N\}$, and smooth function $F_{j}$ such that $v^{\prime
}=F_{j}(t,x,v,y,s_{1},s)$ in $V_{j}$. We therefore have 
\begin{equation*}
\sum_{k,k^{\prime }}\int_{t_{k+1}+\varepsilon }^{t_{k}-\varepsilon }\int 
_{\substack{  \\ |v^{\prime }|\leq 2N,}}\int_{t_{k^{\prime }+1}^{\prime
}}^{t_{k^{\prime }}^{\prime }}\leq \sum_{k,k^{\prime
}}\int_{t_{k+1}+\varepsilon }^{t_{k}-\varepsilon }\int_{\substack{  \\ %
|v^{\prime }|\leq 2N,}}\int_{t_{k^{\prime }+1}^{\prime }}^{t_{k^{\prime
}}^{\prime }}\mathbf{1}_{O_{t_{i},x_{i},v_{i}}}+\sum_{j,k,k^{\prime
}}\int_{t_{k+1}+\varepsilon }^{t_{k}-\varepsilon }\int_{\substack{  \\ %
|v^{\prime }|\leq 2N,}}\int_{t_{k^{\prime }+1}^{\prime }}^{t_{k^{\prime
}}^{\prime }}\mathbf{1}_{V_{j}}.
\end{equation*}%
Since $\sum_{k^{\prime }}\int_{t_{k^{\prime }+1}^{\prime }}^{t_{k^{\prime
}}^{\prime }}=\int_{0}^{s_{1}}\leq \int_{0}^{T_{0}}$ and $%
|O_{t_{i},x_{i},v_{i}}|<\varepsilon ,$ the first part is bounded by $%
C_{N}\varepsilon e^{-\frac{\nu _{0}}{2}t}\sup_{s}\{e^{\frac{\nu _{0}}{2}%
s}||h(s)||_{\infty }\}$ from Lemma \ref{specularlower}.

For the second part, we can make a change of variable $v^{\prime
}\rightarrow y=x_{k^{\prime }}^{\prime }+(s-t_{k^{\prime }}^{\prime
})v_{k^{\prime }}^{\prime }$ on each $V_{j}$ to get 
\begin{eqnarray*}
&&C_{\varepsilon ,T_{0},N}\sum_{j,k,k^{\prime }}\int_{V_{j}}\int_{|v^{\prime
\prime }|\leq 3N}e^{-\nu (v)(t-s)}|h\left( s,x_{k^{\prime }}^{\prime
}+(s-t_{k^{\prime }}^{\prime })v_{k^{\prime }}^{\prime },v^{\prime \prime
}\right) | \\
&=&C_{\varepsilon ,T_{0},N}\sum_{j}\int_{V_{j}}\int_{|v^{\prime \prime
}|\leq 3N}e^{-\nu (v)(t-s)}|h\left( s,y,v^{\prime \prime }\right) |\frac{1}{%
|\det \{\frac{\partial y}{\partial v^{\prime }}\}|}dydv^{\prime \prime
}dsds_{1} \\
&\leq &\frac{C_{\varepsilon ,T_{0},N}}{\delta }\int_{0}^{t}%
\int_{0}^{s_{1}}e^{-\nu _{0}t}\int_{|v^{\prime \prime }|\leq 3N}e^{\nu
_{0}s}\left\{ \int_{\Omega }h^{2}\left( s,y,v^{\prime \prime }\right)
dy\right\} ^{1/2}dv^{\prime \prime }dsds_{1} \\
&\leq &C_{\varepsilon ,T_{0},N}\int_{0}^{t}||f(s)||ds,
\end{eqnarray*}%
where $f=\frac{h}{w}.$ We therefore conclude, summing over $k$ and $%
k^{\prime }$ and collecting terms 
\begin{eqnarray}
||h(t,x,v)\mathbf{1}_{A_{\alpha }}||_{\infty } &\leq &\{1+C_{K}t\}e^{-\nu
_{0}t}||h_{0}||_{\infty }+\{\frac{C}{N}+C_{N,T_{0}}\varepsilon \}\sup_{s}e^{-%
\frac{\nu _{0}}{2}\{t-s\}}||h(s)||_{\infty }  \notag \\
&&+C_{\varepsilon ,N,T_{0}}\int_{0}^{t}||f(s)||ds.  \label{hm}
\end{eqnarray}

\textbf{STEP 2:} Estimate of $h(t,x,v).$ We now further plug (\ref{hm}) back
in: $h(t,x,v)=G(t,s)h_{0}+\int_{0}^{t}G(t,s_{1})\{K_{w}h(s_{1})\}ds_{1}$ to
get 
\begin{equation}
||h(t)||_{\infty }\leq e^{-\nu _{0}t}||h_{0}||_{\infty }+\int_{0}^{t}e^{-\nu
_{0}\{t-s_{1}\}}||K_{w}h||_{\infty }(s_{1})ds.  \label{hduhamel}
\end{equation}%
But $\{K_{w}h\}(s_{1},x,v)=\int K_{w}(v,v^{\prime })h(s_{1},x,v^{\prime
})dv^{\prime }$ and we split it as 
\begin{equation*}
\int K_{w}(v,v^{\prime })h(s_{1},x,v^{\prime })\{1-\mathbf{1}_{A_{\alpha
}(x,v^{\prime })}\}dv^{\prime }+\int K_{w}(v,v^{\prime })h(s_{1},x,v^{\prime
})\mathbf{1}_{A_{\alpha }(x,v^{\prime })}dv^{\prime }.
\end{equation*}%
By the definition of $A_{\alpha }$ in (\ref{aalpha}), the first term is
bounded by 
\begin{equation*}
\left( \int_{|v^{\prime }|\geq N,\text{ or }|v^{\prime }|\leq \frac{1}{N}%
}|K_{w}(v,v^{\prime })|dv^{\prime }+\int_{\alpha (x,v^{\prime })\leq \frac{1%
}{N}}|K_{w}(v,v^{\prime })|\right) ||h(s_{1})||_{\infty }.
\end{equation*}%
By (\ref{wk}) and (\ref{approximate}) if necessary, $\int_{|v^{\prime }|\geq
N,\text{ or }|v^{\prime }|\leq \frac{1}{N}}|K_{w}(v,v^{\prime })|dv^{\prime
}=o(1)$ as $N\rightarrow \infty .$ From $\alpha (x,v^{\prime })\leq \frac{1}{%
N}$, $\xi ^{2}(x)+[v^{\prime }\cdot $ $\nabla \xi (x)]^{2}\leq \frac{1}{N}.$
For $N$ large, $x\backsim \partial \Omega $ and $|\nabla \xi (x)|\geq \frac{1%
}{2}$ so that 
\begin{equation*}
\int_{\alpha (x,v^{\prime })\leq \frac{1}{N}}|K_{w}(v,v^{\prime
})|dv^{\prime }\leq \int_{|v^{\prime }\cdot \frac{\nabla \xi (x)}{|\nabla
\xi (x)|}|\leq \frac{2}{\sqrt{N}}}|K_{w}(v,v^{\prime })|dv^{\prime }=o(1)
\end{equation*}%
as $N\rightarrow \infty .$ We apply (\ref{hm}) to the second term to bound $%
||\{K_{w}h\}(s_{1})||_{\infty }$ as%
\begin{equation*}
\{1+C_{K}s_{1}\}e^{-\nu _{0}s_{1}}||h_{0}||_{\infty }+\{o(1)+\frac{C}{N}%
+C_{N,T_{0}}\varepsilon \}\sup_{s}e^{-\frac{\nu _{0}}{2}\{s_{1}-s%
\}}||h(s)||_{\infty }+C_{\varepsilon ,N,T_{0}\,}\int_{0}^{s_{1}}||f(s)||ds.
\end{equation*}%
Hence, by (\ref{hduhamel}), $||h(t)||_{\infty }$ is bounded by 
\begin{eqnarray*}
&&e^{-\nu _{0}t}||h_{0}||_{\infty }+\int_{0}^{t}e^{-\nu
_{0}t}\{1+C_{K}s_{1}\}||h_{0}||_{\infty }+ \\
&&+\int_{0}^{t}e^{-\nu _{0}\{t-s_{1}\}}\{o(1)+\frac{C}{N}+C_{N,T_{0}}%
\varepsilon \}\sup_{s\leq t}e^{-\frac{\nu _{0}}{2}\{s_{1}-s\}}||h(s)||_{%
\infty }+C_{\varepsilon ,N,T_{0}}\int_{0}^{s_{1}}||f(s)||ds\}ds_{1} \\
&\leq &\{1+C_{K}t^{2}\}e^{-\nu _{0}t}||h_{0}||_{\infty }+C\{o(1)+\frac{1}{N}%
+C_{N,T_{0}}\varepsilon \}\sup_{s\leq t}\{e^{-\frac{\nu _{0}}{4}%
\{t-s\}}||h(s)||_{\infty }\}+C_{\varepsilon ,N,T_{0}}\int_{0}^{t}||f(s)||ds.
\end{eqnarray*}%
We choose $T_{0}$ large such that $2\{1+C_{K}T_{0}^{2}\}e^{-\frac{\nu _{0}}{4%
}T_{0}}=e^{-\lambda T_{0}},$ for some $\lambda >0.$ We then further choose $%
N $ large, and then $\varepsilon $ sufficiently small such that $C\{o(1)+%
\frac{1}{N}+C_{N,T_{0}}\varepsilon \}<\frac{1}{2}.$ We there have\ 
\begin{equation*}
\sup_{0\leq s\leq t}\{e^{\frac{\nu _{0}}{4}s}||h(s)||_{\infty }\}\leq
2\{1+C_{K}t^{2}\}||h_{0}||_{\infty }+C_{T_{0}}\int_{0}^{t}||f(s)||ds.
\end{equation*}%
Choosing $s=t=T_{0},$ we deduce the finite-time estimate (\ref{finitetime}),
and our theorem follows from Lemma \ref{bootstrap}.
\end{proof}

\subsection{$L^{\infty }\,$Decay of Diffuse Reflection}

\subsubsection{Infinite Cycles and $L^{\infty }$ Bound for Diffuse $G(t)$}

In this section, we study the $L^{\infty }$ decay of the diffuse reflection.
Define $h=fw$ to satisfy 
\begin{eqnarray}
\{\partial _{t}+v\cdot \nabla _{x}+\nu \}h &=&0,\text{ }h(t,x,v)|_{\gamma
_{-}}=\frac{1}{\tilde{w}(v)}\int_{\mathcal{V}(x)}h(t,x,v^{\prime })\tilde{w}%
(v^{\prime })d\sigma ,  \label{hdiffuse} \\
\text{where }\mathcal{V}(x) &\mathcal{=}&\mathcal{\{}v^{\prime }\in \mathbf{R%
}^{3}:v^{\prime }\cdot n(x)>0\},\text{ }\tilde{w}(v)\equiv \frac{1}{w(v)%
\sqrt{\mu (v)}},\text{ \ }  \label{wtilde}
\end{eqnarray}%
and by (\ref{diffusenormal}), the probability measure $d\sigma =d\sigma (x)$
is given by 
\begin{equation}
d\sigma (x)=c_{\mu }\mu (v^{\prime })\{n(x)\cdot v^{\prime }\}dv^{\prime }.
\label{prob}
\end{equation}%
For $\frac{1}{4}-\theta >0$ and $\theta >\theta _{0},$ and for $\rho $
sufficiently small, 
\begin{equation}
\tilde{w}=\frac{e^{\{\frac{1}{4}-\theta \}|v|^{2}}}{(1+\rho |v|^{2})^{\beta }%
}\geq 1,\text{ \ }\int_{\mathcal{V}}\tilde{w}^{2}d\sigma \backsim c_{\mu
}\int_{\mathcal{V}}e^{-2\theta |v|^{2}}\{n(x)\cdot v\}dv=\frac{1}{16\theta
^{2}}<\frac{1}{16\theta _{0}^{2}}.  \label{w>1}
\end{equation}%
We have used the normalization (\ref{diffusenormal}) and a change of
variable $v=\sqrt{\frac{1}{4\theta }}v^{\prime }$ to evaluate the integral.

\begin{definition}
\label{diffusecycles}Fix any point $(t,x,v)\notin \gamma _{0},$ and let $%
(t_{0},x_{0},v_{0})=(t,x,v)$. Define the back-time cycle as 
\begin{equation}
(t_{k+1},x_{k+1},v_{k+1})=(t_{k}-t_{\mathbf{b}}(x_{k},v_{k}),x_{\mathbf{b}%
}(x_{k},v_{k}),v_{k+1}),\text{ for }v_{k+1}\in \mathcal{V}%
_{k+1}=\{v_{k+1}\cdot n(x_{k+1})>0\}.  \label{diffusecycle}
\end{equation}%
And 
\begin{equation*}
X_{\mathbf{cl}}(s;t,x,v)=\sum_{k}\mathbf{1}_{[t_{k+1},t_{k})}(s)%
\{x_{k}+(s-t_{k})v_{k}\},\text{ \ }V_{\mathbf{cl}}(s;t,x,v)=\sum_{k}\mathbf{1%
}_{[t_{k+1},t_{k})}(s)v_{k}.
\end{equation*}%
We define the iterated integral for $k\geq 2$ 
\begin{equation}
\int_{\Pi _{l=1}^{k-1}\mathcal{V}_{l}}\Pi _{l=1}^{k-1}d\sigma _{l}\equiv
\int_{\mathcal{V}_{1}}...\left\{ \int_{\mathcal{V}_{k-1}}d\sigma
_{k-1}\right\} d\sigma _{1}  \label{sigma}
\end{equation}
\end{definition}

We note that each $v_{l}$ ($l=1,2,...$) are all independent variables,
however, the phase space $\mathcal{V}_{l}$ implicitly depends on $%
(t,x,v,v_{1},v_{2},...v_{l-1}).$ We first show that the set in the phase
space $\Pi _{l=1}^{k-1}\mathcal{V}_{l}$ not reaching $t=0$ after $k$ bounces
is small when $k$ is large.

\begin{lemma}
\label{small}For any $\varepsilon >0,$ there exists $k_{0}(\varepsilon
,T_{0})$ such that for $k\geq k_{0},$ for all $(t,x,v),0\leq t\leq T_{0},$ $%
x\in \bar{\Omega}$ and $v\in \mathbf{R}^{3},$%
\begin{equation*}
\int_{\Pi _{l=1}^{k-1}\mathcal{V}_{l}}\mathbf{1}_{\mathcal{\{}%
t_{k}(t,x,v,v_{1},v_{2}...,v_{k-1})>0\}}\Pi _{l=1}^{k-1}d\sigma _{l}\leq
\varepsilon .
\end{equation*}
\end{lemma}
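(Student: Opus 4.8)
The plan is to exploit the fact that each bounce of the diffuse cycle consumes a definite amount of time \emph{unless} the reflected velocity is nearly grazing, together with the fact that the $\sigma_l$-measure of nearly grazing velocities is small. First I would reduce the statement to a form in which the parameters $t,x,v$ do not appear. Since the cycle satisfies $t_{k}=t-\sum_{l=0}^{k-1}t_{\mathbf b}(x_l,v_l)$ with every $t_{\mathbf b}\ge 0$ and $t\le T_{0}$, the event $\{t_{k}>0\}$ is contained in $\{\sum_{l=1}^{k-1}t_{\mathbf b}(x_l,v_l)<T_{0}\}$, which no longer involves $t$, $x$ or $v$; this is what makes the eventual bound uniform in $(t,x,v)$.

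Second, the key pointwise estimate: for every $x\in\partial\Omega$, every $N$ and every $\delta>0$,
\[
\int_{\mathcal V(x)}\mathbf 1_{\{t_{\mathbf b}(x,v)<\delta\}}\,d\sigma(x)\ \le\ \sigma\big(\{|v|\ge N\}\big)+C_{\xi}\,N^{4}\delta^{2}.
\]
Indeed, part (3) of Lemma \ref{huang} gives $t_{\mathbf b}(x,v)\ge |n(x)\cdot v|/(C_{\xi}|v|^{2})$ for $x\in\partial\Omega$ and $v\in\mathcal V(x)$, so on $\{|v|\le N\}$ the condition $t_{\mathbf b}<\delta$ forces $|n(x)\cdot v|\le C_{\xi}N^{2}\delta$; a rotation sending $n(x)$ to $e_{3}$ — under which $\mu$, $d\sigma$ and the normalization (\ref{diffusenormal}) are invariant — reduces the $\sigma$-measure of this slab to an $x$-independent Gaussian integral bounded by $C\,(C_{\xi}N^{2}\delta)^{2}$, and likewise $\sigma(\{|v|\ge N\})\to 0$ as $N\to\infty$ uniformly in $x$. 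Hence I may first fix $N$ and then $\delta$ so small that the right-hand side is $\le p$ for some fixed $p<1$, uniformly over all $x\in\partial\Omega$.

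Third, a counting/iterated-integral argument. On $\{\sum_{l=1}^{k-1}t_{\mathbf b}(x_l,v_l)<T_{0}\}$ at most $m_{0}:=\lfloor T_{0}/\delta\rfloor$ of the indices $l\in\{1,\dots,k-1\}$ can satisfy $t_{\mathbf b}(x_l,v_l)\ge\delta$, so this set is covered by the $\binom{k-1}{m_{0}}$ sets $\bigcap_{l\notin S}\{t_{\mathbf b}(x_l,v_l)<\delta\}$ with $S$ an $m_{0}$-subset of $\{1,\dots,k-1\}$. For a fixed $S$ I would evaluate the iterated integral (\ref{sigma}) from the innermost variable $v_{k-1}$ outward; because $x_{l}$ depends only on $v_{1},\dots,v_{l-1}$, integrating out $v_{l}$ with the remaining variables frozen produces a factor $\le p$ when $l\notin S$ (the key estimate, applied at the boundary point $x_{l}$) and a factor $=1$ when $l\in S$ (each $d\sigma_{l}$ is a probability measure by (\ref{diffusenormal})). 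Thus the contribution of $S$ is $\le p^{\,(k-1)-m_{0}}$, and summing over $S$ gives
\[
\int_{\Pi_{l=1}^{k-1}\mathcal V_{l}}\mathbf 1_{\{t_{k}>0\}}\,\Pi_{l=1}^{k-1}d\sigma_{l}\ \le\ \binom{k-1}{m_{0}}p^{\,(k-1)-m_{0}}\ \le\ (k-1)^{m_{0}}p^{\,(k-1)-m_{0}}.
\]
With $m_{0}$ and $p<1$ now fixed, the right-hand side tends to $0$ as $k\to\infty$, so choosing $k_{0}(\varepsilon,T_{0})$ accordingly finishes the proof. Routine points I would only mention in passing: the grazing set $\gamma_{0}$ and the boundaries of the integration regions are $\sigma$-negligible by Lemma \ref{huang}(1),(4), so all trajectories in the support of the integrand are bona fide cycles. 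I expect the main obstacle to be the bookkeeping in the peeling step — namely making precise that the factor $\le p$ is available \emph{uniformly in the past history} $v_{1},\dots,v_{l-1}$ (which is exactly why the key estimate must be uniform over $x\in\partial\Omega$) and that the velocity truncation $|v_{l}|\ge N$ is absorbed layer-by-layer rather than through a useless union bound over $l$; everything else is Gaussian integration and counting.
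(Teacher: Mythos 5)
Your proposal is correct and follows essentially the same route as the paper: the smallness of the $\sigma$-measure of grazing/large velocities (your slab estimate is the paper's (\ref{v-v})-type bound), the lower bound (\ref{tlower}) forcing at most $O(T_{0}/\delta^{3})$-many non-grazing bounces within time $T_{0}$, and a binomial counting of configurations, with the bound $(k-1)^{m_{0}}p^{k-1-m_{0}}\to 0$. The only cosmetic difference is that you package the "bad" event directly as $\{t_{\mathbf b}(x_{l},v_{l})<\delta\}$ and peel the iterated integral over $m_{0}$-subsets, whereas the paper splits into $\mathcal{V}_{l}^{\delta}$ and its complement and sums over the exact number of non-grazing indices; the two bookkeeping schemes are equivalent.
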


\begin{proof}
Choosing $0<\delta <1$ sufficiently small$,$ we further define non-grazing
sets for $1\leq l\leq k-1$ as%
\begin{equation*}
\mathcal{V}_{l}^{\delta }=\{v_{l}\in \mathcal{V}_{l}:\text{ }v_{l}\cdot
n(x_{l})\geq \delta \}\cap \{v_{l}\in \mathcal{V}_{l}:\text{ }|v_{l}|\leq 
\frac{1}{\delta }\}.
\end{equation*}%
Clearly, by the same argument in (\ref{nvsmall}), 
\begin{equation}
\int_{\mathcal{V}_{l}\setminus \mathcal{V}_{l}^{\delta }}d\sigma _{l}\leq
\int_{v_{l}\cdot n(x_{l})\leq \delta }d\sigma _{l}+\int_{|v_{l}|\geq \frac{1%
}{\delta }}d\sigma _{l}\leq C\delta ,  \label{v-v}
\end{equation}%
where $C$ is independent of $l.$ On the other hand, if $v_{l}\in \mathcal{V}%
_{l}^{\delta },$ then from diffusive back-time cycle (\ref{diffusecycle}),
we have $x_{l}-x_{l+1}=(t_{l}-t_{l+1})v_{l}.$ By (\ref{tlower}) in Lemma \ref%
{huang}$,$ since $|v_{l}|\leq \frac{1}{\delta },$ and $v_{l}\cdot
n(x_{l})\geq \delta ,$ 
\begin{equation*}
(t_{l}-t_{l+1})\geq \frac{\delta ^{3}}{C_{\xi }}.
\end{equation*}%
Therefore, if $t_{k}(t,x,v,v_{1},v_{2}...,v_{k-1})>0,$ then there can be at
most $\left[ \frac{C_{\xi }T_{0}}{\delta ^{3}}\right] +1$ number of $v_{l}$ $%
\in \mathcal{V}_{l}^{\delta }$ for $1\leq l\leq k-1.$ We therefore have 
\begin{eqnarray*}
&&\int_{\mathcal{V}_{1}}...\left\{ \int_{\mathcal{V}_{k-1}}\mathbf{1}_{%
\mathcal{\{}t_{k}>0\}}d\sigma _{k-1}\right\} d\sigma _{k-2}...d\sigma _{1} \\
&\leq &\sum_{j=1}^{\left[ \frac{C_{\xi }T_{0}}{\delta ^{3}}\right]
+1}\int_{\{\text{There are exactly }j\text{ of }v_{l_{i}}\in \mathcal{V}%
_{l_{i}}^{\delta },\text{ and }k-1-j\text{ of }v_{l_{i}}\notin \mathcal{V}%
_{l_{i}}^{\delta }\}}\Pi _{l=1}^{k-1}d\sigma _{l} \\
&\leq &\sum_{j=1}^{\left[ \frac{C_{\xi }T_{0}}{\delta ^{3}}\right] +1}\binom{%
k-1}{j}|\sup_{l}\int_{\mathcal{V}_{l}^{\delta }}d\sigma _{l}|^{j}\left\{
\sup_{l}\int_{\mathcal{V}_{l}\setminus \mathcal{V}_{l}^{\delta }}d\sigma
_{l}\right\} ^{k-j-1}.
\end{eqnarray*}%
Since $d\sigma $ is a probability measure, $\int_{\mathcal{V}_{l}^{\delta
}}d\sigma _{l}\leq 1,$ and 
\begin{equation*}
\left\{ \int_{\mathcal{V}_{l}\setminus \mathcal{V}_{l}^{\delta }}d\sigma
_{l}\right\} ^{k-j-1}\leq \left\{ \int_{\mathcal{V}_{l}\setminus \mathcal{V}%
_{l}^{\delta }}d\sigma _{l}\right\} ^{k-2-\left[ \frac{C_{\xi }T_{0}}{\delta
^{3}}\right] }\leq \{C\delta \}^{k-2-\left[ \frac{C_{\xi }T_{0}}{\delta ^{3}}%
\right] }.
\end{equation*}%
But $\binom{k-1}{j}\leq \{k-1\}^{j}\leq \{k-1\}^{\left[ \frac{C_{\xi }T_{0}}{%
\delta ^{3}}\right] +1},$ we deduce that 
\begin{equation*}
\int \mathbf{1}_{\{t_{k}>0\}}\Pi _{l=1}^{k-1}d\sigma _{l}\leq \{k-1\}^{\left[
\frac{C_{\xi }T_{0}}{\delta ^{3}}\right] +1}\{C\delta \}^{k-2-\left[ \frac{%
C_{\xi }T_{0}}{\delta ^{3}}\right] }.
\end{equation*}%
For $\varepsilon >0,$ our lemma follows for $C\delta <1,$ and $k>>\left[ 
\frac{C_{\xi }T_{0}}{\delta ^{3}}\right] +1.$
\end{proof}

\begin{lemma}
Assume that $h,\frac{q}{\nu }\in L^{\infty }$ satisfy $\{\partial
_{t}+v\cdot \nabla _{x}+\nu \}h=q(t,x,v),$ with the diffuse boundary
condition (\ref{hdiffuse}). Recall the diffusive cycles in (\ref%
{diffusecycle}). Then for any $0\leq s\leq t,$ for almost every $x,v$, if $%
t_{1}(t,x,v)\leq s,$ 
\begin{equation}
h(t,x,v)=e^{\nu (s-t)}h(s,x-v(t-s),v)+\int_{s}^{t}e^{\nu (\tau -t)}q(\tau
,x-v(t-\tau ),v)d\tau ;  \label{t1le0}
\end{equation}%
If $t_{1}(t,x,v)>s,$ then for $k\geq 2,$ 
\begin{equation*}
h(t,x,v)=\int_{t_{1}}^{t}e^{\nu (\tau -t)}q(\tau ,x-v(t-\tau ),v)d\tau +%
\frac{e^{\nu (v)(t_{1}-t)}}{\tilde{w}(v)}\int_{\Pi _{j=1}^{k-1}\mathcal{V}%
_{j}}H
\end{equation*}%
where $H$ is given by 
\begin{eqnarray}
&&\sum_{l=1}^{k-1}\mathbf{1}_{\{t_{l}>s,t_{l+1}\leq
s\}}h(s,x_{l}+(s-t_{l})v_{l},v_{l})d\Sigma _{l}(s)  \notag \\
&&+\sum_{l=1}^{k-1}\int_{s}^{t_{l}}\mathbf{1}_{\{t_{l}>s,t_{l+1}\leq
s\}}q(\tau ,x_{l}+(\tau -t_{l})v_{l},v_{l})d\Sigma _{l}(\tau )d\tau  \notag
\\
&&+\sum_{l=1}^{k-1}\int_{t_{l+1}}^{t_{l}}\mathbf{1}_{\{t_{l+1}>s\}}q(\tau
,x_{l}+(\tau -t_{l})v_{l},v_{l})d\Sigma _{l}(\tau )d\tau  \notag \\
&&+\mathbf{1}_{\{t_{k}>s\}}h(t_{k},x_{k},v_{k-1})d\Sigma _{k-1}(t_{k}),
\label{diffuseformula}
\end{eqnarray}%
and $d\Sigma _{k-1}(t_{k})$ is evaluated at $s=t_{k}$ of 
\begin{equation}
d\Sigma _{l}(s)=\{\Pi _{j=l+1}^{k-1}d\sigma _{j}\}\{e^{\nu (v_{l})(s-t_{l})}%
\tilde{w}(v_{l})d\sigma _{l}\}\Pi _{j=1}^{l-1}\{e^{\nu
(v_{j})(t_{j+1}-t_{j})}d\sigma _{j}\}.  \label{dsigma}
\end{equation}
\end{lemma}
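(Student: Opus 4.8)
The plan is to derive the representation formula by iterating the Duhamel (mild) formulation of the transport equation along the diffusive back-time cycle, exactly as was done for the in-flow, bounce-back, and specular cases, but now keeping careful track of the probability measures $d\sigma_j$ introduced at each reflection. First I would fix $(t,x,v)\notin\gamma_0$ and $0\le s\le t$. Along the characteristic $\frac{dX}{d\tau}=v$, $\frac{dV}{d\tau}=0$ we have $\frac{d}{d\tau}\{e^{\nu(v)\tau}h\}=e^{\nu(v)\tau}q$, so integrating from the first backward hitting time $t_1=t-t_{\mathbf b}(x,v)$ up to $t$ gives
\begin{equation*}
h(t,x,v)=e^{\nu(v)(t_1-t)}h(t_1,x_1,v)+\int_{t_1}^{t}e^{\nu(v)(\tau-t)}q(\tau,x-v(t-\tau),v)\,d\tau,
\end{equation*}
where $x_1=x_{\mathbf b}(x,v)$. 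If $t_1\le s$ the trajectory has not yet hit the boundary by time $s$ and the formula $(\ref{t1le0})$ follows immediately by replacing $t_1$ with $s$. If $t_1>s$, I apply the diffuse boundary condition $(\ref{hdiffuse})$ at $x_1\in\partial\Omega$: $h(t_1,x_1,v)=\tilde w(v)^{-1}\int_{\mathcal V_1}h(t_1,x_1,v_1)\tilde w(v_1)\,d\sigma_1$. Each $h(t_1,x_1,v_1)$ is then expanded again along its own back-time cycle, picking up either a contribution from $\{t_2\le s\}$ (the trajectory reaches the time-slice $s$) or from $\{t_2>s\}$ (another boundary hit, another integral against $d\sigma_2$), plus the intervening $q$-integral on $[\max\{s,t_2\},t_1]$. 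Iterating this $k-1$ times and bookkeeping the accumulated exponential factors $e^{\nu(v_j)(t_{j+1}-t_j)}$ and weights $\tilde w(v_j)/\tilde w(v_{j+1})$ (which telescope since the boundary condition at stage $j$ divides by $\tilde w(v_j)$ and the measure at stage $j$ multiplies by $\tilde w(v_j)$) produces exactly the sum $(\ref{diffuseformula})$ with $d\Sigma_l$ as in $(\ref{dsigma})$: the term $\mathbf 1_{\{t_l>s,t_{l+1}\le s\}}h(s,\dots)$ collects trajectories terminating on the slice at the $l$-th leg, the two $q$-sums collect source contributions on legs that respectively do or do not straddle $s$, and the last term $\mathbf 1_{\{t_k>s\}}h(t_k,x_k,v_{k-1})d\Sigma_{k-1}(t_k)$ is the unexpanded remainder after $k-1$ bounces.

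The key technical points to verify, rather than the algebra, are the following. First, the almost-everywhere validity: the iteration uses part (4) of Lemma \ref{huang}, which guarantees that $\Phi_{\mathbf b}$ and $\Phi_{\mathbf b}^{-1}$ preserve null sets, so that starting from a.e. $(x,v)$ one stays away from $\gamma_0$ at every subsequent bounce and the formulas $\nabla_x t_{\mathbf b}$, $\nabla_v t_{\mathbf b}$ make sense along the cycle; this also ensures the change from $v_j$ to the next integration variable is measure-theoretically legitimate. Second, the structure of $d\Sigma_l(s)$ in $(\ref{dsigma})$ must be matched leg-by-leg with the exponential collected as the trajectory passes through legs $1,\dots,l-1$ before reaching leg $l$ at time $s$: legs $1$ through $l-1$ are fully traversed, contributing $e^{\nu(v_j)(t_{j+1}-t_j)}d\sigma_j$ each, leg $l$ is only partially traversed down to time $s$, contributing $e^{\nu(v_l)(s-t_l)}\tilde w(v_l)\,d\sigma_l$ (the $\tilde w(v_l)$ being the leftover weight from the boundary reflection that created $v_l$ and was not yet cancelled), and legs $l+1,\dots,k-1$ are not reached, contributing only the bare measures $\Pi_{j=l+1}^{k-1}d\sigma_j$ needed to integrate out those dummy variables. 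Third, for any finite $t$ and $k$ all sums are finite and every integral is absolutely convergent because $d\sigma_j$ is a probability measure, $\tilde w\ge 1$ with $\int_{\mathcal V}\tilde w^2\,d\sigma$ finite by $(\ref{w>1})$, and $h,\frac{q}{\nu}\in L^\infty$ — this is what makes the termwise manipulations legal (Fubini across the $v_1,\dots,v_{k-1}$ integrations).

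The main obstacle I anticipate is not any single estimate but getting the indicator-function bookkeeping exactly right — in particular correctly placing the $\mathbf 1_{\{t_l>s,\,t_{l+1}\le s\}}$ versus $\mathbf 1_{\{t_{l+1}>s\}}$ cutoffs so that the three sums in $(\ref{diffuseformula})$ partition all the ways a $k$-bounce cycle can relate to the fixed time $s$ without overlap or omission, and simultaneously verifying that the weight $\tilde w$ telescopes correctly so that only a single factor $\tilde w(v_l)$ survives in $d\Sigma_l$ (and none in the prefactor $e^{\nu(v)(t_1-t)}/\tilde w(v)$ beyond the initial $\tilde w(v)^{-1}$). I would handle this by induction on $k$: assume the formula holds with $k$ replaced by $k-1$, then expand the single remainder term $\mathbf 1_{\{t_{k-1}>s\}}h(t_{k-1},x_{k-1},v_{k-2})d\Sigma_{k-2}(t_{k-1})$ using the boundary condition once more and splitting $\{t_{k-1}>s\}=\{t_{k-1}>s,t_k\le s\}\cup\{t_k>s\}$, checking that the newly produced terms slot into the three sums at index $l=k-1$ and the updated remainder at index $k$; the base case $k=2$ is the direct one-bounce computation sketched above. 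Everything else is routine once the indices are pinned down.
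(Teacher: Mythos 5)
Your proposal is correct and follows essentially the same route as the paper: the mild (Duhamel) form along each characteristic leg, the diffuse boundary condition at each bounce, and an induction on $k$ whose inductive step expands the remainder term $\mathbf{1}_{\{t_{k}>s\}}h(t_{k},x_{k},v_{k-1})$ via the boundary condition, splits $\{t_{k}>s\}$ into $\{t_{k}>s,t_{k+1}\leq s\}\cup\{t_{k+1}>s\}$, and inserts $\int_{\mathcal{V}_{k}}d\sigma _{k}=1$ into the earlier terms, with a.e.\ validity justified by part (4) of Lemma \ref{huang}. No gaps worth noting.
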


\begin{proof}
When $k=2,$ if $t_{1}(t,x,v)\leq s,$ then (\ref{t1le0}) is clearly valid. If 
$t_{1}(t,x,v)>s,$ 
\begin{equation}
h(t,x,v)\mathbf{1}_{\{t_{1}>s\}}=e^{\nu
(v)(t_{1}-t)}h(t_{1},x_{1},v)+\int_{t_{1}}^{t}e^{\nu (v)(\tau -t)}q(\tau
,x+(\tau -t)v,v)d\tau .
\end{equation}%
Since $\frac{d\{e^{\nu s}h\}}{ds}=e^{\nu s}q$ along a trajectory $\frac{dx}{%
dt}=v,\frac{dv}{dt}=0$ almost everywhere, the first term can be expressed
(almost everywhere) by the diffuse boundary condition (\ref{hdiffuse}) and
part 4 of Lemma \ref{huang} as 
\begin{eqnarray*}
&&\frac{e^{\nu (v)(t_{1}-t)}}{\tilde{w}(v)}\int_{\mathcal{V}%
_{1}}h(t_{1},x_{1},v_{1})\tilde{w}(v_{1})d\sigma _{1} \\
&=&\frac{e^{\nu (v)(t_{1}-t)}}{\tilde{w}(v)}\int_{\mathcal{V}_{1}}\mathbf{1}%
_{\{t_{1}>s,t_{2}\leq s\}}e^{\nu
(v_{1})(s-t_{1})}h(s,x_{1}+v_{1}(s-t_{1}),v_{1})\tilde{w}(v_{1})d\sigma _{1}
\\
&&+\frac{e^{\nu (v)(t_{1}-t)}}{\tilde{w}(v)}\int_{s}^{t_{1}}\int_{\mathcal{V}%
_{1}}\mathbf{1}_{\{t_{1}>s,t_{2}\leq s\}}e^{\nu (v_{1})(\tau -t_{1})}q(\tau
,x_{1}+v_{1}(\tau -t_{1}),v_{1})\tilde{w}(v_{1})d\sigma _{1}d\tau \\
&&+\frac{e^{\nu (v)(t_{1}-t)}}{\tilde{w}(v)}\int_{\mathcal{V}_{1}}\mathbf{1}%
_{\{t_{2}>s\}}e^{\nu
(v_{1})\{t_{2}-t_{1}\}}h(t_{2},x_{1}+v_{1}(t_{2}-t_{1}),v_{1})\tilde{w}%
(v_{1})d\sigma _{1} \\
&&+\frac{e^{\nu (v)(t_{1}-t)}}{\tilde{w}(v)}\int_{t_{2}}^{t_{1}}\int_{%
\mathcal{V}_{1}}\mathbf{1}_{\{t_{2}>s\}}e^{\nu (v_{1})(\tau -t_{1})}q(\tau
,x_{1}+v_{1}(\tau -t_{1}),v_{1})\tilde{w}(v_{1})d\sigma _{1}d\tau .
\end{eqnarray*}%
Therefore, the formula (\ref{diffuseformula}) is valid for $k=2.$ Assume
that (\ref{diffuseformula}) is valid for $k\geq 2$, then for $k+1,$ we
further split the last term in (\ref{diffuseformula}) with $t_{k}>0$ into 
\begin{equation*}
h(t_{k},x_{k},v_{k-1})\tilde{w}(v_{k-1})=\int_{\mathcal{V}%
_{k}}h(t_{k},x_{k},v_{k})\tilde{w}(v_{k})d\sigma _{k}=\int_{\mathcal{V}_{k}}%
\mathbf{1}_{\{t_{k}>s,t_{k+1}\leq s\}}+\mathbf{1}_{\{t_{k+1}>s\}}.
\end{equation*}%
For the first term, we further integrate along the characteristics $\frac{dx%
}{dt}=v,\frac{dv}{dt}=0$ to reach the plane $t=s$ as 
\begin{equation*}
\int_{\mathcal{V}_{k}}\mathbf{1}_{t_{k+1}\leq s<t_{k}}\{e^{\nu
(v_{k})(s-t_{k})}h(s,x_{k}+(s-t_{k})v_{k},v_{k})+\int_{s}^{t_{k}}e^{\nu
(v_{k})(\tau -t_{k})}q(\tau ,x_{k}+(\tau -t_{k})v_{k},v_{k})d\tau \}\tilde{w}%
(v_{k})d\sigma _{k};
\end{equation*}%
For the second term, we integrate along the characteristics $\frac{dx}{dt}=v,%
\frac{dv}{dt}=0$ to $t=t_{k+1}>s$ to get 
\begin{equation*}
\int_{\mathcal{V}_{k}}\mathbf{1}_{t_{k+1}>s}\{e^{\nu
(v_{k})(t_{k+1}-t_{k})}h(t_{k+1},x_{k+1},v_{k})+\int_{t_{k+1}}^{t_{k}}e^{\nu
(v_{k})(\tau -t_{k})}q(\tau ,x_{k}+(\tau -t_{k})v_{k},v_{k})d\tau \}\tilde{w}%
(v_{k})d\sigma _{k}.
\end{equation*}%
We then deduce our lemma by adding $\int_{\mathcal{V}_{k}}d\sigma _{k}=1$
inside the rest of the terms so that all the integrations are over $\Pi
_{l=1}^{k}\mathcal{V}_{l}$ instead of $\Pi _{l=1}^{k-1}\mathcal{V}_{l}.$
\end{proof}

\begin{lemma}
\label{diffuseg0}Let $h_{0}\in L^{\infty }$ and\ assume (\ref{wdiffuse})$.$%
There exits a unique solution $h(t)=G(t)h_{0}\in L^{\infty }$ to (\ref%
{transport}) with the diffuse boundary condition (\ref{hdiffuse}) and 
\begin{equation}
\sup_{0\leq t\leq 1}\{e^{\nu _{0}t}||\{G(t)h_{0}\}\mathbf{1}%
_{t_{1}>0}||_{\infty }\}\leq e^{\frac{\nu _{0}}{2}}||\frac{h_{0}}{\tilde{w}}%
||_{\infty },\text{ \ \ \ }||\{G(t)h_{0}\}\mathbf{1}_{t_{1}\leq 0}||_{\infty
}\leq ||e^{-t\nu (v)}h_{0}||_{\infty }  \label{diffuserate}
\end{equation}%
In particular, 
\begin{equation}
\sup_{t\geq 1}e^{\frac{\nu _{0}}{2}t}||G(t)h_{0}||_{\infty }\leq e^{\nu
_{0}}\max \{||\frac{h_{0}}{\tilde{w}}||_{\infty },||e^{-\nu (v)+\nu
_{0}}h_{0}||_{\infty }\}.  \label{t>1}
\end{equation}
\end{lemma}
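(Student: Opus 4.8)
The plan is to follow the same scheme used for the bounce-back and specular semigroups: first produce a solution by a damped iteration that avoids the delicate measure-preservation issue at $\gamma_0$, then pass to the limit, and finally extract the quantitative bound from the explicit cycle representation formula (\ref{diffuseformula}) with $q\equiv 0$. For existence, I would mimic Lemma \ref{abstract}: for $\varepsilon>0$ replace the boundary operator in (\ref{hdiffuse}) by $(1-\varepsilon)$ times the diffuse reflection operator, and solve the iteration $\{\partial_t+v\cdot\nabla_x+\nu\}h^{k+1}=0$, $h^{k+1}_{\gamma_-}=(1-\varepsilon)$(diffuse average of $h^k_{\gamma_+}$), $h^{k+1}(0)=h_0$. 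The contraction estimate now uses (\ref{w>1}): the diffuse operator $g\mapsto \tilde w(v)^{-1}\int_{\mathcal V}g\,\tilde w\,d\sigma$ has $L^{\infty}\to L^{\infty}$ norm $\le 1$ precisely because $\tilde w\ge 1$ and $\int_{\mathcal V}\tilde w\,d\sigma\le (\int \tilde w^2 d\sigma)^{1/2}$ is controlled (actually we need $\tilde w(v)^{-1}\int_{\mathcal V}|g(v')|\tilde w(v')d\sigma \le \|g\|_\infty \tilde w(v)^{-1}\int \tilde w\,d\sigma \le \|g\|_\infty$ once $\theta>\theta_0$, $\rho$ small makes $\tilde w(v)^{-1}\int\tilde w\,d\sigma\le 1$; this is exactly what (\ref{w>1}) is set up to give). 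Hence $h^k_{\gamma_+}$ is Cauchy in $L^{\infty}(\mathbf R\times\gamma_+)$ by Lemma \ref{ginflowdecay}, so $h^k$ and $h^k_\gamma$ converge; taking a $w^{-2}\{1+|v|\}\in L^1$ weight as in Lemma \ref{bouncebackformula} gives $f^\varepsilon=h^\varepsilon/w\in L^2$ with $L^2(\gamma)$-trace, so the solution is unique by the energy identity, and the $w\!-\!*$ limit $\varepsilon\to0$ produces the genuine diffuse solution $G(t)h_0$, again unique in $L^{\infty}$.

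For the quantitative bound I would use the representation (\ref{diffuseformula}) with $q=0$, $s=0$, iterated $k$ times. On $\{t_1\le 0\}$ the trajectory reaches the initial plane directly, giving $\{G(t)h_0\}(t,x,v)=e^{-\nu(v)t}h_0(x-vt,v)$, which is the second estimate in (\ref{diffuserate}). On $\{t_1>0\}$, formula (\ref{diffuseformula}) expresses $h(t,x,v)$ (for $0\le t\le 1$) as a sum over $l$ of boundary-to-initial contributions $\mathbf 1_{\{t_l>0,t_{l+1}\le 0\}}h_0(x_l-t_lv_l,v_l)\,d\Sigma_l(0)$ plus the leftover term $\mathbf 1_{\{t_k>0\}}h(t_k,x_k,v_{k-1})d\Sigma_{k-1}(t_k)$. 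The key point is that $d\Sigma_l$ in (\ref{dsigma}) is built from the probability measures $d\sigma_j$ times exponentials $e^{\nu(v_j)(t_{j+1}-t_j)}\le 1$ and one factor $e^{\nu(v_l)(-t_l)}\tilde w(v_l)$; using $\tilde w\ge 1$ and the normalization (\ref{w>1}), the total measure $\sum_l \int_{\Pi \mathcal V_j} \tilde w(v)^{-1} d\Sigma_l(0)$ is bounded by $1$ (the $\tilde w(v)^{-1}\tilde w(v_l)$ is absorbed and the $d\sigma_j$ integrate to $1$), so the sum of the initial-data terms is bounded by $e^{-\nu_0 t}\|h_0/\tilde w\|_\infty$ — or rather $e^{\nu_0/2}e^{-\nu_0 t}\|h_0/\tilde w\|_\infty$ after tracking the $e^{\nu(v)(t_1-t)}$ prefactor on $0\le t\le 1$. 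The leftover term is handled by Lemma \ref{small}: choose $k=k_0(\varepsilon,1)$ so that $\int_{\Pi\mathcal V_j}\mathbf 1_{\{t_k>0\}}\Pi d\sigma_j\le \varepsilon$; since $h(t_k,\cdot)$ is already controlled by $\|G(\cdot)h_0\|_\infty$ on $[0,1]$, this term contributes at most $C\varepsilon\sup_{0\le s\le1}\|G(s)h_0\|_\infty$, which is absorbed into the left side after also noting $|h(t_k,x_k,v_{k-1})|\le \tilde w(v_{k-1})$ times the average bound. Taking $\varepsilon$ small and then $\sup$ over $0\le t\le1$ yields the first inequality of (\ref{diffuserate}); combining the two cases and iterating on unit time intervals (as in Lemma \ref{bootstrap}'s telescoping, or simply by the semigroup property for $t\ge1$ away from the boundary datum) gives (\ref{t>1}).

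The main obstacle is making the absorption of the leftover term rigorous: unlike the bounce-back/specular cases where the number of bounces before reaching $t=0$ is bounded pointwise (by Velocity Lemma \ref{velocity}), here grazing trajectories may bounce indefinitely, so there is genuinely no finite-bounce representation, and one must quantify the smallness of the non-reaching set purely in measure — that is exactly the role of Lemma \ref{small}, but one has to be careful that the bound there is uniform in $(t,x,v)$ over $0\le t\le T_0$ and that the factor $\tilde w(v_{k-1})/\tilde w(v)$ appearing in $d\Sigma_{k-1}$ does not spoil the estimate (it does not, because $\tilde w(v_{k-1})\le $ something integrable against $d\sigma_{k-1}$ by (\ref{w>1}), while $\tilde w(v)^{-1}\le 1$). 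A secondary technical point is the passage through part (4) of Lemma \ref{huang} to justify the almost-everywhere validity of (\ref{diffuseformula}) when composing the boundary map $k$ times; this is routine given that $\Phi_{\mathbf b}$ and $\Phi_{\mathbf b}^{-1}$ preserve null sets, but must be invoked explicitly for each of the $k$ iterations.
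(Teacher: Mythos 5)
Your overall architecture (damped approximation for existence, then the cycle representation with Lemma \ref{small} to control the part that never reaches $t=0$) matches the paper, but the key quantitative step contains a genuine error. You claim that after bounding $|h_0(x_l-t_lv_l,v_l)|\le \|h_0/\tilde w\|_\infty\,\tilde w(v_l)$ the total measure of the initial-data terms is $\le 1$ because ``the $\tilde w(v)^{-1}\tilde w(v_l)$ is absorbed and the $d\sigma_j$ integrate to $1$.'' That absorption is not available: the factor $\tilde w(v_l)$ sits inside the $v_l$-integral and must be paid for there, and under (\ref{wdiffuse}) one has $\int_{\mathcal V}\tilde w^2\,d\sigma\sim\frac{1}{16\theta^2}>1$ and likewise $\int_{\mathcal V}\tilde w\,d\sigma\to(1/2+2\theta)^{-2}>1$ as $\rho\to0$, since $\theta<\frac14$. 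So every bounce costs a factor strictly larger than $1$, and after the $k_0(\varepsilon)$ bounces dictated by Lemma \ref{small} the accumulated constant is of order $(1/16\theta^2)^{k_0(\varepsilon)}$, not $1$. The paper deals with this by dominating each $d\sigma_j$ by $\tilde w^2(v_j)d\sigma_j$ (using $\tilde w\ge1$), summing the disjoint indicators to $\mathbf 1_{\{t_{k+1}\le0\}}$, and then invoking the hypothesis $\theta_0(\nu_0)<\theta<\frac14$ to choose $\theta_0$ so close to $\frac14$ that $(\frac{1}{4\theta_0})^{2k_0(\varepsilon)}\le(1-2\sqrt\varepsilon)e^{\nu_0/2}$, with $\varepsilon$ fixed first so that $(1-2\sqrt\varepsilon)e^{\nu_0/2}>1$; the leftover $\{t_k>0\}$ term is estimated by Cauchy--Schwarz, $\int\mathbf 1_{\{t_k>0\}}\tilde w(v_{k-1})\Pi d\sigma_l\le 2\sqrt\varepsilon$, and absorbed into $\sup_{0\le t\le1}e^{\nu_0t}\|h\mathbf 1_{t_1>0}\|_\infty$. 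Your argument never uses the restriction $\theta>\theta_0(\nu_0)$ at all, which is a clear sign of the gap: without it the constant $e^{\nu_0/2}$ in (\ref{diffuserate}) cannot be obtained (and indeed the estimate would then ``hold'' for all $0\le\theta<\frac14$, which is not what the lemma asserts).

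There is a second, related flaw in your existence step. Working directly in the $h$ variable, the diffuse boundary operator $g\mapsto\tilde w(v)^{-1}\int_{\mathcal V}g\,\tilde w\,d\sigma$ does \emph{not} have $L^\infty\to L^\infty$ norm $\le1$ under (\ref{wdiffuse}): for small incoming $|v|$ one has $\tilde w(v)\sim1$, while $\int_{\mathcal V}\tilde w\,d\sigma>1$ as noted above, so (\ref{w>1}) does not give what you claim and the $(1-\varepsilon)$-damped iteration of Lemma \ref{abstract} need not contract. The paper avoids this by passing to $\tilde h^m=h^m\tilde w$, for which the boundary condition becomes the genuine probability average $(1-\frac1m)\int_{\mathcal V}\tilde h^m\,d\sigma$ of norm exactly $1-\frac1m$; the price is that $\tilde h^m(0)=h_0\tilde w$ is unbounded for general $h_0\in L^\infty$, which is why the initial datum is truncated to $h_0\mathbf 1_{\{|v|\le m\}}$, the truncation being removed at the end using the uniform bound (\ref{diffuserate})--(\ref{t>1}). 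Your secondary remarks (a.e.\ validity of (\ref{diffuseformula}) via part (4) of Lemma \ref{huang}, and iteration on unit time intervals to pass from (\ref{diffuserate}) to (\ref{t>1})) are consistent with the paper, but the two points above are where the proof actually lives and they are missing or incorrect as written.
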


\begin{proof}
Given any $m\geq 1,$ we first construct a solution to $\{\partial
_{t}+v\cdot \nabla _{x}+\nu \}h^{m}=0,$ with the following approximate
boundary and initial conditions as%
\begin{eqnarray}
h^{m}(t,x,v) &=&\left\{ 1-\frac{1}{m}\right\} \frac{1}{\tilde{w}(v)}\int_{%
\mathcal{V}}h^{m}(t,x,v^{\prime })\tilde{w}(v^{\prime })d\sigma (x),
\label{diffusecutoff} \\
h^{m}(0,x,v) &=&h_{0}\mathbf{1}_{\{|v|\leq m\}}.  \notag
\end{eqnarray}%
Then $\tilde{h}^{m}\equiv h^{m}\tilde{w}$ satisfies $\{\partial _{t}+v\cdot
\nabla _{x}+\nu \}\tilde{h}^{m}=0$ but with 
\begin{equation}
\tilde{h}^{m}(t,x,v)=\left\{ 1-\frac{1}{m}\right\} \int_{\mathcal{V}}\tilde{h%
}^{m}(t,x,v^{\prime })d\sigma (x).  \label{1-1/m}
\end{equation}%
Clearly, since $\int d\sigma =1,$ this boundary operator maps $L^{\infty }$
to $L^{\infty }$ with norm bounded by $1-\frac{1}{m},$ and initially 
\begin{equation*}
||\tilde{h}^{m}(0)||_{\infty }=\sup |h^{m}(0,x,v)\tilde{w}|=||h_{0}\mathbf{1}%
_{\{|v|\leq m\}}\tilde{w}||_{\infty }\leq C_{m,\theta }||h_{0}||_{\infty
}<\infty .
\end{equation*}%
Therefore, by Lemma \ref{abstract}, there exists a solution $\tilde{h}%
^{m}(t,x,v)\in L^{\infty }$ to (\ref{transport}) with (\ref{1-1/m}), so that
we have constructed $h^{m}=$ $\frac{\tilde{h}^{m}}{\tilde{w}}$ with (\ref%
{diffusecutoff})$,$ which obviously is bounded. Such a solution is unique by
the transformation $f^{m}=\frac{h^{m}}{w}\in L^{2}$ with $%
\int_{0}^{t}||f^{m}(s)||_{\gamma }^{2}ds<\infty .$

In order to take $m\rightarrow \infty $ in (\ref{diffusecutoff})$,$ we need
to obtain an uniform $L^{\infty }$ bound (\ref{diffuserate}) and (\ref{t>1})
for $h^{m},$ which is more delicate. We first claim that it suffices to show
(\ref{diffuserate}) to derive (\ref{t>1}) for $h^{m}$. Letting $t=1$ in two
parts of (\ref{diffuserate}), \ since $\tilde{w}\geq 1$ from (\ref{w>1}) and 
$e^{-\frac{\nu _{0}}{2}}e^{\nu _{0}}\geq 1,$ we have 
\begin{equation}
||h^{m}(1)||_{\infty }\leq e^{-\frac{\nu _{0}}{2}}\max \{||h(0)||_{\infty
},||e^{-\nu (v)+\nu _{0}}h(0)||_{\infty }\}\leq e^{-\frac{\nu _{0}}{2}%
}||h(0)||_{\infty }.  \label{01decay}
\end{equation}%
For any $l\leq t<l+1,$ we can repeatedly apply (\ref{01decay}) to get:%
\begin{equation*}
||h^{m}(l)||_{\infty }\leq e^{-\frac{\nu _{0}}{2}}||h^{m}(l-1)||_{\infty }.
\end{equation*}%
Therefore, by (\ref{diffuserate}), we deduce (\ref{t>1}) as%
\begin{eqnarray*}
||h^{m}(t)||_{\infty } &\leq &e^{\frac{\nu _{0}}{2}}||h^{m}(l)||_{\infty
}\leq ||h^{m}(l-1)||_{\infty } \\
&\leq &e^{-\frac{\nu _{0}(l-2)}{2}}||h^{m}(1)||_{\infty } \\
&\leq &e^{-\frac{\nu _{0}(l-1)}{2}}\max \{||\frac{h(0)}{\tilde{w}}||_{\infty
},||e^{-\nu (v)+\nu _{0}}h(0)||_{\infty }\} \\
&\leq &e^{\nu _{0}}e^{-\frac{\nu _{0}}{2}t}\max \{||\frac{h(0)}{\tilde{w}}%
||_{\infty },||e^{-\nu (v)+\nu _{0}}h(0)||_{\infty }\}.
\end{eqnarray*}

The rest of the proof is devoted to the validity of (\ref{diffuserate}). If $%
t_{1}(t,x,v)\leq 0,$ we have $\{G(t)h_{0}^{m}\}(t,x,v)=e^{-\nu
(v)t}h_{0}^{m}(x-tv,v)$ and (\ref{diffuserate}) is clearly valid.

We now consider $t_{1}(t,x,v)>0,$ then the back-time trajectory first hits
the boundary. As in the proof of (\ref{diffuseformula}) with $q\equiv 0$, we
can ignore powers of the factor $1-\frac{1}{m}$ to bound $|h^{m}(t,x,v)|$ by 
$\frac{e^{\nu (v)(t_{1}-t)}}{\tilde{w}}\times $%
\begin{eqnarray}
&&\sum_{l=1}^{k-1}\int_{\Pi _{l=1}^{k-1}\mathcal{V}_{l}}\mathbf{1}%
_{\{t_{l}>0,t_{l+1}\leq 0\}}|h^{m}(0,x_{l}-t_{l}v_{l},v_{l})|d\Sigma _{l}(0)
\notag \\
&&+\int_{\Pi _{l=1}^{k-1}\mathcal{V}_{l}}\mathbf{1}_{\{t_{k}>0%
\}}|h^{m}(t_{k},x_{k},v_{k-1})|d\Sigma _{k-1}(t_{k}).  \label{hmformula}
\end{eqnarray}

Over the second small set $\mathbf{1}_{\{t_{k}>0\}}$, choose any $%
\varepsilon (\nu _{0})>0$ such that 
\begin{equation}
(1-2\sqrt{\varepsilon })e^{\frac{\nu _{0}}{2}}>1\text{,\ \ \ \ }  \label{emu}
\end{equation}%
then choose $k_{0}(\varepsilon )$ by Lemma \ref{small} with $T_{0}=1$. By
Lemma \ref{small} with $T_{0}=1$, for $k=k_{0}(\varepsilon )$, $\int_{\Pi
_{l=1}^{k-1}\mathcal{V}_{l}}\mathbf{1}_{\{t_{k}>0\}}\Pi _{l=1}^{k-1}d\sigma
_{l}<\varepsilon .$ We further split the second integral in (\ref{hmformula}%
) into $\{t_{k}>0,t_{k+1}\leq 0\}$ and $\{t_{k+1}>0\}$ in $\mathcal{V}_{k}$
with $d\sigma _{k}.$ Integrating along the characteristic for the first part 
$\{t_{k}>0,t_{k+1}\leq 0\}$ yields: $\frac{e^{\nu (v)(t_{1}-t)}}{\tilde{w}}%
\times $ 
\begin{eqnarray}
&&\int_{\Pi _{l=1}^{k}\mathcal{V}_{l}}\mathbf{1}_{\{t_{k}>0,t_{k+1}\leq
0\}}|h^{m}(0,x_{k}-v_{k}t_{k},v_{k})|d\Sigma _{k}(0)  \notag \\
&&+\int_{\Pi _{l=1}^{k}\mathcal{V}_{l}}\mathbf{1}_{\{t_{k+1}>0%
\}}|h^{m}(t_{k},x_{k},v_{k-1})|d\Sigma _{k-1}(t_{k})d\sigma _{k}.  \label{tk}
\end{eqnarray}%
Since $t_{1}(t_{k},x_{k},v_{k})>0$ over the set $\{t_{k+1}>0\},$ we deduce
that 
\begin{equation*}
\mathbf{1}_{\{t_{k+1}>0\}}|h^{m}(t_{k},x_{k},v_{k-1})|\leq
\sup_{x,v}|h^{m}(t_{k},x,v)\mathbf{1}_{\{t_{1}>0\}}|.
\end{equation*}%
From (\ref{dsigma}), the exponential in $d\Sigma _{l}(s)$ is bounded by $%
e^{\nu _{0}(s-t_{1})}.$ Since $\int_{\mathcal{V}_{k}}d\sigma _{k}=1,$ by
Lemma \ref{small}, the last part in (\ref{tk}) is then bounded by 
\begin{eqnarray}
&&\frac{e^{\nu _{0}(t_{k}-t)}}{\tilde{w}}||h^{m}(t_{k})\mathbf{1}%
_{t_{1}>0}||_{\infty }\int_{\Pi _{l=1}^{k-1}\mathcal{V}_{l}}\mathbf{1}%
_{\{t_{k}>0\}}\tilde{w}(v_{k-1})\Pi _{l=1}^{k-1}d\sigma _{l}  \label{smallhm}
\\
&\leq &\sup_{0\leq s\leq t\leq 1}\{e^{\nu _{0}(s-t)}||h^{m}(s)\mathbf{1}%
_{t_{1}>0}||_{\infty }\}\left\{ \int_{\Pi _{l=1}^{k-1}\mathcal{V}_{l}}%
\mathbf{1}_{\{t_{k}>0\}}\Pi _{l=1}^{k-1}d\sigma _{l}\right\} ^{1/2}\left\{
\int \tilde{w}^{2}(v_{k-1})d\sigma _{k-1}\right\} ^{1/2}  \notag \\
&\leq &2\sqrt{\varepsilon }\sup_{0\leq s\leq t\leq 1}\{e^{\nu
_{0}(s-t)}||h^{m}(s)\mathbf{1}_{t_{1}>0}||_{\infty }\},  \notag
\end{eqnarray}%
We have used (\ref{w>1}) for $\theta <\theta _{0}\backsim \frac{1}{4}$.

On the other hand, inserting $\int_{\mathcal{V}_{k}}d\sigma _{k}=1$ into the
main contribution in (\ref{hmformula}), and combining with the first term in
(\ref{tk}) yields: 
\begin{eqnarray*}
&&\frac{e^{\nu (v)(t_{1}-t)}}{\tilde{w}}\int_{\Pi _{l=1}^{k}\mathcal{V}%
_{l}}\sum_{l=1}^{k}\mathbf{1}_{\{t_{l}>0,t_{l+1}\leq
0\}}|h^{m}(0,x_{l}-t_{l}v_{l},v_{l})| \\
&&\times \{\Pi _{j=l+1}^{k}d\sigma _{j}\}\{\tilde{w}(v_{l})e^{-\nu
(v_{l})t_{l}}d\sigma _{l}\}\Pi _{j=1}^{l-1}\{e^{\nu
(v_{j})(t_{j+1}-t_{j})}d\sigma _{j}\} \\
&\leq &\frac{e^{-\nu _{0}t}}{\tilde{w}}||\frac{h_{0}^{m}}{\tilde{w}}%
||_{\infty }\int \sum_{l=1}^{k}\mathbf{1}_{\{t_{l}>0,t_{l+1}\leq 0\}}\Pi
_{j=1}^{k}\tilde{w}^{2}(v_{j})d\sigma _{j},
\end{eqnarray*}%
since $\tilde{w}(v_{j})\geq 1$ by (\ref{wdiffuse}) and (\ref{w>1})$.$ Note $%
\sum_{l=1}^{k}\mathbf{1}_{\{t_{l}>0,t_{l+1}\leq 0\}}=\mathbf{1}%
_{\{t_{k+1}\leq 0\}}.$ By (\ref{emu}), we can further choose $\theta _{0}$
near $1/4~$such that $\left( \frac{1}{4\theta _{0}}\right)
^{2k_{0}(\varepsilon )}\leq (1-2\sqrt{\varepsilon })e^{\frac{\nu _{0}}{2}}$
in (\ref{w>1}). We therefore get 
\begin{eqnarray*}
&&\int_{\Pi _{l=1}^{k}\mathcal{V}_{l}}\sum_{l=1}^{k}\mathbf{1}%
_{\{t_{l}>0,t_{l+1}\leq 0\}}\Pi _{l=1}^{k}\tilde{w}^{2}(v_{l})d\sigma _{l} \\
&=&\int_{\Pi _{l=1}^{k}\mathcal{V}_{l}}\mathbf{1}_{\{t_{k+1}\leq 0\}}\Pi
_{l=1}^{k}\tilde{w}^{2}(v_{l})d\sigma _{l} \\
&\leq &\Pi _{l=1}^{k}\left\{ \int_{\mathcal{V}_{l}}\tilde{w}%
^{2}(v_{l})d\sigma _{l}\right\} \leq (1-2\sqrt{\varepsilon })e^{\frac{\nu
_{0}}{2}}.
\end{eqnarray*}%
for $k=k_{0}(\varepsilon )$. Hence, combining with (\ref{smallhm}), we have 
\begin{equation*}
\sup_{x,v}\{e^{\nu _{0}t}|h_{m}(t,x,v)\mathbf{1}_{\{t_{1}>0\}}|\}\leq 2\sqrt{%
\varepsilon }\sup_{0\leq s\leq 1}\{e^{\nu _{0}s}||h^{m}(s)\mathbf{1}%
_{t_{1}>0}||_{\infty }\}+(1-2\sqrt{\varepsilon })e^{\frac{\nu _{0}}{2}}||%
\frac{h_{0}^{m}}{\tilde{w}}||_{\infty }.
\end{equation*}%
Taking $\sup_{0\leq t\leq 1}$ and absorbing the first term on the right hand
side$,$ we obtain%
\begin{equation*}
\sup_{0\leq t\leq 1}e^{\nu _{0}t}||h_{m}(t)1_{\{t_{1}>0\}}||_{\infty }\leq
e^{\frac{\nu _{0}}{2}}||\frac{h_{0}^{m}}{\tilde{w}}||_{\infty }.
\end{equation*}%
Letting $t=1,$ and we deduce (\ref{diffuserate}) uniform in $m.$ We then
deduce our lemma by letting $m\rightarrow \infty .$
\end{proof}

\subsubsection{Continuity of Diffuse $G(t).$}

\begin{lemma}
\label{diffusegcon}Let $\Omega $ be strictly convex as in (\ref{convexity})
and (\ref{wdiffuse}) be valid. Let $h$ and $q$ satisfy $\{\partial
_{t}+v\cdot \nabla _{x}+\nu \}h=q(t,x,v),$ with the diffuse boundary
condition (\ref{hdiffuse}). Assume $h(0,x,v)=h_{0}(x,v),$ continuous for $%
(x,v)\notin \gamma _{0},$ and $q(t,x,v)$ is continuous in the interior of $%
[0,\infty ]\times \Omega \times \mathbf{R}^{3}$ with $\sup_{[0,\infty
]\times \Omega \times \mathbf{R}^{3}}|\frac{q(t,x,v)}{\nu (v)}|<\infty .$
Assume 
\begin{equation}
h_{0}(x,v)|_{\gamma _{-}}=\frac{1}{\tilde{w}(v)}\int_{\mathcal{V}%
}h_{0}(x,v^{\prime })\tilde{w}(v^{\prime })d\sigma .  \label{diffusecom}
\end{equation}%
Then for any $t,$ $(x,v)\notin \gamma _{0},$ $h(t,x,v)$ is continuous.
\end{lemma}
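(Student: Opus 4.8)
The plan is to run the same three-case argument as in the proof of Lemma \ref{inflowcon} (back-time trajectory hits the initial plane; hits the boundary exactly at $t=0$; hits the boundary at $t_{1}>0$), but with the finite trajectory replaced by the diffuse back-time cycle of Definition \ref{diffusecycles} and with the representation formula (\ref{diffuseformula}); the genuinely new point is that the cycle branches into infinitely many segments, so it must be truncated after finitely many bounces by means of Lemma \ref{small}. First I would fix $(t,x,v)\notin\gamma_{0}$ and extract the consequences of strict convexity exactly as in Lemma \ref{inflowcon}: since $(x,v)\notin\gamma_{0}$ we have $\alpha(t)>0$ in (\ref{alpha}), so Velocity Lemma \ref{velocity} gives $\alpha(t-t_{\mathbf{b}})=\{v\cdot\nabla\xi(x_{\mathbf{b}})\}^{2}\geq c\,\alpha(t)>0$, hence $v\cdot n(x_{\mathbf{b}})\neq0$ and $t_{\mathbf{b}}(x,v)>0$ by (\ref{tlower}), and by Lemma \ref{huang}(2) the functions $t_{1}=t-t_{\mathbf{b}}(x,v)$ and $x_{1}=x_{\mathbf{b}}(x,v)$ are smooth in $(x,v)$ near our point. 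Split according to $t_{1}<0$, $t_{1}=0$, $t_{1}>0$.

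If $t_{1}<0$, then $h(t,x,v)$ is given by (\ref{t1le0}) with $s=0$, namely $e^{-\nu(v)t}h_{0}(x-tv,v)+\int_{0}^{t}e^{\nu(\tau-t)}q(\tau,x-v(t-\tau),v)\,d\tau$, and continuity at $(t,x,v)$ follows verbatim from Lemma \ref{inflowcon}: $x-tv\in\Omega$ so $h_{0}$ is continuous there, and the $q$-integral is split into a middle piece (interior continuity of $q$) and two short end pieces controlled by $\sup|q/\nu|$. If $t_{1}=0$, then $x-tv=x_{1}\in\partial\Omega$ with $(x_{1},v)\notin\gamma_{0}$; for $(\bar t,\bar x,\bar v)$ close, either $\bar t_{1}\leq0$ (same formula, same argument) or $\bar t_{1}>0$ (the cycle formula of the next paragraph), and in the latter case the leading boundary term $\tfrac{e^{\nu(\bar v)(\bar t_{1}-\bar t)}}{\tilde{w}(\bar v)}\int_{\mathcal{V}(\bar x_{1})}h(\bar t_{1},\bar x_{1},v_{1})\tilde{w}(v_{1})\,d\sigma_{1}$ converges, by dominated convergence together with $h(\bar t_{1},\bar x_{1},v_{1})\to h_{0}(x_{1},v_{1})$ (valid since $v_{1}\cdot n(x_{1})>0$ forces $(x_{1},v_{1})\notin\gamma_{0}$), to $\tfrac{e^{-\nu(v)t}}{\tilde{w}(v)}\int_{\mathcal{V}(x_{1})}h_{0}(x_{1},v_{1})\tilde{w}(v_{1})\,d\sigma_{1}=e^{-\nu(v)t}h_{0}(x_{1},v)$ by the compatibility hypothesis (\ref{diffusecom}); this equals the $t_{1}=0$ value of $h(t,x,v)$, exactly as in Lemma \ref{inflowcon}.

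The main case is $t_{1}>0$. Here I would use (\ref{diffuseformula}) with $s=0$: for every $k\geq2$, $h(t,x,v)=\int_{t_{1}}^{t}e^{\nu(\tau-t)}q\,d\tau+\tfrac{e^{\nu(v)(t_{1}-t)}}{\tilde{w}(v)}\int_{\Pi_{j=1}^{k-1}\mathcal{V}_{j}}H$ with $H$ as in (\ref{diffuseformula}); the $q$-integral is continuous as in the first case. Given $\varepsilon>0$, choose $k=k_{0}(\varepsilon,T_{0})$ from Lemma \ref{small}, so that $\int_{\Pi\mathcal{V}_{j}}\mathbf{1}_{\{t_{k}>0\}}\Pi d\sigma_{j}\leq\varepsilon$ uniformly over $(\bar t,\bar x,\bar v)$ with $0\leq\bar t\leq T_{0}$. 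Since $h$ is bounded and $q/\nu\in L^{\infty}$, and each density $d\Sigma_{l}$ in (\ref{dsigma}) carries all its exponentials $\leq1$ and a single weight $\tilde{w}(v_{l})$ with $\int\tilde{w}^{2}\,d\sigma$ the finite constant of (\ref{w>1}), all terms of $H$ supported on $\{t_{k}>0\}$ — in particular the remainder $\mathbf{1}_{\{t_{k}>0\}}h(t_{k},x_{k},v_{k-1})\,d\Sigma_{k-1}(t_{k})$ — contribute at most $C\sqrt{\varepsilon}$ by Cauchy--Schwarz, uniformly in $(\bar t,\bar x,\bar v)$. Hence $|h(t,x,v)-h(\bar t,\bar x,\bar v)|$ is bounded by $2C\sqrt{\varepsilon}$ plus the difference of the truncated finite sums over $1\leq l\leq k_{0}-1$ of integrals over $\Pi_{j=1}^{k_{0}-1}\mathcal{V}_{j}$, so it remains to prove each truncated integral is continuous at $(t,x,v)$, after which one lets $\varepsilon\to0$.

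For the truncated integrals I would argue by dominated convergence. Inductively on $l$: $x_{1}=x_{\mathbf{b}}(x,v)$ is smooth near $(x,v)$ by the first paragraph; given $v_{1},\dots,v_{l-1}$ off a $\Pi d\sigma_{j}$-null set, the set $\{v_{l}:v_{l}\cdot n(x_{\mathbf{b}}(x_{l},v_{l}))=0\}$ is $\sigma$-null by the argument of Lemma \ref{s=0} applied at $x=x_{l}$, so off a null set all $t_{l+1},x_{l+1},v_{l+1}$ depend smoothly on $(t,x,v,v_{1},\dots,v_{l})$; the jump loci $\{t_{l}=0\}$, $\{t_{l+1}=0\}$ of the indicators in $H$ are $\Pi d\sigma_{j}$-null and move continuously with $(t,x,v)$; and on the complement of an arbitrarily small neighbourhood of these loci, when $t_{l+1}<0<t_{l}$ the point $x_{l}-t_{l}v_{l}$ lies on the open segment, hence in $\Omega$, so $(x_{l}-t_{l}v_{l},v_{l})\notin\gamma_{0}$ and $h_{0}$ is continuous there (likewise the forcing arguments $x_{l}+(\tau-t_{l})v_{l}$ are interior for $\tau$ away from $t_{l},t_{l+1}$). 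Thus the integrands converge pointwise a.e. as $(\bar t,\bar x,\bar v)\to(t,x,v)$ and are dominated by $\|h_{0}\|_{\infty}$ (resp. $\sup|q/\nu|\cdot\nu(v_{l})$) times the $d\Sigma_{l}$-densities, whose $\tilde{w}$-weights are $\sigma$-integrable, and dominated convergence finishes. I expect the main obstacle to be precisely this last step: organizing the iterated substitution so that the joint smoothness of all $t_{l},x_{l},v_{l}$ in every variable is transparent, and verifying that the indicator jump sets $\{t_{l}=0\}$ are $\Pi d\sigma_{j}$-null and depend continuously on $(t,x,v)$ — the diffuse-specific bookkeeping absent from the finite-cycle Lemmas \ref{inflowcon}, \ref{bouncebackcon} and \ref{specularcon}.
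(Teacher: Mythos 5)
Your proposal is correct in outline and shares the paper's skeleton --- the three-way split on $t_{1}$, the representation (\ref{diffuseformula}) with $s=0$, truncation after $k_{0}(\varepsilon)$ bounces via Lemma \ref{small} with the Cauchy--Schwarz/$\tilde{w}$ bound from (\ref{w>1}), and the compatibility condition (\ref{diffusecom}) to identify the limit when $t_{1}=0$ --- but in the main case $t_{1}>0$ you take a genuinely different route. The paper never argues by dominated convergence: it builds the common approximate phase space $B=\Pi_{l=1}^{k-1}[\mathcal{V}_{l}^{\varepsilon _{1}}\cap \mathcal{\bar{V}}_{l}^{\varepsilon _{1}}]\cap \{t_{k}\leq 0,\bar{t}_{k}\leq 0\}$ of (\ref{approximateb}) with a non-grazing cutoff $\varepsilon_{1}$, shows the symmetric differences are small, and then compares the two expressions on $B$ term by term with a second scale $\varepsilon_{2}\ll\varepsilon_{1}$ (its Cases 2a--2c); crucially, on the mismatch sets where the barred cycle crosses the initial plane one bounce earlier or later, it does not show those sets are negligible but reconciles the structurally different expressions by re-expanding $h_{0}(x_{i},v_{i-1})\tilde{w}(v_{i-1})=\int_{\mathcal{V}_{i}}h_{0}(x_{i},v_{i})\tilde{w}(v_{i})d\sigma _{i}$, i.e.\ it uses (\ref{diffusecom}) again inside the interior case, whereas on your route compatibility enters only at $t_{1}=0$. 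Your alternative --- declaring the mismatch regions negligible and passing to the limit by pointwise a.e.\ convergence plus domination --- does work, and the claim you flag but leave unproved is true and easy: conditioning on $(v_{1},\dots ,v_{l-2})$, the slice $\{v_{l-1}:t_{l}=0\}$ is empty when $t_{l-1}\leq 0$ and, when $t_{l-1}>0$, is contained in the preimage of $\partial \Omega$ under the affine isomorphism $v\mapsto x_{l-1}-t_{l-1}v$, hence Lebesgue-null, so Fubini gives $\Pi d\sigma _{j}$-nullity; the grazing sets are null by Lemma \ref{s=0} and Fubini, as you say. Two technical points to make explicit when writing it up: rewrite the iterated integrals against the fixed measure $\Pi dv_{j}$, absorbing the indicators of $\mathcal{\bar{V}}_{j}$ and the densities $c_{\mu }\mu (v_{j})\{n(\bar{x}_{j})\cdot v_{j}\}$ into the integrand, since domains and densities move with the perturbed point; and in the remainder term keep the Cauchy--Schwarz step (as in (\ref{smallhm})) because $d\Sigma _{k-1}$ carries the weight $\tilde{w}(v_{k-1})$, so a pure $L^{\infty }\times$measure bound is not quite enough. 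In exchange for these measure-zero verifications your argument is shorter and avoids the delicate two-scale case analysis, but it is purely qualitative; the paper's comparison on $B$ is quantitative on the non-grazing sets and needs no nullity of the transition sets $\{t_{l}=0\}$, at the cost of invoking the compatibility condition also in the interior case.
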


\begin{proof}
We fix $(t,x,v)$ such that $(x,v)\notin \gamma _{0},$ for any fixed $k,$ we
recall (\ref{diffuseformula}) with $s=0$ for the expression of $h(t,x,v).$
Now we take any point $(\bar{t},\bar{x},\bar{v})$ near $(t,x,v)$ and
evaluate $h(\bar{t},\bar{x},\bar{v})$ by (\ref{diffuseformula}) with the
same number of bounces ($k-$bounces), and with corresponding $\bar{t}_{l},%
\bar{x}_{l}$ and $\mathcal{\bar{V}}_{l}$ $\ $and $d\bar{\sigma}_{l}.$

\textbf{Step 1. Reduction to the approximate of phase spaces. }Since $\nu
(v)\backsim |v|$ for large $v$ and $\frac{1}{\tilde{w}}$ decays
exponentially, by Lemma \ref{diffuseg0} and the Duhamel principle, 
\begin{equation}
||h(t)||_{\infty }\leq ||G(t)h_{0}||_{\infty
}+\int_{0}^{t}||G(t-s)q(s)||_{\infty }ds\leq C(t,||h_{0}||_{\infty
},\sup_{[0,\infty ]\times \Omega \times \mathbf{R}^{3}}|\frac{q}{\nu }|),
\label{ht}
\end{equation}%
For any $\varepsilon >0,$ by Lemma \ref{small} and (\ref{ht}) , we can fix $%
k(\varepsilon ,t)$ sufficiently large, such that the last terms in (\ref%
{diffuseformula}) for both $h(t,x,v)$ and $h(\bar{t},\bar{x},\bar{v})$ are
bounded by 
\begin{equation*}
\{||h(t_{k})||_{\infty }+||h(\bar{t}_{k})||\}\int_{\Pi _{l=1}^{k-1}\mathcal{V%
}_{l}}\mathbf{1}_{\{t_{k}>0\}}\Pi _{l=1}^{k-1}d\sigma _{l}+\int_{\Pi
_{l=1}^{k-1}\mathcal{\bar{V}}_{l}}\mathbf{1}_{\{\bar{t}_{k}>0\}}\Pi
_{l=1}^{k-1}d\bar{\sigma}_{l}\leq \frac{\varepsilon }{2}.
\end{equation*}%
For the remaining sets $\mathbf{1}_{\{t_{k}\leq 0\}}$ and $\mathbf{1}_{\{%
\bar{t}_{k}\leq 0\}},$ for $\varepsilon _{1}<<\varepsilon ,$ define the
non-grazing sets as 
\begin{eqnarray*}
\mathcal{V}_{l}^{\varepsilon _{1}} &=&\{v_{l}:v_{l}\cdot n(x_{l})\geq
\varepsilon _{1}\text{ and }|v_{l}|\leq \frac{1}{\varepsilon _{1}}\}, \\
\mathcal{\bar{V}}_{l}^{\varepsilon _{1}} &=&\{v_{l}:v_{l}\cdot n(\bar{x}%
_{l})\geq \varepsilon _{1}\text{ and }|v_{l}|\leq \frac{1}{\varepsilon _{1}}%
\}.
\end{eqnarray*}%
We further split the integration region in (\ref{diffuseformula}) as 
\begin{eqnarray*}
\int \mathbf{1}_{\{t_{k}\leq 0\}} &=&\int_{\{\text{there exists a }v_{l}\in 
\mathcal{V}_{l}\setminus \mathcal{V}_{l}^{\varepsilon _{1}}\}}\mathbf{1}%
_{\{t_{k}\leq 0\}}+\int_{\mathcal{\{}\text{all }v_{l}\in \mathcal{V}%
_{l}^{\varepsilon _{1}}\}}\mathbf{1}_{\{t_{k}\leq 0\}}; \\
\int \mathbf{1}_{\{\bar{t}_{k}\leq 0\}} &=&\int_{\{\text{there exists a }%
v_{l}\in \mathcal{\bar{V}}_{l}\setminus \mathcal{\bar{V}}_{l}^{\varepsilon
_{1}}\}}\mathbf{1}_{\{\bar{t}_{k}\leq 0\}}+\int_{\mathcal{\{}\text{all }%
v_{l}\in \mathcal{\bar{V}}_{l}^{\varepsilon _{1}}\}}\mathbf{1}_{\{\bar{t}%
_{k}\leq 0\}}.
\end{eqnarray*}%
Clearly, by (\ref{v-v}), $\int_{\mathcal{V}_{l}\setminus \mathcal{V}%
_{l}^{\varepsilon _{1}}}d\sigma _{l}+\int_{\mathcal{\bar{V}}_{l}\setminus 
\mathcal{\bar{V}}_{l}^{\varepsilon _{1}}}d\bar{\sigma}_{l}\leq C\varepsilon
_{1},$ so that from the boundedness of $h_{0}$ and $\frac{q}{\nu },$ the
integrals in (\ref{diffuseformula}) over the almost grazing sets are small: 
\begin{equation*}
\left\vert \int_{\{\text{there exists }v_{l}\in \mathcal{V}_{l}\setminus 
\mathcal{V}_{l}^{\varepsilon _{1}}\}}\mathbf{1}_{\{t_{k}\leq
0\}}...\right\vert +\left\vert \int_{\{\text{there exists }v_{l}\in \mathcal{%
\bar{V}}_{l}\setminus \mathcal{\bar{V}}_{l}^{\varepsilon _{1}}\}}\mathbf{1}%
_{\{\bar{t}_{k}\leq 0\}}...\right\vert \leq C(h_{0},\frac{q}{\nu }%
,k)\varepsilon _{1}\leq \frac{\varepsilon }{4}.
\end{equation*}

Therefore, it suffices to compare only the integrations over the non-grazing
sets $\Pi _{l=1}^{k-1}\mathcal{V}_{l}^{\varepsilon _{1}}\cap \mathcal{\{}%
t_{k}\leq 0\}$ and $\Pi _{l=1}^{k-1}\mathcal{\bar{V}}_{l}^{\varepsilon
_{1}}\cap \mathcal{\{}\bar{t}_{k}\leq 0\}$. For any $1\leq l\leq k-1,$
recalling the back-time diffuse cycle (\ref{diffusecycle}), if $\alpha
(t_{l})=\{v_{l}\cdot n(x_{l})\}^{2}\geq \frac{\varepsilon _{1}}{2}>0,$ then
from convexity and the Velocity Lemma \ref{velocity}, we deduce 
\begin{equation*}
\alpha (t_{l+1})=\{v_{l}\cdot n(x_{l+1})\}^{2}\geq C\alpha (t_{l})\geq C%
\frac{\varepsilon _{1}}{2}>0.
\end{equation*}%
Hence $x_{l+1},t_{l+1}$ are smooth functions of $x_{l}$ and $v_{l}$ from
Lemma \ref{huang}. A simple induction for $l$ implies that $x_{l},t_{l}$ are
smooth functions of $(v_{1},...v_{l-1})\in \Pi _{j=1}^{l-1}\mathcal{V}_{j}^{%
\frac{\varepsilon _{1}}{2}}:$ 
\begin{equation}
|t_{l}-\bar{t}_{l}|+|x_{l}-\bar{x}_{l}|\rightarrow 0  \label{diffusecon}
\end{equation}%
as $(\bar{t},\bar{x},\bar{v})\rightarrow (t,x,v),$ uniformly in $\Pi
_{j=1}^{l-1}\mathcal{V}_{j}^{\frac{\varepsilon _{1}}{2}}.$ Clearly $\Pi
_{l=1}^{k-1}\mathcal{V}_{l}^{\varepsilon _{1}}\subset \Pi _{l=1}^{k-1}%
\mathcal{V}_{l}^{\frac{\varepsilon _{1}}{2}}.$ Since $\bar{x}_{1}\backsim
x_{1}$ by the Velocity Lemma \ref{velocity}, $\mathcal{\bar{V}}%
_{1}^{\varepsilon _{1}}\subset \mathcal{V}_{1}^{\frac{\varepsilon _{1}}{2}}.$
A simple induction leads to (\ref{diffusecon}) and $\Pi _{j=1}^{l-1}\mathcal{%
\bar{V}}_{j}^{\varepsilon _{1}}\subset \Pi _{j=1}^{l-1}\mathcal{V}_{j}^{%
\frac{\varepsilon _{1}}{2}}$ for \thinspace $1\leq l\leq k-1.$ Therefore, (%
\ref{diffusecon}) is valid on both $\Pi _{l=1}^{k-1}\mathcal{V}%
_{l}^{\varepsilon _{1}}$ and $\Pi _{l=1}^{k-1}\mathcal{\bar{V}}%
_{l}^{\varepsilon _{1}},$ subsets of $\Pi _{l=1}^{k-1}\mathcal{\bar{V}}_{l}^{%
\frac{\varepsilon _{1}}{2}}.$

Moreover, we have 
\begin{eqnarray*}
\mathcal{V}_{l}^{\varepsilon _{1}}\setminus \mathcal{\bar{V}}%
_{l}^{\varepsilon _{1}} &\equiv &\{v_{l}\cdot n(x_{l})\geq \varepsilon
_{1},v_{l}\cdot n(\bar{x}_{l})<\varepsilon _{1},\text{ and }|v_{l}|\leq 
\frac{1}{\varepsilon _{1}}\}. \\
\mathcal{\bar{V}}_{l}^{\varepsilon _{1}}\setminus \mathcal{V}%
_{l}^{\varepsilon _{1}} &\equiv &\{v_{l}\cdot n(x_{l})<\varepsilon
_{1},v_{l}\cdot n(\bar{x}_{l})\geq \varepsilon _{1},\text{ and }|v_{l}|\leq 
\frac{1}{\varepsilon _{1}}\}.
\end{eqnarray*}%
By continuity (\ref{diffusecon}), for $(\bar{t},\bar{x},\bar{v})\rightarrow
(t,x,v),$ $x_{l}\rightarrow \bar{x}_{l},$ and both sets are contained in 
\begin{equation*}
\{\varepsilon _{1}-C|x_{l}-\bar{x}_{l}|\leq v_{l}\cdot n(x_{l})\leq
\varepsilon _{1}+C|x_{l}-\bar{x}_{l}|,\text{ and }|v_{l}|\leq \frac{1}{%
\varepsilon _{1}}\}
\end{equation*}%
which have measure $\frac{C|x_{l}-\bar{x}_{l}|}{^{\varepsilon _{1}^{2}}}.$
We now define the approximate phase-spaces as: 
\begin{equation}
B=\Pi _{l=1}^{k-1}[\mathcal{V}_{l}^{\varepsilon _{1}}\cap \mathcal{\bar{V}}%
_{l}^{\varepsilon _{1}}]\cap \{t_{k}\leq 0,\bar{t}_{k}\leq 0\}.
\label{approximateb}
\end{equation}%
To estimate $\Pi _{l=1}^{k-1}\mathcal{V}_{l}^{\varepsilon _{1}}\setminus B$,
by an induction on $k,$ we get 
\begin{equation*}
|\Pi _{l=1}^{k-1}\mathcal{V}_{l}^{\varepsilon _{1}}\setminus \Pi
_{l=1}^{k-1}[\mathcal{V}_{l}^{\varepsilon _{1}}\cap \mathcal{\bar{V}}%
_{l}^{\varepsilon _{1}}]|+|\Pi _{l=1}^{k-1}\mathcal{V}_{l}^{\varepsilon
_{1}}\setminus \Pi _{l=1}^{k-1}\mathcal{\bar{V}}_{l}^{\varepsilon _{1}}|\leq
C(\varepsilon _{1},k)\sup_{1\leq l\leq k-1}|x_{l}-\bar{x}_{l}|.
\end{equation*}%
Notice that $\Pi _{l=1}^{k-1}\mathcal{V}_{l}^{\varepsilon _{1}}\subset
\lbrack \Pi _{l=1}^{k-1}\mathcal{V}_{l}^{\varepsilon _{1}}\setminus \Pi
_{l=1}^{k-1}\mathcal{\bar{V}}_{l}^{\varepsilon _{1}}]\cup \Pi _{l=1}^{k-1}%
\mathcal{\bar{V}}_{l}^{\varepsilon _{1}},$ $\Pi _{l=1}^{k-1}\mathcal{V}%
_{l}^{\varepsilon _{1}}\setminus B$ is contained in 
\begin{eqnarray*}
\Pi _{l=1}^{k-1}\mathcal{V}_{l}^{\varepsilon _{1}}\setminus \Pi _{l=1}^{k-1}[%
\mathcal{V}_{l}^{\varepsilon _{1}}\cap \mathcal{\bar{V}}_{l}^{\varepsilon
_{1}}]\cup \lbrack \Pi _{l=1}^{k-1}\mathcal{V}_{l}^{\varepsilon _{1}}\cap
\{t_{k} &>&0\}] \\
\cup \lbrack \Pi _{l=1}^{k-1}\mathcal{\bar{V}}_{l}^{\varepsilon _{1}}\cap \{%
\bar{t}_{k} &>&0\}]\cup \lbrack \Pi _{l=1}^{k-1}\mathcal{V}_{l}^{\varepsilon
_{1}}\setminus \Pi _{l=1}^{k-1}\mathcal{\bar{V}}_{l}^{\varepsilon _{1}}]
\end{eqnarray*}%
From Lemma \ref{small}, both $\int_{\Pi _{l=1}^{k-1}\mathcal{V}_{l}\cap
\{t_{k}>0\}}$ $\Pi _{l=1}^{k-1}d\sigma _{l}$ and $\int_{\Pi _{l=1}^{k-1}%
\mathcal{\bar{V}}_{l}\cap \{\bar{t}_{k}>0\}}$ $\Pi _{l=1}^{k-1}d\bar{\sigma}%
_{l}$ are bounded by $C\varepsilon .$ By similar splitting for the set $\Pi
_{l=1}^{k-1}\mathcal{\bar{V}}_{l}^{\varepsilon _{1}}\setminus B,$ as $|x-%
\bar{x}|+|t-\bar{t}|+|v-\bar{v}|\rightarrow 0,$ we deduce 
\begin{equation*}
\int_{\Pi _{l=1}^{k-1}\mathcal{V}_{l}^{\varepsilon _{1}}\setminus B}\Pi
_{l=1}^{k-1}d\sigma _{l}+\int_{\Pi _{l=1}^{k-1}\mathcal{\bar{V}}%
_{l}^{\varepsilon _{1}}\setminus B}\Pi _{l=1}^{k-1}d\bar{\sigma}%
_{l}<4C\varepsilon +C(\varepsilon _{1},k)\sup_{1\leq l\leq k-1}|x_{l}-\bar{x}%
_{l}|<5C\varepsilon .
\end{equation*}%
By $L^{2}$ bound for $\tilde{w}$ in (\ref{w>1}) and Cauchy-Schwarz's
inequality: 
\begin{equation*}
\int_{\Pi _{l=1}^{k-1}\mathcal{V}_{l}^{\varepsilon _{1}}\setminus B}\Pi
_{l=1}^{k-1}d\Sigma _{l}(s)+\int_{\Pi _{l=1}^{k-1}\mathcal{\bar{V}}%
_{l}^{\varepsilon _{1}}\setminus B}\Pi _{l=1}^{k-1}d\bar{\Sigma}_{l}(s)\leq
C(t)\sqrt{\varepsilon }.
\end{equation*}%
Thanks to the boundedness of $h_{0}$ and $\frac{q}{\nu },$ to prove the
continuity, it suffices to estimate the difference of $h(t,x,v)$ and $h(\bar{%
t},\bar{x},\bar{v})$ in (\ref{diffuseformula}), where the integrations are
over the same set $B.$

\textbf{Step 2. Continuity of }$h(t,x,v)$\textbf{\ over }$B.$

\textbf{Case 1: }\textit{\ }$t_{1}(t,x,v)\leq 0.$ In the case $t_{1}<0,$
then $\bar{t}_{1}<0$ by continuity over the set $B$ in (\ref{approximateb}).
Then both $h(t,x,v)$ and $h(\bar{t},\bar{x},\bar{v})$ are given by the same
formula (\ref{t1le0}). The continuity now follows from $(\bar{t},\bar{x},%
\bar{v})\rightarrow (t,x,v)$ and the continuity of $h_{0}$ and $q.$ Same
argument also applies to the situation $t_{1}=0$ and $t_{1}\leq 0.$

We only need to study the case $t_{1}=0$ but $\bar{t}_{1}>0$ in which $h(%
\bar{t},\bar{x},\bar{v})$ are given by the different expression (\ref%
{diffuseformula}). Over the set $B,$ since $|\bar{v}_{1}\cdot n(\bar{x}%
_{1})|\geq \varepsilon _{1}>0,$ from (\ref{tlower}) that $\bar{t}_{1}-\bar{t}%
_{2}\geq \frac{\varepsilon _{1}^{3}}{C_{\xi }}.$ But $\bar{t}_{1}\rightarrow
t_{1}=0,$ we therefore deduce that $\bar{t}_{2}<0.$ This implies for $k$
large 
\begin{equation*}
B=\{\bar{t}_{1}>0,\bar{t}_{2}\leq 0\}\cap B=\Pi _{l=1}^{k-1}\mathcal{V}%
_{l}^{\varepsilon _{1}}\cap \mathcal{\bar{V}}_{l}^{\varepsilon _{1}}.
\end{equation*}%
Applying (\ref{diffuseformula}) to $h(\bar{t},\bar{x},\bar{v})$ over the set 
$B$ with $\bar{t}_{2}<0,$ by Step 1, we obtain 
\begin{eqnarray}
h(\bar{t},\bar{x},\bar{v}) &\thicksim &\int_{\bar{t}_{1}}^{\bar{t}}e^{\nu
(\tau -\bar{t})}q(\tau ,\bar{x}-\bar{v}(\bar{t}-\tau ),\bar{v})d\tau + 
\notag \\
&&+\frac{e^{\nu (\bar{v})(\bar{t}_{1}-\bar{t})}}{\tilde{w}(\bar{v})}\int_{%
\mathcal{V}_{1}^{\varepsilon _{1}}\cap \mathcal{\bar{V}}_{1}^{\varepsilon
_{1}}}\mathbf{1}_{\{\bar{t}_{1}>0,\bar{t}_{2}\leq 0\}}h_{0}(\bar{x}_{1}-\bar{%
t}_{1}v_{1},v_{1})\tilde{w}(v_{1})e^{-\nu (v_{1})\bar{t}_{1}}d\bar{\sigma}%
_{1}  \label{barcase1} \\
&&+\frac{e^{\nu (\bar{v})(\bar{t}_{1}-\bar{t})}}{\tilde{w}(\bar{v})}\int_{%
\mathcal{V}_{1}^{\varepsilon _{1}}\cap \mathcal{\bar{V}}_{1}^{\varepsilon
_{1}}}\int_{0}^{\bar{t}_{1}}\mathbf{1}_{\{\bar{t}_{1}>0,\bar{t}_{2}\leq
0\}}q(\bar{x}_{1}+(\tau -\bar{t}_{1})v_{1},v_{1})\tilde{w}(v_{1})e^{\nu
(v_{1})(\tau -\bar{t}_{1})}d\bar{\sigma}_{1}d\tau .  \notag
\end{eqnarray}%
Since $\bar{t}_{1}\rightarrow t_{1}=0,$ it follows the last term above is
small from the boundedness of $\frac{q}{\nu }$. The first term on the right
hand side of (\ref{barcase1}) tends to 
\begin{equation*}
\int_{0}^{t}e^{\nu (\tau -t)}q(\tau ,x-v(t-\tau ),v)d\tau ,
\end{equation*}%
as second part of $h(t,x,v)$ in (\ref{t1le0}), from the continuity of $q.$
Since $\mathbf{1}_{\{\bar{t}_{1}>0,\bar{t}_{2}\leq 0\}}\equiv 1$ over $\Pi
_{l=1}^{k-1}\mathcal{V}_{l}^{\varepsilon _{1}}\cap \mathcal{\bar{V}}%
_{l}^{\varepsilon _{1}}$ in this case, and by $\bar{t}_{1}\backsim 0,$ $\bar{%
x}_{1}\backsim x_{1}\in \partial \Omega ,$ the second term on the right hand
side of (\ref{barcase1}) tends to 
\begin{equation*}
\frac{e^{-\nu (v)t}}{\tilde{w}(v)}\int_{\mathcal{V}_{1}^{\varepsilon
_{1}}}h_{0}(x_{1},v_{1})\tilde{w}(v_{1})d\sigma _{1}\backsim e^{-\nu
(v)t}h_{0}(x_{1},v)\thicksim e^{-\nu (v)t}h_{0}(x-tv,v),
\end{equation*}%
by the continuity of $h_{0}$ away from $\gamma _{0}$ and the compatibility
condition (\ref{diffusecom}). Therefore, we have shown $h(\bar{t},\bar{x},%
\bar{v})\rightarrow h(t,x,v)$ by (\ref{t1le0}).

\textbf{CASE 2:}\textit{\ }$t_{1}(t,x,v)>0.$ From continuity, $\bar{t}_{1}>0$
and 
\begin{equation*}
\int_{t_{1}}^{t}e^{\nu (\tau -t)}q(\tau ,x-v(t-\tau ),v)d\tau \backsim \int_{%
\bar{t}_{1}}^{\bar{t}}e^{\nu (\tau -\bar{t})}q(\tau ,\bar{x}-v(\bar{t}-\tau
),\bar{v})d\tau .
\end{equation*}%
It thus sufficient to only study integrals (\ref{diffuseformula}) over $B$
for both $h(t,x,v)$ and $h(\bar{t},\bar{x},\bar{v}).$ We further split 
\begin{equation*}
B=\sum_{i,m}B\cap \{t_{i+1}\leq 0,t_{i}>0;\bar{t}_{m+1}\leq 0,\bar{t}%
_{m}>0\}\equiv \sum_{i,m}B_{im},
\end{equation*}%
and $h(t,x,v)-h(\bar{t},\bar{x},\bar{v})\backsim \sum_{i,m}\int_{B_{im}}.$
It suffices to estimate the difference over each $B_{im},$ which can be
rewritten from (\ref{diffuseformula}) as: 
\begin{eqnarray}
&&\frac{e^{\nu (v)(t_{1}-t)}}{\tilde{w}(v)}\times
\{\int_{B_{im}}h_{0}(x_{i}-t_{i}v_{i},v_{i})d\Sigma _{i}(0)  \notag \\
&&+\int_{B_{im}}\sum_{j=1}^{i-1}\int_{t_{j+1}}^{t_{j}}q(\tau ,x_{j}+(\tau
-t_{j})v_{j},v_{j})d\Sigma _{j}(\tau )d\tau  \notag \\
&&+\int_{0}^{t_{i}}\int_{B_{im}}q(\tau ,x_{i}+(\tau
-t_{i})v_{i},v_{i})d\Sigma _{i}(\tau )d\tau \}  \label{h>0} \\
&&-\frac{e^{\nu (\bar{v})(\bar{t}_{1}-\bar{t})}}{\tilde{w}(\bar{v})}\times
\{\int_{B_{im}}h_{0}(\bar{x}_{m}-\bar{t}_{m}v_{m},v_{m})d\Sigma _{m}(s) 
\notag \\
&&+\int_{B_{im}}\sum_{j=1}^{m-1}\int_{\bar{t}_{j+1}}^{\bar{t}_{j}}q(\tau ,%
\bar{x}_{j}+(\tau -\bar{t}_{j})v_{j},v_{j})d\bar{\Sigma}_{j}(\tau )d\tau 
\notag \\
&&+\int_{0}^{\bar{t}_{m}}\int_{B_{im}}q(\tau ,\bar{x}_{m}+(\tau -\bar{t}%
_{m})v_{m},v_{m})d\bar{\Sigma}_{m}(\tau )d\tau \}.  \notag
\end{eqnarray}

By (\ref{tlower}) in Lemma \ref{huang}, $t_{i}-t_{i+1}\geq \frac{\varepsilon
_{1}^{3}}{C_{\xi }}>0.$ For $\varepsilon _{2}<<\varepsilon _{1},$ we further
split 
\begin{equation*}
\{t_{i}>0,t_{i+1}\leq 0\}=\{t_{i}>\varepsilon _{2},t_{i+1}\leq -\varepsilon
_{2}\}\cup \{0\leq t_{i}\leq \varepsilon _{2},t_{i+1}\leq 0\}\cup
\{t_{i}>\varepsilon _{2},-\varepsilon _{2}<t_{i+1}\leq 0\}.
\end{equation*}

\textbf{CASE 2a: }On the set $B_{im}\cap \{t_{i}>\varepsilon
_{2},t_{i+1}\leq -\varepsilon _{2}\}.$ By continuity (\ref{diffusecon}), for 
$(\bar{t},\bar{x},\bar{v})\rightarrow (t,x,v),$%
\begin{equation*}
\bar{t}_{i}>\frac{\varepsilon _{2}}{2},\bar{t}_{i+1}\leq -\frac{\varepsilon
_{2}}{2},
\end{equation*}%
then $h(\bar{t},\bar{x},\bar{v})$ has the same expression as $h(t,x,v)$ with 
$m=i$ in (\ref{h>0}), and the difference in (\ref{h>0}) is small over this
set, from the continuity of $h_{0}$ and $q.$

\textbf{CASE 2b: }On the set $B_{im}\cap \{0<t_{i}\leq \varepsilon
_{2},t_{i+1}\leq 0\}$. Now $\bar{t}_{i}\thicksim t_{i}\thicksim \varepsilon
_{2}$ and $\bar{t}_{i+1}<0$ for $\varepsilon _{2}<<\varepsilon _{1}.$ If $%
\bar{t}_{i}>0$ and $\bar{t}_{i+1}\leq 0,$ we the again have the same
expression so the difference in (\ref{h>0}) is small on this set again, from
the continuity of $h_{0}$ and $q$. Otherwise we have $\bar{t}_{i}\leq 0,$
and as $t\backsim \bar{t},$ $x\backsim \bar{x},$ $v\backsim \bar{v},$%
\begin{equation*}
\bar{t}_{i-1}\backsim t_{i-1}\geq \frac{\varepsilon _{1}^{3}}{2C_{\xi }}>0,
\end{equation*}%
from (\ref{tlower}). We define 
\begin{equation*}
B_{im}^{+}=B_{im}\cap \{0<t_{i}\leq \varepsilon _{2},t_{i+1}\leq 0,\text{
but }\bar{t}_{i}\leq 0,\bar{t}_{i-1}>0\}.
\end{equation*}%
By the continuity of $h_{0}$ and $q,$ the first term for $h(t,x,v)$ in (\ref%
{h>0}) is close to ($t_{i}\thicksim 0$): $\frac{e^{\nu (v)(t_{1}-t)}}{\tilde{%
w}(v)}\times $ 
\begin{eqnarray}
&&\int_{B_{im}^{+}}h_{0}(x_{i},v_{i})\{\Pi _{j=i+1}^{k-1}d\sigma _{j}\}\{%
\tilde{w}(v_{i})d\sigma _{i}\}\Pi _{j=1}^{i-1}\{e^{\nu
(v_{j})(t_{j+1}-t_{j})}d\sigma _{j}\}  \notag \\
&&+\int_{B_{im}^{+}}\sum_{j=1}^{i-1}\int_{t_{j+1}}^{t_{j}}q(\tau
,x_{j}+(\tau -t_{j})v_{j},v_{j})d\Sigma _{j}(\tau )d\tau .  \label{tlsmall}
\end{eqnarray}%
Since $\bar{t}_{i}\leq 0$, and $\bar{t}_{i-1}>0,$ hence $m=i-1$, $B_{im}^{+}$
is empty except for $m=i+1.$ The second term for $h(\bar{t},\bar{x},\bar{v})$
in (\ref{h>0}) is given by $\frac{e^{\nu (\bar{v})(\bar{t}_{1}-\bar{t})}}{%
\tilde{w}(\bar{v})}\times $ 
\begin{eqnarray*}
&&\int_{B_{im}^{+}}h_{0}(\bar{x}_{i-1}-\bar{t}_{i-1}v_{i-1},v_{i-1})\{\Pi
_{j=i}^{k-1}d\bar{\sigma}_{j}\}\{\tilde{w}(v_{i-1})e^{-\nu (v_{i-1})\bar{t}%
_{i-1}}d\bar{\sigma}_{i-1}\}\Pi _{j=1}^{i-2}\{e^{\nu (v_{j})(\bar{t}_{j+1}-%
\bar{t}_{j})}d\bar{\sigma}_{j}\} \\
&&+\int_{B_{im}^{+}}\sum_{j=1}^{i-2}\int_{t_{j+1}}^{t_{j}}q(\tau ,\bar{x}%
_{j}+(\tau -\bar{t}_{j})v_{j},v_{j})d\bar{\Sigma}_{j}(\tau )d\tau \\
&&+\int_{0}^{\bar{t}_{i-1}}\int_{B_{im}^{+}}q(\tau ,\bar{x}_{i-1}+(\tau -%
\bar{t}_{i-1})v_{i-1},v_{i-1})d\bar{\Sigma}_{i-1}(\tau )d\tau .
\end{eqnarray*}%
Since $\bar{x}_{i-1}-\bar{t}_{i-1}v_{i-1}\thicksim \bar{x}_{i-1}-(\bar{t}%
_{i-1}-\bar{t}_{i})v_{i-1}=\bar{x}_{i}\thicksim x_{i},$ $t_{i}\thicksim
0,\int_{0}^{\bar{t}_{i-1}}\thicksim \int_{t_{i}}^{t_{i-1}},$ the above is
simplified as%
\begin{eqnarray}
&\thicksim &\int_{B_{im}^{+}}h_{0}(x_{i},v_{i-1})\{\Pi _{j=i}^{k-1}d\sigma
_{j}\}\{\tilde{w}(v_{i-1})e^{\nu (v_{i-1})\{t_{i}-t_{i-1}\}}d\sigma
_{i-1}\}\Pi _{j=1}^{i-2}\{e^{\nu (v_{j})(t_{j+1}-t_{j})}d\sigma _{j}\} 
\notag \\
&&+\int_{B_{im}^{+}}\sum_{j=1}^{i-1}\int_{t_{j+1}}^{t_{j}}q(\tau
,x_{j}+(\tau -t_{j})v_{j},v_{j})d\Sigma _{j}(\tau )d\tau  \label{b+}
\end{eqnarray}%
By the boundary condition (\ref{hdiffuse}), 
\begin{equation*}
h_{0}(x_{i},v_{i-1})\tilde{w}(v_{i-1})=\int_{\mathcal{V}%
_{i}}h_{0}(x_{i},v_{i})\tilde{w}(v_{i})d\sigma _{i}\thicksim \int_{\mathcal{V%
}_{i}^{\varepsilon _{1}}\cap \mathcal{\bar{V}}_{i}^{\varepsilon
_{1}}}h_{0}(x_{i},v_{i})\tilde{w}(v_{i})d\sigma _{i}.
\end{equation*}%
For $v_{i}\in \mathcal{V}_{i}^{\varepsilon _{1}}\cap \mathcal{\bar{V}}%
_{i}^{\varepsilon _{1}},$ we have $t_{i}-t_{i+1}\geq \frac{\varepsilon
_{1}^{3}}{C_{\xi }}.$ But $t_{i}\leq \varepsilon _{2}<<\varepsilon _{1}$ in $%
B_{im}^{+},$ so that $t_{i+1}\leq 0.$ Hence 
\begin{equation*}
\mathcal{V}_{i}^{\varepsilon _{1}}\cap \mathcal{\bar{V}}_{i}^{\varepsilon
_{1}}=\{\mathcal{V}_{i}^{\varepsilon _{1}}\cap \mathcal{\bar{V}}%
_{i}^{\varepsilon _{1}}\}\cap \{0<t_{i}\leq \varepsilon _{2},t_{i+1}\leq 0,%
\text{ and }\bar{t}_{i}\leq 0,\bar{t}_{i-1}>0\},
\end{equation*}%
i.e., no restriction on $v_{i}\in \mathcal{V}_{i}^{\varepsilon _{1}}\cap 
\mathcal{\bar{V}}_{i}^{\varepsilon _{1}}$ in $B_{im}^{+},$ because $%
t_{i+1}\leq 0$ and $t_{i},$ $\bar{t}_{i}$ only depends on $v_{1},...v_{i-1},$
not on $v_{i}$ from (\ref{diffusecycle}). Moreover, since $\int_{\mathcal{V}%
_{i}^{\varepsilon _{1}}\cap \mathcal{\bar{V}}_{i}^{\varepsilon _{1}}}d\sigma
_{i}\thicksim 1,$ the first term in (\ref{b+}) 
\begin{equation*}
\thicksim \frac{e^{\nu (v)(t_{1}-t)}}{\tilde{w}(v)}%
\int_{B_{im}^{+}}h_{0}(x_{i},v_{i})\tilde{w}(v_{i})\{\Pi
_{j=i+1}^{k-1}d\sigma _{j}\}\Pi _{j=1}^{i-1}\{e^{\nu
(v_{j})(t_{j+1}-t_{j})}d\sigma _{j}\},
\end{equation*}%
which is exactly the first term in (\ref{tlsmall}).

\textbf{CASE 2c:} $B_{im}\cap \{t_{i}>\varepsilon _{2},-\varepsilon _{2}\leq
t_{i+1}\leq 0\}.$ Clearly $\bar{t}_{i}>0$ by continuity over $B_{im}$. If $%
\bar{t}_{i+1}\leq 0,$ then we again have $i=m$ in (\ref{h>0}) and the
difference is small on this set from the continuity of $h_{0}$ and $q.$ We
therefore only need to consider the set 
\begin{equation*}
B_{im}^{-}\equiv B_{im}\cap \{t_{i}>\varepsilon _{2},-\varepsilon _{2}\leq
t_{i+1}\leq 0,\text{ but }0<\bar{t}_{i+1},\bar{t}_{i+2}\leq 0\}.
\end{equation*}%
Since $B_{im}^{-}$ is empty except for $m=i+1,$ the contribution for $h(\bar{%
t},\bar{x},\bar{v})$ is given by $\frac{e^{\nu (\bar{v})(\bar{t}_{1}-\bar{t}%
)}}{\tilde{w}(\bar{v})}$ $\times $ 
\begin{eqnarray*}
&&\int_{B_{im}^{-}}h_{0}(\bar{x}_{i+1}-\bar{t}_{i+1}v_{i+1},v_{i+1})\{\Pi
_{j=i+2}^{k-1}d\sigma _{j}\}\{\tilde{w}(v_{i+1})e^{-\nu (v_{i+1})\bar{t}%
_{i+1}}d\sigma _{i+1}\}\Pi _{j=1}^{i}\{e^{\nu (v_{j})(\bar{t}_{j+1}-\bar{t}%
_{j})}d\sigma _{j}\} \\
&&+\int_{B_{im}^{-}}\sum_{j=1}^{i}\int_{\bar{t}_{j+1}}^{\bar{t}_{j}}q(\tau ,%
\bar{x}_{j}+(\tau -\bar{t}_{j})v_{j},v_{j})d\bar{\Sigma}_{j}(\tau )d\tau \\
&&+\int_{0}^{\bar{t}_{i+1}}\int_{B_{im}^{-}}q(\tau ,\bar{x}_{i+1}+(\tau -%
\bar{t}_{i+1})v_{i+1},v_{i+1})d\Sigma _{i+1}(\tau )d\tau .
\end{eqnarray*}%
Since on $B_{im}^{-},$ we have $\bar{t}_{i+1}\rightarrow t_{i+1}\thicksim
\varepsilon _{2},$ the last term is small from the continuity of $q$ and the
second term tends to the second term for $h(t,x,v)$ in (\ref{h>0}). Since $%
\bar{t}_{i+1}\thicksim \varepsilon _{2},$ $\bar{x}_{j+1}\thicksim x_{j+1%
\text{ }}$and $\bar{t}_{j+1}\thicksim \bar{t}_{j+1},$ the first term above
takes the form 
\begin{equation}
\thicksim \frac{e^{\nu (v)(t_{1}-t)}}{\tilde{w}(v)}%
\int_{B_{im}^{-}}h_{0}(x_{i+1},v_{i+1})\{\Pi _{j=i+2}^{k-1}d\sigma _{j}\}\{%
\tilde{w}(v_{i+1})d\sigma _{i}\}\Pi _{j=1}^{i}\{e^{\nu
(v_{j})(t_{j+1}-t_{j})}d\sigma _{j}\}.  \label{hbar}
\end{equation}

On the other hand, consider the first term\ for $h(t,x,v)$ in (\ref{h>0}).
Since $x_{i+1}-x_{i}=-v_{i}(t_{i}-t_{i+1}),$ and $t_{i+1}\thicksim
\varepsilon _{2},$ $x_{i}-t_{i}v_{i}-x_{i+1}\thicksim \varepsilon _{2},$ by
the continuity of $h_{0\text{ \ }}$(away from $\gamma _{0}$), it takes the
form 
\begin{equation}
\thicksim \frac{e^{\nu (v)(t_{1}-t)}}{\tilde{w}(v)}%
\int_{B_{im}}h_{0}(x_{i+1},v_{i})\{\Pi _{j=i+1}^{k-1}d\sigma _{j}\}\{\tilde{w%
}(v_{i})e^{-\nu (v_{i})t_{i})}d\sigma _{i}\}\Pi _{j=1}^{i-1}\{e^{\nu
(v_{j})(t_{j+1}-t_{j})}d\sigma _{j}\}  \label{hb-}
\end{equation}%
Since $\bar{t}_{i+1}\thicksim \varepsilon _{2}$ from continuity, for $%
\varepsilon _{2}<<\varepsilon _{1}$, $\bar{t}_{i+2}<0,$ and 
\begin{equation*}
\mathcal{V}_{i+1}^{\varepsilon _{1}}\cap \mathcal{\bar{V}}%
_{i+1}^{\varepsilon _{1}}\cap \{t_{i}>\varepsilon _{2},-\varepsilon _{2}\leq
t_{i+1}\leq 0,\text{ but }0<\bar{t}_{i+1},\bar{t}_{i+2}\leq 0\}=\mathcal{V}%
_{i+1}^{\varepsilon _{1}}\cap \mathcal{\bar{V}}_{i+1}^{\varepsilon _{1}},
\end{equation*}%
where $\bar{t}_{i+1}$ only depends on $v_{1},...v_{i},$ but not on $v_{i+1}$
by (\ref{diffusecycle})$.$ From the diffuse boundary condition (\ref%
{hdiffuse}) we have 
\begin{equation*}
h_{0}(x_{i+1},v_{i})\tilde{w}(v_{i})\thicksim \int_{\mathcal{V}%
_{i+1}^{\varepsilon _{1}}\cap \mathcal{\bar{V}}_{i+1}^{\varepsilon _{1}}\cap
B_{im}^{-}}h_{0}(x_{i+1},v_{i+1})\tilde{w}(v_{i+1})d\sigma _{i+1}.
\end{equation*}%
Since $\int_{\mathcal{V}_{i+1}^{\varepsilon _{1}}\cap \mathcal{\bar{V}}%
_{i+1}^{\varepsilon _{1}}\cap B_{im}^{-}}d\sigma _{i+1}\thicksim 1,$ (\ref%
{hb-}) reduces to (\ref{hbar}) and we conclude our proof.
\end{proof}

\subsubsection{Decay for Diffuse Reflection $U(t)$}

\begin{theorem}
\label{duffusiverate}Let $h_{0}\in L^{\infty }$ and assume (\ref{wdiffuse}).
There exits a unique solution to both the (\ref{lboltzmann}) and (\ref%
{lboltzmannh}) with the diffuse boundary condition (\ref{hdiffuse}). Let $%
U(t)h_{0}$ be the solution the the weighted linear Boltzmann equation (\ref%
{lboltzmannh}) with the diffuse boundary condition, then there exist $%
\lambda >0$ and $C>0$ such that the exponential decay (\ref{bouncebackdecay}%
) is valid$.$
\end{theorem}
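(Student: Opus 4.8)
The plan is to follow the architecture of the proofs of Theorems \ref{bouncebackrate} and \ref{specularrate}, replacing the bounce-back/specular cycles by the diffuse cycles of Definition \ref{diffusecycles} and feeding in the two diffuse-specific ingredients already established: the $L^{\infty}$ bound for $G(t)$ (Lemma \ref{diffuseg0}) and the smallness, for large $k$, of the set of cycles that fail to reach the initial plane after $k$ bounces (Lemma \ref{small}). First I would dispose of well-posedness: the Duhamel formula (\ref{duhamel}) produces an $L^{\infty}$ solution $h(t)=U(t)h_{0}$ of (\ref{lboltzmannh}) with diffuse boundary condition, whose trace lies in $L^{\infty}(\gamma)$ by Ukai's trace theorem; since the weight in (\ref{wdiffuse}) satisfies $w^{-2}\{1+|v|\}\in L^{1}$, the function $f=h/w$ is an $L^{2}$ solution of (\ref{lboltzmann}) with $f_{\gamma}\in L^{2}_{\mathrm{loc}}(L^{2}(\gamma))$, hence unique by the energy identity; this gives uniqueness for both problems. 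With the mass condition (\ref{mass}) on $f_{0}$ in force (inherited from Theorem \ref{diffusenl}, and automatic anyway for the linear diffuse flow since the boundary flux of $\sqrt{\mu}f$ vanishes by (\ref{diffusenormal})), Theorem \ref{L2decay}(4) yields $\|f(s)\|\le\sqrt{2}\,e^{-\lambda s}\|f(0)\|\le Ce^{-\lambda s}\|h_{0}\|_{\infty}$. By Lemma \ref{bootstrap} (whose hypothesis $w^{-2}\in L^{1}$ holds here), it then suffices to prove the finite-time estimate (\ref{finitetime}): for one fixed $T_{0}$, $\|U(T_{0})h_{0}\|_{\infty}\le e^{-\lambda T_{0}}\|h_{0}\|_{\infty}+C_{T_{0}}\int_{0}^{T_{0}}\|f(s)\|\,ds$.

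To prove this I would iterate Duhamel once more to get (\ref{duhamel2}). The term $G(t)h_{0}$ is controlled by Lemma \ref{diffuseg0}: since $\tilde{w}\ge 1$ one has $\|h_{0}/\tilde{w}\|_{\infty}\le\|h_{0}\|_{\infty}$ and $\|e^{\nu_{0}-\nu(v)}h_{0}\|_{\infty}\le\|h_{0}\|_{\infty}$, so $\|G(t)h_{0}\|_{\infty}\le Ce^{-\nu_{0}t/2}\|h_{0}\|_{\infty}$; with $\|K_{w}g\|_{\infty}\le C\|g\|_{\infty}$ from Lemma \ref{kernel}, the single-integral term in (\ref{duhamel2}) is $\le C(1+t)e^{-\nu_{0}t/2}\|h_{0}\|_{\infty}$. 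For the double-integral term, fix $(t,x,v)\notin\gamma_{0}$ and expand both $G(t-s_{1})\{\cdot\}$ and $G(s_{1}-s)\{K_{w}h(s)\}$ via the diffuse representation (\ref{diffuseformula}) (with $q\equiv 0$ in the outer instances), so that the kernel of the double integral becomes an iterated integral in $v'$, $v''$ and the diffuse-cycle velocities $v_{1},v_{2},\dots$, against the probability measures $d\Sigma_{l}$ of (\ref{dsigma}) and two copies of $K_{w}$, of $|h(s,X_{\mathbf{cl}}'(s),v'')|$, where $X_{\mathbf{cl}}'$ is the diffuse cycle issuing from $(s_{1},X_{\mathbf{cl}}(s_{1}),v')$.

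The double integral then splits exactly as in the bounce-back proof. For $|v|\ge N$ one uses $\int\!\!\int K_{w}(v,v')K_{w}(v',v'')\,dv'dv''\le C_{K}/N$; for $|v'|\ge 2N$ or $|v''|\ge 3N$ one extracts $e^{-\varepsilon N^{2}/8}$ from Lemma \ref{kernel}; for $s_{1}-s\le\varepsilon$ one gains a factor $\varepsilon$. On the remaining region — $|v|,|v'|,|v''|$ bounded and $s_{1}-s\ge\varepsilon$ — I would first apply Lemma \ref{small} to truncate the cycle to $k\le k_{0}(\varepsilon,T_{0})$ bounces, the discarded piece contributing $\le C\sqrt{\varepsilon}\sup_{s}e^{-\nu_{0}(t-s)/2}\|h(s)\|_{\infty}$ by Cauchy--Schwarz and the $L^{2}$ bound (\ref{w>1}) for $\tilde{w}$; then, on the non-grazing slices of the truncated cycle — $v_{l}\cdot n(x_{l})\ge\delta$, $|v_{l}|\le 1/\delta$, where $t_{l},x_{l}$ are smooth by Lemmas \ref{velocity} and \ref{huang} — and away from a small time-slab around the segment endpoints, I would change variables $v_{j}\mapsto y=X_{\mathbf{cl}}'(s)$, with $v_{j}$ the velocity labelling the cycle-segment through $X_{\mathbf{cl}}'(s)$; the Jacobian is bounded below by $\varepsilon^{-3}$ there, so the $v_{j}$-integral becomes $\big(\int_{\Omega}|h(s,y,v'')|^{2}\,dy\big)^{1/2}$ and, after the bounded $v''$-integration, $C_{\varepsilon,N,T_{0}}\int_{0}^{T_{0}}\|f(s)\|\,ds$ with $f=h/w$. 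Collecting the pieces bounds $\sup_{0\le t\le T_{0}}e^{\nu_{0}t/2}\|h(t)\|_{\infty}$ by $C_{K}(1+T_{0})e^{-\nu_{0}T_{0}/2}\|h_{0}\|_{\infty}+\big(\tfrac{C_{K}}{N}+C_{N}\varepsilon+C\sqrt{\varepsilon}\big)\sup_{s\le T_{0}}e^{\nu_{0}s/2}\|h(s)\|_{\infty}+C_{\varepsilon,N,T_{0}}\int_{0}^{T_{0}}\|f(s)\|\,ds$; choosing $T_{0}$ large, then $N$ large, then $\varepsilon$ small absorbs the middle term and gives (\ref{finitetime}), whence (\ref{bouncebackdecay}) for the diffuse flow by Lemma \ref{bootstrap}.

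The hard part will be the bounded-velocity portion of the double integral. Because, unlike the bounce-back and specular flows, the diffuse semigroup $G$ has no explicit finite representation in terms of the initial datum — grazing trajectories may never descend to $t=0$ — one must first invoke Lemma \ref{small} to reduce to a controlled number of bounces, perform the change of variables only on those non-grazing slices of the truncated cycle where the cycle map is genuinely smooth, and throughout keep uniformly under control both the book-keeping of which cycle-segment contains $X_{\mathbf{cl}}'(s)$ and the weighted probability measures $d\Sigma_{l}$. No analyticity of $\partial\Omega$ is needed at this stage, since the change of variables is in a single velocity rather than in a composition of reflections; the whole difficulty is routing the $L^{\infty}$--$L^{2}$ interplay through the delicate $L^{\infty}$ bound for $G(t)$.
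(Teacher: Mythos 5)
Your proposal is correct and follows essentially the same route as the paper: well-posedness via Duhamel plus the trace theorem and $L^{2}$ uniqueness, reduction to the finite-time estimate (\ref{finitetime}) through Lemma \ref{bootstrap} and Theorem \ref{L2decay}(4), the double Duhamel expansion with $G$ represented by the diffuse cycle formula (\ref{diffuseformula}), truncation to finitely many bounces by Lemma \ref{small} with a Cauchy--Schwarz/(\ref{w>1}) error, and a change of variables in the single cycle velocity labelling the segment through $X_{\mathbf{cl}}'(s)$ (valid precisely because $x_{l'}'$, $t_{l'}'$ do not depend on that velocity), yielding $\int_{0}^{T_{0}}\|f(s)\|\,ds$. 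Your extra restriction to non-grazing cycle velocities before the change of variables is unnecessary (the map is affine in the chosen velocity, and the paper instead just excises the thin slab $t_{l'}'-s\leq 1/N$), but it is harmless since the discarded set has small $d\sigma$ measure.
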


\begin{proof}
Once again, with the $L^{\infty }$ solution $h(t)=U(t)h_{0}$ to the weighted
linear Boltzmann equation (\ref{lboltzmannh})$,$ from Lemma \ref{diffuseg0}
and Duhamel principle (\ref{duhamel}), we deduce from Ukai's trace theorem, $%
h_{\gamma }$ $\in L^{\infty }\,.$ So $f=\frac{h}{w}$ satisfies the original
linear Boltzmann equation (\ref{lboltzmann}) with $f\in L^{2}$ \ and $%
\int_{0}^{t}||f(s)||_{\gamma }^{2}ds<\infty .$ Hence uniqueness follows for $%
f$ by using the standard $L^{2}$ energy estimate. The well-posedness follows
from the exact argument in the proof of Theorem \ref{bouncebackrate}. By
Lemma \ref{bootstrap} and the $L^{2}$ decay for the diffuse boundary
problem, it suffices to establish the finite time estimate (\ref{finitetime}%
).

We apply the double-Duhamel's principle (\ref{duhamel2}). By Lemma \ref%
{diffuseg0}, the first two terms in (\ref{duhamel2}) are easily bounded by $%
C_{K}(t+1)e^{-\nu _{0}t}||h_{0}||_{\infty }.$

We concentrate on the third term 
\begin{equation}
\int_{0}^{t}\int_{0}^{s_{1}}G(t-s_{1})K_{w}G(s_{1}-s)K_{w}h(s)dsds_{1}.
\label{diffuse3}
\end{equation}%
We now fix any point $(t,x,v)$ so that $(x,v)\notin \gamma _{0}.$ From (\ref%
{diffuseformula}) with $q\equiv 0,$ the integrand above is given by 
\begin{eqnarray}
&&e^{\nu (v)(s_{1}-t)}\mathbf{1}_{\{t_{1}\leq
s_{1}\}}\{K_{w}G(s_{1}-s)K_{w}h(s)\}(s_{1},x-tv,v)  \notag \\
&&+\frac{e^{\nu (v)(t_{1}-t)}}{\tilde{w}(v)}\int \sum_{l=1}^{k-1}\mathbf{1}%
_{\{t_{l}>s_{1},t_{l+1}\leq
s_{1}\}}\{K_{w}G(s_{1}-s)K_{w}h(s)\}(s_{1},x_{l}+(s_{1}-t_{l})v_{l},v_{l})d%
\Sigma _{l}(s_{1})  \notag \\
&&+\frac{e^{\nu (v)(t_{1}-t)}}{\tilde{w}(v)}\int \mathbf{1}%
_{\{t_{k}>s_{1}\}}\{G(t-s_{1})K_{w}G(s_{1}-s)K_{w}h(s)%
\}(t_{k},x_{k},v_{k-1})d\Sigma _{k-1}(t_{k}).  \label{diffuseu}
\end{eqnarray}%
where $d\Sigma _{l}(s_{1})=\{\Pi _{j=l+1}^{k-1}d\sigma _{j}\}e^{\nu
(v_{l})(s_{1}-t_{l})}\tilde{w}(v_{l})d\sigma _{l}\Pi _{j=1}^{l-1}\{e^{\nu
(v_{j})(t_{j+1}-t_{j})}d\sigma _{j}\},$ and the exponential factor in $%
d\Sigma _{l}(s_{1})$ is bounded by $e^{\nu _{0}(s_{1}-t_{1})}.$ By Lemma \ref%
{diffuseg0}, 
\begin{equation*}
||\{G(t_{k}-s_{1})K_{w}G(s_{1}-s)K_{w}h(s)\}(t_{k})||_{\infty }\leq C_{K}e^{-%
\frac{\nu _{0}}{2}(t_{k}-s)}||h(s)||_{\infty }.
\end{equation*}%
Letting $k=k_{0}(\varepsilon ,T_{0})$ large in Lemma \ref{small}, as in (\ref%
{inflowstep1}), the integral of the last term is bounded by 
\begin{equation}
\varepsilon C_{K}e^{-\frac{\nu _{0}}{8}t}\sup_{0\leq t\leq T_{0}}\{e^{\frac{%
\nu _{0}}{8}s}||h(s)||_{\infty }\}.  \label{smalle}
\end{equation}

To estimate the first and second terms in (\ref{diffuseu}), we first derive
the formula for $K_{w}G(s_{1}-s)K_{w}h(s).$ Recall the back-time cycles of: $%
X_{\mathbf{cl}}(s)=x_{l}-(t_{l}-s)v_{l}.$ We denote $t_{l^{\prime }}^{\prime
}=t_{l^{\prime }}(s_{1},X_{\mathbf{cl}}(s_{1}),v^{\prime }),$ $\ $and $%
x_{l^{\prime }}^{\prime }=X_{\mathbf{cl}}(t_{l^{\prime }},(X_{\mathbf{cl}%
}(s_{1}),v^{\prime }),$ and $v_{l^{\prime }}$ $\in \mathcal{V}_{l^{\prime }},
$ for $1\leq l^{\prime }\leq k-1.$ By (\ref{diffuseformula}) again, 
\begin{eqnarray}
&&\{K_{w}G(s_{1}-s)K_{w}h(s)\}(s_{1},X_{\mathbf{cl}}(s_{1}),v_{l})  \notag \\
&=&\int K_{w}(v_{l},v^{\prime })\{G(s_{1}-s)K_{w}h(s)\}\left( s_{1},X_{%
\mathbf{cl}}(s_{1}),v^{\prime }\right) dv^{\prime }  \notag \\
&=&\int K_{w}(v_{l},v^{\prime })e^{\nu (v^{\prime })(s-s_{1})}\mathbf{1}%
_{\{t_{1}^{\prime }\leq s\}}\{K_{w}h(s)\}(s,X_{\mathbf{cl}%
}(s_{1})-(s_{1}-s)v^{\prime },v^{\prime })dv^{\prime }  \notag \\
&&+\int K_{w}(v_{l},v^{\prime })\frac{e^{\nu (v^{\prime })(t_{1}^{\prime
}-s_{1})}}{\tilde{w}(v^{\prime })}\sum_{l^{\prime }=1}^{k-1}\mathbf{1}%
_{t_{l^{\prime }+1}^{\prime }\leq s<t_{l^{\prime }}^{\prime }}\{\int
K_{w}(v_{l^{\prime }},v^{\prime \prime })h(s,x_{l^{\prime }}^{\prime
}-(t_{l^{\prime }}^{\prime }-s)v_{l^{\prime }},v^{\prime \prime })dv^{\prime
\prime }\}d\Sigma _{l^{\prime }}dv^{\prime }  \notag \\
&&+\int K_{w}(v_{l},v^{\prime })\frac{e^{\nu (v^{\prime })(t_{1}^{\prime
}-s_{1})}}{\tilde{w}(v^{\prime })}\int \mathbf{1}_{t_{k}^{\prime
}>s}\{G(s_{1}-s)K_{w}h(s)\}(t_{k}^{\prime },x_{k}^{\prime },v_{k})d\Sigma
_{k-1}(t_{k}^{\prime })dv^{\prime }.  \label{h'}
\end{eqnarray}%
Once again, since $||G(s_{1}-s)K_{w}h(s)(t_{k}^{\prime },x_{k}^{\prime
},v_{k})||_{\infty }\leq C_{K}e^{\frac{\nu _{0}}{2}\{t_{k}^{\prime
}-s\}}||h(s)||_{\infty },$ the last term is bounded by (\ref{smalle}) due to
Lemma \ref{small}, when $k\geq k_{0}(\varepsilon )$ large.

By inserting the main terms in (\ref{h'}) back to (\ref{diffuseu}), we
deduce that, up to $\varepsilon C_{K}e^{-\frac{\nu _{0}}{8}t}\sup_{0\leq
t\leq T_{0}}\{e^{\frac{\nu _{0}}{8}s}||h(s)||_{\infty }\},$ the third term (%
\ref{diffuse3}) in the double-duhamel representation (\ref{duhamel2}) is $%
\int_{0}^{t}\int_{0}^{s_{1}}\ast dsds_{1},$ where $\ast $ is 
\begin{eqnarray}
&&e^{\nu (v)(s_{1}-t)}\mathbf{1}_{t_{1}\leq s_{1}}\int K_{w}(v,v^{\prime
})e^{\nu (v^{\prime })(s-s_{1})}\mathbf{1}_{t_{1}^{\prime }\leq
s}\{K_{w}h(s)\}(s,X_{\mathbf{cl}}(s_{1})-(s_{1}-s)v,v^{\prime })dv^{\prime }
\notag \\
&&+e^{\nu (v)(s_{1}-t)}\mathbf{1}_{t_{1}\leq s_{1}}\int K_{w}(v,v^{\prime })%
\frac{e^{\nu (v^{\prime })(t-s_{1})}}{\tilde{w}(v^{\prime })}\sum_{l^{\prime
}=1}^{k-1}\mathbf{1}_{t_{l^{\prime }+1}^{\prime }\leq s<t_{l^{\prime
}}^{\prime }}\times   \notag \\
&&\times \{\int K_{w}(v_{l^{\prime }},v^{\prime \prime })h(s,x_{l^{\prime
}}^{\prime }-(t_{l^{\prime }}^{\prime }-s)v_{l^{\prime }},v^{\prime \prime
})dv^{\prime \prime }\}d\Sigma _{l^{\prime }}^{\prime }(s)dv^{\prime } 
\notag \\
&&+\frac{e^{\nu (v)(t_{1}-t)}}{\tilde{w}(v)}\int \sum_{l=1}^{k-1}\mathbf{1}%
_{t_{l+1}\leq s_{1}<t_{l}}K_{w}(v_{l},v^{\prime })\times   \label{diffusee}
\\
\times  &&e^{\nu (v^{\prime })(s-s_{1})}\mathbf{1}_{t_{1}^{\prime }\leq
s}K_{w}(v^{\prime },v^{\prime \prime })h(s,X_{\mathbf{cl}%
}(s_{1})-(s_{1}-s)v^{\prime },v^{\prime \prime })dv^{\prime \prime
}dv^{\prime }d\Sigma _{l}(s_{1})  \notag \\
&&+\frac{e^{\nu (v)(t_{1}-t)}}{\tilde{w}(v)}\int \sum_{l=1,l^{\prime
}=1}^{k-1}\mathbf{1}_{\{t_{l}>s_{1},t_{l+1}\leq
s_{1}\}}K_{w}(v_{l},v^{\prime })d\Sigma _{l}\times   \notag \\
&&\times \mathbf{1}_{\{t_{l^{\prime }}^{\prime }>s,t_{l^{\prime }+1}^{\prime
}\leq s\}}\frac{e^{\nu (v^{\prime })(t_{1}^{\prime }-s_{1})}}{\tilde{w}%
(v^{\prime })}K_{w}(v_{l^{\prime }},v^{\prime \prime })h(s,x_{l^{\prime
}}^{\prime }-(t_{l^{\prime }}^{\prime }-s)v_{l^{\prime }},v^{\prime \prime
})dv^{\prime \prime }d\Sigma _{l^{\prime }}^{\prime }ds_{1}ds.  \notag
\end{eqnarray}

We now estimate them term by term. For the first term in (\ref{diffusee}),
the back-time trajectories never touch the boundary. This term can be easily
estimated as the proof of Theorem \ref{inflowrate} for the in-flow case
(e.g. (\ref{inflowstep42})) by 
\begin{equation*}
\varepsilon C_{K}e^{-\frac{\nu _{0}}{8}t}\sup_{0\leq t\leq T_{0}}\{e^{\frac{%
\nu _{0}}{8}s}||h(s)||_{\infty }\}+C_{\varepsilon
,T_{0}}\int_{0}^{T_{0}}||f(s)||ds.
\end{equation*}

For the second term in (\ref{diffusee}), we fix $l^{\prime }$ and consider $%
v_{l^{\prime }},v^{\prime }$ and $v^{\prime \prime }$ integration. We recall 
$d\Sigma _{l^{\prime }}^{\prime }(s)=\{\Pi _{j=l^{\prime }+1}^{k-1}d\sigma
_{j}\}e^{\nu (v_{l^{\prime }})(s-t_{l^{\prime }}^{\prime })}\tilde{w}%
(v_{l^{\prime }})d\sigma _{l^{\prime }}^{\prime }\Pi _{j=1}^{l^{\prime
}-1}\{e^{\nu (v_{j})(t_{j+1}^{\prime }-t_{j}^{\prime })}d\sigma _{j}^{\prime
}\}$, and $\tilde{w}(v_{l^{\prime }})d\sigma _{l^{\prime }}=\tilde{w}%
(v_{l^{\prime }})\mu (v_{l^{\prime }})\{n(x_{l^{\prime }}^{\prime })\cdot
v_{l^{\prime }}\}dv_{l^{\prime }}$. The exponential factor in $d\Sigma
_{l^{\prime }}^{\prime }(s)$ is bounded by $e^{\nu _{0}(t_{1}^{\prime }-s)}.$
Notice that with $\tilde{w}(v_{l^{\prime }})\mu (v_{l^{\prime }})$ bounded
and integrable for $\theta <\frac{1}{4}$. Hence from (\ref{wk}), 
\begin{eqnarray}
&&\int_{|v_{l^{\prime }}|\geq N}\int |K_{w}(v_{l^{\prime }},v^{\prime \prime
})|dv^{\prime \prime }\tilde{w}(v_{l^{\prime }})\mu (v_{l^{\prime
}})\{n(x_{l^{\prime }}^{\prime })\cdot v_{l^{\prime }}\}dv_{l^{\prime }}
\label{vl'} \\
&\leq &\frac{C_{K}}{N}\int_{|v_{l^{\prime }}|\geq N}\tilde{w}(v_{l^{\prime
}})\mu (v_{l^{\prime }})\{n(x_{l^{\prime }}^{\prime })\cdot v_{l^{\prime
}}\}dv_{l^{\prime }}=\frac{C_{K}}{N}.  \notag
\end{eqnarray}%
By similar arguments as in Case 1 (\ref{inflowstep2}), Case 2 (\ref%
{inflowstep3}) in the proof of Theorem \ref{inflowrate}, we only need to
consider the case $|v|\leq N,$ $|v^{\prime }|\leq 2N,$ and $|v_{l^{\prime
}}|\leq N$ and $|v^{\prime \prime }|\leq 2N,$ for some large $N$. As in Case
4 in the proof of Theorem \ref{inflowrate}, we can also use the same
approximation (\ref{approximate}). Hence, for each fixed $l^{\prime },$ we
only need to control: 
\begin{eqnarray*}
&&\int_{t_{1}}^{t}\int_{\Pi _{j=1}^{l^{\prime }-1}\mathcal{V}_{j}^{\prime
}}\left\{ \int_{|v^{\prime \prime }|\leq 2N,|v_{l^{\prime }}|\leq
N}\int_{t_{l^{\prime }+1}^{\prime }}^{t_{l^{\prime }}^{\prime }}e^{\nu
_{0}(s-t)}|h(s,x_{l^{\prime }}^{\prime }-(t_{l^{\prime }}^{\prime
}-s)v_{l^{\prime }},v^{\prime \prime })|dv^{\prime \prime }dv^{\prime
}dv_{l^{\prime }}\right\} \Pi _{j=1}^{l^{\prime }-1}d\sigma _{j}^{\prime } \\
&\leq &\int_{t_{l^{\prime }+1}^{\prime }}^{t_{l^{\prime }}^{\prime }}{}%
\mathbf{1}_{\{t_{l^{\prime }}^{\prime }-s\geq \frac{1}{N}\}}+\mathbf{1}%
_{\{t_{l^{\prime }}^{\prime }-s\leq \frac{1}{N}\}} \\
&\leq &\int_{t_{l^{\prime }+1}^{\prime }}^{t_{l^{\prime }}^{\prime }}\mathbf{%
1}_{\{t_{l^{\prime }}^{\prime }-s\geq \frac{1}{N}\}}+\frac{C}{N}\sup_{0\leq
s\leq T_{0}}\{e^{-\nu _{0}(s-t_{l^{\prime }}^{\prime })}||h(s)||_{\infty }\}.
\end{eqnarray*}%
Here we have used the fact $\int \{\Pi _{j=l^{\prime }+1}^{k-1}d\sigma
_{j}\}=1$. Notice that $x_{l^{\prime }}^{\prime }$ and $t_{l^{\prime
}}^{\prime }$ depend only on $t,x,v,v_{1},...v_{l^{\prime }-1},$ not on $%
v_{l^{\prime }}.$ By making a change of variable $y=x_{l^{\prime }}^{\prime
}-(t_{l^{\prime }}^{\prime }-s)v_{l^{\prime }},$ for the first part, we use
Fubini's theorem and the fact $\int \{\Pi _{j=1}^{l^{\prime }-1}d\sigma
_{j}\}=1$ to majorize it as ($f=\frac{h}{w}):$ 
\begin{equation}
C_{N,T_{0}}\int_{0}^{T_{0}}\left\{ \int_{y\in \Omega ,|v^{\prime \prime
}|\leq 2N}|h(s,y,v^{\prime \prime })|^{2}dydv^{\prime \prime }\right\}
^{1/2}=C_{N,T_{0},\Omega }\int_{0}^{T_{0}}||f(s)||ds.  \label{diffusel2}
\end{equation}

For the third term in (\ref{diffusee}), for fixed $l,$ we consider the $%
v_{l},v^{\prime },v^{\prime \prime }$ integration. Recall $d\Sigma
_{l}(s_{1})=\{\Pi _{j=l+1}^{k-1}d\sigma _{j}\}e^{\nu (v_{l})(s_{1}-t_{l})}%
\tilde{w}(v_{l})d\sigma _{l}\Pi _{j=1}^{l-1}\{e^{\nu
(v_{j})(t_{j+1}-t_{j})}d\sigma _{j}\}$ and the exponential factor is bounded
by $e^{\nu _{0}(t_{1}-s_{1})}.$ Because $\tilde{w}(v_{l})\mu (v_{l})$ is
bounded and integrable, as in (\ref{vl'}), we can use the arguments in Case
1 (\ref{inflowstep2}), Case 2 (\ref{inflowstep3}) in the proof of Theorem %
\ref{inflowrate} to reduce to $|v_{l}|\leq N,$ $|v^{\prime }|\leq 2N$ and $%
|v^{\prime \prime }|\leq 3N.$ As in the second term, it then suffices to
estimate 
\begin{equation*}
e^{-\nu _{0}(t-s)}\int_{\Pi _{j=1}^{l-1}\mathcal{V}_{j}^{\prime
}}\int_{t_{l+1}}^{t_{l}}\int_{s}^{s_{1}}\int_{|v^{\prime \prime }|\leq
2N,|v^{\prime }|\leq N}|h(s,X_{\mathbf{cl}}(s_{1})-(s_{1}-s)v^{\prime
},v^{\prime \prime })|dsds_{1}dv^{\prime \prime }dv^{\prime }\Pi
_{j=1}^{l-1}d\sigma _{j}
\end{equation*}%
\newline
which is bounded by (\ref{diffusel2}) by the change of variable $y=X_{%
\mathbf{cl}}(s_{1})-(s_{1}-s)v^{\prime }$.

In the last integral in (\ref{diffusee}), both back-time diffuse cycles
first hit the boundary. For fixed $l$ and $l^{\prime },$ we consider the $%
v_{l},v_{l^{\prime }}$ and $v^{\prime }$,$v^{\prime \prime }$ integration.
We again recall $d\Sigma _{l}$ and $d\Sigma _{l^{\prime }}^{\prime \text{ }}$
and their exponential factors are bounded by $e^{\nu _{0}(t_{1}-s_{1})}$ \
and $e^{\nu _{0}(t_{1}^{\prime }-s)}$ respectively. Notice that both $\tilde{%
w}(v_{l})\mu (v_{l})$ and $\tilde{w}(v_{l^{\prime }})\mu (v_{l^{\prime }})$
are bounded and integrable. We use twice (\ref{vl'}), and as in the
reduction to (\ref{diffusel2}), we can reduce to the case of $|v_{l}|\leq
N,|v^{\prime }|\leq 2N,$ and $|v_{l^{\prime }}|\leq N$, $|v^{\prime \prime
}|\leq 2N.$ Therefore, by using the approximation (\ref{approximate}) and $%
\int \{\Pi _{j=l^{\prime }+1}^{k-1}d\sigma _{j}^{\prime }\}=1,$ $\int \{\Pi
_{j=l+1}^{k-1}d\sigma _{j}\}=1,$ we only need to control: 
\begin{equation*}
\int_{\Pi _{j=1}^{l-1}\mathcal{V}_{j}}\int_{\Pi _{j=1}^{l^{\prime }-1}%
\mathcal{V}_{j}^{\prime }}\int_{t_{l+1}}^{t_{l}}\int_{t_{l^{\prime
}+1}^{\prime }}^{t_{l^{\prime }}^{\prime }}\int_{|v_{l^{\prime
}}|,|v^{\prime \prime }|\leq 2N}e^{\nu _{0}(s-t)}|h(s,x_{l^{\prime
}}^{\prime }-(t_{l^{\prime }}^{\prime }-s)v_{l^{\prime }},v^{\prime \prime
})|dv^{\prime \prime }dv_{l^{\prime }}dsds_{1}|\Pi _{j=1}^{l^{\prime
}-1}d\sigma _{j}^{\prime }\Pi _{j=1}^{l-1}d\sigma _{j},
\end{equation*}%
which is again bounded by (\ref{diffusel2}) by the change of variable $%
y=x_{l^{\prime }}^{\prime }-(t_{l^{\prime }}^{\prime }-s)v_{l^{\prime }}$.

Summing over $l$, $l^{\prime }\leq k-1,$ and letting $N$ large, we summarize:%
\begin{equation*}
||h(t)||_{\infty }\leq C_{K}\{1+t\}e^{-\frac{\nu _{0}}{2}t}||h_{0}||_{\infty
}+\varepsilon C_{K}e^{-\frac{\nu _{0}}{8}t}\sup_{0\leq t\leq T_{0}}\{e^{%
\frac{\nu _{0}}{8}s}||h(s)||_{\infty }\}+C_{N,T_{0},\varepsilon
}\int_{0}^{T_{0}}||f(s)||ds.
\end{equation*}%
Choosing $\varepsilon $ small such that $\varepsilon C_{K}=\frac{1}{2},$ and 
$T_{0}$ large so that $2C_{K}\{1+T_{0}\}e^{-\frac{\nu _{0}}{2}%
T_{0}}=e^{-\lambda T_{0}},$ we deduce (\ref{finitetime}), and by Lemma \ref%
{bootstrap}, we deduce our theorem.
\end{proof}

\section{Nonlinear Exponential Decay}

\begin{proof}
(of Theorem \ref{inflownl}): \textbf{Step 1. Existence and Continuity}: Let $%
h^{0}\equiv 0,$ we use iteration 
\begin{equation}
\{\partial _{t}+v\cdot \nabla _{x}+\nu -K_{w}\}h^{m+1}=w\Gamma (\frac{h^{m}}{%
w},\frac{h^{m}}{w})  \label{interation1}
\end{equation}%
with $h^{m+1}|_{t=0}=h_{0},$ $h^{m+1}|_{\gamma ^{-}}=wg$. We further split $%
h^{m+1}=h_{g}^{m+1}+h_{\Gamma }^{m+1}$ for $m\geq 1,$ where $h_{g}^{m+1}$
satisfies the homogeneous linear weighted Boltzmann equation (\ref%
{lboltzmannh}) 
\begin{equation*}
\{\partial _{t}+v\cdot \nabla _{x}+\nu -K_{w}\}h_{g}^{m+1}=0,\text{ \ \ }%
h_{g}^{m+1}|_{\gamma ^{-}}=wg,\text{ \ }h_{g}^{m+1}|_{t=0}=h_{0};
\end{equation*}%
while $h_{\Gamma }^{m+1}$ satisfies the inhomogeneous linear weighted
Boltzmann equation (\ref{lboltzmannh}) with zero-boundary condition:%
\begin{equation*}
\{\partial _{t}+v\cdot \nabla _{x}+\nu -K_{w}\}h_{\Gamma }^{m+1}=w\Gamma (%
\frac{h^{m}}{w},\frac{h^{m}}{w}),\text{ }h_{\Gamma }^{m+1}|_{\gamma ^{-}}=%
\text{\ }h_{\Gamma }^{m+1}|_{t=0}=0.
\end{equation*}%
Clearly, from Theorem \ref{inflowrate}, for some $0<\lambda <\lambda _{0},$ 
\begin{equation*}
\sup_{0\leq t\leq \infty }e^{\lambda t}||h_{g}^{m+1}(t)||_{\infty }\leq
C\{||h_{0}||_{\infty }+\sup_{0\leq t\leq \infty }e^{\lambda
_{0}t}||wg(t)||_{\infty }\}.
\end{equation*}%
On the other hand, denote $U_{0}(t,s)$ and $G_{0}(t,s)$ to be solution
operators for linear weighted Boltzmann equation (\ref{lboltzmannh}) and (%
\ref{transport}) with \textit{zero} boundary condition respectively, then $%
U_{0}(t,s)=U_{0}(t-s,0)$ and $G_{0}(t,s)=G(t-s,0)$ are semigroups. Hence, by
the Duhamel Principle, 
\begin{equation}
h_{\Gamma }^{m+1}=\int_{0}^{t}U_{0}(t-s)w\Gamma (\frac{h^{m}}{w},\frac{h^{m}%
}{w})(s)ds.  \label{iteration2}
\end{equation}%
To avoid the extra weight function $\nu (v)\thicksim \{1+|v|\}^{\gamma }$ in
Lemma \ref{collision}, we further use the Duhamal Principle $%
U_{0}(t-s)=G_{0}(t-s)+\int_{s}^{t}G_{0}(t-s_{1})K_{w}U_{0}(s_{1}-s)ds_{1}$
to bound (\ref{iteration2}) by%
\begin{equation}
||\int_{0}^{t}G_{0}(t-s)w\Gamma (\frac{h^{m}}{w},\frac{h^{m}}{w}%
)(s)ds||_{\infty
}+||\int_{0}^{t}\int_{s}^{t}G_{0}(t-s_{1})K_{w}U_{0}(s_{1}-s)w\Gamma (\frac{%
h^{m}}{w},\frac{h^{m}}{w})(s)ds_{1}ds||_{\infty }.  \label{usplit}
\end{equation}%
For any $(t,x,v),$ by Lemma \ref{collision}:%
\begin{equation*}
|\{w\Gamma (\frac{h^{m}}{w},\frac{h^{m}}{w})\}(s,x-(t-s)v,v)|\leq C\nu
(v)||h^{m}||_{\infty }^{2},
\end{equation*}%
By the explicit formula (\ref{inflowformula}) with $g=0$, we therefore can
bound the first term in (\ref{usplit}) as 
\begin{eqnarray}
&&|\int_{0}^{t}e^{-\nu (v)(t-s)}\mathbf{1}_{t-t_{\mathbf{b}}(x,v)\leq
s}\{w\Gamma (\frac{h^{m}}{w},\frac{h^{m}}{w})\}(s,x-(t-s)v,v)ds|  \notag \\
&\leq &C\int_{0}^{t}e^{-\nu (v)(t-s)}\nu (v)||h^{m}(s)||_{\infty }^{2}ds 
\notag \\
&\leq &C\int e^{-\frac{\nu (v)}{2}(t-s)}\nu (v)ds\times \sup_{0\leq s\leq
t}\{e^{-\frac{\nu _{0}}{2}(t-s)}||h^{m}(s)||_{\infty }^{2}\}  \notag \\
&\leq &Ce^{-\frac{\nu _{0}}{2}t}\times \sup_{s}\{e^{\frac{\nu _{0}}{2}%
s}||h^{m}(s)||_{\infty }\}^{2}.  \label{g0}
\end{eqnarray}%
Here we have used the fact $\nu (v)\geq \nu _{0}\,,$ and $\int e^{-\frac{\nu
(v)}{2}(t-s)}\nu (v)ds<\infty .$

On the other hand, for the second term in (\ref{usplit}), let the semigroup $%
\tilde{U}(t)h_{0}$ solve 
\begin{equation}
\{\partial _{t}+v\cdot \nabla _{x}+\nu -K_{w/\sqrt{1+\rho |v|^{2}}}\}\{%
\tilde{U}(t)h_{0}\}=0,\text{ \ \ \ }  \label{utilde}
\end{equation}%
with $\{\tilde{U}(t)\tilde{h}_{0}\}|_{\gamma -}=0$ and $\tilde{U}(0)\tilde{h}%
_{0}=\tilde{h}_{0}.$ By a direct computation, $\sqrt{1+\rho |v|^{2}}\tilde{U}%
(t)$ solves the original linear Boltzmann equation (\ref{lboltzmannh}). By
uniqueness in the $L^{\infty }$ class, we deduce 
\begin{equation}
U(t)h_{0}\equiv \sqrt{1+\rho |v|^{2}}\tilde{U}(t)\{\frac{h_{0}}{\sqrt{1+\rho
|v|^{2}}}\}.  \label{equalutilde}
\end{equation}%
Therefore, we can rewrite $K_{w}U(s_{1},s)w\Gamma (\frac{h^{m}}{w},\frac{%
h^{m}}{w})(s)$ to get 
\begin{equation*}
\int_{0}^{t}\int_{s}^{t}e^{-\nu _{0}(t-s_{1})}||\left\{ \int
K_{w}(v,v^{\prime })\{\sqrt{1+\rho |v^{\prime }|^{2}}\}\right\} \tilde{U}%
(s_{1}-s)\{\frac{w}{\sqrt{1+\rho |v^{\prime }|^{2}}}\Gamma (\frac{h^{m}}{w},%
\frac{h^{m}}{w})(s)\}||_{\infty }ds_{1}dsdv^{\prime }.
\end{equation*}%
Since $w^{-2}(1+|v|)^{3}\in L^{1},$ $\ $\ $\{\frac{w}{\sqrt{1+\rho
|v^{\prime }|^{2}}}\}^{-2}(1+|v|)\in L^{1}$ so that Theorem \ref{inflowrate}
is valid for $\tilde{U}.$ Since $\frac{\nu (v^{\prime })}{\sqrt{1+\rho
|v^{\prime }|^{2}}}\leq C_{\rho }$, from the proof of Lemma \ref{collision}, 
\begin{equation}
\int K_{w}(v,v^{\prime })\sqrt{1+\rho |v^{\prime }|^{2}}dv^{\prime }\leq
C\int K_{w}(v,v^{\prime })\{|v-v^{\prime }|+|v|\}dv^{\prime }<+\infty .
\label{k}
\end{equation}%
Hence the second term in\ (\ref{usplit}) is bounded 
\begin{eqnarray}
&&C\int_{0}^{t}\int_{s}^{t}e^{-\nu _{0}(t-s_{1})}\tilde{U}(s_{1}-s)||\{\frac{%
w}{\sqrt{1+\rho |v^{\prime }|^{2}}}\Gamma (\frac{h^{m}}{w},\frac{h^{m}}{w}%
)(s)\}||_{\infty }ds_{1}ds  \notag \\
&\leq &C\int_{0}^{t}\int_{s}^{t}e^{-\lambda (t-s_{1})}||h^{m}(s)||_{\infty
}^{2}ds_{1}ds  \notag \\
&\leq &Ce^{-\frac{\lambda }{2}t}\times \{\sup_{0\leq s\leq \infty }e^{\frac{%
\lambda }{2}s}||h^{m}(s)||_{\infty }\}^{2}.  \label{u0tilde}
\end{eqnarray}%
Collecting terms for both $h_{g}^{m+1}$ and $h_{\Gamma }^{m+1},$ we obtain
for $0<\lambda <\lambda _{0},$ 
\begin{equation*}
\sup_{m}\sup_{0\leq t\leq \infty }\{e^{\lambda t}||h^{m+1}(t)||_{\infty
}\}\leq C\{||h_{0}||_{\infty }+\sup_{0\leq s\leq \infty }e^{\lambda
_{0}s}||g(s)||_{\infty }\}.
\end{equation*}%
for $||h_{0}||$ sufficiently small. Moreover, subtracting $h^{m+1}-h^{m}$
yields:%
\begin{equation}
\{\partial _{t}+v\cdot \nabla _{x}+\nu -K_{w}\}\{h^{m+1}-h^{m}\}=w\{\Gamma (%
\frac{h^{m}}{w},\frac{h^{m}}{w})-\Gamma (\frac{h^{m-1}}{w},\frac{h^{m-1}}{w}%
)\}  \label{difference}
\end{equation}%
with zero initial and boundary value. Therefore, splitting 
\begin{equation*}
\Gamma (\frac{h^{m}}{w},\frac{h^{m}}{w})-\Gamma (\frac{h^{m-1}}{w},\frac{%
h^{m-1}}{w})=\Gamma (\frac{h^{m}-h^{m-1}}{w},\frac{h^{m}}{w})-\Gamma (\frac{%
h^{m-1}}{w},\frac{h^{m-1}-h^{m}}{w})
\end{equation*}%
we can bound $||h^{m+1}(t)-h^{m}(t)||_{\infty }$ as in (\ref{g0}) and (\ref%
{u0tilde}): 
\begin{eqnarray}
&&C||\int_{0}^{t}U_{0}(t-s)w\Gamma (\frac{h^{m}-h^{m-1}}{w},\frac{h^{m}}{w}%
)(s)ds||_{\infty }  \label{hmdifference} \\
&&+C||\int_{0}^{t}U_{0}(t-s)w\Gamma (\frac{h^{m-1}}{w},\frac{h^{m-1}-h^{m}}{w%
})(s)ds||_{\infty }  \notag \\
&\leq &C\sup_{s}\{e^{\lambda s}\{||h^{m}(s)||_{\infty
}+||h^{m-1}(s)||\}\}\times e^{-\lambda t}\times \sup_{s}\{e^{\lambda
s}\{||h^{m}(s)-h^{m-1}(s)||_{\infty }\}.  \notag
\end{eqnarray}%
Hence $h^{m}$ is a Cauchy sequence and the limit $h$ is a desired unique
solution.

Moreover, if $\Omega $ is strictly convex, by Lemma \ref{inflowcon},
inductively, $h^{m}$ is continuous in $[0,\infty )\times \{\bar{\Omega}%
\times \mathbf{R}^{3}\setminus \gamma _{0}\}.$ It is straightforward and
routine to verify that $w\Gamma (\frac{h^{m}}{w},\frac{h^{m}}{w})$ is
continuous in the interior of $[0,\infty )\times \Omega \times \mathbf{R}%
^{3}.$ Moreover, from Lemma \ref{collision}, $\sup |\frac{w\Gamma (\frac{%
h^{m}}{w},\frac{h^{m}}{w})}{\nu }|$ is also finite. We therefore deduce that 
$h^{m+1}$ and hence $h$ is also continuous in $[0,\infty )\times \{\bar{%
\Omega}\times \mathbf{R}^{3}\setminus \gamma _{0}\}$ from the uniform
convergence.

\textbf{Step 2. Uniqueness and Positivity.} \ Assume that there is another
solution $\tilde{h}$ to the full Boltzmann equation with the same initial
and boundary condition as $h.$ Assume that $\sup_{0\leq t\leq T_{0}}||\tilde{%
h}(t)||_{\infty }$ is sufficiently small. Then 
\begin{equation*}
\{\partial _{t}+v\cdot \nabla _{x}+\nu -K_{w}\}\{h-\tilde{h}\}=w\{\Gamma (%
\frac{h}{w},\frac{h}{w})-\Gamma (\frac{\tilde{h}}{w},\frac{\tilde{h}}{w})\},
\end{equation*}%
so that $||h(t)-\tilde{h}(t)||_{\infty }$ is bounded by as in step 1: 
\begin{eqnarray*}
&&C||\int_{0}^{t}U_{0}(t-s)w\Gamma (\frac{h-\tilde{h}}{w},\frac{h}{w}%
)(s)ds||_{\infty }+C||\int_{0}^{t}U_{0}(t-s)w\Gamma (\frac{\tilde{h}}{w},%
\frac{h-\tilde{h}}{w})(s)ds||_{\infty }. \\
&\leq &C_{T_{0}}\sup_{0\leq s\leq T_{0}}\{\{||h(s)||_{\infty }+||\tilde{h}%
(s)||\}\}\sup_{0\leq s\leq T_{0}}\{||h(s)-\tilde{h}(s)||_{\infty }.
\end{eqnarray*}%
This implies that $\sup_{0\leq t\leq T_{0}}||h(t)-\tilde{h}(t)||_{\infty
}\equiv 0$ if $\sup_{0\leq t\leq T_{0}}||\tilde{h}(t)||_{\infty }$ and $%
\sup_{0\leq t\leq T_{0}}||h(t)||_{\infty }$ sufficiently small.

Finally, we address the positivity of the $F=\mu +\sqrt{\mu }f.$ We use a
different approximation for the original Boltzmann equation (\ref{boltzmann}%
) 
\begin{equation}
\{\partial _{t}+v\cdot \nabla _{x}\}F^{m+1}+\nu (F^{m})F^{m+1}=Q_{\text{gain}%
}(F^{m},F^{m}),\text{ \ }  \label{positive}
\end{equation}%
with the inflow boundary condition $\ F^{m+1}|_{\gamma ^{-}}=\mu +\sqrt{\mu }%
g\geq 0$ and $F^{m+1}|_{t=0}=\mu +\sqrt{\mu }f_{0},$ starting with $%
F^{0}\equiv \mu .$ Here 
\begin{equation*}
\nu (F^{m})=\int F^{m}(v)|v-u|^{\gamma }dud\omega .
\end{equation*}%
In terms of $f^{m}=\frac{F^{m}-\mu }{\sqrt{\mu }},$ by (\ref{mu-k}) and (\ref%
{gamma}), we have 
\begin{equation}
\{\partial _{t}+v\cdot \nabla _{x}+\nu \}f^{m+1}=Kf^{m}+\Gamma _{\text{gain}%
}(f^{m},f^{m})-\Gamma _{\text{loss}}(f^{m},f^{m+1}).  \label{fm}
\end{equation}%
It is routine and standard to show that $h^{m}=wf^{m}$ is convergent in $%
L^{\infty },$ locally in time $[0,T_{0}],$ where $T_{0}$ depends on $%
||h_{0}||_{\infty }=||\frac{F_{0}-\mu }{\sqrt{\mu }}||_{\infty }$ and $%
\sup_{0\leq t\leq T_{0}}||g(t)||_{\infty }.$ By induction on $m$, we can
show that if $F^{m}\geq 0,$ the $Q_{\text{gain }}(F^{m},F^{m})\geq 0.$
Denote the integration factor as 
\begin{equation}
I(t,s)=e^{-\int_{s}^{t}\nu (F^{m})(\tau ,X(\tau ),V(\tau ))d\tau },
\label{ifactor}
\end{equation}%
so that $\frac{d}{ds}\{I(t,s)F^{m+1}\}=I(t,s)Q_{\text{gain }}(F^{m},F^{m})$
almost everywhere along the back-time trajectory $[X(s),V(s)]$ of $t,x,v,$
inside $\bar{\Omega}$. As in (\ref{inflowformula}), we express $%
F^{m+1}(t,x,v)=$%
\begin{eqnarray*}
&&\mathbf{1}_{t-t_{\mathbf{b}}\leq
0}\{I(t,0)F_{0}(x-vt,v)+\int_{0}^{t}I(t,s)Q_{\text{gain }%
}(F^{m},F^{m})(X(s),V(s))ds\}+\mathbf{1}_{t-t_{\mathbf{b}}>0}\times \\
&&\{I(t,t-t_{\mathbf{b}})\{\mu +\sqrt{\mu }g\}(t-t_{\mathbf{b}},x-vt_{%
\mathbf{b}},v)+\int_{t-t_{\mathbf{b}}}^{t}I(t,s)Q_{\text{gain }%
}(F^{m},F^{m})(X(s),V(s)))ds\}
\end{eqnarray*}%
so that $F^{m+1}\geq 0$ on $[0,T_{0}].$ This implies that $F\geq 0$ in the
limit with $h=\frac{F-\mu }{\sqrt{\mu }}\in L^{\infty }.$ By the uniqueness
of our solutions with this class, $F$ is the same solution we constructed
earlier. We obtain $F\geq 0$ for all time by repeating $%
[0,T_{0}],[T_{0},2T_{0}]...[kT_{0},(k+1)T_{0}]$, from the uniform bound of $%
\sup_{t}||h(t)||_{\infty }.$
\end{proof}

\begin{proof}
(of Theorem \ref{bouncebacknl} and Theorem \ref{specularnl}): We use the
same iteration (\ref{interation1}) with either bounce-back or specular
reflection for $h^{m+1}.$ By the Duhamel Principle, 
\begin{equation}
h^{m+1}=U(t)h_{0}+\int_{0}^{t}U(t-s)w\Gamma (\frac{h^{m}}{w},\frac{h^{m}}{w}%
)(s)ds.  \label{hmduhamel}
\end{equation}%
Applying either Theorems \ref{bouncebackrate} or \ref{specularrate}
respectively, by Lemma \ref{collision}, we have 
\begin{equation*}
||h^{m+1}(t)||_{\infty }\leq Ce^{-\lambda t}||h_{0}||_{\infty
}+||\int_{0}^{t}U(t-s)w\Gamma (\frac{h^{m}}{w},\frac{h^{m}}{w}%
)(s)ds||_{\infty }.
\end{equation*}%
To avoid the extra weight function $\nu (v)\thicksim \{1+|v|\}^{\gamma }$ in
Lemma \ref{collision}, we use $U(t-s)=G(t-s)+%
\int_{s}^{t}G(t-s_{1})K_{w}U(s_{1}-s)ds_{1}$to estimate the second term
above. For any $(t,x,v)\notin \gamma _{0},$ and its back-time cycle $(X_{%
\mathbf{cl}}(s),V_{\mathbf{cl}}(s)),$ we use (\ref{bouncebackformular}) and (%
\ref{specularformula}) respectively to get 
\begin{eqnarray*}
&&|\int_{0}^{t}G(t-s)\{w\Gamma (\frac{h^{m}}{w},\frac{h^{m}}{w}%
)(s)\}ds|=|\int_{0}^{t}e^{-\nu (v)(t-s)}\{w\Gamma (\frac{h^{m}}{w},\frac{%
h^{m}}{w})(s,X(s),V(s))\}ds| \\
&\leq &C|\int_{0}^{t}e^{-\nu (v)(t-s)}\nu (v)||h^{m}(s)||_{\infty
}^{2}ds\leq Ce^{-\frac{\nu _{0}}{2}t}\times \{\sup_{0\leq s\leq \infty }e^{%
\frac{\nu _{0}}{2}s}||h^{m}(s)||_{\infty }\}^{2}
\end{eqnarray*}%
where we have used Lemma \ref{collision} and (\ref{g0}).

On the other hand, for the second term, we use $\tilde{U}$ as in (\ref%
{utilde}) with either the bounce-back or specular reflection. Hence (\ref%
{equalutilde}) still holds. Since $w^{-2}(1+|v|)^{3}\in L^{1},$ Theorems \ref%
{bouncebackrate} or \ref{specularrate} are valid for $\tilde{U},$ and we get
from (\ref{u0tilde}) 
\begin{eqnarray}
&&\int_{0}^{t}\int_{s}^{t}e^{-\nu _{0}(t-s_{1})}||K_{w}U(s_{1}-s)w\Gamma (%
\frac{h^{m}}{w},\frac{h^{m}}{w})(s)||_{\infty }dsds_{1}  \notag \\
&\leq &C\int_{0}^{t}\int_{s}^{t}e^{-\nu _{0}(t-s_{1})}\left\{ \int
K_{w}(v,v^{\prime })\sqrt{1+\rho |v^{\prime }|^{2}}dv^{\prime }\right\} 
\tilde{U}(s_{1}-s)||\{\frac{w\Gamma (\frac{h^{m}}{w},\frac{h^{m}}{w})(s)\}}{%
\sqrt{1+\rho |v^{\prime }|^{2}}}||_{\infty }dsds_{1}  \notag \\
&\leq &C\int_{0}^{t}\int_{s}^{t}e^{-\nu _{0}(t-s_{1})}\tilde{U}(s_{1}-s)||%
\frac{w\Gamma (\frac{h^{m}}{w},\frac{h^{m}}{w})(s)}{\sqrt{1+\rho |v^{\prime
}|^{2}}}||_{\infty }dsds_{1}  \notag \\
&\leq &Ce^{-\frac{\lambda }{2}t}\times \{\sup_{0\leq s\leq \infty }e^{\frac{%
\lambda }{2}s}||h^{m}(s)||_{\infty }\}^{2},  \label{2star}
\end{eqnarray}%
by (\ref{k}). This implies that $\sup_{m}\sup_{0\leq t\leq \infty }\{e^{%
\frac{\lambda }{2}t}||h^{m+1}(t)||_{\infty }\}\leq C||h_{0}||_{\infty }$ for 
$||h_{0}||$ sufficiently small. Moreover, by subtracting $h^{m+1}-h^{m},$ by
(\ref{difference}) and (\ref{hmdifference}), we deduce that $h^{m}$ is a
Cauchy sequence and the limit $h$ is the desired unique solution. If $\Omega 
$ is strictly convex, inductively, by Lemma \ref{bouncebackcon} and \ref%
{specularcon} respectively, $h^{m}$ is continuous in $[0,\infty )\times \{%
\bar{\Omega}\times \mathbf{R}^{3}\setminus \gamma _{0}\},$ and $w\Gamma (%
\frac{h^{m}}{w},\frac{h^{m}}{w})$ is continuous in $[0,\infty )\times \Omega
\times \mathbf{R}^{3}.$ Moreover, from Lemma \ref{collision}, $\sup |\frac{%
w\Gamma (\frac{h^{m}}{w},\frac{h^{m}}{w})}{\nu }|$ is also finite. We
therefore deduce that $h$ is also continuous in $[0,\infty )\times \{\bar{%
\Omega}\times \mathbf{R}^{3}\setminus \gamma _{0}\}$ from the uniform
convergence.

As for the positivity of $F,$ we follow the argument in the proof of the
inflow case by using a different iterative scheme (\ref{positive}) with
either bounce-back or specular reflection boundary conditions, so that $%
f^{m}=\frac{F^{m}-\mu }{\sqrt{\mu }}$ satisfies (\ref{fm}). Again, it
follows from a routine procedure to show that $h^{m}=wf^{m}$ is a Cauchy
sequence, local in time $[0,T_{0}]$. Assume $F^{m}\geq 0.$ For any $%
(t,x,v)\notin \gamma _{0},$ we denote its back-time cycle (either bounceback
or specular) as $[X_{\mathbf{cl}}(s),V_{\mathbf{cl}}(s)],$ and $%
t_{1},t_{2},...t_{l}>0,$ so that $t_{l+1}\leq 0.$ Recall (\ref{ifactor}), so
that $\frac{d}{ds}\{I(t,s)F^{m+1}\}=I(t,s)Q_{\text{gain }}(F^{m},F^{m})$
almost everywhere along the back-time cycles $[X(s),V(s)]$ of $t,x,v,$
inside $\bar{\Omega}$. As in Lemmas \ref{bouncebackformular} and \ref%
{specularformula}, we can express $F^{m+1}(t,x,v)$ as 
\begin{equation*}
I(t,t_{1})F^{m+1}(t_{1},x_{1},v_{1})+\int_{t_{1}}^{t}I(s,t)Q_{\text{gain }%
}(F^{m},F^{m})(X_{\mathbf{cl}}(s),V_{\mathbf{cl}}(s)ds\geq
I(t,t_{1})F^{m+1}(t_{1},x_{1},v_{1}).
\end{equation*}%
For $1\leq i\leq l,$ similarly, 
\begin{eqnarray*}
F^{m+1}(t_{i},x_{i},v_{i})
&=&I(t_{i},t_{i+1})F^{m+1}(t_{i+1},x_{i+1},v_{i+1})+%
\int_{t_{1}}^{t}I(s,t_{i})Q_{\text{gain }}(F^{m},F^{m})(X_{\mathbf{cl}%
}(s),V_{\mathbf{cl}}(s)ds, \\
F^{m+1}(t_{l},x_{l},v_{l}) &=&I(t_{l},0)F_{0}(X_{\mathbf{cl}}(0),V_{\mathbf{%
cl}}(0))+\int_{0}^{t_{l}}I(s,0)Q_{\text{gain }}(F^{m},F^{m})(X_{\mathbf{cl}%
}(s),V_{\mathbf{cl}}(s)ds\geq 0.
\end{eqnarray*}%
Hence by an induction over $i,$ $F^{m+1}\geq 0$ and we deduce $F\geq 0$ by
the uniqueness as in the proof of the in-flow case.
\end{proof}

\begin{proof}
of Theorem \ref{diffusenl}: We use the same iteration (\ref{interation1})
together with the diffusive boundary condition (\ref{hdiffuse}). By (\ref%
{hmduhamel}), we further bound its last term by the Duhamal principle as
before:%
\begin{equation}
||\int_{0}^{t}G(t-s)w\Gamma (\frac{h^{m}}{w},\frac{h^{m}}{w})(s)ds||_{\infty
}+||\int_{0}^{t}\int_{s}^{t}G(t-s_{1})K_{w}U(s_{1}-s)w\Gamma (\frac{h^{m}}{w}%
,\frac{h^{m}}{w})(s)dsds_{1}||_{\infty }.  \notag
\end{equation}%
We estimate the first term above in two parts: 
\begin{equation*}
|\int_{0}^{t}G(t-s)\{w\Gamma (\frac{h^{m}}{w},\frac{h^{m}}{w})(s)\}ds|\leq
|\int_{t-1}^{t}|+|\int_{0}^{t-1}|
\end{equation*}%
For $\int_{t-1}^{t},$ we use estimate (\ref{diffuserate}) and Lemma \ref%
{collision} to get 
\begin{eqnarray*}
&&|\int_{t-1}^{t}e^{-\nu _{0}(t-s)}\mathbf{1}_{\{t_{1}\leq s)}\{\frac{%
w\Gamma (\frac{h^{m}}{w},\frac{h^{m}}{w})(s)}{\tilde{w}}\}ds|+|%
\int_{t-1}^{t}e^{-\nu (v)(t-s)}\mathbf{1}_{\{t_{1}\leq s)}\{w\Gamma (\frac{%
h^{m}}{w},\frac{h^{m}}{w})(s)\}ds| \\
&\leq &C\sup |\int_{t-1}^{t}e^{-\nu (v)(t-s)}\nu (v)ds|\times
||h^{m}||_{\infty }^{2}+Ce^{-\frac{\nu _{0}}{2}t}\sup \{e^{\frac{\nu _{0}}{2}%
s}||h(s)||_{\infty }^{{}}\}^{2} \\
&\leq &Ce^{-\frac{\nu _{0}}{2}t}\sup \{e^{\frac{\nu _{0}}{2}%
s}||h(s)||_{\infty }^{{}}\}^{2}.
\end{eqnarray*}%
For $\int_{0}^{t-1},$ we use (\ref{t>1}) to get extra decay in $v$ as: 
\begin{equation*}
\int_{0}^{t-1}e^{-\frac{\nu _{0}}{2}(t-s)}\{||\frac{w\Gamma (\frac{h^{m}}{w},%
\frac{h^{m}}{w})(s)}{\tilde{w}}||_{\infty }+||e^{-\nu (v)}w\Gamma (\frac{%
h^{m}}{w},\frac{h^{m}}{w})(s)||_{\infty }\}ds\leq Ce^{-\frac{\nu _{0}}{2}%
t}\sup \{e^{\frac{\nu _{0}}{2}s}||h(s)||_{\infty }^{{}}\}^{2}.
\end{equation*}%
We have used the fact $\frac{1}{\tilde{w}}$ decays exponentially by (\ref%
{w>1}).

On the other hand, for the second term, we define $\tilde{U}$ ~again in (\ref%
{utilde}) with new weight $w_{1}=\frac{w}{\{1+\rho |v|^{2}\}^{\frac{1}{2}}}$
and the new diffuse boundary condition as 
\begin{equation*}
\{\tilde{U}\tilde{h}_{0}\}(t,x,v)|_{\gamma _{-}}=\frac{1}{\tilde{w}_{1}}%
\int_{v^{\prime }\cdot n(x)>0}\{\tilde{U}\tilde{h}_{0}\}(t,x,v^{\prime })%
\tilde{w}_{1}(v^{\prime })d\sigma .
\end{equation*}%
Hence (\ref{equalutilde}) still holds. By letting $\rho $ further
sufficienlty small in (\ref{wdiffuse}), we can apply Theorem \ref%
{duffusiverate} for the diffusive $\tilde{U}$ exactly as in (\ref{2star}).
Hence, $\sup_{m,0\leq t\leq \infty }\{e^{\lambda t}||h^{m+1}(t)||_{\infty
}\}\leq C||h_{0}||_{\infty }$ for $||h_{0}||$ sufficiently small. As before,
we can construct the desired unique solution and establish continuity when $%
\Omega $ is convex.

As for the positivity of $F,$ we follow use the different iterative scheme (%
\ref{positive}) with diffuse boundary condition%
\begin{equation*}
F^{m+1}(t,x,v)|_{\gamma _{-}}=c_{\mu }\mu (v)\int F^{m+1}(t,x,v^{\prime
})\{n\cdot v^{\prime }\}dv^{\prime },
\end{equation*}%
and $f^{m}=\frac{F^{m}-\mu }{\sqrt{\mu }}$ satisfies (\ref{fm}). Again, it
follows from a routine procedure that $h^{m}=wf^{m}$ is a Cauchy sequence,
local in time $[0,T_{0}]$. Assume $F^{m}\geq 0$ and recall (\ref{ifactor})
so that $\frac{d}{ds}\{I(t,s)F^{m+1}\}=I(t,s)Q_{\text{gain }}(F^{m},F^{m})$
almost everywhere along the back-time trajectory $[X(s),V(s)]$ of $t,x,v,$
inside $\bar{\Omega}$. Recall (\ref{prob}). For any $(t,x,v),$ consider its
back-time diffusive cycle $(X_{\mathbf{cl}}(s),V_{\mathbf{cl}}(s)).$ By a
similar derivation as in (\ref{diffuseformula}), if $t_{1}(t,x,v)\leq 0,$ 
\begin{equation*}
F^{m+1}(t,x,v)=I(t,0)F_{0}(x-vt,v)+\int_{0}^{t}I(s,0)Q_{\text{gain }%
}(F^{m},F^{m})(X_{\mathbf{cl}}(s),V_{\mathbf{cl}}(s))ds\geq 0.
\end{equation*}%
If $t_{1}(t,x,v)>0,$ then for $k\geq 2,$ 
\begin{equation*}
F^{m+1}(t,x,v)=\int_{t_{1}}^{t}I(s,t_{1})Q_{\text{gain }}(F^{m},F^{m})(X_{%
\mathbf{cl}}(s),V_{\mathbf{cl}}(s))ds+I(t,t_{1})c_{\mu }\mu (v)\int_{\Pi
_{j=1}^{k-1}\mathcal{V}_{j}}H^{m}
\end{equation*}%
where $H^{m}$ is given by 
\begin{eqnarray*}
&&\sum_{l=1}^{k-1}\mathbf{1}_{\{t_{l}>0,t_{l+1}\leq 0\}}F_{0}(X_{\mathbf{cl}%
}(0),V_{\mathbf{cl}}(0))d\Sigma _{l}^{m}(0) \\
&&+\sum_{l=1}^{k-1}\int_{t_{l+1}}^{t_{l}}\mathbf{1}_{\{t_{l+1}>0\}}Q_{\text{%
gain }}(F^{m},F^{m})(X_{\mathbf{cl}}(s),V_{\mathbf{cl}}(s))d\Sigma
_{l}^{m}(s)ds \\
&&+\sum_{l=1}^{k-1}\int_{s}^{t_{l}}\mathbf{1}_{\{t_{l}>0,t_{l+1}\leq 0\}}Q_{%
\text{gain }}(F^{m},F^{m})(X_{\mathbf{cl}}(s),V_{\mathbf{cl}}(s))d\Sigma
_{l}^{m}(s)ds \\
&&+\mathbf{1}_{\{t_{k}>0\}}F^{m+1}(t_{k},x_{k},v_{k-1})d\Sigma
_{k-1}^{m}(t_{k}),
\end{eqnarray*}%
where $d\Sigma _{l}(s)=\{\Pi _{j=l+1}^{k-1}d\sigma
_{j}\}\{I(s,t_{l})[n(x_{l})\cdot v_{l}]dv_{l}\}\Pi
_{j=1}^{l}\{I(t_{j},t_{j+1})d\sigma _{j}\}$. For any $\varepsilon >0,$ by
Lemma \ref{small}, $\int_{\Pi _{j=1}^{k-2}\mathcal{V}_{j}}\mathbf{1}%
_{\{t_{k-1}>0\}}\Pi _{j=1}^{k-2}d\sigma _{j}<\varepsilon $ for $k$ large.
Notice that $\{t_{k}>0\}\subset \{t_{k-1}>0\},$ and by (\ref{ifactor}), $%
I(s,t_{l})\leq C(\sup_{m,0\leq s\leq T_{0}}||F^{m}(s)||_{\infty },T_{0}).$
Since $F_{0}\geq 0$ and $Q_{\text{gain }}(F^{m},F^{m})\geq 0,$ we conclude $%
F^{m+1}(t,x,v)\geq $ 
\begin{eqnarray*}
&&C(\sup_{m,0\leq s\leq T_{0}}||F^{m}(s)||_{\infty },T_{0})\int_{\Pi
_{j=1}^{k-1}\mathcal{V}_{j}}\mathbf{1}_{\{t_{k}>0%
\}}F^{m+1}(t_{k},x_{k},v_{k})dv_{k-1}\Pi _{j=1}^{k-2}d\sigma _{j} \\
&\geq &-C(\sup_{m,0\leq s\leq T_{0}}||h^{m}(s)||_{\infty },T_{0})\int_{\Pi
_{j=1}^{k-2}\mathcal{V}_{j}}\mathbf{1}_{\{t_{k-1}>0\}}\left\{ \int_{\mathcal{%
V}_{k-1}}\{\mu +\sqrt{\mu }f^{m+1}\}(t_{k},x_{k},v_{k})dv_{k-1}\right\} \Pi
_{j=1}^{k-2}d\sigma _{j} \\
&\geq &-C(\sup_{m,0\leq s\leq T_{0}}||h^{m}(s)||_{\infty },T_{0})\varepsilon
.
\end{eqnarray*}%
Since $\varepsilon $ is arbitrary, we deduce that $F^{m+1}\geq 0$ and this
conclude the positivity of $F$ over $[0,T_{0}]$. We then conclude $F\geq 0$
by the uniqueness.
\end{proof}

\textbf{Acknowledgements. }We thank H-J. Hwang, C. Kim, N. Masmoudi and R.
Marra for stimulating discussions. We especially thank R. Esposito for
pointing out a simplication of the contradiction argument in $L^{2}$ decay
theory, the role of rotational symmetry, and the recent interesting
development for diffusive reflection in [AEMN].

\section{References}

[A] Arkeryd, L, On the strong $L^{1}$ trend to equilibrium for the Boltzmann
equation'. Studies in Appl. Math. (1992) 87, 283-288.

[AC] Arkeryd, L; Cercignani, C, A global existence theorem for the initial
boundary value problem for the Boltzmann equation when the boundaries are
not isothermal. Arch. Rational Mech. Anal. (1993) 125, 271-288.

[AEMN] Arkeryd, L., Esposito, R., Marra R. and Nouri, A., Stability of the
laminar solution of the Boltzmann equation for the Benard problem. Preprint
2007.

[AEP] Arkeryd, L.; Esposito, R.; Pulvirenti, M.: The Boltzmann equation for
weakly inhomogeneous data. Comm. Math. Phys. 111 (1987), no. 3, 393--407.

[ADVW] Alexandre, R., Desvillettes, L., Villani, C. and Wennberg, B.,
Entropy dissipation and long-range interactions. Arch. Ration. Mech. Anal.
152 (2000), 4, 327-355.

[AH] Arkeryd, L.; Heintz, A.: On the solvability and asymptotics of the
Boltzmann equation in irregular domains. Comm. Partial Differential
Equations 22 (1997), no. 11-12, 2129--2152.

[BP] Beals, R.; Protopopescu, V: Abstract time-depedendent transport
equations. J. Math. Anal. Appl. \ (1987) 212, 370-405.

[C1] Cercignani, C.: The Boltzmann Equation and Its Application,
Springer-Verlag, 1988.

[C2] Cercignani, C., Equilibrium States and the trend to equlibrium in a gas
according to the Boltzmann equation. Rend. Mat. Appl. (1990) 10, 77-95.

[C3] Cercignani, C., On the initial-boundary value problem for the Boltzmann
equation. Arch. Rational Mech. Anal. (1992) 116, 307-315.

[CC] Cannoe, R.; Cercignani, C, A trace theorem in kinetic theory. Appl.
Math. Letters. (1991) 4, 63-67.

[CIP] Cercignani, C., Illner, R. and Pulvirenti, M., The Mathematical Theory
of Dilute Gases, Springer-Verlag, 1994.

[D] Deimling, K., Nonlinear Functional Analysis. Springer-Verlag, 1988.

[De] Desvillettes, L: Convergence to equilibrium in large time for Boltzmann
and BGK equations. Arch. Rational Mech. Anal. (1990) 110, 73-91.

[DeV] Desvillettes, L.; Villani, C., On the trend to global equilibrium for
spatially inhomogeneous kinetic systems: the Boltzmann equation. Invent.
Math. 159 (2005), no. 2, 245--316.

[DL2] Diperna, R., Lions, P-L., On the Cauchy problem for the Boltzmann
equation. Ann. Math., (1989) 130, 321-366.

[DL2] Diperna, R., Lions, P-L., Global weak solution of Vlasov-Maxwell
systems, Comm. Pure Appl. Math., 42, 729-757 (1989).

[EGM] Esposito, L., Guo, Y., Marra, R., The Vlasov-Boltzamann system for
phase transition. In preparation.

[G1] Guo, Y., The Vlasov-Poisson-Boltzmann system near Maxwellians. Comm.
Pure Appl. Math. 55 (2002) no. 9, 1104-1135.

[G2] Guo, Y., The Vlasov-Maxwell-Boltzmann system near Maxwellians. Invent.
Math. 153 (2003), no. 3, 593--630.

[G3] Guo, Y., Singular solutions of the Vlasov-Maxwell system on a half
line. Arch. Rational Mech. Anal. 131 (1995), no. 3, 241--304.

[G4] Guo, Y., Regularity for the Vlasov equations in a half-space. Indiana
Univ. Math. J. 43 (1994), no. 1, 255--320.

[GL] Glassey, R., The Cauchy Problems in Kinetic Theory, SIAM, 1996.

[GS] Guo, Y., Strain, R., The relativistic Maxwell-Boltzmann system. In
preparation.

[Gr1] Grad, H.: Principles of the kinetic theory of gases. Handbuch der
Physik, XII, 205-294 (1958).

[Gr2] Grad, H.: Asymptotic theory of the Boltzmann equation. II. Rarefied
gas dynamics, 3rd Symposium, 26-59, Paris, 1962.

[Gui] Guiraud, J. P. : An $H-$theorem for a gas of rigid spheres in a
bounded domain. Theories cinetique classique et relativistes, (1975) G.
Pichon, ed., 29-58, CNRS, Paris.

[H] Hamdache, K: Initial boundary value problems for Boltzmann equation.
Global existence of week solutions. Arch. Rational Mech. Anal. (1992) 119,
309-353.

[HH] Hwang, H-J., Regularity for the Vlasov-Poisson system in a convex
domain. SIAM J. Math. Anal. 36 (2004), no. 1, 121--171.

[HV] Hwang, H-J., Velazquez, J., Global existence for the Vlasov-Poisson
system in bounded domain. Preprint 2007.

[LY] Liu, T-P.; Yu, S-H., Initial-boundary value problem for one-dimensional
wave solutions of the Boltzmann equation. Comm. Pure Appl. Math. 60 (2007),
no. 3, 295--356.

[M] Maslova, N. B. Nonlinear Evolution Equations, Kinetic Approach.
Singapore, World Scientific Publishing, 1993.

[Mi] Mischler, S., On the initial boundary value problem for the
Vlasov-Poisson-Boltzmann system. Commun. Math. Phys. 210 (2000) 447-466.

[MS] Masmoudi, N.; Saint-Raymond, L., From the Boltzmann equation to the
Stokes-Fourier system in a bounded domain. Comm. Pure Appl. Math. 56 (2003),
no. 9, 1263--1293.

[S] Shizuta, Y., On the classical solutions of the Boltzmann equation, Comm.
Pure Appl. Math., 36 (1983) 705-754.

[SA] Shizuta, Y., Asano, K., Global solutions of the Boltzmann equation in a
bounded convex domain. Proc. Japan Acad. (1977) 53A, 3-5.

[SG] Strain, R. Guo, Y., Exponential decay for soft potentials near \
Maxwellians. Arch. Rational Mech. Anal., in press, 2007.

[U1] Ukai, S., Solutions of the Boltzmann equation. Pattern and
waves-Qualitative Analysis of Nonlinear Differential Equations, (1986) 37-96.

[U2] Ukai, S., private communications.

[US] Ukai, S., Asano, K., On the initial boundary value problem of the
linearized Boltzmann equation in an exterior domain. Proc. Japan Acad.
(1980) 56, 12-17.

[V] Villani, C., Hypocoercivity. Memoir of AMS, to appear.

[Vi] Vidav, I., Spectra of perturbed semigroups with applications to
transport theory. J. Math. Anal. Appl., 30 (1970) 264-279.

[YZ] Yang, T.; Zhao, H-J., A half-space problem for the Boltzmann equation
with specular reflection boundary condition. Comm. Math. Phys. 255 (2005),
no. 3, 683--726.

\end{document}